\newtheorem*{rep@theorem}{\rep@title}
\newcommand{\newreptheorem}[2]{%
\newenvironment{rep#1}[1]{%
 \def\rep@title{#2 \ref{##1}}%
 \begin{rep@theorem}}%
 {\end{rep@theorem}}}
\newtheorem*{rep@cor}{\rep@title}
\newcommand{\newrepcor}[2]{%
\newenvironment{rep#1}[1]{%
 \def\rep@title{#2 \ref{##1}}%
 \begin{rep@cor}}%
 {\end{rep@cor}}}
\newtheorem*{rep@prop}{\rep@title}
\newcommand{\newrepprop}[2]{%
\newenvironment{rep#1}[1]{%
 \def\rep@title{#2 \ref{##1}}%
 \begin{rep@prop}}%
 {\end{rep@prop}}}
\newtheorem{cor}{Corollary}[section]   
\newtheorem{theorem}[cor]{Theorem}
\newtheorem{prop}[cor]{Proposition}
\newtheorem{defi}[cor]{Definition}
\newtheorem{remark}[cor]{Remark}
\newtheorem{lemma}[cor]{Lemma}
\newtheorem{sublemma}{Sublemma}[cor]
\newtheorem{corx}{Corollary}
\newtheorem{thmx}[corx]{Theorem}
\newtheorem{propb}[corx]{Proposition}   
\newlist{steps}{enumerate}{1}
\setlist[steps, 1]{itemsep=8pt,leftmargin=0cm,itemindent=.5cm,labelwidth=\itemindent,labelsep=0cm,align=left,label = \textbf{\emph{Step \arabic*}:\,}}
\newcommand{\myitem}[1]{%
\item[#1]\protected@edef\@currentlabel{#1}%
}
\newcommand{\R}{\mathbb{R}}
\newcommand{\Z}{\mathbb{Z}}
\newcommand{\SL}{\operatorname{SL}}
\newcommand{\SO}{\operatorname{SO}}
\newcommand{\C}{\mathbb{C}}
\def\C{\mathbb{C}}
\def\R{\mathbb{R}}
\def\Z{\mathbb{Z}}
\def\Aff{\operatorname{Aff}}
\def\Aut{\operatorname{Aut}}
\DeclareMathOperator{\dev}{dev}   
\DeclareMathOperator{\hol}{hol}
\def\Aut{{\sf{Aut}}}
\def\GL{{\operatorname{GL}}}
\def\O{{\operatorname{O}}}
\def\SO{{\operatorname{SO}}}
\def\SL{{\operatorname{SL}}}
\def\Sp{{\operatorname{Sp}}}
\def\Span{{\sf{span}} }
\def\Sp{{\sf{Sp}} }
\def\Id{{\operatorname{Id}} }
\def\Ker{{\operatorname{Ker}}}
\def\det{{\operatorname{det}}}
\def\l{{\mathfrak{l}}}
\def\a{{\mathfrak{a}}}
\def\g{{\mathfrak{g}}}
\def\n{{\mathfrak{n}}}
\newcommand{\ad}{\mathrm{ad}}
\newcommand{\Ad}{\mathrm{Ad}}
\def\Span{{\operatorname{Span}}}
\def\cd{\operatorname{cd}}
\def\disc{\operatorname{disc}}
\begin{document}\raggedbottom

\setcounter{secnumdepth}{3}
\setcounter{tocdepth}{2}

\title[Kleinian affine manifolds]{Completeness of closed Kleinian flat Pseudo-Riemannian Manifolds of Signature (2,2)}

\author[Farid Diaf, Blandine Galiay, Malek Hanounah]{Farid Diaf \and Blandine Galiay \and Malek Hanounah}

\address{Farid Diaf: Institut de Recherche Mathématique Avancée, UMR 7501, Université de Strasbourg et CNRS, 7 rue René Descartes, 67000 Strasbourg, France.} 
\email{f.diaf@unistra.fr}

\address{Blandine Galiay: Institut de Recherche Mathématique Avancée, UMR 7501, Université de Strasbourg et CNRS, 7 rue René Descartes, 67000 Strasbourg, France.}
\email{galiay@unistra.fr}

\address{Malek Hanounah: Institut für Mathematik und Informatik
Walther-Rathenau-Straße 47, 17489
Greifswald, Germany.} 
\email{malek.hanounah@uni-greifswald.de}

\thanks{}

\begin{abstract}
Let~$\mathbb{R}^{2,2}$ denote the model space of flat pseudo-Riemannian manifolds of signature~$(2,2)$. We prove that the only domain divisible by a discrete subgroup of the isometry group of~$\mathbb{R}^{2,2}$ is~$\mathbb{R}^{2,2}$ itself. In the Kleinian setting, this provides the first completeness theorem of closed flat pseudo-Riemannian manifolds beyond the Euclidean and Lorentzian cases.

Along the proof, we show two results of independent interest. The first is a geometric reduction for certain divisible domains of affine space. The second concerns the existence of syndetic hulls in semidirect products~$R \ltimes G$, where~$G$ is a homothety Lie group. This construction generalizes earlier constructions in affine geometry due to Carrière and Dal'bo.

\end{abstract}
\maketitle
\tableofcontents

\section{Introduction}
Since Felix Klein's Erlangen program, geometric structures have played an important role in geometry and topology. From this viewpoint, a geometry is determined by a model space~$X$ together with a transitive and effective action of a Lie group~$G$; a manifold~$M$ carries a~$(G,X)$-structure if it admits an atlas of charts with values in~$X$ whose transition maps are restrictions of elements of~$G$. Affine structures have gained particular attention in this context. An \emph{affine manifold} is a manifold endowed with an~$(\Aff(\mathbb{R}^d),\mathbb{R}^d)$-structure, where~$\Aff(\mathbb{R}^d)=\GL_d(\mathbb{R})\ltimes\mathbb{R}^d$ is the group of affine transformations of~$\mathbb{R}^d$. Many fundamental questions on affine structures remain open, although they have been extensively studied; see for instance \cite{fried1980closed,Nilpotent_complete,fried1982polynomials,Crysta_FGH,GH_cohomology,fried1986distality,goldman1986affine,carriere1989autour,GLM,DGK_margulis,klingler2017chern,auslanderconj}. One of the most important questions, raised by Markus in 1960, concerns the \emph{completeness} of special affine manifolds. An affine manifold is said to be \emph{complete} if it is the quotient of~$\mathbb{R}^d$ by a discrete group of affine transformations acting freely and properly discontinuously. The Markus conjecture asserts that a closed affine manifold with parallel volume, that is, whose linear holonomy lies in~$\SL_d(\mathbb{R})$, is complete. In the Markus conjecture, the assumption of having parallel volume is necessary. Indeed, the so-called \emph{Hopf manifolds} are known examples of incomplete closed affine manifolds. A Hopf manifold is the quotient of~$\mathbb{R}^d\setminus\{0\}$ by the discrete group generated by~$\lambda \Id$ for some~$\lambda>1$. Topologically, $d$-dimensional Hopf manifolds are diffeomorphic to~$\mathbb{S}^1\times \mathbb{S}^{d-1}$. This class of manifolds belongs to the family of \emph{Kleinian affine manifolds}. These are manifolds that are finite covers of $\Omega/\Gamma$, where~$\Omega\subset\mathbb{R}^d$ is a domain, i.e.\ a nonempty connected open set, and~$\Gamma\le\Aff(\mathbb{R}^d)$ is a discrete group acting freely, properly discontinuously, and cocompactly on~$\Omega$ (see Definition~\ref{def: Kleinian}). When the linear part of~$\Gamma$ lies in~$\SL_d(\mathbb{R})$, one may formulate a Kleinian version of Markus' conjecture:

\medskip

\noindent\textbf{Kleinian Markus conjecture.}
Let~$\Omega$ be a domain of~$\mathbb{R}^d$ and let~$\Gamma\le \SL_d(\mathbb{R})\ltimes \mathbb{R}^d$ be a discrete group acting freely, properly discontinuously, and cocompactly on~$\Omega$. Then~$\Omega=\mathbb{R}^d$.

\medskip
The conjecture is solved in dimension two: it follows from the completeness of special affine surfaces, see for instance, \cite{Benzecri1958-1960}. In higher dimensions~$(d\geq 3)$, the question remains widely open. There are partial results under additional hypotheses: one may assume that~$\Gamma$ preserves a geometric structure stronger than a parallel volume form. For example, if~$\Gamma$ preserves a pseudo-Riemannian metric of signature~$(p,q)$, equivalently, if~$\Gamma$ is a discrete subgroup of~$\SO(p,q)\ltimes\mathbb{R}^{p+q}$, then the Kleinian Markus conjecture holds in the Riemannian case~$q=0$ by the classical Hopf--Rinow theorem, and in the Lorentzian case~$q=1$ by Carrière's completeness theorem for closed flat Lorentzian manifolds \cite{carriere1989autour}. Outside these signatures, the problem is largely open; the main contribution of this paper is a positive answer in signature~$(2,2)$, which provides the first evidence for the validity of completeness of closed flat pseudo-Riemannian manifolds manifolds of non-Lorentzian signature.

Another geometric condition is provided when~$\Gamma$ preserves a pseudo-Hermitian structure, i.e., when~$\Gamma$ is a discrete subgroup of~$\mathrm{U}(p,q)\ltimes\mathbb{C}^{\,p+q}$. In this setting, the conjecture is known when~$q=0$ (the Riemannian case) and when~$q=1$ by a result of Tholozan \cite{tholozan2015completude}. The latter structures are known in the literature as \textit{Hermite-Lorentz} manifolds (see for instance \cite{Hermite_benzeg,flat_hermite_4}). In the same paper, Tholozan also obtained a positive answer when~$\Gamma$ is a discrete subgroup of~$\SO(3,\mathbb{C})\ltimes\mathbb{C}^3$ acting on~$\mathbb{C}^3$, which is the model of flat holomorphic Riemannian manifolds. In a different direction, Jo--Kim \cite{Markus_convex} proved the Kleinian Markus conjecture for convex domains~$\Omega\subset\mathbb{R}^d$ when~$d\le 5$.

There are also results of a different flavor that that address the completeness problem under algebraic assumptions on~$\Gamma$, for instance when~$\Gamma$ is abelian or nilpotent; see \cite{smillie1977affinely,Nilpotent_complete,goldman1986affine,fried1986distality}. We return to some of these results in more detail in the preliminaries (Section~\ref{sec2}).

\subsection{Main result}
Let $\mathbb{R}^{2,2}$ denote the affine space $\mathbb{R}^4$ equipped with a nondegenerate quadratic form of signature $(2,2)$. The group of orientation-preserving isometries of $\mathbb{R}^{2,2}$ is $\SO(2,2)\ltimes\mathbb{R}^{2,2}$. In particular, $\mathbb{R}^{2,2}$ is a homogeneous flat pseudo-Riemannian space of signature~$(2,2)$. Our first main result is the following.

\begin{thmx}\label{main_theorem}
Let~$\Omega$ be a domain of~$\mathbb{R}^{2,2}$ and let~$\Gamma\le \SO(2,2)\ltimes \mathbb{R}^{2,2}$ be a discrete group acting freely, properly discontinuously, and cocompactly on~$\Omega$. Then~$\Omega=\mathbb{R}^{2,2}$.
\end{thmx}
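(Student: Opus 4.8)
The plan is to argue by contradiction, assuming $\Omega \subsetneq \mathbb{R}^{2,2}$, and to show that divisibility of a \emph{proper} domain is incompatible with the dynamics of the linear holonomy. It is convenient to use the model $\mathbb{R}^{2,2} \cong M_2(\R)$ with quadratic form $q = \det$, under which $\SO_0(2,2) \cong (\SL_2(\R)\times\SL_2(\R))/\{\pm\}$ acts by $(A,B)\cdot X = AXB^{-1}$; this ties the problem to $\AdS^3$-geometry and, crucially, exhibits $\SO(2,2)$ as a rank-two group. The first move is to apply the geometric reduction for divisible domains (the first result of independent interest) to replace $(\Omega,\Gamma)$ by a configuration in which the linear holonomy $L(\Gamma)\le\SO(2,2)$ preserves a nonzero proper (possibly degenerate) subspace and in which the position of $\partial\Omega$ is controlled; this strips away the genuinely irreducible configurations that have no analogue in the Lorentzian setting.

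Next I would pass from the discrete group $\Gamma$ to a connected Lie group. A Carrière-type discompactness estimate — exploiting that properness of the $\Gamma$-action on a proper domain forbids the linear part from contracting transversally to $\partial\Omega$ in too many directions — should show that $\overline{L(\Gamma)}$ is amenable, whence by the Tits alternative $\Gamma$ is virtually solvable. With solvability in hand, I would invoke the syndetic-hull construction (the second result of independent interest) for semidirect products $R\ltimes G$ with $G$ a homothety group, producing a connected solvable $H\le\SO(2,2)\ltimes\mathbb{R}^{2,2}$ that contains $\Gamma$ cocompactly and still acts properly and cocompactly on $\Omega$. From here the problem becomes Lie-theoretic: one studies $H$-orbits, stabilizers, and the affine cocycle through the Lie algebra $\mathfrak{h}$.

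The core is then a classification argument. Using the product structure of $\SO_0(2,2)$ I would enumerate the connected solvable subalgebras of $\so(2,2)\cong\sl_2(\R)\oplus\sl_2(\R)$ — assembled from the compact, split-Cartan, unipotent, and Borel pieces of each factor — and, for each together with its admissible translation parts, analyze the induced decomposition of $\mathbb{R}^{2,2}$ by $H$-orbits. Properness forces the point stabilizers in $H$ to be compact, while cocompactness gives $\Omega = H\cdot K$ for a compact set $K$; confronting these two constraints with the location of $\partial\Omega$ coming from the reduction step should, in each case, either contradict properness or force $\partial\Omega=\varnothing$, that is $\Omega=\mathbb{R}^{2,2}$.

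I expect the main obstacle to be precisely the regime of maximal discompactness, where $L(H)$ contains a two-dimensional $\R$-split Cartan subgroup and hence contracts along two independent isotropic directions simultaneously. This is exactly where Carrière's Lorentzian argument (discompactness at most one) has no bearing, and where the homothety structure $R\ltimes G$ becomes unavoidable; ruling out a nonempty boundary in this regime requires combining the fine position of $\partial\Omega$ from the reduction with the precise translational cocycle of $H$, and it is for this situation that the generalized syndetic-hull theorem extending Carrière and Dal'bo is needed.
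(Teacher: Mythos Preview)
Your outline has a genuine gap at the solvability step, and it misreads how both main ingredients are deployed. The reduction (Proposition~\ref{mainprop}) is sharper than ``$L(\Gamma)$ preserves a subspace and $\partial\Omega$ is controlled'': it shows $L(\Gamma)$ fixes a totally isotropic $2$-plane $P_0$ \emph{and} that $\Omega$ is foliated by affine translates of $P_0$. This places $\Gamma$ in the parabolic $\mathcal P_2\cong\GL_2^+(\R)\ltimes N$ and supplies two projections $p:\mathcal P_2\to\GL_2^+(\R)$ and $q:\mathcal P_2\to\GL_2^+(\R)\ltimes_{\mathsf t}\R^2$, together with a fibration $\Omega\to\widehat\Omega\subset\R^2$. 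The entire case analysis is then organized by the rank of $\Gamma\cap\Ker(q)$ and by discreteness of $p(\Gamma)$, $q(\Gamma)$, not by a classification of solvable subalgebras of $\so(2,2)$.

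The step ``Carri\`ere-type discompactness estimate $\Rightarrow$ $\overline{L(\Gamma)}$ amenable $\Rightarrow$ $\Gamma$ virtually solvable'' is exactly what fails in rank two and is never carried out in the paper. Solvability or nilpotency of $\Gamma$ is not an input but an \emph{output} of the case analysis: in each subcase one either forces $p(\Gamma)$ into a Borel-type subgroup of $\GL_2^+(\R)$, or derives a contradiction from $\cd(\Gamma)\ge4$ via short exact sequences built from $p$ and $q$, or reaches $\cd(\Gamma)=4$ with $\Gamma$ solvable and invokes Goldman--Hirsch. Likewise, Theorem~\ref{maintheorem2} is applied to $\GL_2^+(\R)\ltimes N$ (with $N$ the homothety group), and its role is to certify that $\overline{p(\Gamma)}^\circ$ is nilpotent and to produce nilpotent syndetic hulls of pieces of $\Gamma$ inside $\mathcal P_2$; these hulls are used as dimension and Lie-algebra constraints, not as connected groups acting on $\Omega$. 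In particular ``replace $\Gamma$ by a connected solvable $H$ still acting properly cocompactly on $\Omega$'' is unavailable: there is no reason a syndetic hull preserves a proper $\Omega$.
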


In other words, Kleinian $(\SO(2,2)\ltimes\mathbb{R}^{2,2},\mathbb{R}^{2,2})$-closed manifolds are complete. This provides the first result toward the Kleinian Markus conjecture in dimension $4$, which remains open. Note that our result does not assume any additional topological conditions on the domain~$\Omega$. 

Carrière~\cite{carriere1989autour} proved Markus' conjecture when the linear part of~$\Gamma$ has \emph{discompacity~$1$} (for instance, when it is contained in~$\SO(n,1)$). Roughly speaking, \emph{discompacity} measures the number of contracting directions of an ellipsoid under the action of the linear part of ~$\Gamma$. The discompacity of a reductive group is in general bigger than the (real) rank of the group; for instance, the irreducible embedding of~$\SL_2(\mathbb{R})$ into~$\SL_d(\mathbb{R})$ has discompacity~$\lfloor d/2\rfloor$. Discompacity~$1$ can therefore be seen as a strong rank-one assumption. Tholozan later extended Carrière's argument to prove completeness for Kleinian~$(\mathrm{U}(n,1)\ltimes\mathbb{C}^{\,n+1},\mathbb{C}^{\,n+1})$-affine manifolds; his approach relies on the fact that~$\mathrm{U}(n,1)$ has \emph{complex discompacity}~$1$, thereby going beyond the strictly real framework of~\cite{carriere1989autour}. However, both these results are restricted to manifolds whose linear holonomy is contained in a rank-one simple Lie group (and even satisfying the stronger condition of discompacity~$1$). Theorem~\ref{main_theorem} is the first result related to Kleinian Markus conjecture that addresses manifolds with no further constraint than a linear holonomy in a \emph{higher-rank} simple Lie group. In particular, it does not follow either from Carrière's result (the group~$\SO(2,2)$ has discompacity~$2$) or from Tholozan's result (the standard embedding of~$\mathrm{U}(n,1)$ into~$\SO(2n,2)$ preserves a complex structure on~$\mathbb{C}^{\,n+1}\cong\mathbb{R}^{2n,2}$, which imposes additional rigidity).

\subsection{Strategy of the proof}

We now provide an overview of the techniques used in the proof of Theorem \ref{main_theorem}. We may always replace $\Gamma$ by a finite-index subgroup, this allow us to assume that the linear part of $\Gamma$ is contained in the identity component $\SO_0(2,2)$ of $\SO(2,2)$. More generally, we will reason in terms of virtual algebraic properties of $\Gamma$, that is, properties considered up to finite index.

\subsubsection{Reduction result}
The first main idea in the proof of Theorem~\ref{main_theorem} is the following. Let $\Omega \subset \mathbb{R}^{2,2}$ be as in Theorem \ref{main_theorem}. If $\Omega \neq \mathbb{R}^{2,2}$, then it is foliated by parallel \emph{isotropic planes} in $\mathbb{R}^{2,2}$. This fact is established through the more general Proposition~\ref{mainprop} below.

Fix once and for all a Euclidean norm $|\cdot|$ on $\mathbb{R}^d$. We say that $g\in \mathrm{SL}_d(\mathbb{R})$ \emph{does not contract} a subspace $F\subset\mathbb{R}^d$ with respect to $|\cdot|$ if $|g\cdot x|\ge |x|$ for all $x\in F$.

\begin{propb}\label{mainprop}
Let~$\Omega$ be a domain in~$\mathbb{R}^{d}$, different from~$\mathbb{R}^d$, and let~$\mathsf{H}\le \SL_d(\mathbb{R})\ltimes \mathbb{R}^{d}$ be a group acting properly and cocompactly on~$\Omega$. Let us denote by~$L(\mathsf{H})$ the linear part of~$\mathsf{H}$. Assume there exists~$1 \leq p \leq d-1$ and a closed~$L(\mathsf{H})$-invariant  subset~$\mathcal{F}$ of~$ \operatorname{Gr}_{p}(\mathbb{R}^d)$, such that:
\begin{enumerate}
    \item for all~$g \in L(\mathsf{H})$, there exists an element of~$\mathcal{F}$ which is not contracted by~$g$;
    \item If~$x,y \in \mathcal{F}$ are distinct, then~$x+y = \mathbb{R}^d$.
\end{enumerate}
Then there exists~$F$ in~$\mathcal{F}$ that is~$L(\mathsf{H})$-invariant, and~$\Omega$ is foliated by affine subspaces parallel to~$F$.
\end{propb}

In this proposition, the existence of an element in~$\operatorname{Gr}_{p}(\mathbb{R}^d)$ which is non-contracted by~$L(\mathsf{H})$ suggests that~$n - p  \geq \disc(L(\mathsf{H}))$, where~$\disc(L(\mathsf{H}))$ is the disompacity of~$L(\mathsf{H})$ in the sense of Carrière \cite{carriere1989autour}. In Carrière's paper, the assumption of \emph{discompacity 1} is necessary. Proposition~\ref{mainprop} says that, by adding a transversality assumption, we can relax this condition and get an intermediate structural result for groups of potentially higher dicsompacity. It generalizes a result of Tholozan \cite[Proposition~2.4]{tholozan2015completude}, where the statement is proved in the case where~$L(\mathsf{H})$ is contained in~$\mathrm{U}(n,1)$ and~$\mathcal{F}$ is the set of a complex hyperplanes. 

In our case, the set of isotropic planes in~$\mathbb{R}^{2,2}$ has two connected components, each of which is an orbit under the identity component~$\SO_0(2,2)$; moreover, the elements of each orbit are pairwise \emph{transverse} (see Proposition \ref{isotropic_plane}), meaning that if $x,y$ are two distinct isotropic planes in the orbit, then $x+y=\mathbb{R}^4$, i.e. they satisfy the second condition of Proposition~\ref{mainprop}. Let us briefly explain this well known fact. Write~$\mathbb{R}^{2,2}=\mathbb{R}^2\oplus \mathbb{R}^2$, endowed with the nondegenerate quadratic form of signature~$(2,2)$ given by~$g\oplus(-g)$, where~$g$ is the standard Euclidean metric. The graph of any linear isometry~$A\in O(2)$ is a totally isotropic plane in~$\mathbb{R}^{2,2}$. One can show that the set of isotropic planes in~$\mathbb{R}^{2,2}$ is exactly~$O(2)$; in particular, it has two connected components. The key observation is that if~$A,B\in \SO(2)$ are distinct elements, then their graphs intersect only at~$0\in \mathbb{R}^{2,2}$ and so they are transverse. Indeed, their intersection coincides with the set of fixed vectors of~$AB^{-1}$, and a nontrivial element of~$\SO(2)$ has no nonzero fixed vectors. This property no longer holds in~$\SO(d)$ for~$d>2$, and it is precisely at this point that the signature~$(2,2)$ plays a crucial role. 

Nevertheless, Proposition~\ref{mainprop} applies in other settings, including cases that deserve further investigation. For instance, when~$d = 8$ and~$L(\mathsf{H})$ is contained in~$\SO(4,\mathbb{C})$, or for any~$d$ when~$L(\mathsf{H})$ is contained in an irreducible simple rank-one Lie subgroup of~$\SL_d(\mathbb{R})$. A more general Lie-theoretic corollary of Proposition \ref{mainprop}, containing these two examples, is stated in Corollary \ref{maincor}. In another direction, an asymptotic version of Proposition~\ref{mainprop} is part of a work in progress and applies to some cases where~$L(\mathsf{H})$ is an~$\{\alpha_p\}$-Anosov subgroup of~$\SL_d(\mathbb{R})$ in the sense of \cite{labourie2006anosov, guichard2012anosov}.

Having established this, Proposition~\ref{mainprop} applies in our setting and yields a foliation of any domain~$\Omega \subset \mathbb{R}^{2,2}$ as in Theorem~\ref{main_theorem} by parallel isotropic planes. The linear part of~$\Gamma$ stabilizes an isotropic plane in~$\mathbb{R}^{2,2}$, so that, up to conjugacy, $\Gamma$ is contained in the group $\GL_2^+(\mathbb{R})\ltimes N$, where~$\GL_2^+(\mathbb{R})$ denotes the identity component of~$\GL_2(\mathbb{R})$ and~$N$ is a step-two nilpotent group (see Lemma~\ref{lemma: GL_2_on_N}). This produces two natural projections:
\begin{enumerate}
    \item $p:\GL_2^+(\mathbb{R})\ltimes N \to \GL_2^+(\mathbb{R})$, the natural projection onto the~$\GL_2^+(\mathbb{R})$ factor;
    \item $q:\GL_2^+(\mathbb{R})\ltimes N \to \GL_2^+(\mathbb{R})\ltimes \mathbb{R}^2$, the projection modulo the isotropic foliation. If~$P_0$ denotes the linear part of the isotropic foliation, then~$\Omega$ fibers over a domain~$\widehat{\Omega}\subset \mathbb{R}^2 \cong \mathbb{R}^{2,2}/P_0$, which is preserved by~$q(\Gamma)$.
\end{enumerate}

In the language of foliated geometric structures, the quotient~$M=\Omega/\Gamma$ carries a transversal~$(\GL_2(\mathbb{R})\ltimes\mathbb{R}^2,\mathbb{R}^2)$-foliation; for background on foliated geometric structures, see \cite{blumenthal,epstein1983transversely,carriere1984flots,thurston2022geometry}.

\subsubsection{Quotient geometry}

Despite the last reduction, the group~$\GL_2^+(\mathbb{R})\ltimes N$ still has discompacity~$2$, and therefore Carrière's Theorem~\cite{carriere1989autour} cannot be applied directly. A natural strategy is then to study the projection~$p(\Gamma)$ onto the~$\GL_2^+(\mathbb{R})$ factor. To this end, we observe that the~$N$ factor belongs to the class of \emph{homothety Lie groups} (see Definition~\ref{homot_group} and Lemma~\ref{N_homothety}). This class includes, for instance, abelian Lie groups, Heisenberg groups, or more generally \emph{Carnot groups}, which are higher-step nilpotent generalizations of Heisenberg groups. This observation leads us to establish the second main result of the paper, which goes beyond the context of Theorem~\ref{main_theorem}.

\begin{thmx}\label{maintheorem2}
Let $G$ be a homothety Lie group, and let $G_\theta = R \ltimes_\theta G$ be a semidirect product, where $R$ is a linear Lie group and $\theta$ commutes with a nontrivial homothety of $G$. Let $\pi \colon G_\theta \to R$ be the natural projection.  
Let $\Gamma \le G_\theta$ be a discrete subgroup, and let $H = \overline{\pi(\Gamma)}$. Set~$\Gamma_{nd} := \Gamma \cap \pi^{-1}(H^\circ)$.  
Then $\Gamma_{nd}$ admits a nilpotent syndetic hull $S$ in $G_\theta$ with the property that
$\overline{\pi(\Gamma_{nd})} = \overline{\pi(S)} = H^\circ$.
\end{thmx}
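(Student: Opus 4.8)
The plan is to exploit the contracting dynamics supplied by the homothety to force nilpotency, and then to invoke the Malcev--Fried--Goldman theory of syndetic hulls. First I would record the extension of the homothety to the total group. Writing elements of $G_\theta = R \ltimes_\theta G$ as pairs $(r,g)$, the hypothesis that a nontrivial homothety $h$ of $G$ commutes with $\theta$ means that $\hat h(r,g) := (r, h(g))$ is an automorphism of $G_\theta$ which fixes $R$ pointwise and contracts the fibre $G$ (recall that a homothety Lie group is simply connected and nilpotent, and that $h^n(g)\to e$ for every $g$). One also checks $\hat h(W)\subseteq W$ for a box neighbourhood $W=U\times W_G$ with $W_G$ an $h$-stable ball. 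I would also dispose at once of the easy half of the projection identity: since $H^\circ$ is open in $H=\overline{\pi(\Gamma)}$ and $\pi(\Gamma)$ is dense in $H$, its trace $\pi(\Gamma_{nd})=\pi(\Gamma)\cap H^\circ$ is dense in $H^\circ$, that is $\overline{\pi(\Gamma_{nd})}=H^\circ$.

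The core of the argument is to show that $H^\circ$ is nilpotent and that $\Gamma_{nd}$ is virtually nilpotent; this is in fact forced by the conclusion, since a discrete subgroup that is cocompact in a nilpotent Lie group is virtually nilpotent, and it is exactly here that the homothety is indispensable. Fix a Zassenhaus neighbourhood $W$ of the identity in $G_\theta$, so that every discrete subgroup $\Delta$ has $\langle \Delta\cap W\rangle$ nilpotent of class bounded by $\dim G_\theta$. Since $\hat h$ is an automorphism, $\langle \hat h^n(\Gamma_{nd})\cap W\rangle=\hat h^n\big(\langle \Gamma_{nd}\cap \hat h^{-n}(W)\rangle\big)$ is nilpotent of bounded class, hence so is $\langle \Gamma_{nd}\cap \hat h^{-n}(W)\rangle$. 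As $n\to\infty$ the sets $\hat h^{-n}(W)$ increase and exhaust, in the fibre direction, all of $G$ while keeping the $R$-part inside the fixed neighbourhood $U\subseteq H^\circ$; their increasing union therefore shows that $\Gamma_U:=\langle \gamma\in\Gamma_{nd}:\pi(\gamma)\in U\rangle$ is nilpotent (class $\le \dim G_\theta$). Because $\pi(\Gamma_U)=\langle \pi(\Gamma_{nd})\cap U\rangle$ is dense in the connected group $H^\circ$, passing to closures (the closure of a nilpotent subgroup is nilpotent) yields that $H^\circ$ is nilpotent. To upgrade $\Gamma_U$ to a \emph{finite-index} subgroup of $\Gamma_{nd}$, note that $\Gamma_U$ already contains the translational kernel $K=\Gamma_{nd}\cap G$, so $[\Gamma_{nd}:\Gamma_U]=[\pi(\Gamma_{nd}):\pi(\Gamma_U)]$; I would bound this index by a geometry-of-numbers (Minkowski) argument in exponential coordinates on the now simply connected nilpotent group $H^\circ$, using that a dense subgroup meets any symmetric convex neighbourhood in a set spanning a finite-index subgroup.

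With $\Gamma_{nd}$ virtually nilpotent and discrete in $G_\theta$, and contained in the connected solvable group $\tilde G := H^\circ\ltimes_\theta G$, I would build the hull by the classical route: pass to a finite-index nilpotent subgroup, form its Malcev hull inside $\tilde G$ in the sense of Fried--Goldman, and enlarge to a nilpotent syndetic hull $S$ of all of $\Gamma_{nd}$ (a syndetic hull of a finite-index subgroup extends to one of the whole group). By construction $S\subseteq\tilde G$, so $\pi(S)\subseteq H^\circ$ is a connected subgroup containing the dense subgroup $\pi(\Gamma_{nd})$; hence $\overline{\pi(S)}=H^\circ$, which together with the first paragraph gives the stated identity $\overline{\pi(\Gamma_{nd})}=\overline{\pi(S)}=H^\circ$. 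This is precisely the step that generalizes the explicit hulls of Carri\`ere and Dal'bo, the novelty being that $G$ is allowed to be an arbitrary homothety group rather than a vector group.

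The main obstacle is the second paragraph: the tension between the discreteness of $\Gamma_{nd}$ in $G_\theta$ and the \emph{non}-discreteness of its projection. One cannot feed $\Gamma_{nd}$ directly into the Zassenhaus lemma, and the contracting flow only captures elements with small projection, so the delicate point is to certify that these already account for a finite-index subgroup; this rests on density in a connected nilpotent group together with the Minkowski-type counting, and requires care when $\Gamma_{nd}$ fails to be finitely generated (the kernel $\Gamma_{nd}=\ker(\Gamma\to H/H^\circ)$ of a finitely generated group need not be finitely generated), where one must instead argue through the closure $\overline{\Gamma_{nd}}$ and its identity component. A secondary technical point, ensuring the hull is nilpotent rather than merely solvable and compatible with $\pi$, is handled by the Malcev/nilshadow construction inside $\tilde G$.
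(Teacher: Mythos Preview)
Your core mechanism---extend the homothety to $\hat h\in\Aut(G_\theta)$ fixing $R$, then combine its contraction on $G$ with a Zassenhaus neighbourhood---is exactly the one the paper uses, and your derivations of $\overline{\pi(\Gamma_{nd})}=H^\circ$ and $\overline{\pi(S)}=H^\circ$ are the same as well. The increasing-union argument showing that $\Gamma_U=\langle\gamma\in\Gamma_{nd}:\pi(\gamma)\in U\rangle$ is nilpotent of bounded class is correct.

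The genuine gap is your passage from $\Gamma_U$ to $\Gamma_{nd}$. The proposed ``geometry-of-numbers (Minkowski)'' justification is not the right tool: at best it would treat finitely generated dense subgroups of $H^\circ$, and your fallback for the non-finitely-generated case (``argue through $\overline{\Gamma_{nd}}$ and its identity component'') cannot work, since $\Gamma_{nd}$ is discrete and hence equal to its own closure. The fact you actually need is that for \emph{any} subgroup $\Lambda$ of a Lie group, the trace $\Lambda_0:=\Lambda\cap\overline{\Lambda}^\circ$ is generated by $\Lambda_0\cap V'$ for every identity neighbourhood $V'$ (this is Lemma~\ref{Lemma1:to filling} in the paper). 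Applied to $\Lambda=\pi(\Gamma)$ it gives $\pi(\Gamma_{nd})=\langle\pi(\Gamma_{nd})\cap U\rangle=\pi(\Gamma_U)$, whence $\Gamma_U=\Gamma_{nd}$ outright---no finite-index loss, no finite-generation hypothesis. With this correction your route goes through.

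The paper deploys the same ingredients in a slightly different order: rather than show $\Gamma_U=\Gamma_{nd}$ globally, it proves that \emph{every finitely generated} $\Gamma_0\le\Gamma_{nd}$ is nilpotent (use the lemma above on $\pi(\Gamma_0)$ to rewrite the generators so their $R$-parts lie in $V_1$, then apply a high power of the homothety to push the whole generating set into a strong Zassenhaus neighbourhood). It then invokes linearity of $G_\theta$ (established via an explicit faithful representation) and the Carri\`ere--Dal'bo fact that a locally nilpotent discrete linear group is nilpotent. Now $\Gamma_{nd}$ is nilpotent and discrete, hence polycyclic and in particular finitely generated, so the same rewriting-plus-homothety step applied to $\Gamma_{nd}$ itself places (an $\hat h^n$-conjugate of) it inside a strong Zassenhaus neighbourhood, and the \emph{strong} Zassenhaus lemma directly furnishes the nilpotent syndetic hull---no separate Malcev or Fried--Goldman construction is needed.
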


A \emph{syndetic hull} of~$\Gamma_{nd}$ is a closed connected subgroup containing~$\Gamma_{nd}$ as a uniform lattice. Theorem~\ref{maintheorem2} is applied at several points in the proof of Theorem~\ref{main_theorem}. In particular, it implies that $\overline{p(\Gamma)}^\circ$ is a nilpotent subgroup of $\GL_2^+(\R)$. This observation plays a key role when the group $p(\Gamma)$ is not discrete; see, for instance, Lemma \ref{lemma: p(gamma)}.

Theorem~\ref{maintheorem2} generalizes a theorem of Carrière--Dal'bo \cite{carriere1989generalisations}, who proved an analogous result for the affine group~$\GL_d(\mathbb{R})\ltimes\mathbb{R}^d$. Another special case was proved recently in \cite[Theorem~1.4]{hanounah_topology}. We believe that Theorem~\ref{maintheorem2} is of independent interest and may be applied in different contexts.

Having established this, we proceed with the proof of Theorem~\ref{main_theorem} according to the \emph{cohomological dimension} of the discrete abelian group~$\Gamma\cap\Ker(q)$. Since~$\Gamma\cap\Ker(q)$ preserves each leaf of the isotropic foliation and acts freely and properly discontinuously on it, its cohomological dimension is at most~$2$. We use results that ensure completeness under algebraic hypotheses. The first is due to Fried--Goldman--Hirsch \cite{Nilpotent_complete}, which states that if~$\Gamma$ is nilpotent and the affine structure admits a parallel volume form, then the affine structure is complete. Another result of Goldman--Hirsch \cite{goldman1986affine} treats the solvable case. The difficult part is to reach a situation where the latter result applies; this requires, in particular, unimodularity arguments and a careful analysis of syndetic hulls provided by Theorem~\ref{maintheorem2}. These arguments are developed in Sections \ref{sec5}-\ref{sec6}.

\subsection{Organization of the paper}
In Section \ref{sec2}, we collect some preliminary results on closed flat affine manifolds. In Section \ref{sec3}, we prove Proposition \ref{mainprop}, which provides the reduction result. Section \ref{sec4} is devoted to the study of domains in the two-dimensional affine space that are invariant under particular one-parameter groups. In Section \ref{sec5}, we prove completeness in the case where $\Gamma\cap\Ker(q)$ is nontrivial, while Section \ref{sec6} treats the case where $\Gamma\cap\Ker(q)$ is trivial.  Theorem~\ref{maintheorem2} is proved in Section~\ref{sec7} and is part of the PhD thesis of the third author. The proof of Theorem \ref{maintheorem2} and can be read independently of Sections \ref{sec2}--\ref{sec6}. Finally, the appendix contains auxiliary results on the structure of certain abelian subgroups of $\GL_2^+(\R)\ltimes N$ that are used at various points in the paper.

\subsection{Acknowledgments}
We would like to thank Andrea Seppi for helpful comments on an earlier
version of this manuscript. 

\section{Preliminaries on affine geometry}\label{sec2}
We start by reviewing some basic definitions about affine geometry and collect the results we use throughout the paper. Most of the statements below are standard; we refer the reader to the survey \cite{danciger2020properactionsdiscretegroups} for a more detailed exposition on the topic.

\subsection{Affine manifolds}
An \emph{affine manifold} of dimension~$d$ is a manifold~$M$ endowed with an atlas of charts with values in~$\mathbb{R}^d$ whose transition maps are restrictions of elements of~$\Aff(\mathbb{R}^d)=\GL_d(\mathbb{R})\ltimes\mathbb{R}^d$, the group of affine transformations of~$\mathbb{R}^d$. An important result in the theory is that the data of an affine manifold~$M$ is equivalent to the data of a \emph{developing map}~$\dev:\widetilde{M}\to \R^d$, which is a local diffeomorphism equivariant with respect to the \emph{holonomy representation}~$\hol:\pi_1(M)\to\Aff(\mathbb{R}^d)$. That is, for any~$x\in\widetilde M$ and~$\gamma\in\pi_1(M)$,
$$
\dev(\gamma\cdot x)=\hol(\gamma)\,\dev(x).
$$
We call the image~$\hol(\pi_1(M))$ the \emph{affine holonomy}. The pair~$(\dev,\hol)$ is defined only up to the action of~$\Aff(\mathbb{R}^d)$, where~$\Aff(\mathbb{R}^d)$ acts by conjugation on the holonomy representation and by post-composition on the developing map.

An affine manifold is said to be \emph{complete} if~$\dev$ is a diffeomorphism onto~$\mathbb{R}^d$. In this case~$M$ is a quotient of~$\mathbb{R}^d$ by a discrete subgroup~$\Gamma\leq\Aff(\mathbb{R}^d)$ acting freely and properly discontinuously on~$\mathbb{R}^d$. A particularly nice class of affine manifold is given by \emph{Kleinian structures}. Following \cite{kulkarni2006uniformization}, we make the following definition.

\begin{defi}\label{def: Kleinian}
An affine structure on a manifold~$M$ is said to be \emph{Kleinian} if the developing map~$\dev: \widetilde{M} \to \mathbb{R}^d$ is a covering map onto its image and if~$\hol(\pi_1(M))$ acts freely, properly discontinuously, and cocompactly on~$\dev(\widetilde{M})$.
\end{defi}
We say that a domain $\Omega\subset \R^d$ is \emph{divisible} by a discrete subgroup $\Gamma\leq \Aff(\R^d)$ if $\Gamma$ acts freely, properly discontinuously, and cocompactly on $\Omega$; we then say that $\Omega$ is divided by $\Gamma$. The quotient $\Omega/\Gamma$ is a Kleinian affine manifold. Observe that if~$M=\Omega/\Gamma$ is a complete affine manifold, then~$\Omega=\mathbb{R}^d$. Indeed, in this case the developing map is the covering~$\dev: \widetilde{M} \to \Omega \subset \mathbb{R}^d$, and the holonomy representation~$\hol: \pi_1(M) \to \Aff(\mathbb{R}^d)$ is the one associated to this covering, so that~$\hol(\pi_1(M)) = \Gamma$. Completeness of~$M$ implies, in particular, that~$\dev$ is surjective, and hence~$\Omega = \mathbb{R}^d$.

\subsection{Algebraic and cohomological constraints}
We recall some algebraic obstructions for the affine holonomy of an affine manifold admitting a parallel volume form, i.e., an affine manifold whose holonomy lies in~$\SL_d(\mathbb{R})\ltimes\mathbb{R}^d$. The first result states that the affine holonomy is irreducible.

\begin{theorem}\cite[Thm.\ p.\ 182 and Cor.\ 2.5]{goldman1986affine}\label{thm_Goldman_Hirsch_rep}
Let~$M$ be a closed affine manifold endowed with a parallel volume form. Then the affine holonomy does not preserve any proper algebraic subset of~$\mathbb{R}^d$. In particular, it does not preserve any proper affine subspace of~$\mathbb{R}^d$.
\end{theorem}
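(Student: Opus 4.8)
The plan is to prove the statement about algebraic subsets; the assertion about affine subspaces is then immediate, since an affine subspace is in particular a proper algebraic subset. Throughout I work with the holonomy group $\Gamma=\hol(\pi_1(M))\le\SL_d(\R)\ltimes\R^d$, the developing map $\dev\colon\widetilde M\to\R^d$, and the parallel volume form, normalized so that $\omega:=\dev^*(dx_1\wedge\cdots\wedge dx_d)$ descends to a volume form on $M$ of total mass $\operatorname{vol}(M)>0$; this descent is exactly the hypothesis that $L(\Gamma)\le\SL_d(\R)$. I assume toward a contradiction that $\Gamma$ preserves a proper nonempty algebraic subset, and I may freely replace $M$ by a finite cover (hence $\Gamma$ by a finite-index subgroup, which still preserves the set), in order to fix irreducible components and connectedness of the linear holonomy.

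The first step is to reduce such an invariant set to a rigid object. Let $I(V)$ be the vanishing ideal of $V\subsetneq\R^d$ and let $n\ge1$ be the least degree of a nonzero element. The space $U$ of elements of $I(V)$ of degree $\le n$ is finite-dimensional and $\Gamma$-invariant for the affine action $(\gamma\cdot P)=P\circ\hol(\gamma)^{-1}$, and every nonzero element of $U$ has degree exactly $n$; passing to top homogeneous parts produces a nonzero $L(\Gamma)$-invariant subspace of degree-$n$ forms. When $n=1$ this is precisely a $\Gamma$-invariant proper \emph{affine subspace} $W=\{x:P(x)=0\ \forall P\in U\}$, which contains $V$ and which I regard as the heart of the matter; for $n\ge2$ one obtains instead an invariant subspace of $\operatorname{Sym}^n(\R^d)^*$, handled by the same mechanism on the associated symmetric-power flat bundle.

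The engine is the incompatibility between compactness with parallel volume and a radiant (or transversely radiant) structure. In the fixed-point case, recentering at the fixed point makes $\Gamma$ linear and $M$ radiant: the Euler field $E(x)=x$ is holonomy-invariant and descends to $X$ on $M$ with $\nabla X=\operatorname{id}$, so $\operatorname{div}_\omega X\equiv d$ and $d\cdot\operatorname{vol}(M)=\int_M\operatorname{div}_\omega X\,\omega=\int_M\mathcal L_X\omega=\int_M d(\iota_X\omega)=0$ by Stokes, a contradiction. For a general invariant proper affine subspace $W$ with direction $W_0$ of dimension $k$, I pass to the quotient affine space $Q=\R^d/W_0$, on which $\Gamma$ fixes the image of $W$; after recentering, $\Gamma$ acts on $Q\cong\R^c$ ($c=d-k$) through a linear representation $\rho$ with Euler field $E$. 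Writing $\pi\colon\R^d\to Q$ for the projection, the transverse volume $dF:=\dev^*\pi^*(\operatorname{vol}_Q)$ is a parallel $c$-form on $\widetilde M$, semi-invariant under the character $b(\gamma)=\det\rho(\gamma)$, and the Cartan formula $\operatorname{vol}_Q=\tfrac1c\,d(\iota_E\operatorname{vol}_Q)$ yields a primitive $\sigma$ with $d\sigma=c\,dF$. Thus $dF$ is a nonzero parallel form valued in the flat line bundle $L_b$ attached to $b$, whose class in the twisted de Rham cohomology $H^c(M;L_b)$ vanishes.

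The contradiction is then supplied by the radiance-obstruction principle of Goldman--Hirsch, that on a closed affine manifold with parallel volume a nonzero parallel (twisted) form is never exact: its class pairs nontrivially, through Poincaré duality and the parallel volume, against a complementary class. Concretely one wants a closed $k$-form $\eta$ valued in $L_{b^{-1}}$ with $\int_M dF\wedge\eta=\int_M\omega\ne0$. I expect this to be the main obstacle, and the reason the theorem genuinely exceeds the fixed-point case: when the character $b$ is nontrivial (the non-unimodular transverse action), no \emph{parallel} complementary form exists, because the relevant line in $\Lambda^{k}(\R^d)^*$ carries the character $b$ rather than the required $b^{-1}$, so the dual class must be built by the radiance-obstruction machinery rather than by a one-line Stokes argument. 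Granting this principle, the exactness of $dF$ contradicts it in the affine-subspace case, and the same argument on the symmetric-power bundles $\operatorname{Sym}^n(\R^d)^*$ disposes of the degrees $n\ge2$, completing the reduction from arbitrary proper algebraic subsets.
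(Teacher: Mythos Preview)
The paper does not prove this theorem at all: it is quoted from Goldman--Hirsch \cite{goldman1986affine} as a background result in the preliminaries, with no argument supplied. So there is nothing in the present paper to compare your attempt against. That said, a few remarks on your sketch are in order.

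Your treatment of the fixed-point case is correct and standard: a closed radiant affine manifold cannot carry a parallel volume form, by the divergence computation you give. The reduction from an arbitrary invariant proper algebraic set to a finite-dimensional $\Gamma$-invariant space of polynomials of minimal degree is also fine.

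The genuine gap is exactly where you locate it. For an invariant proper affine subspace $W$ with nontrivial transverse character $b$, you produce an exact parallel $c$-form $dF$ with values in $L_b$ and then appeal to a ``radiance-obstruction principle of Goldman--Hirsch'' asserting that nonzero parallel twisted forms are never exact. But that principle \emph{is} the substance of the theorem you are trying to prove; invoking it here is circular. The actual Goldman--Hirsch argument proceeds by showing that the Zariski closure $A(\Gamma)$ of the affine holonomy acts transitively on $\mathbb{R}^d$, from which the nonexistence of invariant proper algebraic sets is an immediate corollary; the transitivity is obtained via the radiance obstruction class in $H^1(M;\mathbb{R}^d_{\mathrm{Ad}})$ and a Poincar\'e-duality pairing against the parallel volume, and this is where the real work lies. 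Your write-up gestures at this mechanism but does not carry it out.

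A second, smaller gap: the passage from $n=1$ to $n\ge 2$ (``the same argument on the symmetric-power bundles'') is not a one-line transfer. One needs to explain what plays the role of the Euler field and the transverse volume on $\operatorname{Sym}^n$, or, more cleanly, follow Goldman--Hirsch and deduce the full statement from transitivity of $A(\Gamma)$ rather than arguing degree by degree.
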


The next results deal with the \emph{cohomological dimension} of groups. For a torsion-free group~$\Gamma$, we denote its integral cohomological dimension by~$\cd (\Gamma)$. We record below the basic facts that will be used throughout the paper and refer the reader to \cite[VIII.2]{Brown} for further details.
\begin{enumerate}
    \item Let~$\Gamma$ be a torsion-free group acting properly discontinuously and freely on a contractible manifold~$X$. Then~$\cd(\Gamma) \le \dim(X)$, with equality if and only if the action is cocompact; see \cite[VIII. Proposition 8.1]{Brown}.
    \item If~$1 \to \Gamma' \to \Gamma \to \Gamma'' \to 1$ is a short exact sequence of torsion-free groups, then
   $$
    \cd(\Gamma) \le \cd(\Gamma') + \cd(\Gamma''),
    $$
    see \cite[VIII. Proposition 2.4]{Brown}. 
\end{enumerate}
Recall that, according to Selberg's Lemma~\cite{Selberg}, any finitely generated subgroup of a linear group is virtually torsion-free. In the situations that concern us, we may therefore assume without loss of generality that the discrete linear groups under consideration are torsion-free.

We now recall the following result, which will be used in the sequel.
\begin{theorem}[{\cite[Cor.\ 2.11]{GH_cohomology}}]\label{thm_vcd_gamma}
Let~$\Omega \subset \mathbb{R}^d$ be an open set divided by a discrete subgroup~$\Gamma \le \SL_d(\mathbb{R}) \ltimes \mathbb{R}^d$. Then~$\cd(\Gamma) \ge d$.
\end{theorem}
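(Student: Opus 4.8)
The plan is to prove $H_d(\Gamma;\mathbb{R})\neq 0$, which forces $\cd(\Gamma)\ge d$ since $\cd\ge\mathrm{hd}$ and rational homology in degree $d$ detects homological dimension. By Selberg's Lemma I first pass to a torsion-free finite-index subgroup; as cohomological dimension is a commensurability invariant for torsion-free groups, this costs nothing. Because $L(\Gamma)\subseteq\SL_d(\mathbb{R})$, which is connected, every element of $\Gamma$ is orientation-preserving and fixes the standard volume form $\omega$ on $\mathbb{R}^d$. Hence $M:=\Omega/\Gamma$ is a \emph{closed oriented} $d$-manifold and $\omega$ descends to a volume form $\bar\omega$ on $M$ with $\int_M\bar\omega=\Vol(M)>0$; in particular $[\bar\omega]\neq 0$ in $H^d(M;\mathbb{R})$ and $[M]\neq 0$ in $H_d(M;\mathbb{Z})$. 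The whole difficulty is to transfer this nonvanishing from $M$ to the group $\Gamma$.

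Since $\Gamma$ acts freely and properly discontinuously on $\Omega$, the Borel construction yields a fibration $\Omega\to E\Gamma\times_\Gamma\Omega\simeq M\xrightarrow{\ c\ }B\Gamma$, where $c$ classifies the covering $\Omega\to M$ with deck group $\Gamma$. First I would run the homology Leray--Serre spectral sequence $E^2_{p,q}=H_p(\Gamma;H_q(\Omega))\Rightarrow H_{p+q}(M)$. The crucial structural input is that $\Omega$ is a \emph{noncompact} connected $d$-manifold (an open subset of $\mathbb{R}^d$), so $H_q(\Omega)=0$ for all $q\ge d$, and therefore $E^2_{p,q}=0$ for $q\ge d$. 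Consequently the fundamental class $[M]$ lives in the filtration in columns $p\ge 1$, and its surviving top piece is identified by the edge homomorphism with $c_*[M]\in H_d(\Gamma;\mathbb{Z})$. The theorem thus reduces to the single assertion
\[
c_*[M]\neq 0\quad\text{in }H_d(\Gamma;\mathbb{Z}).
\]

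That this is exactly where the $\SL_d$-hypothesis enters is clarified by the Hopf example $M=\mathbb{S}^1\times\mathbb{S}^{d-1}$: there $\Omega=\mathbb{R}^d\setminus\{0\}$, $B\Gamma=\mathbb{S}^1$, and $c_*[M]=0$, consistent with $\cd(\langle\lambda\,\id\rangle)=1<d$; this collapse is possible precisely because $\lambda\,\id\notin\SL_d(\mathbb{R})$ and $\Omega$ admits no invariant volume form. In our setting I would instead use the parallel volume form to detect $c_*[M]$. Since $\omega$ is invariant under all of $\Aff_0:=\SL_d(\mathbb{R})\ltimes\mathbb{R}^d$ and is \emph{parallel} for the flat affine connection, its class $[\bar\omega]$ is horizontal and is pulled back through the developing map from the $\Aff_0$-invariant form $\omega$; this is the mechanism of the Goldman--Hirsch radiance obstruction, which produces a class $\alpha\in H^d(\Gamma;\mathbb{R})$ with $c^*\alpha=[\bar\omega]$. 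Granting this, $\langle\alpha,c_*[M]\rangle=\langle c^*\alpha,[M]\rangle=\langle[\bar\omega],[M]\rangle=\Vol(M)\neq 0$, so $c_*[M]\neq 0$ and $H_d(\Gamma;\mathbb{R})\neq 0$, giving $\cd(\Gamma)\ge d$.

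The hard part will be the step producing $\alpha$, i.e.\ realizing the parallel volume class as a pullback $c^*\alpha$ from $B\Gamma$, equivalently showing that $[\bar\omega]$ is \emph{aspherical}. This fails without parallel volume — the Hopf example shows the class need not even persist — so the argument must feed in both the flatness of the affine connection and the inclusion $L(\Gamma)\subseteq\SL_d(\mathbb{R})$ in an essential way. Making this precise (for instance through the continuous cohomology of $\Aff_0$ and a van Est / radiance-obstruction computation for parallel forms on affine manifolds, in the spirit of the Goldman--Hirsch results underlying Theorem~\ref{thm_Goldman_Hirsch_rep}) is the technical heart of the statement, and is exactly the point where merely knowing $H_d(M)\neq 0$ is not enough.
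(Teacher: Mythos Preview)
The paper does not prove this statement at all: it is quoted verbatim as \cite[Cor.~2.11]{GH_cohomology} (Goldman--Hirsch) and used as a black box throughout. So there is no ``paper's own proof'' to compare against; your outline is, in fact, a sketch of the original Goldman--Hirsch argument.

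Your framework is the correct one, and you have correctly isolated the one nontrivial step: producing $\alpha\in H^d(\Gamma;\mathbb{R})$ with $c^*\alpha=[\bar\omega]$. What you are missing is not a vague ``van Est / continuous cohomology'' input but a specific identity. The affine holonomy determines a flat $\mathbb{R}^d$-bundle $E\to M$ together with a class $r_M\in H^1(M;E)$, the \emph{radiance obstruction} (the obstruction to the affine holonomy being conjugate into $\GL_d(\mathbb{R})$; concretely, it is represented by the translational cocycle). Since $L(\Gamma)\le\SL_d(\mathbb{R})$, the top exterior power $\Lambda^d E$ is the trivial line bundle, so the $d$-fold cup product $r_M^{\,d}\in H^d(M;\Lambda^d E)\cong H^d(M;\mathbb{R})$ is defined; the Goldman--Hirsch computation is precisely that $r_M^{\,d}=d!\,[\bar\omega]$. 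Because $r_M$ is a characteristic class of the flat bundle, it is pulled back along $c$ from a class $r\in H^1(\Gamma;\mathbb{R}^d_{L})$, and hence $[\bar\omega]=\tfrac{1}{d!}\,c^*(r^{\,d})$, which gives your $\alpha=\tfrac{1}{d!}r^{\,d}$. Pairing with $[M]$ then finishes exactly as you wrote. So your sketch is right, but the spectral-sequence paragraph is ornamental---the whole argument is contained in the radiance-obstruction identity, which you gestured at but did not state.
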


For instance, this implies that a free group cannot divide an open set in~$\mathbb{R}^d$ when~$d \ge 2$, since the cohomological dimension of such group is~$1$. It is worth noting that if~$\Omega$ were assumed to be contractible, then the conclusion would be immediate, as one would have~$\cd(\Gamma)=d$. However, the theorem makes no such assumption on the open set, which makes the result useful later on.

\subsection{Completeness results}
As indicated in the introduction, Markus' conjecture states that a closed affine manifold with parallel volume form should be complete. Although the conjecture remains widely open, there are completeness results under additional assumptions on the affine holonomy. The following result ensures completeness in the case where the affine holonomy is nilpotent.

\begin{theorem}[\cite{Nilpotent_complete}]\label{nilpotent_complete}
Let~$M$ be a closed affine manifold with parallel volume form. If the affine holonomy is nilpotent, then~$M$ is complete.
\end{theorem}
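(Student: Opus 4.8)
The plan is to realise the affine holonomy inside a connected nilpotent Lie group acting simply transitively on $\mathbb{R}^d$, and then to deduce that the developing map is an equivariant diffeomorphism. The parallel-volume hypothesis is used at exactly one essential place: to prevent the holonomy from fixing a point, i.e.\ to exclude \emph{radiant} behaviour of Hopf type, which is precisely the obstruction to completeness when the $\SL_d$-condition is dropped. As a first (routine) reduction, since $M$ is closed, $\pi_1(M)$ and hence $\Gamma=\hol(\pi_1(M))\le\SL_d(\mathbb{R})\ltimes\mathbb{R}^d$ is finitely generated, so by Selberg's Lemma $\Gamma$ is virtually torsion-free. Passing to the corresponding finite cover $M'$ of $M$ replaces $\Gamma$ by a finite-index torsion-free subgroup and changes nothing about completeness, since $M'$ and $M$ share the same developing map and $M$ is complete if and only if $M'$ is. Thus I may assume $\Gamma$ is a finitely generated torsion-free nilpotent group.

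Next I would construct a \emph{syndetic hull} of $\Gamma$: by Malcev theory such a $\Gamma$ is a cocompact lattice in a unique connected, simply connected nilpotent Lie group, and by taking the real algebraic (Zariski) hull of $\Gamma$ inside $\SL_d(\mathbb{R})\ltimes\mathbb{R}^d$ one realises this Malcev completion concretely as a connected, simply connected nilpotent Lie subgroup $G\le\SL_d(\mathbb{R})\ltimes\mathbb{R}^d$ in which $\Gamma$ sits cocompactly. This is the nilpotent affine instance of the syndetic-hull constructions (compare Theorem~\ref{maintheorem2}). Two features are inherited: the linear part of $G$ still lies in $\SL_d(\mathbb{R})$, so $G$ preserves the parallel volume; and, by Theorem~\ref{thm_Goldman_Hirsch_rep} applied to $\Gamma$, neither $\Gamma$ nor $G$ preserves any proper affine subspace of $\mathbb{R}^d$. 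Moreover $\cd(\Gamma)=\dim G$, since $\Gamma$ is a cocompact lattice in the simply connected nilpotent group $G$.

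The heart of the proof is to show that $G$ acts simply transitively on $\mathbb{R}^d$; equivalently $\dim G=d$, some orbit is open, and point-stabilisers are trivial. I would argue by induction on the nilpotency length of $G$. The key input is \emph{non-radiance}: the parallel-volume hypothesis forces the radiance obstruction to vanish, so (after conjugation) the translational part $G\cap(\{\Id\}\ltimes\mathbb{R}^d)$ is nontrivial and contains a central direction acting freely by translations. Quotienting $\mathbb{R}^d$ by this direction yields an affine space of dimension $d-1$ with an induced action of a shorter nilpotent, still unimodular group that again preserves no proper affine subspace, to which the inductive hypothesis applies; reassembling gives simple transitivity and $\dim G=d$. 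Completeness then follows formally: the orbit map $G\to\mathbb{R}^d$ is a diffeomorphism, so $\Gamma$ acts on $\mathbb{R}^d\cong G$ as a cocompact lattice, and the developing map $\dev\colon\widetilde M\to\mathbb{R}^d$ (a $\Gamma$-equivariant local diffeomorphism) descends to a local diffeomorphism $\overline{\dev}\colon M\to G/\Gamma$ between closed manifolds of the same dimension $d$, hence to a covering; as $\mathbb{R}^d$ is simply connected this covering is trivial and $\dev$ is a diffeomorphism onto $\mathbb{R}^d$.

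The main obstacle is the simple-transitivity step, and within it the production of the central translational direction that starts the induction. This is exactly where the $\SL_d(\mathbb{R})$-condition is indispensable: without unimodularity the group can be radiant (the Hopf manifold, with holonomy $\langle\lambda\,\Id\rangle$, is nilpotent and incomplete), so the argument must combine the vanishing of the radiance obstruction coming from the parallel volume with the irreducibility statement of Theorem~\ref{thm_Goldman_Hirsch_rep} to guarantee that no central subgroup fixes a point and that the inductive quotient inherits all the hypotheses. I expect the bookkeeping in this inductive reduction — verifying unimodularity and the absence of invariant affine subspaces at each stage — to be the most delicate part of the write-up.
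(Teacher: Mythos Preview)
The paper does not give a proof of this theorem: it is stated in the preliminaries with a citation to Fried--Goldman--Hirsch \cite{Nilpotent_complete} and then used as a black box. So there is no ``paper's own proof'' to compare against.

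Your outline is essentially the Fried--Goldman--Hirsch strategy: pass to the algebraic/Malcev hull $G$ of the nilpotent holonomy inside $\SL_d(\mathbb{R})\ltimes\mathbb{R}^d$, use irreducibility (Theorem~\ref{thm_Goldman_Hirsch_rep}) together with unimodularity to force $G$ to act simply transitively on $\mathbb{R}^d$, and then conclude. You correctly identify the simple-transitivity step as the crux.

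One genuine subtlety you gloss over: you assert that the Zariski/Malcev hull realises $\Gamma$ as a \emph{cocompact lattice} in a connected $G\le\SL_d(\mathbb{R})\ltimes\mathbb{R}^d$. This is not automatic, because the holonomy $\Gamma=\hol(\pi_1(M))$ need not be discrete in the ambient affine group, and the Zariski closure can have dimension strictly smaller than the Hirsch rank of $\Gamma$ (e.g.\ $\Gamma=\langle\mathrm{diag}(2,1/2),\mathrm{diag}(3,1/3)\rangle\cong\mathbb{Z}^2$ sits densely in a one-dimensional torus). The syndetic-hull theorems you invoke (Theorem~\ref{fact:filling_linear_alg}, Theorem~\ref{maintheorem2}) all assume $\Gamma$ discrete. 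Consequently your final step, descending $\dev$ to a map $M\to G/\Gamma$ between closed manifolds, is not justified as written. The standard fix bypasses discreteness entirely: once $G$ acts simply transitively, pull back a left-invariant Riemannian metric on $G\cong\mathbb{R}^d$ via $\dev$; this is $\pi_1(M)$-invariant, hence descends to the compact $M$ and is complete, so $\dev$ is a local isometry from a complete manifold and therefore a covering onto the simply connected $\mathbb{R}^d$.
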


Another algebraic condition on the affine holonomy implying completeness is the following:

\begin{theorem}\cite[Theorem 3.5]{goldman1986affine}\label{fact: cd=dim+solvable implies complete}
Let~$M$ be a closed affine manifold with parallel volume form. Assume that the affine holonomy is solvable with cohomological dimension equal to the dimension of~$M$. Then~$M$ is complete.
\end{theorem}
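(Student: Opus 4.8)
The plan is to realize the holonomy as a cocompact lattice in a connected solvable Lie group that acts transitively on $\mathbb{R}^d$, and then to close up the developing image by a compactness argument. Write $d=\dim M$ and let $\Gamma=\hol(\pi_1(M))\le \SL_d(\mathbb{R})\ltimes\mathbb{R}^d$; by Selberg's lemma we may assume $\Gamma$ is torsion-free. First I would attach to $\Gamma$ a \emph{syndetic hull}: a connected solvable Lie subgroup $G\le \SL_d(\mathbb{R})\ltimes\mathbb{R}^d$ containing $\Gamma$ as a cocompact lattice. Such a hull exists by the construction of Fried and Goldman for solvable subgroups of the affine group, in the spirit of \cite{carriere1989generalisations} and of Theorem~\ref{maintheorem2}, and it can be arranged so that $G$ has the same Zariski closure as $\Gamma$. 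Consequently every $G$-invariant algebraic subset of $\mathbb{R}^d$ is $\Gamma$-invariant, so by the irreducibility statement of Theorem~\ref{thm_Goldman_Hirsch_rep}, \emph{$G$ preserves no proper algebraic subset of $\mathbb{R}^d$}.

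Next I would pin down the dimension of $G$. Let $K\le G$ be a maximal compact subgroup. Since $\Gamma$ is torsion-free it acts freely, properly discontinuously and cocompactly by left translations on the contractible space $G/K\cong\mathbb{R}^{\dim G-\dim K}$ (freeness holds because an element of $\Gamma$ conjugated into the compact group $K$ generates a relatively compact discrete, hence finite, hence trivial, subgroup). Therefore $\cd(\Gamma)=\dim G-\dim K$, and the hypothesis $\cd(\Gamma)=\dim M=d$ yields
\[
\dim(G/K)=d .
\]
This is the only place where the cohomological-dimension hypothesis enters.

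The heart of the argument, and the step I expect to be the main obstacle, is to prove that $G$ acts \emph{transitively} on $\mathbb{R}^d$. The locus $Z=\{x\in\mathbb{R}^d:\dim(G\cdot x)<d\}$ is cut out by the vanishing of the maximal minors of the infinitesimal action $\g\to T_x\mathbb{R}^d$, hence is a $G$-invariant algebraic subset; by the previous step it is either empty or all of $\mathbb{R}^d$. The delicate point is to exclude $Z=\mathbb{R}^d$, i.e.\ to produce at least one open orbit; this is exactly where the parallel volume form is indispensable, since a volume-preserving solvable action all of whose orbits have positive codimension cannot satisfy the no-invariant-subset property (a careful study of the invariant distribution tangent to the orbits, together with complexification of the algebraic hull, rules this out). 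Granting one open orbit $\mathcal{O}$, its complement is a proper $G$-invariant algebraic subset, hence empty, so $\mathcal{O}=\mathbb{R}^d$ and $G$ is transitive. Writing $\mathbb{R}^d=G/S$ with $S=\Stab_G(o)$ for a base point $o$, the fibration $S\hookrightarrow G\to G/S\cong\mathbb{R}^d$ has contractible base, so $S$ is homotopy equivalent to $G$ and a maximal compact subgroup of $S$ has dimension $\dim K$; on the other hand $\dim S=\dim G-d=\dim K$ by the dimension count, whence $S$ equals its maximal compact subgroup, that is, \emph{$S$ is compact}.

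Finally I would deduce completeness. Since $S$ is compact, the projection $G\to G/S=\mathbb{R}^d$ is proper, so the cocompact lattice $\Gamma\le G$ acts properly discontinuously on $\mathbb{R}^d$; the action is free because $\Gamma$ is torsion-free while the point stabilizers are conjugates of the compact group $S$, and it is cocompact because $\Gamma\backslash G$ is. Thus $N:=\Gamma\backslash\mathbb{R}^d$ is a closed, connected, affine manifold. The developing image $\Omega=\dev(\widetilde M)\subseteq\mathbb{R}^d$ is a nonempty, open, $\Gamma$-invariant set, and the induced map $\overline{\dev}\colon M\to N$ has image $\Gamma\backslash\Omega$, which is therefore compact. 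Being simultaneously open and compact (hence closed) in the connected manifold $N$, it is all of $N$, so $\Omega=\mathbb{R}^d$ and $\dev$ is onto. Then $\overline{\dev}\colon M\to N$ is a local diffeomorphism between closed manifolds, hence a covering map; lifting to universal covers shows that $\dev\colon\widetilde M\to\mathbb{R}^d$ is a covering, hence a diffeomorphism. Therefore $M$ is complete.
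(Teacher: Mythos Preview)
The paper does not prove this theorem; it is quoted without proof from \cite[Theorem~3.5]{goldman1986affine} and used as a black box in Sections~\ref{sec5}--\ref{sec6}. There is therefore no ``paper's own proof'' to compare against. That said, your outline is essentially the original Goldman--Hirsch strategy: pass to a solvable syndetic hull $G$ with the same Zariski closure as $\Gamma$ (this is Theorem~\ref{fact:filling_linear_alg} here), use $\cd(\Gamma)=d$ to get $\dim(G/K)=d$, establish transitivity of $G$ on $\mathbb{R}^d$ with compact point stabilizer, and then close up the developing image. The dimension count and the final completeness step are correctly argued.

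The one genuine gap, which you yourself flag, is the exclusion of $Z=\mathbb{R}^d$. Your sentence about ``a careful study of the invariant distribution tangent to the orbits, together with complexification of the algebraic hull'' is a placeholder, not an argument. In Goldman--Hirsch this step is handled by working with the \emph{Zariski closure} $A$ of $\Gamma$ (equivalently of $G$) rather than with the connected Lie group $G$ directly: for an algebraic group action on affine space, every orbit is Zariski-locally-closed and orbits of minimal dimension are Zariski-closed; such a closed orbit is then a $\Gamma$-invariant algebraic subset, which by Theorem~\ref{thm_Goldman_Hirsch_rep} must be all of $\mathbb{R}^d$, forcing transitivity of $A$. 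One then descends from transitivity of $A$ to transitivity of $G$ (or of its identity component) using the dimension equality $\dim(G/K)=d$ together with the structure of solvable algebraic groups. The parallel volume form enters only through Theorem~\ref{thm_Goldman_Hirsch_rep}, not through a separate analytic argument. Filling in this algebraic-geometric step would turn your sketch into a complete proof along the lines of the cited reference.
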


\subsection{Syndetic hulls}
We finish this preliminaries section by recalling the notion of a \emph{syndetic hull}, an important tool in the proof of our main result.
 
\begin{defi}
Let~$G$ be a Lie group and let~$\Gamma$ be a discrete subgroup. A \emph{syndetic hull} of~$\Gamma$ in~$G$ is a closed connected Lie subgroup~$S\leq G$ containing~$\Gamma$ as a uniform lattice, that is, the quotient $\Gamma\backslash S$ is compact.
\end{defi}
Observe that if~$S$ is a syndetic hull of a torsion-free group~$\Gamma$, then~$\dim(S)\ge \cd(\Gamma)$. Indeed, if~$K$ denotes the maximal compact subgroup of~$S$, then~$\Gamma$ acts properly, cocompactly and freely on the contractible space~$S/K$, and therefore~$\cd(\Gamma)= \dim(S/K)=\dim(S)-\dim(K)$.
We recall the following existence theorem for syndetic hulls.

\begin{theorem}\cite[Section 1.6]{Crysta_FGH}\label{fact:filling_linear_alg}
Let~$G$ be a linear algebraic Lie group and let~$\Gamma\leq G$ be a virtually solvable discrete subgroup. Then virtually $\Gamma$ admits a solvable syndetic hull~$S\le G$. Moreover, the Zariski closure of~$S$ coincides with the Zariski closure of~$\Gamma$ in~$G$.
\end{theorem}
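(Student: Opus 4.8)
The plan is to reduce to a clean algebraic picture and then build the hull by an induction that fills one cyclic direction at a time. First, by Selberg's Lemma $\Gamma$ has a torsion-free subgroup of finite index, and since the conclusion is only claimed virtually I may assume from the start that $\Gamma$ is torsion-free and solvable. Let $A:=\overline{\Gamma}^{Z}$ be its Zariski closure in $G$; as $\Gamma$ is solvable, $A$ is a solvable real algebraic group. After passing to a further finite-index subgroup I may assume that $A$ is Zariski-connected, and replacing $\Gamma$ by $\Gamma\cap A(\mathbb{R})^{\circ}$ (finite index, since $A(\mathbb{R})/A(\mathbb{R})^{\circ}$ is finite) I may assume $\Gamma\le L:=A(\mathbb{R})^{\circ}$, a connected solvable Lie group with $\overline{\Gamma}^{Z}=A$. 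The goal becomes to produce a closed connected subgroup $S\le L$ in which $\Gamma$ is a uniform lattice.

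Before the construction, I would dispatch the \emph{moreover} clause, which is essentially automatic. Any $S$ built inside $L\le A(\mathbb{R})$ satisfies $A=\overline{\Gamma}^{Z}\le\overline{S}^{Z}\le\overline{A(\mathbb{R})^{\circ}}^{Z}$; since $A$ is Zariski-connected its real identity component is Zariski-dense (a connected solvable $\mathbb{R}$-group is a rational variety, hence has dense real points), so $\overline{A(\mathbb{R})^{\circ}}^{Z}=A$ and therefore $\overline{S}^{Z}=A=\overline{\Gamma}^{Z}$. Likewise solvability of $S$ is free, being inherited from $L$. Thus the entire content is the construction of a connected $S\le L$ containing $\Gamma$ cocompactly.

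For the construction I would use that a discrete solvable linear group is polycyclic (Mal'cev), so $\Gamma$ has a subnormal series $\Gamma=\Gamma_{0}\rhd\Gamma_{1}\rhd\cdots\rhd\Gamma_{k}=\{1\}$ with $\Gamma_{i}/\Gamma_{i+1}\cong\mathbb{Z}$, and I would build hulls $S_{i}$ of $\Gamma_{i}$ by descending induction. Given a connected hull $S_{i+1}$ of $\Gamma_{i+1}$ and a lift $\gamma$ of a generator of $\Gamma_{i}/\Gamma_{i+1}$, I would use the real Jordan decomposition in $A$ to write $\gamma=\gamma_{h}\gamma_{e}$ with commuting factors, where $\gamma_{h}$ lies in the $\mathbb{R}$-split-and-unipotent part (so that a genuine one-parameter subgroup $\gamma_{h}^{\mathbb{R}}\cong\mathbb{R}$ passes through it) and $\gamma_{e}$ lies in the maximal anisotropic subtorus. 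Here torsion-freeness is used crucially: an infinite-order element cannot be purely elliptic (an infinite subgroup of a compact torus is never discrete), so $\gamma_{h}\neq e$, and then $\gamma^{\mathbb{Z}}$ sits as a uniform lattice in the connected abelian group $\gamma_{h}^{\mathbb{R}}\cdot\overline{\langle\gamma_{e}\rangle}$ — the noncompact $\mathbb{R}$-direction makes the quotient (a $\overline{\langle\gamma_{e}\rangle}$-bundle over a circle) compact, even when $\langle\gamma_{e}\rangle$ is dense in the subtorus $\overline{\langle\gamma_{e}\rangle}$. Setting $S_{i}$ to be the closed connected subgroup generated by $S_{i+1}$ and this abelian group, and checking by the standard tower argument that cocompactness passes from the pair $(\Gamma_{i+1},S_{i+1})$ together with $\Gamma_{i}/\Gamma_{i+1}\hookrightarrow S_{i}/S_{i+1}$ up to $(\Gamma_{i},S_{i})$, yields the hull $S:=S_{0}$.

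The hard part is the inductive extension step. I must ensure that the generated set is a genuine closed connected solvable subgroup rather than a dense non-closed one, and — more delicately — that $\Gamma_{i}$ remains a \emph{uniform lattice}, not merely a closed subgroup, at each stage. The crux is the elliptic part: when $\langle\gamma_{e}\rangle$ is dense in a positive-dimensional compact torus one is forced to enlarge $S$ by that entire subtorus in order to keep $S$ connected, and one must then verify that these compact directions, together with the $\mathbb{R}$-fillings of the hyperbolic/unipotent directions, still admit $\Gamma$ as a cocompact lattice; this is precisely the Mal'cev-type rigidity that makes the bundle-over-circle picture above work. Establishing that $\gamma_{h}^{\mathbb{R}}$ and $\overline{\langle\gamma_{e}\rangle}$ normalize $S_{i+1}$ (so that $S_{i}$ is a group and the tower argument applies) is where the algebraicity of $A$ and a closedness argument enter. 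A secondary point to pin down is the polycyclicity, hence finite generation, of the discrete solvable group $\Gamma$ that allows the induction to begin.
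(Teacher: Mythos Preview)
The paper does not prove this statement: it is quoted as a known fact with a citation to \cite{Crysta_FGH}, Section~1.6, and no argument is given in the paper itself. So there is nothing to compare your proposal against here.

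For what it is worth, your sketch follows the standard route taken in that reference: pass to a finite-index torsion-free solvable subgroup, replace the ambient group by the identity component of the real points of the Zariski closure, use Mal'cev's theorem that a discrete solvable linear group is polycyclic to get a finite subnormal cyclic series, and then fill each cyclic step by the one-parameter subgroup coming from the real Jordan decomposition together with the closure of the elliptic part. You have also correctly located the genuine technical content (closedness and normalization at each step, and checking that cocompactness is preserved under the tower), which is exactly what the original argument spends its effort on. The ``moreover'' clause is indeed automatic once $S$ is built inside the Zariski closure of $\Gamma$, as you observe.
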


In simply connected nilpotent groups~$G$, the existence of a syndetic hull is due to Malcev \cite{Malcev,raghunathan1972discrete}, and it is unique; this is called the \emph{Malcev closure}. Beyond the nilpotent and solvable cases, there is another interesting setting that guarantees the existence of a syndetic hull, provided by Theorem~\ref{maintheorem2}. For this, we introduce the following definition.

\begin{defi}[Homothety Lie group]\label{homot_group}
Let~$G$ be a real Lie group and~$\Psi \in \Aut(G)$ an automorphism. We say that~$\Psi$ is a \emph{homothety} if the differential~$d\Psi$ or~$d\Psi^{-1}$ at the identity is diagonalizable over~$\mathbb{R}$ and all its eigenvalues have absolute value greater than~$1$.  
A Lie group that admits a homothety is called a \emph{homothety Lie group}.
\end{defi}
The reader interested in the proof of Theorem~\ref{maintheorem2} may refer directly to Section~\ref{sec7}.

\section{Reduction of the flat affine geometry}\label{sec3}

The goal of this section is to prove the following proposition:

\begin{prop}\label{prop_feuilletage_totalement_isotrope} 
Let~$\Omega \subset \mathbb{R}^{2,2}$ be a proper domain, divided by a subgroup~$\Gamma \leq \SO_0(2,2) \ltimes \mathbb{R}^{2,2}$. Then the linear part of~$\Gamma$ preserves a~$2$-dimensional totally isotropic plane~$P_0 \subset \mathbb{R}^{2,2}$, and~$\Omega$ is foliated by affine translates of~$P_0$.
\end{prop}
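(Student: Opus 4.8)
The plan is to deduce Proposition~\ref{prop_feuilletage_totalement_isotrope} from Proposition~\ref{mainprop}, applied with $d=4$, with the semisimple group $G=\SO_0(2,2)\le\SL_4(\mathbb{R})$, with $p=2$, and with $\mathcal{F}$ one of the two connected components of the space of totally isotropic $2$-planes of $\mathbb{R}^{2,2}$. Proposition~\ref{mainprop} then provides an $L(\Gamma)$-invariant plane $H'$ in the closure of $\mathcal{F}$ together with a foliation of $\Omega$ by affine translates of $H'$; since (as I check below) $\overline{\mathcal{F}}=\mathcal{F}$ consists of totally isotropic planes, setting $P_0=H'$ gives exactly the desired conclusion. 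So everything reduces to verifying hypotheses (1) and (2).

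For hypothesis~(1) I use the description of isotropic planes recalled in the introduction and in Proposition~\ref{isotropic_plane}. Identifying $\mathbb{R}^{2,2}$ with $\mathrm{Mat}_2(\mathbb{R})$ equipped with $Q=\det$, the totally isotropic $2$-planes form two families,
$$\mathcal{F}_\alpha=\bigl\{\,\{X:Xv=0\}:[v]\in\mathbb{P}^1\,\bigr\},\qquad \mathcal{F}_\beta=\bigl\{\,\{X:\operatorname{im}X\subseteq\ell\}:\ell\in\mathbb{P}^1\,\bigr\},$$
each an $\SO_0(2,2)$-orbit homeomorphic to $\mathbb{P}^1$, hence compact, so that $\overline{\mathcal{F}}=\mathcal{F}$ for either choice. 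Two distinct kernel-planes $\{Xv=0\}$ and $\{Xw=0\}$ (with $[v]\neq[w]$) meet only at $0$, and likewise for two image-planes; thus each family consists of pairwise transverse planes, which is hypothesis~(1).

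Hypothesis~(2) is the heart of the matter. Through the isogeny $\SO_0(2,2)\approx\SL_2(\mathbb{R})\times\SL_2(\mathbb{R})$ acting by $X\mapsto AXB^{t}$, a Cartan representative has the form $A=\operatorname{diag}(e^{s},e^{-s})$, $B=\operatorname{diag}(e^{t},e^{-t})$ with $s,t\ge 0$, and acts on the entries $(a,b,c,d)$ of $X=\begin{pmatrix}a&b\\ c&d\end{pmatrix}$ diagonally with weights $(s+t,\,s-t,\,t-s,\,-s-t)$. Reading off the two coordinate planes in the families above, the image-plane $\{c=d=0\}\in\mathcal{F}_\beta$ is not contracted by the projection $(s,t)$ exactly when $s\ge t$, while the kernel-plane $\{b=d=0\}\in\mathcal{F}_\alpha$ is not contracted exactly when $t\ge s$; a short computation shows that no other member of either family relaxes these inequalities. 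Hence hypothesis~(2) holds for a suitable plane as soon as the Cartan projections $A^+(L(\Gamma))$ all lie on one side of the wall $\{s=t\}$ of $\mathfrak{a}^+$: if $s\ge t$ throughout, take $\mathcal{F}=\mathcal{F}_\beta$ and $H=\{c=d=0\}$, and symmetrically otherwise.

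The main obstacle is therefore to establish this one-sidedness, and this is exactly where the properness and cocompactness of the action on the proper domain $\Omega$ must be used; note that a Zariski-dense $L(\Gamma)$ would have limit cone of nonempty interior and hence straddle the wall, so this step must genuinely exclude such behaviour. I would argue by contradiction: if $A^+(L(\Gamma))$ met both open half-cones, one could select sequences in $\Gamma$ whose Cartan projections escape to infinity on the two sides, contracting image-planes along one sequence and kernel-planes along the other. Applying these sequences to a point of the nonempty closed invariant set $\mathbb{R}^{2,2}\setminus\Omega$ and exploiting the transversality of the two families, the resulting accumulation would cover affine translates of planes from both $\mathcal{F}_\alpha$ and $\mathcal{F}_\beta$ and fill $\mathbb{R}^{2,2}$, contradicting that $\Omega\neq\mathbb{R}^{2,2}$. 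Controlling the translation parts and the interaction of the two contracting directions is the delicate point. Once one-sidedness is in hand, hypothesis~(2) holds, Proposition~\ref{mainprop} applies, and $P_0=H'$ is the desired $L(\Gamma)$-invariant totally isotropic $2$-plane whose affine translates foliate $\Omega$.
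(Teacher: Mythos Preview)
Your reduction to Proposition~\ref{mainprop} and your verification of hypothesis~(1) match the paper's exactly; the divergence is entirely at hypothesis~(2). The paper does \emph{not} argue any one-sidedness. Working with $q=dx\,dz+dy\,dt$ it sets $A^{+}=\{\operatorname{diag}(\lambda,\mu,\lambda^{-1},\mu^{-1}):\lambda,\mu\ge 1\}$, observes that the isotropic plane $H=\operatorname{Span}(e_1,e_2)$ carries the weights $(\log\lambda,\log\mu)$ and is therefore uncontracted by \emph{every} element of this $A^{+}$, and concludes that hypothesis~(2) holds for the orbit $\mathcal{F}\ni H$ with no input from $\Gamma$ or $\Omega$ whatsoever. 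In your $(s,t)$-parameters the paper is in effect declaring the positive chamber to be $\{s\ge|t|\}$ rather than $\{s,t\ge 0\}$, so that your image-plane $\{c=d=0\}$ (weights $s\pm t$) is uncontracted throughout; from that point of view your entire one-sidedness discussion looks like an artefact of having chosen the wrong chamber.

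Your instinct that the difficulty cannot simply be dissolved by a choice of coordinates is, however, correct. The restricted root system of $\SO_0(2,2)$ is $D_2$ with roots $\pm(\alpha\pm\beta)$, and the Weyl group is generated by $(\alpha,\beta)\mapsto(\beta,\alpha)$ and $(\alpha,\beta)\mapsto(-\beta,-\alpha)$; a genuine closed Weyl chamber is $\{\alpha\ge|\beta|\}$, which is exactly your $\{s,t\ge 0\}$, and the $W$-orbit of $(\alpha,\beta)=(1,-1)$ never meets $\{\alpha\ge 0,\ \beta\ge 0\}$. So the paper's $A^{+}$ is not the exponential of a closed Weyl chamber and $\SO_0(2,2)\neq KA^{+}K$ for that choice; the missing reflection $(\alpha,\beta)\mapsto(\alpha,-\beta)$ lives only in $\O(2,2)\setminus\SO(2,2)$ and is precisely the outer symmetry exchanging the two families $\mathcal{F}_\alpha$, $\mathcal{F}_\beta$. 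Thus your diagnosis---that no single isotropic plane is uncontracted by the whole genuine chamber, so hypothesis~(2) needs an extra ingredient---is on target. Your own sketch does not supply that ingredient either: you never control the translation parts of the sequences, and since any plane of $\mathcal{F}_\alpha$ meets any plane of $\mathcal{F}_\beta$ in a line (they are \emph{never} transverse), having boundary pieces from both families does not force $\mathbb{R}^{2,2}\setminus\Omega=\mathbb{R}^{2,2}$. As written, both the paper's shortcut and your completion leave a gap at exactly this step.
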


\subsection{General reduction result} 
Fix once and for all the Euclidean norm $|\cdot|$ on $\mathbb{R}^d$.
We say that an element $g$ of $\SL_d(\mathbb{R})$ \emph{does not contract} $F$ with respect to $|\cdot|$ if $|g \cdot x| \ge |x|$ for all $g \in X$ and all $x \in F$. In what follows, we will simply say that $g$ \emph{does not contract} $F$.
To prove Proposition~\ref{prop_feuilletage_totalement_isotrope}, we first establish the stronger result given by Proposition \ref{mainprop}, inspired by arguments from \cite{tholozan2015completude}. To fix notation, let $p \in \llbracket 1, d \rrbracket$, and consider $\operatorname{Gr}_p(\mathbb{R}^d)$, the Grassmannian of $p$-dimensional subspaces, note that the linear action of~$\mathrm{SL}(d,\mathbb{R})$ induces a natural action on~$\operatorname{Gr}_p(\mathbb{R}^d)$.

The proof of Proposition~\ref{mainprop} requires some preparation. Throughout this section, we work under the assumptions of Proposition~\ref{mainprop}. Write~$\Omega = \mathsf{H} \cdot \mathcal{C}$ with~$\mathcal{C} \subset \Omega$ compact. Let~$\varepsilon > 0$ be such that~$\mathcal{C} + B_\varepsilon \subset \Omega$, where~$B_\varepsilon$ denotes the Euclidean ball of radius~$\varepsilon$ centered at~$0$, defined with respect to the Euclidean norm~$\lvert \cdot \rvert$.

\begin{lemma}\label{prop_variétés_discompacité_supérieure}
Let~$y \in \partial \Omega$. Then there exist sequences~$(g_n)_{n\in\mathbb{N}} \subset \mathsf{H}$,~$(x_n)_{n\in\mathbb{N}} \subset \mathcal{C}$, and~$(F_n)_{n\in\mathbb{N}} \subset \mathcal{F}$ such that, up to extraction of a subsequence:
\begin{enumerate}
    \item~$y_n:=g_n \cdot x_n$ converges to~$y$ as~$n \to \infty$;
    \item for every~$n$, one has~$(F_n+g_n\cdot x_n) \cap B(g_n\cdot x,\varepsilon) \subset \Omega$;
    \item~$F_n$ converges in~$\operatorname{Gr}_p(\mathbb{R}^d)$ to some element~$F^y$ in the closure of~$\mathcal{F}$, with~$(F^y+y) \cap B(y,\varepsilon)\subset \partial \Omega$, where~$B(y,\varepsilon)$ is Euclidean ball of radius~$\varepsilon$ centered at~$y$.
\end{enumerate}
\end{lemma}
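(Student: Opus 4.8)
The plan is to build the three sequences from the cocompactness of the action and the Cartan decomposition of the linear parts, to get property (2) from a non-contraction estimate, and to obtain the boundary statement in (3) by passing to the limit and then invoking proper discontinuity.

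\textbf{Construction of the sequences.} Since $y\in\partial\Omega\subset\overline\Omega$ and $\Omega$ is open, I first pick $z_n\in\Omega$ with $z_n\to y$. Cocompactness ($\Omega=\Gamma\cdot\mathcal C$) lets me write $z_n=g_n\cdot x_n$ with $g_n\in\Gamma$ and $x_n\in\mathcal C$, which already gives (1) with $y_n:=g_n\cdot x_n=z_n\to y$. Writing the linear part of $g_n$ as $L_n\in L(\Gamma)\le G$, I decompose $L_n=k_na_nk_n'$ along $G=KA^+K$, so that $a_n\in A^+(L(\Gamma))$, and I set $H_n:=k_n\cdot H\in\mathcal F$. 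As $K$ and $\mathcal C$ are compact, after extraction I may assume $x_n\to x_\infty\in\mathcal C$ and $k_n\to k_\infty\in K$; then $H_n\to H_y:=k_\infty\cdot H\in\mathcal F\subset\overline{\mathcal F}$, which is the convergence asserted in (3).

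\textbf{Non-contraction and property (2).} Hypothesis (2) of Proposition~\ref{mainprop} gives that $A^+(L(\Gamma))$ does not contract $H$, so in particular $a_n\cdot H=H$ and $|a_n\cdot w|\ge|w|$ for all $w\in H$. Setting $H_n':=(k_n')^{-1}\cdot H\in\mathcal F$, I compute $L_n\cdot H_n'=k_na_n\cdot H=k_n\cdot H=H_n$, and for $w\in H_n'$, using that $k_n,k_n'$ preserve $|\cdot|$ and that $a_n$ does not contract $H$, I get $|L_n\cdot w|=|a_n\cdot(k_n'\cdot w)|\ge|k_n'\cdot w|=|w|$. Thus $L_n$ is non-contracting on $H_n'$, so $L_n^{-1}$ is non-expanding on $H_n$. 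Now, given $u\in H_n$ with $|u|<\varepsilon$, I put $w:=L_n^{-1}\cdot u\in H_n'$, so that $|w|\le|u|<\varepsilon$ and hence $x_n+w\in\mathcal C+B_\varepsilon\subset\Omega$; applying $g_n$ yields $g_n\cdot(x_n+w)=g_n\cdot x_n+L_n\cdot w=g_n\cdot x_n+u\in\Omega$. Letting $u$ range over $H_n\cap B_\varepsilon$ gives exactly $(H_n+g_n\cdot x_n)\cap B(g_n\cdot x_n,\varepsilon)\subset\Omega$, which is (2).

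\textbf{The boundary slice and the main obstacle (property (3)).} Passing to the limit in (2) shows the slice lies in $\overline\Omega$: for $u\in H_y$ with $|u|<\varepsilon$, I choose $u_n\in H_n$ with $u_n\to u$ (possible since $H_n\to H_y$ in $\operatorname{Gr}_p(\mathbb{R}^d)$), so $g_n\cdot x_n+u_n\in\Omega$ for large $n$ and converges to $y+u$. The genuinely delicate point—and the step I expect to be the main obstacle—is to rule out interior points, i.e.\ to upgrade $\overline\Omega$ to $\partial\Omega$; here proper discontinuity is essential. Arguing by contradiction, suppose $y+u\in\Omega$ for some such $u$, and set $q_n:=g_n\cdot x_n+u_n\in\Omega$. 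Then $q_n\to y+u\in\Omega$, while $g_n^{-1}\cdot q_n=x_n+L_n^{-1}\cdot u_n$ lies in the fixed compact set $\mathcal C+\overline{B_{\varepsilon'}}\subset\Omega$ for any $\varepsilon'$ with $|u|<\varepsilon'<\varepsilon$, again using $|L_n^{-1}\cdot u_n|\le|u_n|$. Choosing a compact $\mathcal K\subset\Omega$ containing both $\{q_n\}\cup\{y+u\}$ and $\mathcal C+\overline{B_{\varepsilon'}}$, I obtain $g_n^{-1}\cdot\mathcal K\cap\mathcal K\ne\emptyset$ for all large $n$. But $\{g_n\}$ must be infinite: a constant subsequence $g_n\equiv g$ would force $y=\lim g\cdot x_n\in g\cdot\mathcal C\subset\Omega$, contradicting $y\in\partial\Omega$. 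Hence infinitely many distinct $g_n^{-1}$ lie in the finite set $\{g\in\Gamma:\ g\cdot\mathcal K\cap\mathcal K\ne\emptyset\}$, contradicting proper discontinuity. Therefore $y+u\notin\Omega$, so $(H_y+y)\cap B(y,\varepsilon)\subset\overline\Omega\setminus\Omega=\partial\Omega$, establishing (3).
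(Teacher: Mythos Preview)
Your proof is correct and follows the paper's approach for (1) and (2): both construct the sequences via cocompactness, Cartan-decompose the linear parts $L(g_n)=k_na_nk_n'$, set $H_n:=k_n\cdot H$, and use the non-contraction hypothesis to obtain $(H_n+y_n)\cap B(y_n,\varepsilon)\subset\Omega$.

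The difference lies in (3). The paper, after observing $(H_y+y)\cap B(y,\varepsilon)\subset\overline\Omega$, simply writes ``since $y\in\partial\Omega$, the inclusion must in fact be $(H_y\cap B_\varepsilon)+y\subset\partial\Omega$'' and stops there. Taken literally this is not a complete justification: knowing that $y\in\partial\Omega$ and that the slice lies in $\overline\Omega$ does not by itself exclude that some other point $y+u$ of the slice lies in the open set $\Omega$. You supply the missing mechanism by invoking proper discontinuity: if $y+u\in\Omega$, then both $q_n=y_n+u_n$ and $g_n^{-1}q_n=x_n+L_n^{-1}u_n$ stay in a fixed compact subset of $\Omega$, forcing infinitely many distinct $g_n^{-1}$ to return to a compact, a contradiction. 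This is exactly the right ingredient, and your argument that $\{g_n\}$ cannot be finite (else $y\in g\cdot\mathcal C\subset\Omega$) is the correct way to close it. So your treatment of (3) is more careful than the paper's; what you flagged as ``the main obstacle'' is indeed where the work is, and your resolution is sound.
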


\begin{proof}
Since~$\Omega = \mathsf{H} \cdot \mathcal{C}$, there exist sequences~$(x_n)_{n\in \mathbb{N}} \subset \mathcal{C}$ and~$(g_n)_{n\in \mathbb{N}} \subset \mathsf{H}$ such that~$g_n \cdot x_n \to y$. This proves the first point. For all~$n \in \mathbb{N}$, we have
\begin{equation}\label{eq:lemma3.3-1}
    g_n \cdot (x_n + B_\varepsilon) = y_n + L(g_n) \cdot B_\varepsilon \subset \Omega.
\end{equation}
Let~$\mathcal{F}$ be the subset of~$\operatorname{Gr}_p(\mathbb{R}^d)$ appearing in the assumptions of Proposition~\ref{mainprop}.
For each~$n$, there exists~$F_n \in \mathcal{F}$ such that~$g_n$ does not contract~$F_n$, and set~$F_n := L(g_n) \cdot F_n$. By the non-contracting assumption, we know that~$L(g_n)\cdot B_{\varepsilon}$ contains~$F_n \cap B_{\varepsilon}$.

\smallskip

\noindent
It follows from \eqref{eq:lemma3.3-1} that~$\Omega$ contains~$y_n + (F_n \cap B_\varepsilon)$, establishing the second point. By compactness of~$\mathcal F$, we may assume (up to extracting a subsequence) that~$F_n$ converges in the Grassmannian to some~$F^y \in \mathcal F$. Then~$\overline{\Omega}$ contains~$y + (F^y \cap B_\varepsilon)$, and since~$y \in \partial \Omega$, the inclusion must in fact be
$$
    (F^y \cap B_\varepsilon) + y \subset \partial \Omega.
$$
This proves the third point and completes the proof.
\end{proof}
\begin{remark}\label{rmk_no_transversality}
    In the previous lemma, we do not use the transversality of the elements of~$\mathcal{F}$. In particular, the lemma asserts that each point of the boundary contains a small neighborhood of an affine subspace.
\end{remark}

The next step is to show the uniqueness of the subspace~$F^y$. Before doing so, we record the following elementary fact about transversality.

\begin{lemma}\label{transversal_intersection}
Let~$E,F\subset\mathbb{R}^d$ be linear subspaces with~$E+F=\mathbb{R}^d$.
Let~$E_n\to E$ in the Grassmannian and~$y_n\to y$ in~$\mathbb{R}^d$.
Fix~$\varepsilon>0$. Then for all sufficiently large~$n$ the sets
\[
(E_n+y_n)\cap B(y_n,\varepsilon)\qquad\text{and}\qquad (F+y)\cap B(y,\varepsilon)
\]
have nonempty intersection.
\end{lemma}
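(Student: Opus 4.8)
The plan is to observe that, since $E+F=\mathbb{R}^d$, nonemptiness of the intersection of the two affine subspaces is automatic and carries no information: two affine subspaces whose directions satisfy $E'+F=\mathbb{R}^d$ always meet, regardless of the translation vectors. The entire content of the lemma is therefore \emph{quantitative}, namely that one can find a common point lying within distance $\varepsilon$ of both $y_n$ and $y$. I would establish this by exhibiting an explicit point $v_n\in(E_n+y_n)\cap(F+y)$ and proving that $v_n\to y$; since $y_n\to y$ as well, this forces $v_n\in B(y_n,\varepsilon)\cap B(y,\varepsilon)$ for all large $n$.

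To construct $v_n$, I would first replace $F$ by a convenient complement of $E$. Writing $F=(E\cap F)\oplus C$ for some subspace $C\subseteq F$, one checks from $E+F=\mathbb{R}^d$ that in fact $E\oplus C=\mathbb{R}^d$ is a direct sum. Transversality (here, direct-sum complementarity to $C$) is an open condition in the Grassmannian, so since $E_n\to E$ we have $E_n\oplus C=\mathbb{R}^d$ for all large $n$; let $\pi_n$ denote the linear projection onto $C$ along $E_n$, and $\pi$ the projection onto $C$ along $E$. Continuity of the projection as a function of the subspace gives $\pi_n\to\pi$, and in particular the operators $\pi_n$ are uniformly bounded. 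Setting $w:=y-y_n$, I would define
\[
v_n:=y_n+(w-\pi_n w).
\]
Then $v_n-y_n=w-\pi_n w\in E_n$, so $v_n\in E_n+y_n$; and $v_n-y=-\pi_n w\in C\subseteq F$, so $v_n\in F+y$. Thus $v_n$ lies in the intersection of the two affine subspaces for every large $n$.

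It remains to bound distances. Since $v_n-y_n=(\mathrm{id}-\pi_n)w$ and $v_n-y=-\pi_n w$, both displacements are images of $w=y-y_n$ under the uniformly bounded operators $\mathrm{id}-\pi_n$ and $-\pi_n$; as $w\to 0$ this yields $\lvert v_n-y_n\rvert\to 0$ and $\lvert v_n-y\rvert\to 0$, so both quantities are eventually $<\varepsilon$. Hence $v_n$ lies in $(E_n+y_n)\cap B(y_n,\varepsilon)$ and in $(F+y)\cap B(y,\varepsilon)$ for all large $n$, which is exactly the claim. The only delicate point I anticipate is the \emph{uniform} boundedness of the projections $\pi_n$: a merely pointwise-in-$n$ control would not close the estimate. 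This is precisely what the openness of transversality provides, equivalently the continuity of the projection onto $C$ along $E'$ as $E'$ varies near $E$; everything else is a direct verification.
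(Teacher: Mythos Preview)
Your proof is correct. Both arguments rest on the same underlying fact (openness of transversality in the Grassmannian), but the execution differs. The paper normalizes $y=0$, considers the affine subspace $G_n=(E_n+y_n)\cap F$, argues that $G_n$ converges to the linear subspace $E\cap F$, and then takes $b_n$ to be the nearest point of $G_n$ to $y_n$. Your route is more algebraic: by splitting off a complement $C\subseteq F$ with $E\oplus C=\mathbb{R}^d$, you reduce to the continuity of the linear projection onto $C$ along a varying transverse subspace, and you write down the intersection point $v_n$ by an explicit formula. Your version has the advantage that the quantitative estimate is immediate from the uniform bound on $\|\pi_n\|$, whereas the paper's convergence of $G_n$ to $E\cap F$ (in an appropriate sense for affine subspaces) is asserted rather than spelled out. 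The paper's version, on the other hand, avoids introducing the auxiliary complement $C$ and stays closer to the geometric picture.
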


\begin{proof}
    Up to translation, we may assume that~$y = 0$. The property~$E + F = \mathbb{R}^d$ is open for~$E$, so eventually we have~$E_n + F = \mathbb{R}^d$. Hence~$(E_n + y_n) \cap F$ is a nonempty affine subspace~$G_n$ of~$\mathbb{R}^d$, converging to~$E \cap F$. The Hausdorff limit of~$G_n$ contains~$0$, and~$y_n \rightarrow 0$, so~$d(G_n, y_n) \rightarrow 0$ (where~$d(\cdot, \cdot)$ is the euclidean distance). Since~$G_n$ is closed and convex, there exists~$b_n \in G_n$ such that~$d( y_n , b_n) = d(y_n, G_n) \rightarrow 0$. Eventually~$b_n$ is clearly both in~$(E_n + y_n) \cap B(y_n, \varepsilon)$ and in~$F \cap B(0, \varepsilon)$.   
\end{proof}

We arrive at the following lemma.
\begin{lemma}\label{uniqueness_of_plane}
Let~$y \in \partial \Omega$, and let~$F^y$ be an element of the closure of~$\mathcal{F}$ as in Lemma~\ref{prop_variétés_discompacité_supérieure}. Then~$F^y$ is the unique element of~$\mathcal{F}$ with the property that there exists a neighborhood~$\mathcal{V}$ of~$y$ in~$\Omega$ such that~$(y + F^y) \cap \mathcal{V} \subset \partial \Omega$.
\end{lemma}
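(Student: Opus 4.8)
The plan is to take existence for granted---$H_y$ already has the stated property by Lemma~\ref{prop_variétés_discompacité_supérieure}(3), with $\mathcal{V} = B(y,\varepsilon)$---and to prove only uniqueness, by contradiction. So I would suppose there is a second subspace $H_y' \in \overline{\mathcal{F}}$ with $H_y' \neq H_y$ admitting a neighborhood $\mathcal{V}'$ of $y$ with $(y + H_y') \cap \mathcal{V}' \subset \partial\Omega$. The first thing to invoke is the transversality hypothesis~(1) of Proposition~\ref{mainprop}: since $\overline{\mathcal{F}}$ consists of pairwise transverse $p$-subspaces and $H_y \neq H_y'$, one has $H_y + H_y' = \mathbb{R}^d$. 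This transversality is precisely what will force $y + H_y'$ to meet the interior of $\Omega$ near $y$, contradicting the assumption.

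Next I would reintroduce the sequences produced by Lemma~\ref{prop_variétés_discompacité_supérieure} for this $y$: points $y_n = g_n \cdot x_n \in \Omega$ with $y_n \to y$, and planes $H_n \in \mathcal{F}$ with $H_n \to H_y$, satisfying the interior containment $(H_n + y_n) \cap B(y_n,\varepsilon) \subset \Omega$ for every $n$. I would then fix a radius $\delta > 0$ small enough that $\delta \le \varepsilon$ and $B(y,\delta) \subset \mathcal{V}'$; the role of $\delta$ is simultaneously to keep the intersection point inside the region where the $\Omega$-containment holds and to confine it to $\mathcal{V}'$.

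The key step is to apply Lemma~\ref{transversal_intersection} with $E = H_y$, $F = H_y'$, $E_n = H_n$, and the sequence $y_n \to y$, using radius $\delta$. For $n$ large this produces a point $b_n$ lying simultaneously in $(H_n + y_n) \cap B(y_n,\delta)$ and in $(H_y' + y) \cap B(y,\delta)$. The first membership gives $b_n \in (H_n + y_n) \cap B(y_n,\varepsilon) \subset \Omega$, while the second gives $b_n \in (y + H_y') \cap B(y,\delta) \subset (y + H_y') \cap \mathcal{V}' \subset \partial\Omega$. Since $\Omega$ is open, $\Omega \cap \partial\Omega = \emptyset$, so this is a contradiction, and the uniqueness of $H_y$ follows.

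I expect the whole argument to be short; the only genuine content is recognizing that transversality of $H_y$ and $H_y'$ is exactly the hypothesis needed to run Lemma~\ref{transversal_intersection}, thereby producing a point on $y + H_y'$ that is nonetheless forced into the interior $\Omega$. The mild technical point---rather than a real obstacle---is the bookkeeping with the two balls $B(y_n,\delta)$ and $B(y,\delta)$ and the choice of $\delta$ guaranteeing both the $\Omega$-containment (through $\delta \le \varepsilon$) and the confinement to $\mathcal{V}'$.
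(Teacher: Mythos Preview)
Your proposal is correct and follows essentially the same argument as the paper: assume a second $H' \in \overline{\mathcal{F}}$ with the stated property, use the transversality hypothesis of Proposition~\ref{mainprop} to apply Lemma~\ref{transversal_intersection} to the sequences $H_n \to H_y$ and $y_n \to y$ from Lemma~\ref{prop_variétés_discompacité_supérieure}, and derive a contradiction from a point lying in both $\Omega$ and $\partial\Omega$. The only cosmetic difference is that the paper shrinks $\varepsilon$ so that $B(y,\varepsilon) \subset \mathcal{V}$, whereas you introduce a separate $\delta \le \varepsilon$; both achieve the same bookkeeping.
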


\begin{proof}
Let~$(y_n)_{n \in \mathbb{N}}$,~$(F_n)_{n \in \mathbb{N}}$, and~$F^y$ be as in Lemma~\ref{prop_variétés_discompacité_supérieure}. By shrinking~$\varepsilon$ if necessary, we may assume that~$B(y, \varepsilon) \subset \mathcal{V}$.

We proceed by contradiction and assume the existence of a subspace~$F \in \mathcal{F}$, distinct from~$F^y$, that satisfies the hypothesis of the lemma. Then by assumption~$(2)$ of Proposition~\ref{mainprop}, one has~$F^y + F = \mathbb{R}^d$.

Since~$y_n \to y$ and~$F_n \to F^y$, and using Lemma~\ref{transversal_intersection}, we deduce that, for~$n$ large enough, the set~$(F_n + y_n) \cap B(y_n, \varepsilon)$ intersects~$(F + y) \cap B(y, \varepsilon)$.

However, by Lemma~\ref{prop_variétés_discompacité_supérieure}, we have~$(F_n + y_n) \cap B(y_n, \varepsilon) \subset \Omega$, while~$(F + y) \cap B(y, \varepsilon) \subset (F + y) \cap \mathcal{V} \subset \partial \Omega$, a contradiction.
\end{proof}

The next lemma goes a step further by showing that~$\partial \Omega$ contains an affine subspace.

\begin{lemma}\label{lem_H_inclus_partial_O}
Let~$y \in \partial \Omega$, and let~$F^y$ be as in Proposition~\ref{prop_variétés_discompacité_supérieure}. Then~$F^y + y \subset \partial \Omega$.
\end{lemma}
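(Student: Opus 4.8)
The plan is to upgrade the \emph{local} inclusion furnished by Lemma~\ref{prop_variétés_discompacité_supérieure} to the \emph{global} one by a connectedness argument along the affine subspace $H_y+y$. For every boundary point $z\in\partial\Omega$, Lemma~\ref{prop_variétés_discompacité_supérieure} attaches some $H_z\in\overline{\mathcal F}$ with $(z+H_z)\cap B(z,\varepsilon)\subset\partial\Omega$, and Lemma~\ref{uniqueness_of_plane} shows $H_z$ is the \emph{unique} element of $\overline{\mathcal F}$ enjoying this local property at $z$. I would then consider
$$
S := \{\, z \in H_y + y \ :\ z \in \partial\Omega \ \text{ and }\ H_z = H_y \,\},
$$
and prove that $S$ is a nonempty clopen subset of the connected affine subspace $H_y+y$, hence equal to it. Since every element of $S$ lies in $\partial\Omega$, the equality $S=H_y+y$ is exactly the assertion $H_y+y\subset\partial\Omega$. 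Note $y\in S$, so $S\neq\emptyset$.

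The crucial observation — the one that makes everything work — is that the radius $\varepsilon$ produced by Lemma~\ref{prop_variétés_discompacité_supérieure} is \emph{uniform}: it is the fixed constant with $\mathcal C+B_\varepsilon\subset\Omega$, and the lemma applies at \emph{every} boundary point with this same $\varepsilon$. Thus for $z\in S$ one has $(z+H_y)\cap B(z,\varepsilon)\subset\partial\Omega$, and I would use this to show $B(z,\varepsilon)\cap(H_y+y)\subset S$, i.e.\ that $S$ contains a ball of fixed radius $\varepsilon$ around each of its points.

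To establish this inclusion, fix $z\in S$ and $z'\in H_y+y$ with $|z'-z|<\varepsilon$. Since $z\in H_y+y$, the affine subspaces $H_y+z$ and $H_y+y$ coincide, so $z'\in(z+H_y)\cap B(z,\varepsilon)\subset\partial\Omega$; in particular $z'\in\partial\Omega$. Setting $\delta:=\varepsilon-|z'-z|>0$ we have $B(z',\delta)\subset B(z,\varepsilon)$ and $z'+H_y=z+H_y$, whence
$$
(z'+H_y)\cap B(z',\delta)\subset(z+H_y)\cap B(z,\varepsilon)\subset\partial\Omega.
$$
Hence $H_y$ satisfies at $z'$ the defining property of Lemma~\ref{uniqueness_of_plane}, and the uniqueness part of that lemma (applied at $z'$) forces $H_{z'}=H_y$, i.e.\ $z'\in S$. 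This shows $S$ is open in $H_y+y$, with the uniform radius $\varepsilon$.

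Finally, the same uniform radius forces closedness: if $z_k\in S$ converges to $z_\infty\in H_y+y$, then $|z_k-z_\infty|<\varepsilon$ for large $k$, and the previous step (with $z=z_k$, $z'=z_\infty$) gives $z_\infty\in S$. Therefore $S$ is a nonempty clopen subset of the connected set $H_y+y$, so $S=H_y+y$ and the proof is complete. The only genuinely delicate point is the closedness of $S$; it is precisely the uniformity of $\varepsilon$ in Lemma~\ref{prop_variétés_discompacité_supérieure} — rather than a point-dependent radius — that dissolves this obstacle, so that no separate transversality argument is needed to propagate the inclusion to the whole subspace.
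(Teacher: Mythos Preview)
Your proof is correct and follows essentially the same route as the paper's: a connectedness argument on $H_y+y$ exploiting the uniform radius~$\varepsilon$ from Lemma~\ref{prop_variétés_discompacité_supérieure} together with the uniqueness Lemma~\ref{uniqueness_of_plane}. The paper defines its clopen set~$\mathcal U$ by the condition $(H_y+y)\cap B(z,\varepsilon)\subset\partial\Omega$ rather than by $H_z=H_y$, but (via Lemmas~\ref{prop_variétés_discompacité_supérieure} and~\ref{uniqueness_of_plane}) these two conditions are equivalent, so your set~$S$ coincides with~$\mathcal U$ and the arguments match.
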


\begin{proof}
Let~$\mathcal{U}$ be the set of points~$z \in F^y + y$ such that~$(F^y + y) \cap B(z, \varepsilon) \subset \partial \Omega$. We will show that~$\mathcal{U}$ is both open and closed (and nonempty) in~$F^y + y$, so that~$\mathcal{U} = F^y + y$, which will complete the proof.

Clearly, the set~$\mathcal{U}$ is closed and contains~$y$. We now show that~$\mathcal{U}$ is also open. Let~$z \in \mathcal{U}$ and~$z' \in (F^y + y) \cap B(z, \varepsilon / 2)$. Then~$(F^y + y) \cap B(z', \varepsilon / 2) \subset \partial \Omega$. Thus, by Lemma~\ref{uniqueness_of_plane}, we have~$F^y = F^{z'}$. By the definition of~$F^{z'}$, this implies~$F^y \cap B(z', \varepsilon) = F^{z'} \cap B(z', \varepsilon) \subset \partial \Omega,$ hence~$z' \in \mathcal{U}$. Therefore~$\mathcal{U}$ is open, which completes the proof.
\end{proof}
We now have all the tools to prove Proposition~\ref{mainprop}.

\begin{proof}[Proof of Proposition~\ref{mainprop}]
First, we claim that two affine subspaces contained in~$\partial \Omega$ whose linear parts lie in~$\mathcal{F}$ must be parallel. By Lemma~\ref{lem_H_inclus_partial_O}, there exists at least one affine subspace contained in~$\partial \Omega$ whose linear part belongs to~$\mathcal{F}$.

Assume, for contradiction, that there exist two affine subspaces~$H$ and~$H'$ contained in~$\partial \Omega$ with distinct linear parts in~$\mathcal{F}$. Then~$H$ and~$H'$ are transverse, so they intersect at some point~$y \in \partial \Omega$, which contradicts Lemma~\ref{uniqueness_of_plane}. Therefore~$H$ and~$H'$ must have the same linear part; in other words~$H$ and~$H'$ are parallel.

Next, let~$H_0$ denote the common linear part of the hyperplanes contained in~$\partial \Omega$. Since~$\mathsf{H}$ preserves~$\Omega$, the group~$L(\mathsf{H})$ preserves~$H_0$. Let~$x \in \Omega$ and suppose that~$(H_0 + x)$ is not contained in~$\Omega$. Then there exists~$y \in (x + H_0) \cap \partial \Omega$, and by the previous argument~$(y + H_0) \subset \partial \Omega$. Hence~$x \in \partial \Omega$, a contradiction. This completes the proof.
\end{proof}

\subsection{General reduction when the linear part is contained in a semisimple Lie subgroup} The goal of this section is to apply the general reduction result of Proposition \ref{mainprop} to a case of interest, namely when the linear part of $\mathsf{H}$ is contained in a semisimple Lie subgroup $G$ of $\SL_d(\mathbb{R})$; see Corollary \ref{maincor} below. Indeed, in this case, the action of $G$ on the Grassmannians of $\mathbb{R}^d$ is well understood through the \emph{Cartan decomposition} of $G$. This notion is recalled in next subsection \label{sect_cartan}.

Our goal in Section \ref{sect_reduction_so} will be to apply Corollary \ref{maincor} to the case where $G = \SO_0(2,2)$.

\subsubsection{Cartan projection}\label{sect_cartan} Consider the subgroup $\SO(d)$ of $\SL_d(\mathbb{R})$ and the \emph{Cartan subspace} of~$\operatorname{Mat}(d, \mathbb{R})$ given by
\begin{equation*}
  \overline{\mathfrak{a}}^+ :=  \{\operatorname{diag}(\lambda_1, \dots, \lambda_d) \mid \lambda_1 \geq \dots \geq \lambda_d \ , \ \  \sum_{k = 1}^d \lambda_k = 0\}.
\end{equation*}
Then $\SL_d(\mathbb{R}) = \SO(d) \exp(\overline{\mathfrak{a}}^+) \SO(d)$. This is called the \emph{Cartan decomposition} of $\SL_d(\mathbb{R})$.

Given an element~$g \in \SL_d(\mathbb{R})$, we can thus write~$g = k \exp(X) k'$ according to this decomposition. The elements~$k,k' \in K$ are not unique, but the element~$X \in \overline{\mathfrak{a}}^+$ is. It is called the \emph{Cartan projection} of~$g$ and denoted by $\mu(g)$. We also consider the maps $\mu_1, \dots, \mu_d: \SL_d(\mathbb{R}) \rightarrow \mathbb{R}$, so that 
\begin{equation*}
    \mu(g) = \operatorname{diag}(\mu_1(g), \dots, \mu_d(g)) \quad \forall g \in \SL_d(\mathbb{R}).
\end{equation*}

\subsubsection{A corollary of Proposition \ref{mainprop}} We can now prove the following Corollary \ref{maincor} of Proposition \ref{mainprop}. For this, we take the notation of Subsection \ref{sect_cartan}.

\begin{cor}\label{maincor}
Let~$\Omega$ be a domain in~$\mathbb{R}^{d}$, different from~$\mathbb{R}^d$, and let~$\mathsf{H}\le \SL_d(\mathbb{R})\ltimes \mathbb{R}^{d}$ be a group acting properly and cocompactly on~$\Omega$. Let $G$ be a reductive Lie subgroup of $\SL_d(\mathbb{R})$ containing the linear part~$L(\mathsf{H})$ of~$\mathsf{H}$. Let 
\begin{equation*}
    p_0 := \operatorname{card} \{1 \leq p \leq d \mid \mu_i(G) \subset \mathbb{R}_+\}.
\end{equation*}
Assume that there exists $\frac{d}{2} \leq p \leq p_0$ the closure $\mathcal{F}$ of the~$G$-orbit of $\operatorname{Span}(e_1, \dots, e_p)$ in $\operatorname{Gr}_p(\mathbb{R}^d)$ consists of pairwise transverse subspaces, where $(e_1, \dots, e_d)$ is the canonical basis of~$\mathbb{R}^d$.

Then there exists~$F \in \mathcal{F}$ that is~$L(\mathsf{H})$-invariant, and~$\Omega$ is foliated by affine subspaces parallel to~$F$.
\end{cor}

\begin{proof} Since $G$ is a reductive subgroup of $\SL_d(\mathbb{R})$, there always exists a compact subgroup $K$ of $\SO(d)$ such that $G = K \exp(\mu(G)) K$. Note that this decomposition does not always coincide with the Cartan decomposition of $G$. For all~$g \in G$, we can write~$g = waw'$ according to the above Cartan decomposition and the assumption that $p \leq p_0$, and~$g$ does not contract~$(w')^{-1} \operatorname{Span}(e_1, \dots, e_p)$. Then all the conditions of Proposition \ref{mainprop} are satisfied, and the corollary follows.
\end{proof}

\begin{remark}\label{rmk_ex_cor}
    Corollary \ref{maincor} gives an concrete examples where the conditions of Proposition \ref{mainprop} are satisfied, such as when~$d = 8$ and~$L(\mathsf{H})$ is contained in~$\SO(4,\mathbb{C})$, or for any~$d$ if~$L(\mathsf{H})$ is contained in an irreducible simple rank-one Lie subgroup of~$\SL_d(\mathbb{R})$. However, it does not cover all the cases where Proposition \ref{mainprop} applies: an asymtotic version of it, which is part of work in progress of the three authors, applies if $L(\mathsf{H})$ is an~$\{\alpha_p\}$-Anosov subgroup of~$\SL_d(\mathbb{R})$ in the sense of \cite{labourie2006anosov, guichard2012anosov}, and if its \emph{$ \{\alpha_p\}$-limit set} satisfies an asymptotic version of the non-contracting assumption of Proposition \ref{mainprop}. This contracting assumption is for instance satisfied whenever~$L(\mathsf{H})$ is contained in $\Sp(d, \mathbb{R})$ for even $d \geq 4$ and $p = d/2$, or in $\SO(p,d-p)$ for $p \geq 2$ and $d-p \geq \max(p,3)$. Hence the geometric reduction still holds for these groups but does not follow from Corollary \ref{maincor}.
\end{remark}

\subsection{Reduction in~\texorpdfstring{\ensuremath{\SO(2,2)}}{SO(2,2)}-case} \label{sect_reduction_so}
In this subsection, we apply Corollary~\ref{maincor} to the geometric setting of primary interest, namely~$(\SO(2,2) \ltimes \mathbb{R}^{2,2}, \mathbb{R}^{2,2})$-geometry. Throughout the paper, we denote by~$\mathbb{R}^{2,2}$ the affine space~$\mathbb{R}^4$ endowed with a bilinear form of signature~$(2,2)$. The isometry group preserving this bilinear form is identified with~$\O(2,2)\ltimes \mathbb{R}^4$; the subgroup preserving orientation is~$\SO(2,2)\ltimes \mathbb{R}^4$, and we denote by~$\SO_0(2,2)\ltimes \mathbb{R}^4$ its identity component. We begin with the following classical result about isotropic planes in~$\mathbb{R}^{2,2}$.

\begin{prop}\label{isotropic_plane}
The space of isotropic planes in~$\mathbb{R}^{2,2}$ is the union of two connected components. Each connected component is a closed~$\mathrm{SO}_0(2,2)$-orbit in~$\mathrm{Gr}_2(\mathbb{R}^4)$, and every pair of distinct planes in the same orbit is transverse.
\end{prop}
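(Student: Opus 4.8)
The plan is to make explicit the well-known identification between isotropic $2$-planes in $\mathbb{R}^{2,2}$ and the orthogonal group $\O(2)$, following the sketch given in the introduction. Write $\mathbb{R}^{2,2}=\mathbb{R}^2\oplus\mathbb{R}^2$ with quadratic form $q(u,v)=|u|^2-|v|^2$, where $|\cdot|$ is the standard Euclidean norm. First I would show that any totally isotropic $2$-plane $P$ is a graph: if the projection $\pi_1\colon P\to\mathbb{R}^2$ onto the first factor had nontrivial kernel, $P$ would contain a vector $(0,v)$ with $v\neq 0$, whence $q(0,v)=-|v|^2<0$, contradicting isotropy; thus $\pi_1$ is injective, hence an isomorphism for dimension reasons, and $P=\{(u,Au):u\in\mathbb{R}^2\}=:\mathrm{graph}(A)$ for a unique linear map $A$. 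A direct computation gives $q(u,Au)=|u|^2-|Au|^2$, so $P$ is isotropic if and only if $A\in\O(2)$. This yields a bijection $\Phi\colon\O(2)\to\{\text{isotropic planes}\}$, $A\mapsto\mathrm{graph}(A)$, which is manifestly continuous; since $\O(2)$ is compact and $\mathrm{Gr}_2(\mathbb{R}^4)$ is Hausdorff, $\Phi$ is a homeomorphism onto its image. As $\O(2)$ has exactly two components $\SO(2)$ and $\O(2)^-:=\O(2)\setminus\SO(2)$, the space of isotropic planes has exactly two components, each the image of a compact set and hence closed in $\mathrm{Gr}_2(\mathbb{R}^4)$.

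For transversality, I would compute, for distinct $A,B$ in the same component, the intersection $\mathrm{graph}(A)\cap\mathrm{graph}(B)=\{(u,Au):Au=Bu\}$. The condition $Au=Bu$ is equivalent to $(A^{-1}B)u=u$; since $A,B$ lie in the same component, $A^{-1}B\in\SO(2)$, and it is a nontrivial rotation because $A\neq B$. A nontrivial rotation of $\mathbb{R}^2$ has no nonzero fixed vector, so the intersection is $\{0\}$; therefore $\dim(\mathrm{graph}(A)+\mathrm{graph}(B))=4$ and the two planes are transverse.

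It remains to identify each component with a single $\SO_0(2,2)$-orbit. Here I would use the block subgroup $\SO(2)\times\SO(2)\le\SO_0(2,2)$ acting by $(R,S)\cdot(u,v)=(Ru,Sv)$, which preserves $q$ and lies in the identity component since it is connected and contains the identity. A short computation shows that $(R,S)$ sends $\mathrm{graph}(A)$ to $\mathrm{graph}(SAR^{-1})$, so the induced action on $\O(2)$ is $A\mapsto SAR^{-1}$. Taking $R=I$ and $S=A'A^{-1}$, which lies in $\SO(2)$ whenever $A,A'$ belong to the same component (as then $\det(A'A^{-1})=1$), sends $A$ to any prescribed $A'$ in that component; hence $\SO(2)\times\SO(2)$, and a fortiori $\SO_0(2,2)$, acts transitively on each component. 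Finally, since $\SO_0(2,2)$ is connected, every orbit is connected and therefore contained in a single component; combined with the transitivity just established, this shows that the two components are precisely the two $\SO_0(2,2)$-orbits, each closed as shown above.

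I expect the only genuine subtlety to lie in the last step. Witt's theorem would give transitivity of the full group $\O(2,2)$ on all isotropic $2$-planes, but the statement concerns the identity component $\SO_0(2,2)$ and asserts both that it preserves the two families and that it acts transitively within each. Exhibiting the explicit block subgroup $\SO(2)\times\SO(2)$ and reading off its action as $A\mapsto SAR^{-1}$ resolves this cleanly, while the remaining points (the graph description, closedness, and transversality) are elementary.
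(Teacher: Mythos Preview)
Your proof is correct and follows essentially the same route as the paper's own argument: the graph parametrization by $\O(2)$, compactness for closedness, the fixed-point argument for transversality, and transitivity via the block subgroup $\SO(2)\times\SO(2)$ acting by $A\mapsto SAR^{-1}$. The only cosmetic difference is the order of the steps; there is nothing to add.
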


\begin{proof}
In this proof, we use the following model of~$\mathbb{R}^{2,2}$: consider~$\mathbb{R}^2 \oplus \mathbb{R}^2$ endowed with the bilinear form~$\langle\cdot,\cdot\rangle_{2,2}:=g \oplus (-g)$, where~$g(\cdot,\cdot)$ is the standard Euclidean inner product on~$\mathbb{R}^2$. It is straightforward to check that for each~$A \in O(2)$, the graph of~$A$, defined as~$\mathrm{graph}(A) := \{(x, Ax) : x \in \mathbb{R}^2\}$, is an isotropic plane in~$\mathbb{R}^{2,2}$. We claim that the map defined by
$$
\mathcal{I} : O(2) \longrightarrow \{ \text{totally isotropic~$2$-planes in } \mathbb{R}^{2,2} \}, \qquad A \longmapsto \mathrm{graph}(A),
$$
is a homeomorphism. Continuity and injectivity are immediate. To prove surjectivity, let~$W$ be an isotropic plane in~$\mathbb{R}^{2,2}$, and consider the projections~$
\pi_1, \pi_2 : \mathbb{R}^2 \oplus \mathbb{R}^2 \longrightarrow \mathbb{R}^2$ onto the first and second factors, respectively. We claim that~$\pi_1|_W$ is injective. Indeed, if~$(v,0) \in W$, the isotropy condition implies
$$
0 = \langle (v,0), (v,0) \rangle_{2,2} = g(v,v),
$$
so~$v = 0$. Since~$\dim W = 2 = \dim \mathbb{R}^2$, it follows that~$\pi_1|_W$ is a linear isomorphism~$W \cong \mathbb{R}^2$. Define~$A : \mathbb{R}^2 \to \mathbb{R}^2$ by~$A = \pi_2 \circ (\pi_1|_W)^{-1}.$ By construction, we have~$W = \{ (x, Ax) : x \in \mathbb{R}^2 \}$. Therefore, the space of isotropic planes has exactly two connected components, corresponding to the two connected components of~$O(2)$.

We now show that each of these components is a single closed~$\mathrm{SO}_0(2,2)$-orbit. First, since~$\mathrm{SO}_0(2,2)$ is connected, every~$\mathrm{SO}_0(2,2)$-orbit is connected; hence each orbit is contained in a single connected component. To show transitivity on each component, consider the subgroup, consider the subgroup 
$$
H = \mathrm{SO}(2) \times \mathrm{SO}(2) \subset \mathrm{SO}_0(2,2)
$$
embedded diagonally via~$(M,N) \mapsto \mathrm{diag}(M,N)$. Its action on graphs is given by 
$$
(M,N) \cdot \mathrm{graph}(A) = \mathrm{graph}(N A M^{-1}).
$$
If~$A,B \in O(2)$ lie in the same connected component of~$O(2)$, then~$\det B = \det A$, so~$BA^{-1} \in \mathrm{SO}(2)$. Taking~$M = I$ and~$N = BA^{-1}$ yields 
$$
(I, BA^{-1}) \cdot \mathrm{graph}(A) = \mathrm{graph}(B).
$$
Hence~$H$, and therefore~$\mathrm{SO}_0(2,2)$, acts transitively on each connected component. Consequently, each connected component of the space of isotropic planes is a single~$\mathrm{SO}_0(2,2)$-orbit. Next, note that~$O(2)$ is compact, so each of its components is compact, and hence its image under the homeomorphism~$\mathcal{I}$ is compact and therefore closed in the Grassmannian. This shows that each orbit is closed. 

Finally, we show that elements within each connected component are transverse. Consider the component corresponding to~$\mathrm{SO}(2)$. Let~$A, B \in \mathrm{SO}(2)$ with~$A \neq B$. If~$\mathrm{graph}(A) \cap \mathrm{graph}(B) \neq \{0\}$, then there exists a nonzero~$v \in \mathbb{R}^2$ such that~$Av = Bv$, i.e.~$(AB^{-1})v = v$. But the only element of~$\mathrm{SO}(2)$ having~$1$ as an eigenvalue is the identity, hence~$A = B$, a contradiction. Thus~$\mathrm{graph}(A)$ and~$\mathrm{graph}(B)$ are transverse.
\end{proof}

\begin{proof}[Proof of Proposition~\ref{prop_feuilletage_totalement_isotrope}]
 Let~$\Omega$ and~$\Gamma$ be as in Proposition~\ref{prop_feuilletage_totalement_isotrope}. In this proof, we consider~$\SO_0(2,2)$, the identity component of the linear group preserving the quadratic form of signature~$(2,2)$ given by~$q= dxdz + dydt$. For this quadratic form, each vector of the canonical basis~$(e_1,e_2,e_3,e_4)$ of~$\mathbb{R}^4$ is isotropic. 
 In the notation of Section \ref{sect_cartan} and Corollary \ref{maincor}, we have 
\begin{equation*}
   \mu(\SO_0(2,2)) = \left\{ \mathrm{diag}(\lambda, \mu, \lambda^{-1}, \mu^{-1}) \mid \lambda, \mu > 0 \right\}. 
\end{equation*}
Then $p_0 = 2$. Besides, the $\SO_0(2,2)$-orbit of the totally isotropic $2$-plane $\operatorname{Span}(e_1, e_2)$ is transverse, by Proposition~\ref{isotropic_plane}. Hence we are in the context of Corollary~\ref{maincor}, and the proof follows.
\end{proof}

\subsection{Stabilizers of an isotropic plane in~\texorpdfstring{$\mathbb{R}^{2,2}$}{R2,2}}

We collect here the Lie-theoretic computations used in the sequel. We consider on~$\mathbb{R}^4$ the flat metric of signature~$(2,2)$ given by
$$
q = dx\,dz + dy\,dt.
$$
We also fix the isotropic plane~$P_0 = \mathbb{R}^2 \times \{(0,0)\}$.  
Let~$\mathcal{P}_2$ be the subgroup of~$\SO_0(2,2)\ltimes\mathbb{R}^{2,2}$ whose linear part preserves~$P_0$, and denote this linear part by~$L(\mathcal{P}_2)$.

\begin{lemma}
Let~$\GL^+_2(\mathbb{R})$ denote the subgroup of~$\GL_2(\mathbb{R})$ consisting of matrices with positive determinant. Then
$$
L(\mathcal{P}_2)
= \left\{
\begin{pmatrix}
M & b\,M J \\[4pt]
0 & M^{-T}
\end{pmatrix}
\;\Big|\; M\in\GL^+_2(\mathbb{R}),\ b\in\mathbb{R}
\right\},
\quad \text{where }
J=\begin{pmatrix} 0 & 1 \\ -1 & 0 \end{pmatrix}.
$$
\end{lemma}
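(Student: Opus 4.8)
The plan is to work in the block decomposition $\mathbb{R}^4 = \mathbb{R}^2 \oplus \mathbb{R}^2$ adapted to $P_0 = \mathrm{Span}(e_1,e_2)$ and to the isotropic splitting induced by $q$. In the basis $(e_1,e_2,e_3,e_4)$ the Gram matrix of $q = dx\,dz + dy\,dt$ is
$$
Q = \begin{pmatrix} 0 & I \\ I & 0 \end{pmatrix},
$$
where each block is $2\times 2$ and the first block corresponds to $P_0$; in particular $P_0$ is isotropic and $q$ is the hyperbolic pairing between the two factors. Since $L(\mathcal{P}_2) = \mathrm{Stab}_{\SO_0(2,2)}(P_0)$, I would first observe that an element $g \in \GL_4(\mathbb{R})$ stabilizes $P_0$ if and only if it is block upper triangular, $g = \begin{pmatrix} M & B \\ 0 & D \end{pmatrix}$ with $M,B,D$ of size $2\times 2$, because the vanishing of the lower-left block is exactly the condition $g(P_0)\subseteq P_0$.

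Next I would impose that $g$ be an isometry, i.e.\ $g^T Q g = Q$. A direct block computation gives
$$
g^T Q g = \begin{pmatrix} 0 & M^T D \\ D^T M & B^T D + D^T B \end{pmatrix},
$$
so the isometry condition is equivalent to the two relations $M^T D = I$ and $B^T D + D^T B = 0$. The first forces $D = M^{-T}$ and makes $M$ invertible. The second says that $B^T D$ is antisymmetric; since every $2\times 2$ antisymmetric matrix is a real multiple of the matrix $J$ of the statement, we get $B^T D = bJ$ for some $b\in\mathbb{R}$, and solving for $B$ using $D^{-1} = M^T$ yields $B = b\,MJ$ after absorbing a sign into the free parameter $b$. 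This produces exactly the stated family, and one checks $\det g = \det(M)\det(M^{-T}) = 1$, so every such $g$ automatically lies in $\SO(2,2)$.

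It remains to identify the intersection with the identity component $\SO_0(2,2)$, which is the point requiring the most care. The family is parametrized diffeomorphically by $(M,b)\in\GL_2(\mathbb{R})\times\mathbb{R}$, and since $\GL_2(\mathbb{R})$ has two connected components while $\mathbb{R}$ is connected, the stabilizer inside $\SO(2,2)$ has exactly two connected components, distinguished by the sign of $\det M$. The component with $\det M > 0$ is connected and contains the identity (at $M=I$, $b=0$), hence lies in $\SO_0(2,2)$. For the other component I would exhibit the representative $g_0 = \mathrm{diag}(-1,1,-1,1)$, coming from $M=\mathrm{diag}(-1,1)$ and $b=0$, and show $g_0\notin\SO_0(2,2)$: passing to the orthonormal basis $f_i^\pm = \tfrac{1}{\sqrt2}(e_i \pm e_{i+2})$, which diagonalizes $q$ as $\mathrm{diag}(1,1,-1,-1)$, one computes that $g_0$ acts with determinant $-1$ on both the maximal positive-definite subspace $\mathrm{Span}(f_1^+,f_2^+)$ and the maximal negative-definite subspace $\mathrm{Span}(f_1^-,f_2^-)$. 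Thus $g_0$ reverses both orientations and lies in the non-identity component of $\SO(2,2)$; as a connected set meets only one component, the entire $\det M < 0$ part is disjoint from $\SO_0(2,2)$. Combining these facts identifies $L(\mathcal{P}_2)$ with the $\det M > 0$ family, which is the claim.

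The main obstacle is precisely this last connectedness and orientation bookkeeping: the linear-algebra computation of the stabilizer equations is routine, but correctly establishing that exactly $M\in\GL_2^+(\mathbb{R})$ survives the passage to the identity component $\SO_0(2,2)$ requires the explicit orientation argument above (or an equivalent invariant, such as the spinor norm).
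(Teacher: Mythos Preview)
Your proof is correct and follows essentially the same approach as the paper: a block computation of the stabilizer of $P_0$ inside the full isometry group, followed by the observation that passing to the identity component forces $\det M>0$. The paper's own proof is extremely terse (it just says ``a straightforward block computation'' and ``since we work in the identity component''), so you have in fact supplied the details that the paper omits, especially the orientation argument showing that the $\det M<0$ component meets the non-identity component of $\SO(2,2)$.
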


\begin{proof}
A straightforward block computation shows that any linear transformation in~$\SO_0(2,2)$ preserving~$P_0$ must be of the above block form for some~$B\in\GL_2(\mathbb{R})$ and~$b\in\mathbb{R}$.  
Since we work in the identity component~$\SO_0(2,2)$, the matrix~$B$ must have positive determinant.
\end{proof}

\begin{remark}
Another convenient model for~$\mathbb{R}^{2,2}$ is the vector space~$\mathrm{M}_2(\mathbb{R})\cong\mathbb{R}^4$ equipped with the quadratic form given by the determinant.  
The group~$\SL_2(\mathbb{R})\times\SL_2(\mathbb{R})$ acts on~$\mathrm{M}_2(\mathbb{R})$ by left and right multiplication and preserves the determinant, yielding a surjective homomorphism
$$
\SL_2(\mathbb{R})\times\SL_2(\mathbb{R})
\longrightarrow \SO_0(2,2)
$$
with finite kernel.  
Under this identification, the subgroup~$L(\mathcal{P}_2)$ corresponds to a subgroup isomorphic to~$B\times\SL_2(\mathbb{R})$, where $B$ is the upper-triangular Borel subgroup of~$\SL_2(\mathbb{R})$.
\end{remark}

We now investigate the nilradical of~$\mathcal{P}_2$.

\begin{defi}\label{U_group}
We denote by~$N$ the nilradical of~$\mathcal{P}_2$. Explicitly, one has
$$
N=U
\ltimes\mathbb{R}^4, \quad \text{with} \quad U=\left\{
\begin{pmatrix}
I_2 & bJ\\[4pt]
0 & I_2
\end{pmatrix}
\;\Big|\; b\in\mathbb{R}
\right\}.
$$
\end{defi}

The group~$N$ is a~$5$-dimensional, index-$2$ nilpotent Lie group.  
Writing an element of~$N$ as~$(b,v_1,v_2)$ with~$b\in\mathbb{R}$ and~$v_1,v_2\in\mathbb{R}^2$, one checks that
\begin{equation}\label{N_as_product}
N\cong\mathbb{R}\ltimes\mathbb{R}^4,
\quad \text{ where the action of~$b \in \mathbb{R}$ is given by: }\quad
b\cdot(v_1,v_2)=(v_1+bJv_2,\ v_2).
\end{equation}
This leads to the following result.

\begin{lemma}\label{lemma: GL_2_on_N}
There is an isomorphism between~$\mathcal{P}_2$ and~$\GL^+_2(\mathbb{R})\ltimes N$, where the action of~$\GL^+_2(\mathbb{R})$ on~$N$ is given by
\begin{equation}\label{GL2_action}
M\cdot(b,v_1,v_2)
=
(b\ \det M,\ Mv_1,\ M^{-T}v_2).
\end{equation}
\end{lemma}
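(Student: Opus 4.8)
The plan is to realize $\mathcal{P}_2$ as an \emph{internal} semidirect product $R\ltimes N$, where $R\le\mathcal{P}_2$ is a copy of $\GL^+_2(\mathbb{R})$ consisting of the block-diagonal linear maps with trivial translation part, and then to identify the conjugation action of $R$ on the nilradical $N$ with the formula \eqref{GL2_action}. Concretely, for $M\in\GL^+_2(\mathbb{R})$ and $b\in\mathbb{R}$ I write $A(M,b):=\begin{pmatrix} M & bMJ\\ 0 & M^{-T}\end{pmatrix}$ for the linear parts appearing in $L(\mathcal{P}_2)$, and $U(b):=\begin{pmatrix} I_2 & bJ\\ 0 & I_2\end{pmatrix}$ for the generator of $U$; I set $R:=\{(A(M,0),0):M\in\GL^+_2(\mathbb{R})\}$ inside $\mathcal{P}_2=L(\mathcal{P}_2)\ltimes\mathbb{R}^4$. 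Since $N$ is the nilradical of $\mathcal{P}_2$ it is automatically normal, so it suffices to establish three points: that $R$ is a subgroup isomorphic to $\GL^+_2(\mathbb{R})$, that $R\cap N=\{1\}$ and $R\,N=\mathcal{P}_2$, and that conjugation by $R$ is given by \eqref{GL2_action}.

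For the first point I would multiply two block-diagonal linear parts: the off-diagonal blocks vanish and one gets $A(M_1,0)A(M_2,0)=A(M_1M_2,0)$ (using $M_1^{-T}M_2^{-T}=(M_1M_2)^{-T}$), so $M\mapsto(A(M,0),0)$ is an injective homomorphism and $R\cong\GL^+_2(\mathbb{R})$. For $R\cap N=\{1\}$, an element of $R$ has block-diagonal linear part and zero translation, while a nontrivial element of $N$ has either a nonzero off-diagonal block or a nonzero translation; comparison forces $M=I$, $b=0$ and zero translation. For $R\,N=\mathcal{P}_2$, given $g=(A(M,b),t)$ with $t=(t_1,t_2)$, a direct multiplication shows $(A(M,0),0)\cdot(U(b),(M^{-1}t_1,M^{T}t_2))=g$, so every element of $\mathcal{P}_2$ factors (uniquely, by the previous point) as an element of $R$ times an element of $N$. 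This already exhibits $\mathcal{P}_2$ as the internal semidirect product $R\ltimes N$.

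The main computational step is the conjugation action, and it hinges on the elementary $2\times2$ identity
\begin{equation*}
N^{T}JN=(\det N)\,J\qquad\text{for every }N\in\mathrm{M}_2(\mathbb{R}),
\end{equation*}
applied with $N=M^{T}$ to yield $MJM^{T}=(\det M)\,J$. Writing $n=(U(b),(v_1,v_2))\in N$, I would compute $(A(M,0),0)\,n\,(A(M,0),0)^{-1}$ in two steps: first $(A(M,0),0)\,n=(A(M,b),(Mv_1,M^{-T}v_2))$, where the off-diagonal block $M(bJ)$ rewrites as $b(MJ)$; then multiplying on the right by $(A(M^{-1},0),0)$ the linear part becomes $\begin{pmatrix} I_2 & bMJM^{T}\\ 0 & I_2\end{pmatrix}$, which collapses to $U(b\det M)$ via $MJM^{T}=(\det M)J$ (this is exactly where the factor $\det M$ is produced), while the translation part is left unchanged. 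Hence conjugation sends $(b,v_1,v_2)$ to $(b\det M,\,Mv_1,\,M^{-T}v_2)$, which is precisely \eqref{GL2_action}. The expected obstacle is purely the bookkeeping in this conjugation, namely tracking how $J$ interacts with $M$ and $M^{-T}$ in the off-diagonal block, for which the identity $N^{T}JN=(\det N)J$ is the crucial simplification. Once the action is matched, the external semidirect product $\GL^+_2(\mathbb{R})\ltimes N$ with the action \eqref{GL2_action} is isomorphic to the internal product $R\ltimes N=\mathcal{P}_2$ via $(M,n)\mapsto(A(M,0),0)\cdot n$, completing the proof.
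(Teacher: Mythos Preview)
Your proof is correct and follows essentially the same approach as the paper: both exhibit an explicit isomorphism and verify the multiplication/conjugation laws by direct computation, with your version simply spelling out the details that the paper leaves to the reader. In particular, your identification of the identity $MJM^{T}=(\det M)J$ as the source of the $\det M$ factor in the $b$-coordinate is exactly the content of the paper's ``direct computation.''
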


\begin{proof}
Using \eqref{N_as_product}, one checks that the map
$$
(M,(b,v_1,v_2))
\longmapsto
\left(
\begin{pmatrix} M & b MJ \\[2mm] 0 & M^{-T} \end{pmatrix},
\ (v_1,v_2)
\right)
$$
is a Lie group isomorphism from~$\GL^+_2(\mathbb{R})\ltimes N$ onto~$\mathcal{P}_2$.  
A direct computation shows that the multiplication and conjugation laws match.
\end{proof}

Having established this isomorphism, we now describe the Lie algebra $\mathfrak{n}$ of~$N$. For~$(s,v_1,v_2),(t,w_1,w_2)\in\mathfrak{n}$, the bracket is
$$
[(s,v_1,v_2),(t,w_1,w_2)]
=
(0,\ sJw_2 - tJv_2,\ 0).
$$
Next, the infinitesimal action of~$\mathfrak{gl}(2,\mathbb{R})$ on~$\mathfrak{n}$ induced by \eqref{GL2_action} is
$$
X\cdot(s,v_1,v_2)
=
(s\,\operatorname{tr}X,\ Xv_1,\ -X^T v_2).
$$
Thus the bracket in~$\mathfrak{gl}(2,\mathbb{R})\ltimes\mathfrak{n}$ is
$$
[(X,u),(Y,w)]
=
([X,Y],\ X\cdot w - Y\cdot u + [u,w]).
$$
Throughout the paper, we fix the basis of~$\mathfrak{n}$ given by
\begin{equation}\label{basis_n}
\begin{aligned}
u   &= (1,(0,0),(0,0)), &\qquad
T_1 &= (0,(1,0),(0,0)), &\qquad
T_2 &= (0,(0,1),(0,0)),\\[2mm]
T_3 &= (0,(0,0),(1,0)), &\qquad
T_4 &= (0,(0,0),(0,1)).
\end{aligned}
\end{equation}
The only nonzero brackets in this basis are
\begin{equation}\label{bracket_n}
[u,T_3]=-T_2,
\qquad
[u,T_4]=T_1.
\end{equation}
As consequence, we could show that $N$ is a homothety Lie group (see Definition \ref{homot_group}).

\begin{lemma}\label{N_homothety}
The nilradical~$N$ is a homothety Lie group.  
More precisely, for each~$\lambda>1$ there exists an automorphism~$\Phi_\lambda\in\Aut(N)$ whose differential at the identity is diagonalizable with eigenvalues of modulus~$>1$, and which commutes with the~$\GL_2^+(\mathbb{R})$-action on~$N$.
\end{lemma}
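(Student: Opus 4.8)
The plan is to exhibit $\Phi_\lambda$ explicitly as a weighted dilation and to verify the three required properties by direct computation, using that $N$ is simply connected (it is diffeomorphic to $\mathbb{R}^5$ through the coordinates $(b,v_1,v_2)$ of \eqref{N_as_product}), so that a bijective group homomorphism is automatically a Lie group automorphism. The structural input is that the only nonzero brackets \eqref{bracket_n}, $[u,T_3]=-T_2$ and $[u,T_4]=T_1$, make $\mathfrak{n}$ positively graded: assign weight $1$ to $u,T_3,T_4$ and weight $2$ to $T_1,T_2$. Both brackets are then homogeneous of weight $2=1+1$, so the diagonal endomorphism $D$ acting by the weight on each basis vector of \eqref{basis_n} is a derivation, and I take $\Phi_\lambda$ to be its flow, i.e. in coordinates
\[
\Phi_\lambda(b,v_1,v_2)=(\lambda b,\ \lambda^2 v_1,\ \lambda v_2).
\]

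First I would check that $\Phi_\lambda$ is a group automorphism. From \eqref{N_as_product} the product law reads $(b,v_1,v_2)(b',v_1',v_2')=(b+b',\,v_1+v_1'+bJv_2',\,v_2+v_2')$; applying $\Phi_\lambda$ to both sides of the identity $\Phi_\lambda(gh)=\Phi_\lambda(g)\Phi_\lambda(h)$, the only nontrivial term to match is the cross-term $bJv_2'$, which scales by $\lambda\cdot\lambda=\lambda^2$, exactly the dilation factor of the $v_1$-coordinate. Hence $\Phi_\lambda$ respects the product. Its differential at the identity, read in the basis \eqref{basis_n}, is diagonal with eigenvalue $\lambda$ on $u,T_3,T_4$ and $\lambda^2$ on $T_1,T_2$; for $\lambda>1$ all eigenvalues have modulus $>1$ and the differential is manifestly diagonalizable over $\mathbb{R}$, so $N$ is a homothety Lie group in the sense of Definition~\ref{homot_group}. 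Next I would verify commutation with the $\GL_2^+(\mathbb{R})$-action \eqref{GL2_action}: since the dilation acts as a scalar on each of the three coordinate blocks $b$, $v_1$, $v_2$, it commutes with the block maps $b\mapsto(\det M)b$, $v_1\mapsto Mv_1$, $v_2\mapsto M^{-T}v_2$, and concretely both $\Phi_\lambda(M\cdot(b,v_1,v_2))$ and $M\cdot\Phi_\lambda(b,v_1,v_2)$ equal $(\lambda(\det M)b,\ \lambda^2 Mv_1,\ \lambda M^{-T}v_2)$.

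The only real point — which I would flag as the crux rather than an obstacle — is that these requirements are jointly compatible. Commuting with $\GL_2^+(\mathbb{R})$ forces the automorphism to be scalar on each of the three blocks $\langle u\rangle$, $\langle T_1,T_2\rangle$, $\langle T_3,T_4\rangle$: by the infinitesimal action $X\cdot(s,v_1,v_2)=(s\operatorname{tr}X,\,Xv_1,\,-X^Tv_2)$ these are the pairwise non-isomorphic irreducible $\GL_2^+(\mathbb{R})$-representations given by $\det$, the standard representation, and its contragredient, so Schur's lemma leaves only three scalars $\alpha,\beta,\gamma$. Being a Lie-algebra automorphism then imposes the single multiplicative relation $\beta=\alpha\gamma$ coming from $[u,T_3]=-T_2$, and the homothety condition demands $|\alpha|,|\beta|,|\gamma|>1$. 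With one relation among three scalars this system is trivially solvable, e.g. $\alpha=\gamma=\lambda$, $\beta=\lambda^2$, which is precisely the choice above.

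I would close by noting that for a general nilpotent group no such positive, $\GL_2^+(\mathbb{R})$-equivariant grading need exist; here it is the special two-step structure \eqref{bracket_n}, with derived algebra $\langle T_1,T_2\rangle$ sitting in weight $2$, that makes the dilation simultaneously expanding, automorphic, and equivariant. Since all verifications reduce to the one-line computations indicated above, the substance of the lemma lies entirely in this compatibility, not in any delicate estimate.
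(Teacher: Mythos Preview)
Your proof is correct and uses exactly the same automorphism as the paper (eigenvalue $\lambda$ on $u,T_3,T_4$ and $\lambda^2$ on $T_1,T_2$); the only difference is that you verify the homomorphism and equivariance properties directly at the group level via the global coordinates $(b,v_1,v_2)$, whereas the paper checks them on the Lie algebra and then integrates using that $N$ is simply connected nilpotent. Your additional Schur-lemma remark explaining why a $\GL_2^+(\mathbb{R})$-equivariant automorphism must be scalar on each block is a nice bonus not present in the paper's proof.
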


\begin{proof}
Define~$\Psi_\lambda:\mathfrak{n}\to\mathfrak{n}$ by
$$\Psi_\lambda(u)=\lambda u, \qquad
\Psi_\lambda(T_i)=\lambda^2 T_i \ (i=1,2),\qquad
\Psi_\lambda(T_i)=\lambda T_i \ (i=3,4).
$$
One verifies that $\Psi_\lambda$ is an automorphism of the Lie algebra $\mathfrak{n}$ and that it commutes with $\Ad_g:\mathfrak{n}\to\mathfrak{n}$ for all $g\in \GL_2^+(\R)$. 
Since $N$ is simply connected and nilpotent, $\Psi_\lambda$ integrates to a unique Lie group automorphism $\Phi_\lambda$, defined by $\Phi_\lambda(\exp X)=\exp(\Psi_\lambda(X))$, which moreover commutes with the $\GL_2^+(\R)$-action on $N$. This completes the proof.
\end{proof}
Next, we consider the infinitesimal generator
$$
u_1=\begin{pmatrix}0&1\\0&0\end{pmatrix}\in\mathfrak{gl}(2,\mathbb{R}),
$$
corresponding to the nilpotent subgroup
\begin{equation}\label{U_1_group}
U_1=\left\{\begin{pmatrix}1&t\\0&1\end{pmatrix}\mid t\in\mathbb{R}\right\}
\leq \GL^+_2(\mathbb{R}).
\end{equation}
Its nonzero brackets with the basis of~$\mathfrak{n}$ are
\begin{equation}\label{bracket-u-1}
[u_1,T_2]=T_1,
\qquad
[u_1,T_3]=-T_4.
\end{equation}

In the end of this section, we introduce two natural projections that we use extensively in the proof of our theorem:
\begin{list}{\textbullet}{\setlength{\leftmargin}{1em}}
\item The projection $p:\mathcal{P}_2 \to \mathcal{P}_2/N \cong \GL_2^+(\mathbb{R})$ modulo the nilradical.
\item The linear action of $\mathcal{P}_2$ on $\mathbb{R}^{2,2}$ induces an action on $\mathbb{R}^2 \cong \mathbb{R}^{2,2}/P_0$. This action is given by the natural projection 
$q:\mathcal{P}_2 \to \GL_2^+(\mathbb{R}) \ltimes_{\mathsf{t}} \mathbb{R}^2$, where the action of $\GL_2^+(\mathbb{R})$ on $\mathbb{R}^2$ is defined by
$$
M \cdot v = M^{-T} v.
$$
The kernel of $q$ is
\begin{equation}\label{kerq}
I := \Ker(q)
=
\left\{
\begin{pmatrix}
I_2 & bJ\\[1mm]
0 & I_2
\end{pmatrix}
\;\Big|\; b\in\mathbb{R}
\right\} \ltimes P_0=U\times P_0.
\end{equation}
\end{list}

\begin{remark}\label{remark: cd_of_kernal}
We record two useful observations.
\begin{itemize}
    \item The linear part of the kernel $I$ fixes the isotropic plane $P_0$ pointwise.  
Hence the semidirect product in \eqref{kerq} is in fact a direct product, that is, $I = U \times P_0$, and therefore $I$ is abelian.

    \item The subgroup~$I$ preserves every isotropic plane parallel to~$P_{0}$. In particular, if~$\Gamma$ is a discrete subgroup of~$\mathcal{P}_{2}$ dividing a domain $\Omega\subset\mathbb{R}^{2,2}$ foliated by isotropic planes parallel to~$P_{0}$, then $\cd(\Gamma\cap I)\leq 2$. This follows from the fact that~$\Gamma\cap I$ acts properly and discontinuously on each leaf of the foliation. Since~$I$ is abelian, it follows that~$\Gamma\cap I$ is either trivial, or isomorphic to~$\mathbb{Z}$ or~$\mathbb{Z}^{2}$.
\end{itemize}
\end{remark}

For the reader's convenience, we list below the main notations and Lie brackets used throughout the paper.

\begin{table}[ht]
\centering
\vspace{4pt}

\begin{tabular}{|>{\raggedright\arraybackslash}p{3.6cm}|p{9.6cm}|}
\hline

Basis of~$\mathfrak{n}$ &
$u,\, T_1,\, T_2,\, T_3, \, T_4$ as in~\eqref{basis_n} \\[8pt] \hline

Nonzero brackets of the basis of~$\mathfrak{n}$ &
$\displaystyle [u,T_3] = -T_2,\qquad [u,T_4] = T_1$ as in~\eqref{bracket_n} \\[8pt] \hline

Nonzero brackets with~$u_1$ &
$\displaystyle [u_1,T_2]=T_1,\qquad [u_1,T_3]=-T_4$ as in~\eqref{bracket-u-1} \\[10pt] \hline
Isotropic plane &
$P_0=\R^2\times \{(0,0)\}\cong \Span(T_1,T_2)$ which is the center of the group $N$ \\[8pt] \hline
Projections &
$p:\mathcal P_2 \to \GL^+_2(\mathbb{R})$ and 
$q:\mathcal P_2 \to \GL^+_2(\mathbb{R}) \ltimes_\mathsf{t} \mathbb{R}^2$ \\[6pt] \hline

Kernels of projections &
$\Ker(p)=N$ and~$I=\Ker(q)=U\times P_0$ \\

\hline
\end{tabular}

\caption{Lie brackets and notations}
\label{tab:notation}
\end{table}

\section{Domains invariant under one-parameter subgroups}\label{sec4}

The goal of this section is to study domains of~$\mathbb{R}^2$ that are invariant under certain one-parameter groups of~$\GL_2(\mathbb{R}) \ltimes \mathbb{R}^2$, where $\GL_2(\mathbb{R}) $ acts by the usual linear action on $\R^2$.  
We are particularly interested in two families of one-parameter groups that play a central role in our arguments.

The first family consists of one-parameter groups of pure translations.  
The second family consists of one-parameter groups of~$U_1\ltimes \R^2$ which are not contained in~$U_1$. They are defined as follows: for each choice of parameters~$a,b,c \in \mathbb{R}$, set
\begin{equation}\label{eq_J_a_b_c}
    J(a,b,c)
=
\left\{ g_t := 
\left(
\begin{pmatrix}
1 & at\\[2pt]
0 & 1
\end{pmatrix},
\begin{pmatrix}
\tfrac12 ab\, t^2 + ct\\[4pt]
bt
\end{pmatrix}
\right)
\;\Big|\;
t \in \mathbb{R}
\right\}.
\end{equation}

We first study the simpler situation of one parameter group of translations. 

\begin{prop}\label{Lemma: non-discrete translations}
  Let $\mathcal{O}$ be a domain in the affine plane. Assume that $\mathcal{O}$ is invariant under a non-discrete subgroup $\mathcal{T}$ of translations. Then either $\mathcal{O}$ is the entire plane, or its boundary $\partial \mathcal{O}$ consists of one or two parallel affine lines.
\end{prop}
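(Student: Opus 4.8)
The plan is to analyze the structure of a domain $\mathcal{O}$ in $\mathbb{R}^2$ invariant under a non-discrete group $\mathcal{T}$ of translations. Since $\mathcal{T}$ is a non-discrete subgroup of the translation group $\mathbb{R}^2$, its closure $\overline{\mathcal{T}}$ is a connected Lie subgroup, hence either a line (a one-dimensional subspace) or all of $\mathbb{R}^2$. As $\mathcal{O}$ is open and $\mathcal{T}$-invariant, it is also $\overline{\mathcal{T}}$-invariant. First I would dispose of the case $\overline{\mathcal{T}} = \mathbb{R}^2$: then $\mathcal{O}$ is invariant under all translations, and being nonempty and open it must equal the whole plane, giving the first alternative.

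So I would reduce to the main case where $\overline{\mathcal{T}} = \ell$ is a single line through the origin; after a linear change of coordinates assume $\ell = \mathbb{R} \times \{0\}$, so $\mathcal{O}$ is invariant under all horizontal translations. The key structural observation is then that $\mathcal{O}$, being invariant under every horizontal translation, is a \emph{union of horizontal lines}: if a point $(x_0, y_0) \in \mathcal{O}$, then the entire horizontal line $\mathbb{R} \times \{y_0\}$ lies in $\mathcal{O}$. Consequently $\mathcal{O}$ is determined entirely by the set $Y := \{ y \in \mathbb{R} : \mathbb{R} \times \{y\} \subset \mathcal{O}\}$, and $\mathcal{O} = \mathbb{R} \times Y$.

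Next I would transfer the topological hypotheses to $Y$. Since $\mathcal{O} = \mathbb{R} \times Y$ is open, $Y$ is open in $\mathbb{R}$; since $\mathcal{O}$ is connected and nonempty, $Y$ is a nonempty open interval $(\alpha, \beta)$ with $-\infty \le \alpha < \beta \le +\infty$. The boundary of $\mathcal{O}$ is then $\partial \mathcal{O} = \mathbb{R} \times \partial Y$, where $\partial Y \subset \{\alpha, \beta\}$ consists of the finite endpoints of the interval. If both $\alpha$ and $\beta$ are infinite then $Y = \mathbb{R}$, $\mathcal{O} = \mathbb{R}^2$, and again we land in the first alternative; if exactly one endpoint is finite, $\partial \mathcal{O}$ is the single horizontal line at that height; if both are finite, $\partial \mathcal{O}$ consists of two horizontal lines, which are parallel. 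Undoing the linear change of coordinates turns these horizontal lines back into affine lines parallel to $\ell$, yielding the claimed description.

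I do not expect a serious obstacle here; the argument is essentially a reduction to a one-dimensional statement about open intervals. The only point requiring a little care is justifying that a non-discrete subgroup of $\mathbb{R}^2$ has closure equal to either a one-dimensional linear subspace or all of $\mathbb{R}^2$ — this is the classification of closed connected subgroups of $\mathbb{R}^2$, and I would invoke it directly. One should also confirm that invariance under $\mathcal{T}$ upgrades to invariance under $\overline{\mathcal{T}}$: this follows because the stabilizer $\{ v \in \mathbb{R}^2 : \mathcal{O} + v = \mathcal{O}\}$ of the open set $\mathcal{O}$ is a closed subgroup of $\mathbb{R}^2$ containing $\mathcal{T}$, hence contains $\overline{\mathcal{T}}$.
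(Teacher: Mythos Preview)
Your approach is essentially the same as the paper's: pass to the closure $\overline{\mathcal{T}}$, observe it contains a one-parameter group of translations, foliate $\mathcal{O}$ by the corresponding parallel lines, and reduce to an open interval in $\mathbb{R}$. One small inaccuracy: the closure $\overline{\mathcal{T}}$ of a non-discrete subgroup of $\mathbb{R}^2$ need not be connected (e.g.\ $\mathbb{R}\times\mathbb{Z}$), so it is not quite the ``classification of closed connected subgroups'' you want; what is true, and sufficient for your argument, is that the identity component $\overline{\mathcal{T}}^{\circ}$ is nontrivial, hence a line or all of $\mathbb{R}^2$ --- the paper phrases this as ``$\overline{\mathcal{T}}$ contains a one-parameter subgroup''.
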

 \begin{proof}
Consider~$\overline{\mathcal{T}}$, the topological closure of~$\mathcal{T}$ in the group of all pure translations. We claim that~$\overline{\mathcal{T}}$ preserves~$\mathcal{O}$. Indeed, since~$\mathcal{T}$ preserves~$\mathcal{O}$, it also preserves its complement~$\mathcal{O}^c$, which is closed. The closure~$\overline{\mathcal{T}}$ therefore preserves the closed set~$\mathcal{O}^c$, and hence preserves~$\mathcal{O}$ as well.

By hypothesis~$\overline{\mathcal{T}}$ contains a one-parameter subgroup of pure translations; we denote it by~$\mathcal{T}_0$. In particular, the set~$\mathcal{O}$ is foliated by the parallel affine lines that are the orbits of the~$\mathcal{T}_0$-action on the plane. We call this foliation~$\mathcal{L}$.

Considering the quotient~$\mathcal{O}/\mathcal{L}$, we obtain a connected open subset~$J$ of~$\mathbb{R}$. Up to composition with an affine automorphism of the plane, the possible cases for~$J$ are~$\mathbb{R}$,~$(0,1)$, or~$(0,\infty)$.

Since~$\mathcal{O}$ is affinely isomorphic to the product~$J \times \ell$, where~$\ell$ is any~$\mathcal{T}_0$-orbit (an affine line), this completes the proof of the lemma.
\end{proof}
In particular, we deduce the following corollary.

\begin{cor}\label{cor: pure_translation}
Let~$\Omega \subset \mathbb{R}^{2,2}$ be a domain foliated by isotropic planes parallel to $P_0$.  
Suppose that~$\Omega$ is divided by a discrete subgroup~$\Gamma \leq \SO_0(2,2)\ltimes \mathbb{R}^{2,2}$, and that~$\overline{q(\Gamma)}$ contains a one-parameter group of pure translations.  
Then~$\Omega = \mathbb{R}^{2,2}$.
\end{cor}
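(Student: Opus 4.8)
The plan is to push everything down to the quotient $\mathbb{R}^{2,2}/P_0\cong\mathbb{R}^2$ and then apply Proposition~\ref{Lemma: non-discrete translations}. Since $\Omega$ is foliated by isotropic planes parallel to $P_0$, it is saturated for the projection $\pi\colon\mathbb{R}^{2,2}\to\mathbb{R}^{2,2}/P_0\cong\mathbb{R}^2$, i.e.\ $\Omega=\pi^{-1}(\widehat\Omega)$ with $\widehat\Omega:=\pi(\Omega)$ an open connected subset of $\mathbb{R}^2$. In particular $\Omega=\mathbb{R}^{2,2}$ if and only if $\widehat\Omega=\mathbb{R}^2$, so it suffices to prove the latter. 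The group $q(\Gamma)$ preserves $\widehat\Omega$; as the stabilizer of a set for a continuous action is a closed subgroup, $\overline{q(\Gamma)}$ preserves $\widehat\Omega$ as well (the same closure trick used in the proof of Proposition~\ref{Lemma: non-discrete translations}), and by hypothesis it contains a one-parameter group $\mathcal{T}$ of pure translations.

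I would then apply Proposition~\ref{Lemma: non-discrete translations} with $\mathcal{O}=\widehat\Omega$ and the non-discrete translation group $\mathcal{T}$: either $\widehat\Omega=\mathbb{R}^2$, in which case we are done, or $\partial\widehat\Omega$ consists of one or two parallel affine lines. To rule out the second alternative I would exhibit a $\Gamma$-invariant proper algebraic subset of $\mathbb{R}^{2,2}$ and invoke Theorem~\ref{thm_Goldman_Hirsch_rep}. Indeed, $q(\Gamma)$ preserves $\widehat\Omega$, hence preserves its boundary $\partial\widehat\Omega$, and since $\Gamma\le\mathcal{P}_2$ acts on $\mathbb{R}^{2,2}$ compatibly with $\pi$ (with $q$ being the induced action on the quotient), $\Gamma$ preserves $\pi^{-1}(\partial\widehat\Omega)$. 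When $\partial\widehat\Omega$ is a single line this preimage is an affine hyperplane of $\mathbb{R}^4$; when it is two parallel lines $\{f=a\}\cup\{f=b\}$, for an affine functional $f$ on $\mathbb{R}^2$, the preimage is the union of two parallel hyperplanes, i.e.\ the proper algebraic subset $\{(f\circ\pi-a)(f\circ\pi-b)=0\}$.

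On the other hand, $M:=\Omega/\Gamma$ is a closed affine manifold whose affine holonomy is $\Gamma\le\SO_0(2,2)\ltimes\mathbb{R}^{2,2}\subset\SL_4(\mathbb{R})\ltimes\mathbb{R}^4$, so $M$ carries a parallel volume form. By Theorem~\ref{thm_Goldman_Hirsch_rep}, the affine holonomy of $M$ preserves no proper algebraic subset of $\mathbb{R}^4$, directly contradicting the conclusion of the previous paragraph. Hence the second alternative is impossible, forcing $\widehat\Omega=\mathbb{R}^2$ and therefore $\Omega=\mathbb{R}^{2,2}$.

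The one point requiring care, and the main obstacle in writing the argument rigorously, is the identification of the affine holonomy of $M$ with $\Gamma$ when $\Omega$ is not simply connected: one must observe that the developing map of $M$ factors through the covering $\Omega\hookrightarrow\mathbb{R}^4$, so $\pi_1(\Omega)$ lies in the kernel of the holonomy and $\hol(\pi_1(M))=\Gamma$. This legitimizes applying Theorem~\ref{thm_Goldman_Hirsch_rep} to $\Gamma$ itself; everything else is a routine combination of Proposition~\ref{Lemma: non-discrete translations} with the Goldman--Hirsch irreducibility statement.
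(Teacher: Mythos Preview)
Your argument is correct and follows essentially the same route as the paper: project to $\widehat\Omega$, apply Proposition~\ref{Lemma: non-discrete translations}, and then use Theorem~\ref{thm_Goldman_Hirsch_rep} to rule out the cases where $\partial\widehat\Omega$ consists of one or two lines. The only cosmetic difference is that the paper handles the two-line case by passing to an index-two subgroup of $\Gamma$ fixing a single line (so as to invoke only the affine-subspace form of Goldman--Hirsch), whereas you keep both lines and invoke the algebraic-subset form for the union of two parallel hyperplanes; both variants are valid.
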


To prove this, let $\widehat{\Omega}$ denote the projection of $\Omega$ in $\R^{2,2}/P_0$ modulo the isotropic plane $P_0$. Then $\widehat{\Omega}$ is invariant under $q(\Gamma)$.

\begin{proof}
Let $\mathcal{T}_0 \leq q(\Gamma)$ be a one-parameter group of pure translations.  
Since $\widehat{\Omega}$ is $\mathcal{T}_0$-invariant, Lemma~\ref{Lemma: non-discrete translations} implies that $\widehat{\Omega}$ is either the whole plane, in which case $\Omega = P_0 \times \widehat{\Omega} = \R^4$, or that $\partial \widehat{\Omega}$ consists of one or two lines.  
As $\widehat{\Omega}$ is $q(\Gamma)$-invariant, it follows—after passing to a finite-index subgroup if necessary—that $q(\Gamma)$ preserves an affine line $l$.  
Consequently, $\Gamma$ preserves the hyperplane $\pi^{-1}(l)$ in $\mathbb{R}^{2,2}$, where $\pi:\R^4 \to \R^4/P_0$ is the natural projection, contradicting Theorem~\ref{thm_Goldman_Hirsch_rep}.
\end{proof}

The following proposition deals with the subgroups~$J(a,b,c)$ given in \eqref{eq_J_a_b_c}.

\begin{prop}\label{prop:contractible}
Let~$a,b,c \in \mathbb{R}$ with~$b \neq 0$, and let~$\mathcal{O} \subset \mathbb{R}^{2}$ be a domain invariant under~$J(a,b,c)$.  
Then~$\mathcal{O}$ is contractible.
\end{prop}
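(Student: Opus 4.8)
The plan is to find an explicit polynomial change of coordinates on $\mathbb{R}^2$ that conjugates the $J(a,b,c)$-action to a one-parameter group of pure translations, thereby reducing the statement to the translation case already treated in Proposition~\ref{Lemma: non-discrete translations}.

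First I would write the action explicitly: a direct computation gives
$$g_t\cdot(x,y)=\left(x+at\,y+\tfrac12 ab\,t^2+ct,\ y+bt\right),$$
so that the second coordinate is translated by $bt$; since $b\neq 0$ this shows the action is free and that each orbit meets every horizontal level $\{y=\mathrm{const}\}$. The key step is to produce a function that is constant along the orbits. A short verification shows that
$$\xi(x,y):=x-\frac{a}{2b}\,y^2-\frac{c}{b}\,y$$
is $J(a,b,c)$-invariant. Setting $\eta:=y$, the map $\Phi(x,y)=(\xi(x,y),\eta)$ is a triangular polynomial automorphism of $\mathbb{R}^2$, with polynomial inverse $x=\xi+\tfrac{a}{2b}\eta^2+\tfrac{c}{b}\eta$, $y=\eta$, hence a diffeomorphism; in the coordinates $(\xi,\eta)$ the group acts by $(\xi,\eta)\mapsto(\xi,\eta+bt)$, that is, by pure translations in the $\eta$-direction.

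It then remains to transport the domain. Since $\mathcal{O}$ is $J(a,b,c)$-invariant, $\Phi(\mathcal{O})$ is an open connected set invariant under the one-parameter group of pure translations $\{\eta\mapsto\eta+bt:t\in\mathbb{R}\}$, which is all of $\mathbb{R}$ because $b\neq 0$. By Proposition~\ref{Lemma: non-discrete translations} (applied in the $(\xi,\eta)$-plane) $\Phi(\mathcal{O})$ is affinely isomorphic to a product $J\times\mathbb{R}$ with $J\subset\mathbb{R}$ an interval, and is in particular contractible. As $\Phi$ is a diffeomorphism, $\mathcal{O}=\Phi^{-1}(\Phi(\mathcal{O}))$ is contractible as well.

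The computations are routine once the invariant $\xi$ is found, so the only genuinely non-mechanical point is recognizing that the orbits are the parabolas $x=\tfrac{a}{2b}y^2+\tfrac{c}{b}y+\mathrm{const}$ (affine lines when $a=0$) and hence guessing the straightening map $\Phi$. The hypothesis $b\neq 0$ is essential: it makes $\Phi$ well-defined and guarantees that the induced translation subgroup is all of $\mathbb{R}$; when $b=0$ the orbits degenerate to horizontal lines together with a fixed line, and invariant domains such as the complement of a fixed point need not be contractible.
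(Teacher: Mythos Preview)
Your proof is correct and is a close variant of the paper's argument. Both approaches straighten the flow of $J(a,b,c)$ to a family of vertical translations; the paper does this by writing down the trivialization $\Phi(t,(s,0))=g_t(s,0)$ from $\mathbb{R}\times(\mathcal{O}\cap\{y=0\})$ onto $\mathcal{O}$, whereas you first produce the orbit invariant $\xi(x,y)=x-\tfrac{a}{2b}y^2-\tfrac{c}{b}y$, use it as the first coordinate of a global polynomial automorphism of $\mathbb{R}^2$, and then invoke Proposition~\ref{Lemma: non-discrete translations}. The two maps are literally the same object: the paper's $g_{-y/b}(x,y)$ has first coordinate exactly $\xi(x,y)$, so your $\Phi$ is just the paper's $\Phi^{-1}$ rewritten as a coordinate change on the whole plane. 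What your packaging buys is the reuse of the earlier translation lemma and a coordinate change defined globally on $\mathbb{R}^2$; what the paper's packaging buys is self-containment, since it reads off the product structure $\mathcal{O}\cong\mathbb{R}\times I$ directly without citing the previous proposition. One small remark: Proposition~\ref{Lemma: non-discrete translations} as \emph{stated} only describes $\partial\mathcal{O}$, so when you cite it you are really using what its proof establishes, namely that the domain is a strip $J\times\mathbb{R}$; it would be cleaner to say so explicitly.
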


\begin{proof}
Let~$g_{t}$ be an element of~$J(a,b,c)$ (see~\eqref{eq_J_a_b_c}). The linear part of~$g_{t}$ acts on~$\mathbb{R}^{2}$ by the inverse transpose, which gives a map
$$
g_{t}:(x,y)\longmapsto (\,x+at\,y+\frac{1}{2}ab\,t^{2}+ct,\; y+bt\,).
$$
Let~$\pi_{2}:\mathbb{R}^{2}\to\mathbb{R}$ be the projection on the second coordinate.  
For any~$(x,y)\in\mathbb{R}^{2}$ and~$t\in\mathbb{R}$ we have~$\pi_{2}(g_{t}(x,y))=y+bt.$
Since~$b\neq 0$ and~$\mathcal{O}$ is nonempty and invariant, the set~$\pi_{2}(\mathcal{O})$ is nonempty and invariant under all translations by~$bt$. Hence~$\pi_{2}(\mathcal{O})=\mathbb{R}$. In particular, there exists~$(0,s_{0})\in \mathcal{O}$.
Set~$L_{0}=\{(x,0): y\in\mathbb{R}\}$ and~$S:=\mathcal{O}\cap L_{0}$. Then~$S$ is a nonempty open subset of~$L_{0}$. Define
$$
\Phi:\mathbb{R}\times S\longrightarrow\mathcal{O},\qquad 
\Phi(t,(s,0)):=g_{t}(s,0).
$$
We claim that~$\Phi$ is a homeomorphism. It is clearly continuous, and it has inverse
$$
\Phi^{-1}(x,y)=\left(\frac{y}{b},\, g_{-y/b}(x,y)\right),
$$
which is continuous. Hence~$\Phi$ is a homeomorphism, so~$\mathcal{O}\cong \mathbb{R}\times S$.
Since~$\mathcal{O}$ is connected, the set~$S$ must be connected, hence an open interval~$I\subset L_{0}$. Therefore~$\mathcal{O}\cong \mathbb{R}\times I,$ which is contractible. This proves the proposition.
\end{proof}
\begin{remark}\label{remark: transeverse_U_1_transpose}
    It is worth noting that one parameter groups of~$U_1\ltimes_{\mathsf{t}}\R^2$, which are not contained in~$U_1$ are of the form $$
J^\mathsf{t}(a,b,c)
=
\left\{
\left(
\begin{pmatrix}
1 & at\\[2pt]
0 & 1
\end{pmatrix},
\begin{pmatrix}
bt\\[4pt]
-\tfrac12 ab\, t^2 + ct
\end{pmatrix}
\right)
\;\Big|\;
t \in \mathbb{R}
\right\}.
$$ In particular, the conclusion of Proposition~\ref{prop:contractible} still holds. Namely any domain~$\mathcal{O}$ invariant under~$J^\mathsf{t}(a,b,c)$ is contractible.
\end{remark}

We record the following corollary.

\begin{cor}\label{cor: pure_nilpotent_translation}
Let~$\Omega \subset \mathbb{R}^{2,2}$ be a domain foliated by isotropic planes parallel to $P_0$.  
Suppose that~$\Omega$ is divided by a discrete subgroup~$\Gamma \leq \SO_0(2,2)\ltimes \mathbb{R}^{2,2}$, and that~$\overline{q(\Gamma)}$ contains a one parameter group of the from~$J^{\mathsf{t}}(a,b,c)$ with~$b\neq 0$. If~$\Gamma$ is solvable then~$\Omega = \mathbb{R}^{2,2}$.
\end{cor}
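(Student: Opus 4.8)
The plan is to reduce everything to the $2$-dimensional picture, deduce that $\Omega$ is contractible, and then invoke the Goldman--Hirsch completeness theorem for solvable holonomy (Theorem~\ref{fact: cd=dim+solvable implies complete}). The only inputs needed are the contractibility criterion of Remark~\ref{remark: transeverse_U_1_transpose}, the completeness of the leaves of the isotropic foliation, and a cohomological-dimension count.

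First I would descend to the plane. Let $\pi\colon\mathbb{R}^{4}\to\mathbb{R}^{4}/P_0\cong\mathbb{R}^2$ be the quotient projection and $\widehat{\Omega}=\pi(\Omega)$; as recorded before Corollary~\ref{cor: pure_translation}, the set $\widehat\Omega$ is $q(\Gamma)$-invariant. Since the complement $\widehat\Omega^{\,c}$ is closed and $q(\Gamma)$-invariant, and the set of elements of $\GL_2^+(\mathbb{R})\ltimes_{\mathsf t}\mathbb{R}^2$ preserving a closed set is itself closed, the group $\overline{q(\Gamma)}$ also preserves $\widehat\Omega^{\,c}$, hence preserves $\widehat\Omega$; this is exactly the closure argument used in the proof of Lemma~\ref{Lemma: non-discrete translations}. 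By hypothesis $\overline{q(\Gamma)}$ contains a one-parameter group $J^{\mathsf{t}}(a,b,c)$ with $b\neq0$, so $\widehat\Omega$ is invariant under it, and Remark~\ref{remark: transeverse_U_1_transpose} (the transpose analogue of Proposition~\ref{prop:contractible}) yields that $\widehat\Omega$ is contractible.

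Next I would promote contractibility from $\widehat\Omega$ to $\Omega$. Since $\Omega$ is foliated by \emph{complete} isotropic planes parallel to $P_0$ (each $x\in\Omega$ satisfies $x+P_0\subset\Omega$, exactly as in the last paragraph of the proof of Proposition~\ref{mainprop}), the domain $\Omega$ is the full $\pi$-saturation of $\widehat\Omega$, namely $\Omega=\pi^{-1}(\widehat\Omega)\cong P_0\times\widehat\Omega\cong\mathbb{R}^2\times\widehat\Omega$. Therefore $\Omega$ is contractible of dimension $4$. After replacing $\Gamma$ by a torsion-free finite-index subgroup via Selberg's Lemma~\cite{Selberg}—which alters neither the domain $\Omega$ nor the solvability of $\Gamma$—the action of $\Gamma$ on the contractible $4$-manifold $\Omega$ is free, properly discontinuous and cocompact, so $\cd(\Gamma)=\dim\Omega=4$ by the equality case of \cite[VIII.~Proposition~8.1]{Brown}.

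Finally I would conclude. The quotient $M=\Omega/\Gamma$ is a closed affine manifold whose holonomy $\Gamma$ has linear part in $\SO_0(2,2)\subset\SL_4(\mathbb{R})$, so $M$ carries a parallel volume form; moreover $\Gamma$ is solvable with $\cd(\Gamma)=4=\dim M$. Theorem~\ref{fact: cd=dim+solvable implies complete} then shows $M$ is complete, and as observed after Definition~\ref{def: Kleinian} completeness forces $\Omega=\mathbb{R}^{2,2}$. The only genuinely delicate point is the passage from the $2$-dimensional contractibility of $\widehat\Omega$ to the $4$-dimensional contractibility of $\Omega$: this hinges on the leaves of the isotropic foliation being complete planes, so that $\Omega$ is precisely $\pi^{-1}(\widehat\Omega)$. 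Once this structural identity is in place, the cohomological-dimension bookkeeping and the solvable completeness theorem close the argument with no further computation.
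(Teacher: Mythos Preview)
Your proof is correct and follows essentially the same route as the paper: show $\widehat{\Omega}$ is contractible via Remark~\ref{remark: transeverse_U_1_transpose}, deduce $\Omega\cong P_0\times\widehat{\Omega}$ is contractible so that $\cd(\Gamma)=4$, and conclude with Theorem~\ref{fact: cd=dim+solvable implies complete}. You spell out a few steps (the closure argument for $\overline{q(\Gamma)}$-invariance of $\widehat{\Omega}$, Selberg's lemma) that the paper leaves implicit, but the strategy is identical.
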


\begin{proof}
Since~$J^{\mathsf{t}}(a,b,c)$ preserves~$\hat{\Omega}$, Proposition~\ref{prop:contractible} and Remark \ref{remark: transeverse_U_1_transpose} imply that~$\hat{\Omega}$ is contractible. Since~$\Omega$ is foliated by planes parallel to~$P_{0}$, we have~$\Omega\cong P_{0}\times\hat{\Omega}$, hence~$\Omega$ is contractible. Therefore~$\cd(\Gamma)=4$. Finally, the solvability of~$\Gamma$, together with Theorem~\ref{fact: cd=dim+solvable implies complete}, implies completeness.
\end{proof}

\section{The quotient geometry: Non-injective projection}\label{sec5}

The goal of this section is to prove the following result.

\begin{prop}\label{prop: no_pure_translation}
Let~$\Omega \subset \mathbb{R}^{2,2}$ be a domain foliated by  isotropic planes parallel to $P_0$.  
Suppose that~$\Omega$ is divided by a discrete subgroup~$\Gamma \leq \SO_0(2,2)\ltimes \mathbb{R}^{2,2}$, and that the restriction  
$q:\Gamma \to \GL^+_2(\mathbb{R})\ltimes_\mathsf{t} \mathbb{R}^2$ is not injective.  
Then~$\Omega = \mathbb{R}^{2,2}$.
\end{prop}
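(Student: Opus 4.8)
The plan is to exploit the explicit action of the kernel $I=\Ker(q)$ of \eqref{kerq} on the leaves of the isotropic foliation, first reducing to the case where $\Gamma\cap I$ consists of pure translations of $P_0$, and then descending the whole problem to the two–dimensional quotient $\widehat{\Omega}=\pi(\Omega)$, where the results of Section~\ref{sec4} apply. \emph{Setup and action on leaves.} Since $q|_\Gamma$ is not injective, $\Gamma\cap I\neq\{e\}$, so by Remark~\ref{remark: cd_of_kernal} it is isomorphic to $\mathbb{Z}$ or $\mathbb{Z}^2$. Writing an element of $I$ as $i=(b,v_1,0)$ with $b\in\mathbb{R}$ and $v_1\in\mathbb{R}^2$, a direct computation using Lemma~\ref{lemma: GL_2_on_N} shows that $i$ acts on the leaf $\{Y=y\}$ by the translation $X\mapsto X+bJy+v_1$. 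Thus for each $y\in\widehat{\Omega}$ the group $\Gamma\cap I$ acts on the corresponding leaf through the homomorphism $\phi_y\colon i\mapsto bJy+v_1$, and freeness together with proper discontinuity of the $\Gamma$–action force $\phi_y$ to be injective with discrete image for every such $y$.

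\emph{First key step: the kernel lies in $P_0$.} I would show $\Gamma\cap I\subset P_0$, i.e. every element has $b=0$. Using the identity $\det(Jy,v)=\langle y,v\rangle$, one checks that for a basis $i_1,i_2$ of a rank–two kernel the ``Jacobian'' $\psi(y):=\det\!\big(\phi_y(i_1),\phi_y(i_2)\big)$ is an \emph{affine} function of $y$, because the quadratic term $\det(Jy,Jy)$ vanishes. Freeness forces $\psi$ to be nonvanishing on $\widehat{\Omega}$. The equivariance $\phi_{q(\gamma)\cdot y}\circ\Ad(\gamma)=p(\gamma)\circ\phi_y$ yields $\psi(q(\gamma)\cdot y)=\pm\det(p(\gamma))\,\psi(y)$ with $\det p(\gamma)>0$, so the zero locus $\{\psi=0\}$ is $q(\Gamma)$–invariant. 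If $\psi$ were nonconstant, $\{\psi=0\}$ would be a line whose preimage $\pi^{-1}(\{\psi=0\})$ is a $\Gamma$–invariant affine hyperplane, contradicting Theorem~\ref{thm_Goldman_Hirsch_rep}; hence $\psi$ is a nonzero constant, which forces $b_1=b_2=0$. In the rank–one case a generator with $b\neq0$ fixes the single leaf $y^\ast=b^{-1}Jv_1$ pointwise, violating freeness there; moreover $y^\ast$ is $q(\Gamma)$–fixed, so $\pi^{-1}(y^\ast)$ is a $\Gamma$–invariant plane, again contradicting Theorem~\ref{thm_Goldman_Hirsch_rep}. Thus in every case $\Gamma\cap I=\Gamma\cap P_0$ consists of pure $P_0$–translations, spanning a subspace $V\subset P_0$ left invariant by $p(\Gamma)$ (acting by the standard representation).

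\emph{Descent and intended conclusion.} The set $\widehat{\Omega}=\pi(\Omega)$ is preserved by $q(\Gamma)$, and since $q(\gamma)\pi(x)=\pi(\gamma x)$, the image under $\pi$ of a compact fundamental set for $\Gamma$ on $\Omega$ is a compact fundamental set for $q(\Gamma)$ on $\widehat{\Omega}$; hence $\overline{q(\Gamma)}$ acts cocompactly on $\widehat{\Omega}$. As $\Omega=\pi^{-1}(\widehat{\Omega})$, it suffices to prove $\widehat{\Omega}=\mathbb{R}^2$. I would then try to locate inside $\overline{q(\Gamma)}$ a one–parameter subgroup of one of the two types treated in Section~\ref{sec4}: a group of pure translations, in which case Corollary~\ref{cor: pure_translation} gives $\Omega=\mathbb{R}^{2,2}$, or a group of the form $J^{\mathsf{t}}(a,b,c)$ with $b\neq0$, in which case Corollary~\ref{cor: pure_nilpotent_translation} applies provided $\Gamma$ is solvable. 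The structural input here is Theorem~\ref{maintheorem2} applied to $\mathcal{P}_2=\GL_2^+(\R)\ltimes N$ with the homothety of Lemma~\ref{N_homothety}, which forces $\overline{p(\Gamma)}^\circ$ to be nilpotent, hence abelian; and solvability of $\Gamma$ is automatic when $\dim V=1$, since then $p(\Gamma)$ preserves a line of $\mathbb{R}^2$ and so lies in a Borel subgroup.

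\textbf{Main obstacle.} The genuinely delicate situation is when $q(\Gamma)$ (equivalently $p(\Gamma)$) is discrete, so that no one–parameter subgroup is available from the connected component of the closure, together with the subcase $\dim V=2$, where solvability of $\Gamma$ is not granted, and the subcase where $\overline{p(\Gamma)}^\circ$ is nontrivial but neither unipotent nor trivial. Ruling out a proper, cocompactly divided $\widehat{\Omega}$ whose dividing group need not be volume–preserving cannot be done by the two–dimensional completeness theory alone, and this is where the full force of the hypothesis $\Gamma\le\SO_0(2,2)\ltimes\mathbb{R}^{2,2}\subset\SL_4(\mathbb{R})$ must enter. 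The expected resolution combines the cohomological lower bound $\cd(\Gamma)\ge4$ from Theorem~\ref{thm_vcd_gamma}, the contractibility of $\widehat{\Omega}$ supplied by Proposition~\ref{prop:contractible} and Remark~\ref{remark: transeverse_U_1_transpose}, and a unimodularity analysis of the holonomy together with the syndetic hull furnished by Theorem~\ref{maintheorem2}, so as to place $\Gamma$ in the scope of Theorem~\ref{fact: cd=dim+solvable implies complete} (solvable of cohomological dimension equal to $4$), thereby forcing completeness and $\widehat{\Omega}=\mathbb{R}^2$.
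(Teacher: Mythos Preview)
Your first key step—showing that $\Gamma\cap I\subset P_0$—is correct and is a genuine simplification over the paper's approach. The rank–one argument (the generator with $b\neq 0$ has a unique fixed leaf $y^\ast$, and up to index two $q(\Gamma)$ fixes $y^\ast$, so $\pi^{-1}(y^\ast)$ is a $\Gamma$–invariant affine plane, contradicting Theorem~\ref{thm_Goldman_Hirsch_rep}) is valid; the rank–two argument via the affine function $\psi$ is also correct and has the pleasant byproduct that $\Gamma\cap I$ is a lattice in $P_0$ acting cocompactly on every leaf. The paper does \emph{not} make this reduction: it treats separately the case ``$\Gamma\cap I\cong\mathbb{Z}$ with $\Gamma\cap P_0$ trivial'' (its Proposition~\ref{injective projection to U}, together with Lemmas~\ref{gamma_03}--\ref{Gamma_0_trivial}), a case your argument shows is in fact vacuous. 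So this portion of your proposal is a real improvement.

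However, the proposal is not a complete proof. After reducing to $\Gamma\cap I\subset P_0$, two cases remain and you only sketch them. In the rank–two case your observation that $\psi$ is a nonzero constant gives exactly the cocompactness hypothesis needed for Lemma~\ref{lemma:properness-quotient}; with that in hand, $q(\Gamma)$ is discrete and divides $\widehat\Omega$, and the classification of divisible domains in $\mathbb{R}^2$ finishes the job as in the paper's Proposition~\ref{prop:rank_two_kernel}. You should state this explicitly rather than leave it in the ``main obstacle'' paragraph. In the rank–one case you correctly deduce that $p(\Gamma)$ lies in a Borel subgroup (hence $\Gamma$ is solvable), but from there the argument is only a list of ingredients. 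The paper's treatment of this case (its Proposition~\ref{noninjective projection to U}) is a substantial case analysis: one must rule out discreteness of $p(\Gamma)$ and $q(\Gamma)$ via cohomological dimension and the non-unimodularity of $B_1\ltimes_{\mathsf t}\mathbb{R}^2$, then pin down $\overline{p(\Gamma)}^\circ=U_1$ using the syndetic hull from Theorem~\ref{maintheorem2}, and finally analyze $\overline{q(\Gamma)}^\circ$ inside $U_1\ltimes_{\mathsf t}\mathbb{R}^2$ to land in one of the two situations of Section~\ref{sec4} or reach a $\Gamma$–invariant hyperplane. Your ``main obstacle'' paragraph names these tools but does not execute any of them; in particular you have not explained why $\overline{p(\Gamma)}^\circ$ cannot be a non-unipotent one-parameter subgroup, nor how to exclude the degenerate possibility $H=J^{\mathsf t}(a,0,c)$ when $\dim\overline{q(\Gamma)}^\circ=1$. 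Until those steps are written out, the proof is incomplete.
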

Note that by hypothesis we have either~$\Gamma \cap I \cong \mathbb{Z}$ or~$\Gamma \cap I \cong \mathbb{Z}^2$ (recall Remark~\ref{remark: cd_of_kernal}).  
The proof of Proposition~\ref{prop: no_pure_translation} will proceed by distinguishing these two possibilities for~$\Gamma \cap I$.
\subsection{Rank two kernel}\label{subsection:rank two kernel}

In this part we prove the following:

\begin{prop}\label{prop:rank_two_kernel}
Let~$\Gamma$ and~$\Omega$ be as in Proposition~\ref{prop: no_pure_translation}.  
Assume moreover that~$\Gamma \cap I \cong \mathbb{Z}^2$.  
Then~$\Omega = \mathbb{R}^{2,2}$.
\end{prop}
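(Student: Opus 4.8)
The plan is to reduce the statement to a question about the quotient plane and then to exploit the strong rigidity forced by the rank-two kernel. \textbf{Reduction to $\widehat{\Omega}$.} Recall from Section~\ref{sec3} that each leaf of the isotropic foliation is a \emph{full} affine plane contained in $\Omega$, so $\Omega = \pi^{-1}(\widehat{\Omega}) \cong P_0 \times \widehat{\Omega}$. Hence $\Omega = \mathbb{R}^{2,2}$ if and only if $\widehat{\Omega} = \mathbb{R}^2$, and it suffices to analyze the action of $q(\Gamma)$ on the domain $\widehat{\Omega} \subset \mathbb{R}^2$.

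\textbf{The $\Gamma$-invariant $2$-plane and a dichotomy.} Since $\Gamma \cap I \cong \mathbb{Z}^2$ is a rank-two lattice in the abelian group $I \cong \mathbb{R}^3$, its real span $V$ is a $2$-plane, which is also its Malcev (syndetic) hull. As $I = \Ker(q)$ is normal in $\mathcal{P}_2$, the group $\Gamma \cap I$ is normal in $\Gamma$, so $V$ is preserved by the conjugation action of $\Gamma$. A direct computation from \eqref{GL2_action} and \eqref{bracket_n} shows that this conjugation action on $I \cong P_0 \oplus (\text{$U$-line})$ is block-triangular: it preserves the subspace $P_0$, acting there through $M = p(\gamma)$, and acts on $I/P_0$ by the scalar $\det M$. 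This gives a clean dichotomy: either $V = P_0$, or $V$ is transverse to $P_0$ and $W := V \cap P_0$ is a line.

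\textbf{The two cases.} If $V \neq P_0$, then $W = V\cap P_0$ is a $\Gamma$-invariant line inside $P_0$; since conjugation acts on $P_0$ through $M=p(\gamma)$, the line $W$ is $p(\Gamma)$-invariant, so $p(\Gamma)$ is triangularizable, hence solvable, and therefore $\Gamma$ is solvable (an extension of the solvable group $p(\Gamma)$ by the nilpotent group $\Gamma\cap N$). One then seeks inside $\overline{q(\Gamma)}$ a one-parameter group of pure translations or one of type $J^{\mathsf{t}}(a,b,c)$ with $b\neq 0$, so that Corollary~\ref{cor: pure_translation} or Corollary~\ref{cor: pure_nilpotent_translation} yields $\widehat{\Omega}=\mathbb{R}^2$. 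If instead $V = P_0$, then $\Gamma\cap I$ is a cocompact lattice of pure translations in $P_0$, so $p(\Gamma)$ preserves this lattice and hence $p(\Gamma)\le \SL_2(\mathbb{Z})$ is discrete. Cohomological bookkeeping is decisive here: from $\cd(\Gamma)\ge 4$ (Theorem~\ref{thm_vcd_gamma}), $\cd(p(\Gamma))\le 1$ and $\cd(\Gamma\cap I)=2$, subadditivity forces $\cd(\Gamma\cap N)\ge 3$, so $q(\Gamma\cap N)$ is nontrivial. If $q(\Gamma\cap N)$ is non-discrete, $\overline{q(\Gamma)}$ contains a one-parameter translation group and Corollary~\ref{cor: pure_translation} closes the case; if $p(\Gamma)$ is finite, $\Gamma$ is virtually nilpotent and Theorem~\ref{nilpotent_complete} applies.

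\textbf{Main obstacle.} The genuine difficulty is the remaining configuration, where $p(\Gamma)$ is infinite and discrete (so may contain $\SL_2(\mathbb{Z})$-hyperbolic elements) and all relevant translation projections are discrete, leaving no one-parameter subgroup readily visible in $\overline{q(\Gamma)}$. This is exactly the higher-rank, discompacity-$2$ phenomenon that places the problem beyond Carrière's theorem: there is no soft contraction estimate forcing $\widehat{\Omega}$ to be all of $\mathbb{R}^2$. The strategy here is to replace $\Gamma\cap N$ by its Malcev closure and to invoke the nilpotent syndetic hull of Theorem~\ref{maintheorem2} (which in particular makes $\overline{p(\Gamma)}^\circ$ nilpotent), and then to combine unimodularity with the cohomological-dimension count to establish contractibility of $\widehat{\Omega}$, so that $\cd(\Gamma)=4$ and the solvable completeness criterion Theorem~\ref{fact: cd=dim+solvable implies complete} can be applied. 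Controlling the dynamics of the hyperbolic elements on $\widehat{\Omega}$ and ruling out a proper invariant domain is the crux of the argument.
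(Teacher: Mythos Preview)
Your proposal is incomplete: you explicitly leave the ``main obstacle'' case unresolved, and your strategy for that case (syndetic hulls, unimodularity, contractibility of $\widehat{\Omega}$) is only sketched, not carried out. More importantly, you miss the structural simplification that makes the rank-two kernel case the \emph{easy} one.

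The point you overlook is that the hypothesis $\Gamma\cap I\cong\mathbb{Z}^2$ matches the dimension of the fibers. The kernel $I$ acts on each leaf $P_0+y_0$ (for $y_0\in\widehat{\Omega}$) by translations, via $(b,v_1)\mapsto bJy_0+v_1$; since $\Gamma$ acts freely on $\Omega$, the restriction of this map to $\Gamma\cap I$ is injective, and since the action is properly discontinuous, the image is a rank-two lattice in the $2$-plane. Hence $\Gamma\cap I$ acts \emph{cocompactly} on every fiber over $\widehat{\Omega}$, regardless of your dichotomy on $V$. This puts you exactly in the situation of Lemma~\ref{lemma:properness-quotient}: the quotient group $q(\Gamma)$ is discrete and acts properly discontinuously and cocompactly on $\widehat{\Omega}\subset\mathbb{R}^2$. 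Now the classification of divisible domains in $\mathbb{R}^2$ (they are $\mathbb{R}^2$, a half-plane, a quarter-plane, or a once-punctured plane) together with Theorem~\ref{thm_Goldman_Hirsch_rep} finishes the proof immediately: in each proper case, $\partial\Omega=P_0\times\partial\widehat{\Omega}$ is a $\Gamma$-invariant proper algebraic subset of $\mathbb{R}^4$.

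Your case analysis on $V$, the cohomological bookkeeping, and the appeal to syndetic hulls are all unnecessary here; they belong to the genuinely harder rank-one case treated later in the section.
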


The proof follows from the following general fact.

\begin{lemma}
    \label{lemma:properness-quotient}
Let~$G$ and~$H$ be Lie groups acting on manifolds~$X$ and~$Y$ respectively.
Let~$\Gamma$ be a discrete subgroup of~$G$, and let
$\rho:\Gamma\to H$ be a group homomorphism.
Let~$F:X\to Y$ be a fibration which is~$\Gamma$-equivariant in the sense that~$
F(\gamma\cdot x)=\rho(\gamma)\cdot F(x)$ for all~$\gamma\in\Gamma$ and~$ x\in X$. Assume that:
\begin{enumerate}
  \item~$\Gamma$ acts properly discontinuously and cocompactly on~$X$;
  \item~$\Ker(\rho)$ preserves each fiber of~$F$ and acts properly discontinuously and cocompactly on it.
\end{enumerate}
Then~$\rho(\Gamma)$ is discrete and the induced action of~$\rho(\Gamma)$ on~$Y$ is properly discontinuous.
\end{lemma}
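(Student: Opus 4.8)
The plan is to prove directly that $Q:=\rho(\Gamma)$ acts properly discontinuously on $Y$ in the combinatorial sense that for every compact $C\subseteq Y$ the set $\{q\in Q:\ qC\cap C\neq\emptyset\}$ is finite, and then to deduce discreteness of $Q$ in $H$ as a formal consequence. Write $K:=\Ker(\rho)$, so that $K$ is normal in $\Gamma$ and $Q\cong\Gamma/K$. The guiding idea is that the equivariant fibration $F$ lets us lift a failure of proper discontinuity downstairs to a failure of proper discontinuity of $\Gamma$ on $X$, after correcting by an element of $K$; the correction is harmless precisely because $K$ acts cocompactly on the fibers and is killed by $\rho$.

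First I would argue by contradiction. If proper discontinuity fails, there is a compact $C\subseteq Y$ and infinitely many pairwise distinct $q_n\in Q$, say $q_n=\rho(\gamma_n)$ with the $\gamma_n$ in distinct cosets of $K$, together with points $c_n,c_n'\in C$ satisfying $q_n\cdot c_n=c_n'$. After extracting, $c_n\to c$ and $c_n'\to c'$ in $C$. Since $F$ is a fibration it admits local sections (and is in particular open), so I can lift the convergent sequences near their limits: choose $x_n\to x$ with $F(x_n)=c_n$ and $x_n'\to x'$ with $F(x_n')=c_n'$, where $F(x)=c$ and $F(x')=c'$. By equivariance, $F(\gamma_n\cdot x_n)=\rho(\gamma_n)\cdot F(x_n)=q_n\cdot c_n=c_n'=F(x_n')$, so $\gamma_n\cdot x_n$ and $x_n'$ lie in the same fiber $F^{-1}(c_n')$.

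Now comes the key step, where the cocompactness of $K$ on fibers enters. I claim that $K$ acts cocompactly not only on each fiber but on $F^{-1}(C)$ for every compact $C\subseteq Y$; granting this, fix a compact set $\mathcal D\subseteq F^{-1}(C)$ with $K\cdot\mathcal D=F^{-1}(C)$, and choose $k_n\in K$ with $k_n\gamma_n\cdot x_n\in\mathcal D$. Set $\delta_n:=k_n\gamma_n\in\Gamma$. Then $\delta_n\cdot x_n\in\mathcal D$ while $x_n\to x$, so for a compact neighborhood $A\ni x$ we have $\delta_n\cdot A\cap\mathcal D\neq\emptyset$ for all large $n$; proper discontinuity of $\Gamma$ on $X$ (applied to the compact set $A\cup\mathcal D$) forces $\{\delta_n\}$ to be finite, so some value $\delta$ is attained infinitely often. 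But $\rho(\delta_n)=\rho(k_n\gamma_n)=\rho(\gamma_n)=q_n$ since $k_n\in K=\Ker(\rho)$, so infinitely many $q_n$ coincide, contradicting their distinctness. Finally, discreteness of $Q$ in $H$ is a formal consequence of this combinatorial proper discontinuity together with continuity of the $H$-action: if $q_n\to q$ in $H$ with distinct $q_n\in Q$, then for a compact neighborhood $C$ of a basepoint $y_0$ the points $q_n\cdot y_0$ eventually lie in $C$, whence all such $q_n$ belong to the finite set $\{q\in Q:\ q\cdot\{y_0\}\cap C\neq\emptyset\}$, a contradiction.

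The main obstacle is the claim that $K$ acts cocompactly on $F^{-1}(C)$ for compact $C$, equivalently that the induced map $\bar F\colon K\backslash X\to Y$ (which has compact fibers $K\backslash F^{-1}(y)$ by hypothesis~(2)) is proper. Cocompactness over $C$ is additive over finite covers, so one reduces to the case where $C$ lies in a trivializing chart, in which $F^{-1}(U)\cong U\times\mathbb F$ and $K$ acts preserving the $U$-coordinate. Over the limit fiber a compact fundamental set exists by hypothesis; the delicate point is to spread it to a single compact fundamental set valid for all nearby fibers, for which I would use the continuity of the $K$-action together with its proper discontinuity on $X$ to rule out escape of the fundamental domains as the base point varies over $C$. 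This uniformity across fibers is the only non-formal ingredient; everything else is a lift-and-correct argument driven by the identity $\rho(k_n\gamma_n)=\rho(\gamma_n)$ for $k_n\in K=\Ker(\rho)$.
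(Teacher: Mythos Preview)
Your approach is correct and essentially the same as the paper's: both use the lift-and-correct idea via $K=\Ker(\rho)$, both rest on the fact that $K$ acts cocompactly on $F^{-1}(C)$ for compact $C$, and both deduce discreteness of $\rho(\Gamma)$ from proper discontinuity in the same way. The paper's version is slightly more direct---it avoids sequences and local sections by fixing once and for all a compact $B\subset F^{-1}(C)$ with $K\cdot B=F^{-1}(C)$ and showing $\{h:hC\cap C\neq\emptyset\}\subset\rho(\{\gamma\in\Gamma:\gamma B\cap B\neq\emptyset\})$---and it simply asserts without proof the cocompactness step you correctly flagged as the only non-formal ingredient.
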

\begin{proof}
Let~$C\subset Y$ be compact and set~$A:=F^{-1}(C)\subset X$.  
By hypothesis, each fiber of~$F$ modulo~$\Gamma_0$ is compact, and therefore
$A/\Gamma_0$ is compact.  
Hence there exists a compact set~$B\subset A$ such that~$\Gamma_0\cdot B = A$. Define~$S:=\{\sigma\in\Gamma:\ \sigma\cdot B\cap B\neq\varnothing\}$. Because~$B$ is compact and~$\Gamma$ acts properly discontinuously on~$X$,
the set~$S$ is finite.

We claim that
$$
\{h\in \rho(\Gamma) \mid h\cdot C\cap C\neq\varnothing\}\subset \rho(S).
$$

Let~$h\in\rho(\Gamma)$ satisfy~$h\cdot C\cap C\neq\varnothing$.  
Choose~$\gamma\in\Gamma$ with~$\rho(\gamma)=h$.  
Then there exist~$y,y'\in C$ with~$h\cdot y = y'$.  
Pick~$x\in F^{-1}(y)\subset A$.  
Then~$F(\gamma\cdot x)=\rho(\gamma)\cdot F(x)=h\cdot y=y'$, so~$\gamma\cdot x\in A$.
Now let~$k_1,k_2\in \Gamma_0$ and~$b_1,b_2\in B$ such that~$x=k_1\cdot b_1$ and~$\gamma\cdot x=k_2 b_2$. Then~$(k_2^{-1}\gamma k_1)\cdot b_1=b_2$. Therefore~$k_2^{-1}\gamma k_1 \in S$ and so~$\gamma\in k_2 Sk_1^{-1}$.  
Because~$k_1,k_2\in\Ker\rho$, we have~$\rho(\gamma)\in \rho(S)$.  
Hence~$h=\rho(\gamma)\in\rho(S)$, proving the claim.
Since~$\rho(S)$ is finite, the set~$\{h\in \rho(\Gamma) \mid h\cdot C\cap C\neq\varnothing\}$ is finite, and so the action of~$\rho(\Gamma)$ on~$Y$ is properly discontinuous. The discreteness follows in the same way, indeed if~$\rho(\gamma_n)\to \Id$, then for any~$y\in Y$~$\rho(\gamma_n) \cdot y\to y$, take a compact neighborhood~$C'$ of~$y$, then~$\rho(\gamma_n)\cdot y\in C'$ for large~$n$, hence for large~$n$, $\rho(\gamma_n)\in \{h\in \rho(\Gamma)\mid h\cdot C'\cap C'\neq \varnothing\}$ which is finite set, and so~$\rho(\gamma_n)=\Id$ for~$n$ large enough.
\end{proof}

\begin{proof}[Proof of Proposition~\ref{prop:rank_two_kernel}]
By Lemma~\ref{lemma:properness-quotient}, the discrete group $q(\Gamma)$ acts properly discontinuously and cocompactly on $\widehat{\Omega} \subset \mathbb{R}^2$. This implies that $\widehat{\Omega}$ is either the affine plane $\mathbb{R}^2$, a half-plane $H$, a quarter-plane $Q$, or the once-punctured plane $\mathbb{R}^2 \setminus \{0\}$ (see, for instance, \cite[Proposition 5.2]{baues} or \cite[\textsection 9.1]{Benzecri1958-1960}). If $\widehat{\Omega}$ is not the whole plane, then the boundary of $\Omega$, which is given by $P_0 \times \partial \widehat{\Omega}$, is a proper algebraic set preserved by $\Gamma$, contradicting Theorem~\ref{thm_Goldman_Hirsch_rep}. Hence $\widehat{\Omega} = \mathbb{R}^2$, and thus $\Omega = \mathbb{R}^4$. This completes the proof.
\end{proof}

\subsection{Rank one kernel}\label{subsection:rank_one_kernal}
In this subsection, we prove Proposition \ref{prop: no_pure_translation} under the assumption that $\Gamma \cap I \cong \mathbb{Z}$.

\begin{prop}\label{prop:rank_one_kernel}
Let~$\Gamma$ and~$\Omega$ be as in Proposition~\ref{prop: no_pure_translation}.  
Assume moreover that~$\Gamma \cap I \cong \mathbb{Z}$.  
Then~$\Omega = \mathbb{R}^{2,2}$.
\end{prop}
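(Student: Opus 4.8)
Since $I=\Ker(q)$ is normal in $\mathcal P_2$, the group $\Gamma\cap I=\langle\gamma_0\rangle\cong\mathbb Z$ is normal in $\Gamma$, and the conjugation action on $\langle\gamma_0\rangle\cong\mathbb Z$ takes values in $\{\pm1\}$; passing to an index-two subgroup (harmless for completeness) I may assume $\gamma_0$ is central in $\Gamma$. By Theorem~\ref{thm_vcd_gamma} one has $\cd(\Gamma)\ge4$, and the extension $1\to\langle\gamma_0\rangle\to\Gamma\to q(\Gamma)\to1$ together with $\cd\langle\gamma_0\rangle=1$ forces $\cd(q(\Gamma))\ge3$. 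Writing $\gamma_0=(u^{b_0},w_0)\in U\times P_0$ as in \eqref{kerq}, I would split the argument according to whether $b_0=0$.

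\textbf{Ruling out $b_0\neq0$.} Here centrality is very restrictive. The $u$-component $b_0$ of $\gamma_0$ is scaled by $\det M$ under the $\GL_2^+(\mathbb R)$-action \eqref{GL2_action} and is untouched by $N$-conjugation (as $P_0$ is central in $N$, by \eqref{N_as_product}), so centrality forces $\det p(\gamma)=1$ for all $\gamma\in\Gamma$, i.e.\ $p(\Gamma)\subseteq\SL_2(\mathbb R)$. Moreover $\gamma_0$ fixes pointwise the isotropic leaf $\{v_2=c^\ast\}$ with $c^\ast=-b_0^{-1}J^{-1}w_0$; being central, $\gamma_0$ has $\Gamma$-invariant fixed locus, so $q(\Gamma)$ fixes the point $c^\ast\in\mathbb R^2$. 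Conjugating $c^\ast$ to the origin identifies $q(\Gamma)$ with a subgroup of the linear group, whence $q(\Gamma)=p(\Gamma)\subseteq\SL_2(\mathbb R)$. But a finitely generated subgroup of $\SL_2(\mathbb R)$ has cohomological dimension at most $2$: if it is discrete it acts freely on $\mathbb{H}^2$, and if not, Theorem~\ref{maintheorem2} gives that $\overline{p(\Gamma)}^\circ$ is nilpotent, hence at most one-dimensional in $\SL_2(\mathbb R)$, so $\overline{p(\Gamma)}$ lies in a closed subgroup of cohomological dimension $\le2$. This contradicts $\cd(q(\Gamma))\ge3$, so necessarily $b_0=0$.

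\textbf{The pure-translation case $b_0=0$.} Now $\gamma_0=(I,w_0)$ is the translation by $w_0\in P_0\setminus\{0\}$, and centrality forces $Mw_0=w_0$ for all $M\in p(\Gamma)$; thus $p(\Gamma)$ lies in the solvable stabilizer $\Stab(w_0)\le\GL_2^+(\mathbb R)$, and since $\Ker p=N$ is nilpotent, $\Gamma$ is solvable. The endgame is to reach Theorem~\ref{fact: cd=dim+solvable implies complete}, for which it suffices to show $\cd(\Gamma)=4$; since $\Omega\cong P_0\times\widehat\Omega$, this amounts to proving that $\widehat\Omega$ is contractible (or all of $\mathbb R^2$, which already gives $\Omega=\mathbb R^4$). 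I would analyze the solvable group $q(\Gamma)$ acting on $\widehat\Omega\subset\mathbb R^2$, whose linear parts preserve a common direction. If $q(\Gamma)$ is non-discrete, then $\overline{q(\Gamma)}^\circ\neq\{1\}$ is a connected solvable group preserving $\widehat\Omega$, and I would extract from it either a one-parameter subgroup of pure translations, invoking Corollary~\ref{cor: pure_translation}, or a one-parameter subgroup of the form $J^{\mathsf t}(a,b,c)$ with $b\neq0$ (Remark~\ref{remark: transeverse_U_1_transpose}), in which case the solvability of $\Gamma$ lets Corollary~\ref{cor: pure_nilpotent_translation} apply. If $q(\Gamma)$ is discrete, I would classify the invariant domains $\widehat\Omega$ by a Benzécri-type analysis as in the proof of Proposition~\ref{prop:rank_two_kernel}: either $\widehat\Omega=\mathbb R^2$, or $\widehat\Omega$ is contractible and $\cd(\Gamma)=4$ so Theorem~\ref{fact: cd=dim+solvable implies complete} finishes, or $\partial\widehat\Omega$ is a proper algebraic set whose preimage $P_0\times\partial\widehat\Omega$ is a $\Gamma$-invariant proper algebraic subset of $\mathbb R^{2,2}$, contradicting Theorem~\ref{thm_Goldman_Hirsch_rep}.

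\textbf{Main obstacle.} The delicate point is the non-discrete subcase of $b_0=0$: producing the correct one-parameter subgroup inside $\overline{q(\Gamma)}^\circ$ and, above all, verifying that its translation component is nonzero (the hypothesis $b\neq0$ of Corollary~\ref{cor: pure_nilpotent_translation}), rather than a purely linear one-parameter group to which neither corollary applies. This is exactly where the structural input of Theorem~\ref{maintheorem2}—the nilpotency of $\overline{p(\Gamma)}^\circ$ and the existence of a nilpotent syndetic hull whose projection matches $\overline{p(\Gamma)}^\circ$—is used to organize the possible shapes of $\overline{q(\Gamma)}^\circ$ and to exclude the degenerate linear-only configurations.
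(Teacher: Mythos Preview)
Your case split (on whether the $U$-component $b_0$ of the central generator $\gamma_0$ vanishes) coincides with the paper's (on whether $\Gamma\cap P_0$ is trivial), and your treatment of $b_0\neq0$ contains a genuine shortcut the paper does not exploit: the fixed locus of $\gamma_0$ in $\R^{2,2}$ is the affine plane $\{v_2=c^*\}$, centrality makes it $\Gamma$-invariant, and Theorem~\ref{thm_Goldman_Hirsch_rep} then gives an immediate contradiction. The paper instead runs the long rank analysis of $\Gamma_0=\Gamma\cap\R^4$ (Proposition~\ref{injective projection to U} and its sub-lemmas). However, you should stop right there and \emph{not} continue with the cohomological-dimension argument, which is wrong: a finitely generated subgroup of $\SL_2(\R)$ need not have $\cd\le2$. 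For instance $\Z^3$ embeds in any one-parameter subgroup, and $\SL_2(\Z[1/p])$ embeds non-discretely with $\cd=3$. The phrase ``lies in a closed subgroup of cohomological dimension $\le2$'' tells you nothing about the $\cd$ of a non-discrete subgroup sitting inside it.

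Your $b_0=0$ case has two real gaps. First, when $q(\Gamma)$ is discrete you invoke a Benz\'ecri-type classification of $\widehat\Omega$ ``as in the proof of Proposition~\ref{prop:rank_two_kernel}'', but that classification needs $q(\Gamma)$ to act \emph{cocompactly} on $\widehat\Omega$; cocompactness was supplied there by Lemma~\ref{lemma:properness-quotient} only because $\Gamma\cap I\cong\Z^2$ acted cocompactly on each $2$-dimensional leaf. With $\Gamma\cap I\cong\Z$ the leafwise quotient is a cylinder, and no such conclusion is available. The paper closes this case differently: $B_1\ltimes_{\sf t}\R^2$ is non-unimodular (Sublemma~\ref{non_unimodular}), so a discrete $q(\Gamma)$ has $\cd\le3$, whence $\cd(\Gamma)\le4$ and Theorem~\ref{fact: cd=dim+solvable implies complete} applies. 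Second, in the non-discrete subcase you correctly flag the obstacle (forcing the translation component $b\neq0$ in the one-parameter subgroup of $\overline{q(\Gamma)}^\circ$) but do not resolve it; the paper's resolution (proof of Proposition~\ref{prop: p and q not discrete}) relies on first establishing $\overline{p(\Gamma)}^\circ=U_1$ (Sublemma~\ref{p(gamma)_not_connected}), which in turn presupposes that $p(\Gamma)$ is non-discrete---a separate case (Lemma~\ref{lemma:p discrete}) that your dichotomy on $q(\Gamma)$ alone does not address.
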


The proof of the proposition requires some preparation. We start by this lemma.

\begin{lemma}\label{lemma:rank_one_possibility}
Let $P_0=\Span(T_1,T_2)$ be the center of $N$ and assume that $\Gamma\cap I\cong \Z$. Then
\begin{itemize}
    \item If $\Gamma\cap P_0$ is trivial, then after passing to a subgroup of index~$2$ of $\Gamma$, we have $p(\Gamma)\leq \SL_2(\R)$.
    \item If $\Gamma\cap P_0$ is nontrivial, then after passing to a subgroup of index~$2$ of $\Gamma$, we may assume that $p(\Gamma)$ lies in
\begin{equation}\label{B_1}
B_1:= 
\left\{
\begin{pmatrix}
1 & x\\
0 & \lambda
\end{pmatrix}
\;\middle|\;
\lambda >0,\, x\in\R
\right\}.
\end{equation}
\end{itemize}
\end{lemma}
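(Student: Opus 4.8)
The plan is to exploit that $I=\Ker(q)$ is abelian and normal in $\mathcal P_2$, so that $\Gamma\cap I$ is a normal copy of $\mathbb Z$ inside $\Gamma$, and to read off the two cases from how $\Gamma$ acts on it by conjugation. First I would fix a generator $\gamma_0$ of $\Gamma\cap I\cong\mathbb Z$ and, using the abelian identification $I=U\times P_0\cong\mathbb R^3$, record it as a triple $(\alpha;\beta,\delta)$ of coordinates relative to $u,T_1,T_2$. The dichotomy in the statement is then just the vanishing of $\alpha$: since $P_0=\{\alpha=0\}\subset I$ and $\Gamma\cap P_0=(\Gamma\cap I)\cap P_0$, one checks that $\Gamma\cap P_0$ is trivial exactly when the $u$-coordinate $\alpha$ of $\gamma_0$ is nonzero, and nontrivial (with $\gamma_0\in P_0$) exactly when $\alpha=0$.

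The key computation is the conjugation action of an arbitrary $\gamma=(M,n)\in\Gamma$, with $M=p(\gamma)\in\GL_2^+(\mathbb R)$ and $n\in N$, on $\gamma_0$. I would decompose it into its $\GL_2^+(\mathbb R)$-part and its $N$-part. By \eqref{GL2_action}, $M$ acts on $I=U\times P_0$ by the character $\det$ on the $U$-factor and by the standard representation $v_1\mapsto Mv_1$ on $P_0$. For the $N$-part, using that $N$ is $2$-step nilpotent and that the only nonzero brackets \eqref{bracket_n} are $[u,T_3]=-T_2$ and $[u,T_4]=T_1$, conjugation by $n=\exp(\zeta)$ sends $\xi=\alpha u+\beta T_1+\delta T_2$ to $\xi+[\zeta,\xi]$, where $[\zeta,\xi]$ lies in $P_0$ and is proportional to $\alpha$. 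The upshot is that the $u$-coordinate of $\gamma\gamma_0\gamma^{-1}$ equals $\alpha\,\det M$, while its $P_0$-coordinate is $M(\beta,\delta)$ up to an $\alpha$-proportional shift. Since $\Gamma\cap I$ is normal of rank one, $\gamma\gamma_0\gamma^{-1}=\gamma_0^{\pm1}$; the resulting sign defines a homomorphism $\Gamma\to\{\pm1\}$, and I would pass to its index-$\le2$ kernel to arrange $\gamma\gamma_0\gamma^{-1}=\gamma_0$ throughout.

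Finally I would split into the two cases. If $\alpha\neq0$, then $\Gamma\cap P_0$ is trivial, and matching $u$-coordinates in $\gamma\gamma_0\gamma^{-1}=\gamma_0^{\pm1}$ gives $\alpha\det M=\pm\alpha$; since $\det M>0$ on $\GL_2^+(\mathbb R)$, the sign is forced to be $+$ and $\det M=1$, so $p(\gamma)\in\SL(2,\mathbb R)$ (here the index-$2$ reduction is automatic). If instead $\alpha=0$, then $\gamma_0\in P_0$ and $\Gamma\cap P_0$ is nontrivial; now the $N$-shift vanishes, so the $P_0$-coordinate of $\gamma\gamma_0\gamma^{-1}$ is exactly $M v_0$ with $v_0=(\beta,\delta)\neq0$. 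Restricting to the index-$2$ subgroup on which the conjugation is trivial yields $Mv_0=v_0$, so $p(\Gamma)$ fixes the nonzero vector $v_0$; conjugating by an element of $\GL_2^+(\mathbb R)$ sending $v_0$ to $e_1$ identifies the stabilizer of $v_0$ with $B_1$ of \eqref{B_1}, giving $p(\Gamma)\le B_1$. The main obstacle is organizing the conjugation computation cleanly—in particular isolating the $u$-coordinate, which transforms by $\det M$ and drives Case~1, from the $P_0$-coordinate, which carries the stabilizer condition of Case~2—together with the small but genuine index-$2$ reduction needed in Case~2 to remove the $\pm$ ambiguity.
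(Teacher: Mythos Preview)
Your proposal is correct and follows essentially the same approach as the paper: both exploit that $\Gamma\cap I\cong\mathbb Z$ is normal in $\Gamma$, pass to the index-$\le 2$ subgroup where conjugation fixes (rather than inverts) a generator, and then read off the conditions $\det M=1$ or $Mv_0=v_0$ from the conjugation formula, the only cosmetic difference being that the paper phrases the computation via the adjoint action on the Lie algebra of the Malcev closure of $\Gamma\cap I$ while you work directly with the group element $\gamma_0$. Your remark that the index-$2$ reduction is automatic in the first case (since $\det M>0$ forces the sign to be $+$) is a correct observation the paper does not make explicit.
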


\begin{proof}
Consider $S=\Span(\Gamma\cap I)\leq I$, which is the Malcev closure of $\Gamma\cap I$ in $I$, and denote by $\mathfrak{s}$ its Lie algebra. Since $\Gamma$ normalizes $\Gamma\cap I\cong\mathbb{Z}$, it also normalizes $S$. In particular, this yields a homomorphism $\Gamma\to \Aut(\mathbb{Z})\cong\{\pm1\}$. The kernel of this homomorphism is a subgroup of $\Gamma$ of index at most $2$ and acts trivially on $\Gamma\cap I$. Therefore, it induces the trivial action on $S$. On the level of Lie algebras, this gives
$\Ad_\gamma(u)=u$ for all $u\in\mathfrak{s}$ and all $\gamma\in\Gamma$.

Write $\gamma=(B,n)\in\Gamma$, and let $u_0=(s_0,v_0)$ be a generator of $\mathfrak{s}$. A direct computation gives
$$\Ad_\gamma(u_0)=\left(\det(B)\,s_0,\;B\cdot(v_0-s_0Jw_2)\right).$$
The equation $\Ad_\gamma(u_0)=u_0$ therefore implies
\begin{equation}\label{eq:coords}
\det(B)\,s_0=s_0,\quad
B\bigl(v_0-s_0Jw_2\bigr)=v_0,\quad
\forall\,B\in p(\Gamma).
\end{equation}
If $\Gamma\cap P_0$ is trivial, then $s_0\neq0$, and hence $\det(B)=1$ for all $B\in p(\Gamma)$, so $p(\Gamma)\leq\SL_2(\R)$.
If $\Gamma\cap P_0$ is nontrivial, then $s_0=0$, and equation~\eqref{eq:coords} reduces to $Bv_0=v_0$, with $v_0\in P_0$. Up to conjugacy, we may assume that $v_0=T_1$, and hence $p(\Gamma)$ is contained in the group $B_1$.
\end{proof}
Having established this, we proceed with the proof of Proposition~\ref{prop:rank_one_kernel} by considering two cases, depending on whether $\Gamma\cap P_0$ is trivial or not.

\subsubsection{$\Gamma\cap P_0$ trivial}
We start with the following result.

\begin{prop}\label{injective projection to U}
If $\Gamma\cap I\cong\Z$ and  $\Gamma\cap P_0$ is trivial, then $\Omega=\R^{2,2}$.
\end{prop}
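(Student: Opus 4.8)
The plan is to exploit the hypothesis $\Gamma\cap P_0=\{1\}$, which by (the proof of) Lemma~\ref{lemma:rank_one_possibility} forces the generator $u_0=(s_0,v_0)$ of the Lie algebra of the Malcev closure of $\Gamma\cap I$ to satisfy $s_0\neq0$. After passing to the index-two subgroup on which $\Ad_\gamma(u_0)=u_0$ for all $\gamma\in\Gamma$, the centralizing relation from Lemma~\ref{lemma:rank_one_possibility} reads $(B-I)v_0=s_0\,BJ\,w_2$ for every $\gamma=(B,(b,w_1,w_2))\in\Gamma$, where $B=p(\gamma)\in\SL_2(\R)$. Since $s_0BJ$ is invertible this slaves the $v_2$-translation to the linear part, $w_2=f(B):=\tfrac1{s_0}J^{-1}(I-B^{-1})v_0$. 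First I would record the two consequences: (i) the projection $(B,f(B))\mapsto B$ is an isomorphism $q(\Gamma)\cong p(\Gamma)$, so $\overline{q(\Gamma)}$ is the graph of the continuous map $f$ over $\overline{p(\Gamma)}$, and $\overline{q(\Gamma)}^\circ=\{(B,f(B)):B\in\overline{p(\Gamma)}^\circ\}$; (ii) since $B=I$ forces $w_2=0$ and $\Gamma\cap P_0$ is trivial, the linear-part map is injective on $\Gamma$, embedding $\Gamma\hookrightarrow\SL_2(\R)\times U$ (the $\SL_2(\R)$-action on $U$ being trivial by \eqref{GL2_action}, as $\det B=1$).

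Next I would pin down $\overline{p(\Gamma)}$. By Theorem~\ref{thm_vcd_gamma} we have $\cd(\Gamma)\ge4$, and the extension $1\to\Gamma\cap I\to\Gamma\to q(\Gamma)\to1$ with $\Gamma\cap I\cong\Z$ gives $\cd(q(\Gamma))\ge3$. Were $p(\Gamma)$ discrete, then $q(\Gamma)\cong p(\Gamma)$ would be a torsion-free discrete subgroup of $\SL_2(\R)$, hence of cohomological dimension at most $2$ (it acts freely and properly on $\mathbb H^2$), a contradiction. Thus $p(\Gamma)$ is non-discrete, and by Theorem~\ref{maintheorem2} (see the introduction) $\overline{p(\Gamma)}^\circ$ is a nontrivial connected nilpotent subgroup of $\SL_2(\R)$; such a subgroup is one-dimensional and, up to conjugacy, is the unipotent $U_1$, the split torus $A$, or the compact torus $\SO(2)$.

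The two toral cases are immediate: since $p(\Gamma)\le N(\overline{p(\Gamma)}^\circ)$, a further index-two reduction puts $p(\Gamma)\le A$ (resp.\ $p(\Gamma)\le\SO(2)$), which is abelian; through $\Gamma\hookrightarrow\SL_2(\R)\times U$ this makes $\Gamma$ abelian, hence nilpotent, so Theorem~\ref{nilpotent_complete} (the holonomy has parallel volume, as $\Gamma\le\SL_4(\R)\ltimes\R^4$) gives $\Omega=\R^{2,2}$. In the unipotent case $p(\Gamma)$ lies in the Borel $N(U_1)$, so $\Gamma$ is solvable, and I would study the one-parameter group $\overline{q(\Gamma)}^\circ=\{(B,f(B)):B\in U_1\}$, with unipotent linear part, acting on the $\overline{q(\Gamma)}$-invariant quotient domain $\widehat\Omega\subset\R^2$. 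If this flow is fixed-point free it is affinely conjugate to some $J^{\mathsf{t}}(a,b,c)$ with $b\neq0$, so by Remark~\ref{remark: transeverse_U_1_transpose} and Proposition~\ref{prop:contractible} the domain $\widehat\Omega$—and hence $\Omega\cong P_0\times\widehat\Omega$—is contractible; then $\cd(\Gamma)=4$ and, $\Gamma$ being solvable, Theorem~\ref{fact: cd=dim+solvable implies complete} yields $\Omega=\R^{2,2}$. If instead the flow has a fixed point it is a shear fixing a line, whose orbits are parallel lines, so $\partial\widehat\Omega$ is contained in at most three parallel lines; then $\partial\Omega=P_0\times\partial\widehat\Omega$ lies in a proper $\Gamma$-invariant algebraic subset of $\R^4$, contradicting Theorem~\ref{thm_Goldman_Hirsch_rep} unless it is empty, whence again $\Omega=\R^{2,2}$.

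The main obstacle is the opening rigidity step: checking that the centralizing equation genuinely expresses $w_2$ as the cocycle $f(B)$ of the linear part, giving $q(\Gamma)\cong p(\Gamma)$ and the graph description of $\overline{q(\Gamma)}$. Everything downstream—the cohomological-dimension dichotomy ruling out discreteness, the identification of $\overline{p(\Gamma)}^\circ$, and the split into abelian, solvable-contractible, and shear sub-cases—rests on this identification. A secondary delicate point is the unipotent case, where one must correctly separate the fixed-point-free (parabolic, handled by contractibility) and fixed-point (shear, handled by the non-algebraicity of the boundary) behaviours and invoke the appropriate completeness input in each.
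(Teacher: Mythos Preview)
Your approach is correct and genuinely different from the paper's. The paper argues by stratifying on the rank of the pure-translation subgroup $\Gamma_0=\Gamma\cap\R^4$ (Lemmas~\ref{gamma_03}, \ref{lemma:Gamma_0_rank_2}, \ref{Gamma_01}, then the long Proposition~\ref{Gamma_0_trivial}); you instead exploit the centralising relation to identify $q(\Gamma)$ with the graph $\{(B,f(B)):B\in p(\Gamma)\}$ and work directly with $p(\Gamma)\le\SL_2(\R)$. This is more economical: the injectivity of $L$ and the isomorphism $q(\Gamma)\cong p(\Gamma)$ immediately give the solvability/abelianness needed, and the analysis of $\overline{q(\Gamma)}^\circ$ acting on $\widehat\Omega$ replaces the paper's case split on $\cd(\Gamma_0)$. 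Your route also avoids the appeal to Theorem~\ref{fact: cd=dim+solvable implies complete} in the decisive unipotent branch.

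Two remarks worth tightening. First, in the unipotent case the ``fixed-point-free'' branch is actually \emph{vacuous}: computing $f(B)$ for $B=\begin{pmatrix}1&t\\0&1\end{pmatrix}$ gives a translation with vanishing first coordinate, so the resulting one-parameter group is always $J^{\mathsf t}(a,0,c)$, a shear fixing a vertical line. This is harmless---your shear branch covers everything---but the dichotomy you set up is illusory. Second, in the shear branch you assert that $\partial\widehat\Omega$ lies in at most three parallel lines. This is true but deserves a sentence: off the fixed line the orbits are full vertical lines, so the shadow $\pi_1(\widehat\Omega)$ is an interval and $\partial\widehat\Omega$ is contained in $(\partial\pi_1(\widehat\Omega))\times\R$ together with the fixed line; it is then the \emph{Zariski closure} of $\partial\widehat\Omega$ that is the proper $q(\Gamma)$-invariant algebraic set whose $\pi$-preimage contradicts Theorem~\ref{thm_Goldman_Hirsch_rep}.
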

From now on, and until the end of the proof of Proposition \ref{injective projection to U}, we work under the assumptions above.
We define $L:\GL_2^+(\R)\ltimes N\to \GL_2^+(\R)\ltimes U$ so that $L(\Gamma)$ is the linear part of $\Gamma$. Since $\Gamma \cap P_0$ is trivial, Lemma \ref{lemma:rank_one_possibility} implies that $p(\Gamma)\leq \SL_2(\R)$, and thus $L(\Gamma)\leq \SL_2(\R)\ltimes U$ (see \ref{U_group} for the definition of $U$). However, elements of $\SL_2(\R)$ commute with $U$, and therefore $L(\Gamma)$ is contained in the direct product $\SL_2(\R)\times U$.
Next, we define
\begin{equation}\label{eq: pure_translation_gamma}
\Gamma_0=\Gamma\cap \R^4,
\end{equation}
the subgroup of pure translations in $\Gamma$. This subgroup is invariant under the action of $L(\Gamma)$. Moreover, it is a discrete subgroup of $\R^4$, and hence $\cd(\Gamma_0)\leq 4$.

\begin{lemma}\label{gamma_03}
If $\cd(\Gamma_0)\geq 3$, then $\Omega=\R^{2,2}$.
\end{lemma}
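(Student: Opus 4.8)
The plan is to reduce to the non-discrete translation situation already dispatched by Corollary~\ref{cor: pure_translation}. Recall that $\Gamma_0 = \Gamma \cap \R^4$ is a discrete subgroup of the translation group $\R^4$, hence free abelian of rank equal to $\cd(\Gamma_0)$; the hypothesis $\cd(\Gamma_0)\geq 3$ therefore means $\Gamma_0 \cong \Z^k$ with $k\geq 3$. The guiding idea is that such a large group of pure translations cannot survive injectively in the two-dimensional quotient $\R^{2,2}/P_0$, and this forces a continuous family of translations to appear.

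First I would examine how $q$ acts on pure translations. Since $q$ is the projection modulo $P_0$, it sends a pure translation by $(v_1,v_2)\in\R^4$ to the pure translation by $v_2$ in $\R^2\cong\R^{2,2}/P_0$; in particular $q$ maps $\R^4$ onto a group of pure translations, with $\Ker(q)\cap\R^4 = P_0$ (cf.\ \eqref{kerq}). We are working under the standing assumption of this subsection that $\Gamma\cap P_0$ is trivial, so $\Gamma_0\cap P_0=\{e\}$ and hence $q$ restricts to an injective homomorphism on $\Gamma_0$. Consequently $q(\Gamma_0)$ is a subgroup of $\R^2$ abstractly isomorphic to $\Z^k$ with $k\geq 3$.

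The key step is then a rank count: a discrete subgroup of $\R^2$ has rank at most $2$, so $q(\Gamma_0)$ cannot be discrete. Its closure $\overline{q(\Gamma_0)}$ is thus a non-discrete closed subgroup of $\R^2$; by the classification of closed subgroups of $\R^2$ its identity component is a nontrivial linear subspace, hence contains a line, i.e.\ a one-parameter group of pure translations. A fortiori $\overline{q(\Gamma)}$ contains such a one-parameter group, and Corollary~\ref{cor: pure_translation} yields $\Omega=\R^{2,2}$.

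I do not expect a genuine obstacle in this lemma: it is the easy branch of the rank-one-kernel analysis, precisely the case where the rank of $\Gamma_0$ already exceeds the dimension of the quotient plane. The only points requiring (routine) care are checking that $q$ sends pure translations to pure translations with kernel exactly $P_0$, so that injectivity of $q|_{\Gamma_0}$ follows from $\Gamma\cap P_0=\{e\}$, and recalling that non-discreteness in $\R^2$ produces a one-parameter subgroup in the closure. The substantive difficulty is deferred to the complementary range $\cd(\Gamma_0)\leq 2$ (and to the case $\Gamma\cap P_0$ nontrivial), which this lemma is designed to set aside.
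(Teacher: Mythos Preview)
Your proof is correct and follows essentially the same route as the paper: both use that $q$ (equivalently, the projection $\R^4\to\R^4/P_0\cong\Span(T_3,T_4)$) is injective on $\Gamma_0$ by the standing hypothesis $\Gamma\cap P_0=\{e\}$, so a rank-$\geq 3$ image in $\R^2$ is non-discrete and Corollary~\ref{cor: pure_translation} applies. Your additional remarks about the structure of closed subgroups of $\R^2$ are accurate but not strictly needed.
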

\begin{proof}
We consider the natural projection $\R^4 \to \Span(T_3,T_4)$, with kernel $P_0=\Span(T_1,T_2)$. By hypothesis, $\Gamma\cap P_0$ is trivial. Hence, we obtain an injective homomorphism $\Gamma_0 \to \Span(T_3,T_4)$. Since $\cd(\Gamma_0)\geq 3$, the image of $\Gamma_0$ cannot be discrete in $\Span(T_3,T_4)$. Therefore, $\overline{q(\Gamma_0)}$ contains a one-parameter group of pure translations, and completeness then follows from Corollary~\ref{cor: pure_translation}.
\end{proof}

Next we investigate the rank two case.

\begin{lemma}\label{lemma:Gamma_0_rank_2}
If $\cd(\Gamma_0)=2$, then $\Omega=\R^{2,2}$.
\end{lemma}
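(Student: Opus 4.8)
The plan is to extract a single invariant plane from the kernel $\Gamma_0$ and reduce everything to linear algebra, rather than analyzing the boundary of $\widehat\Omega$. Since $\Gamma_0=\Gamma\cap\mathbb{R}^4$ is discrete in $\mathbb{R}^4$ and $\cd(\Gamma_0)=2$, it is free abelian of rank $2$, so $V_0:=\mathrm{Span}_{\mathbb{R}}(\Gamma_0)$ is a genuine $2$-plane. Because $\Gamma_0$ is the intersection of $\Gamma$ with the \emph{normal} subgroup $\mathbb{R}^4\trianglelefteq\mathcal{P}_2$, conjugating a pure translation by $\gamma\in\Gamma$ yields the translation by $L(\gamma)\cdot(\,\cdot\,)$, so $\gamma\,\Gamma_0\,\gamma^{-1}=L(\gamma)\,\Gamma_0$; hence $L(\Gamma)$ preserves $\Gamma_0$ and therefore the plane $V_0$. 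This $L(\Gamma)$-invariant plane is the object on which the whole argument will turn.

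Next I would project by $q$. As $\Gamma\cap P_0$ is trivial, the restriction $q|_{\Gamma_0}$ is injective, so $q(\Gamma_0)\cong\mathbb{Z}^2$ is a rank-two subgroup of $\mathbb{R}^{2,2}/P_0\cong\mathbb{R}^2$ consisting of pure translations, and I would split into two cases according to whether it is discrete. If $q(\Gamma_0)$ is \emph{not} discrete, then a rank-two subgroup of a plane failing to be discrete must be dense in some affine line; its closure, and a fortiori $\overline{q(\Gamma)}$, then contains a one-parameter group of pure translations, and Corollary~\ref{cor: pure_translation} gives $\Omega=\mathbb{R}^{2,2}$ immediately. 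If instead $q(\Gamma_0)$ is a lattice, then $q(V_0)$ is all of $\mathbb{R}^{2,2}/P_0$, so the projection $\mathbb{R}^4\to\mathbb{R}^{2,2}/P_0$ restricts to an isomorphism on $V_0$ and $V_0\cap P_0=0$, i.e.\ $V_0$ is transverse to $P_0$. Here I would invoke the kernel $\Gamma\cap I\cong\mathbb{Z}$: its generator cannot lie in $P_0$ (since $\Gamma\cap P_0$ is trivial), so by the description $I=U\times P_0$ of \eqref{kerq} its linear part is a nontrivial element $u\in U$ as in Definition~\ref{U_group}, a unipotent map of $\mathbb{R}^4$ whose fixed subspace is exactly $P_0$. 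Because $u\in L(\Gamma)$, it preserves $V_0$; but a unipotent automorphism restricted to a nonzero invariant subspace has a nonzero fixed vector, which forces $V_0\cap P_0\neq 0$, contradicting transversality. Thus the lattice case cannot occur, and whenever $\cd(\Gamma_0)=2$ we are necessarily in the first case, where $\Omega=\mathbb{R}^{2,2}$.

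The main obstacle I anticipate is resisting the temptation to study $\widehat\Omega$ directly in the lattice case: there $\widehat\Omega$ is invariant under the lattice $q(\Gamma_0)$ and descends to the torus $\mathbb{R}^2/q(\Gamma_0)$, and controlling its boundary would require delicate dynamics of $p(\Gamma)$ acting on the torus. The decisive simplification is that in the lattice case one should not look at the domain at all; the normality of $\Gamma_0$ already produces an $L(\Gamma)$-invariant plane transverse to $P_0$, and the unipotent generator coming from $\Gamma\cap I$ is simply incompatible with such a plane. The only genuine verifications are the elementary linear-algebra fact that a nontrivial element of $U$ fixes exactly $P_0$, and the discrete-versus-dense dichotomy for a rank-two subgroup of $\mathbb{R}^2$; both are routine, so the real content of the lemma is locating this single invariant plane.
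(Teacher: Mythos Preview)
Your proof is correct and follows essentially the same approach as the paper: both use that the plane $V_0=\Span(\Gamma_0)$ is invariant under the linear part of the generator of $\Gamma\cap I$ (a nontrivial element of $U$), and both conclude that $q(\Gamma_0)$ cannot be a lattice in $\mathbb{R}^2$, so Corollary~\ref{cor: pure_translation} applies. The only cosmetic difference is that the paper first classifies all $U$-invariant planes (Sublemma~\ref{sublemma:invariant_plane}) and reads off that $q(V_0)$ is a line, whereas you argue the contrapositive via the softer fact that a unipotent map has a nonzero fixed vector on any invariant subspace.
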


For that, we need the following lemma about invariant planes under $U$.

\begin{sublemma}\label{sublemma:invariant_plane}
Let $M\in U$ be different from the identity. Then the invariant planes of $M$ are
$$
P_0=\Span(T_1,T_2)\quad \text{or}\quad \Span((Jv,0),(u,v)),
$$
where $v\neq 0$ and $u\in \R^2$.
\end{sublemma}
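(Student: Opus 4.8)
The plan is to reduce everything to the nilpotent operator $N := M - \mathrm{Id}$ and run an elementary case analysis on how a candidate plane meets the center $P_0$. Since $M$ has the block form $\begin{pmatrix} I_2 & bJ \\ 0 & I_2 \end{pmatrix}$ with $b \neq 0$, the operator $N = \begin{pmatrix} 0 & bJ \\ 0 & 0 \end{pmatrix}$ satisfies $N^2 = 0$, and a subspace is $M$-invariant if and only if it is $N$-invariant, so I work with $N$ from here on. The two facts that drive the argument are that $\Ker N = P_0$ and $N(\R^4) = P_0$: writing a vector as $(v_1,v_2)$ with $v_1,v_2 \in \R^2$, one has $N(v_1,v_2) = (bJv_2, 0)$, which vanishes exactly when $v_2 = 0$ (as $b \neq 0$ and $J$ is invertible) and otherwise sweeps out all of $P_0$.

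Given a $2$-plane $W$ with $N(W) \subseteq W$, I note $N(W) \subseteq N(\R^4) = P_0$, and combining these gives $N(W) \subseteq W \cap P_0$. The case split is then according to whether $W$ is contained in $P_0$. If $W \subseteq P_0$, then $\dim W = 2 = \dim P_0$ forces $W = P_0$, the first family. If $W \not\subseteq P_0$, I first rule out $W \cap P_0 = \{0\}$: that would give $N(W) \subseteq W \cap P_0 = \{0\}$, hence $W \subseteq \Ker N = P_0$, a contradiction. Therefore $\dim(W \cap P_0) = 1$. Choosing $(u,v) \in W \setminus P_0$ (so that $v \neq 0$), the vector $N(u,v) = (bJv, 0)$ is a nonzero element of $W \cap P_0$ and hence spans it; thus $W = \Span\bigl((Jv,0),(u,v)\bigr)$, the second family.

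To finish I would verify the converse, namely that both families are genuinely invariant: $N$ kills $P_0$, while for the second family $N(Jv,0) = 0$ and $N(u,v) = (bJv,0) \in \Span((Jv,0))$, so each such plane is $N$-stable; and since $J$ is invertible the two listed spanning vectors are independent whenever $v \neq 0$, so these are honest $2$-planes. There is no real obstacle beyond bookkeeping here; the only point requiring a moment's care is the dimension count showing $W \cap P_0$ is exactly one-dimensional in the nontrivial case, which is precisely where the coincidence $\Ker N = N(\R^4) = P_0$ gets used.
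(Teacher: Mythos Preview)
Your proof is correct and follows essentially the same approach as the paper: both reduce to the observation that $M(u,v)-(u,v)=(bJv,0)$ and then split on whether the plane sits inside $P_0$ or contains a vector with nonzero second component. Your version is marginally more structured (naming $N=M-\Id$, explicitly identifying $\Ker N=N(\R^4)=P_0$, and verifying the converse), but the underlying argument is identical.
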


\begin{proof}
Let $b\neq 0$, and consider $M$, the non trivial element of $U$ given by
$$M=\begin{pmatrix}I_2 & bJ\\ 0 & I_2\end{pmatrix}\in U,$$
Let $(u,v)\in \R^4$, then 
\begin{equation}\label{eq:invaraince_plane}
    M(u,v)-(u,v)=(bJv,0)
\end{equation}
Let $V\subset\mathbb{R}^4$ be a $2$--dimensional linear subspace invariant under $M$. If $\pi_2(V)$ is trivial, then $V\subset\mathbb{R}^2\oplus\{0\}$. Since $\dim V=2$, this forces $V=\mathbb{R}^2\oplus\{0\}=P_0$. If not, choose $(u,v)\in V$ with $v\neq 0$. By \eqref{eq:invaraince_plane}, $(Jv,0)\in V$ and hence the vectors $(Jv,0)$ and $(u,v)$ are linearly independent in $V$. Since $\dim V=2$, it follows that $V=\Span\{(Jv,0),(u,v)\}.$
\end{proof}

\begin{proof}[Proof of Lemma \ref{lemma:Gamma_0_rank_2}]
Consider a nontrivial element $\gamma\in \Gamma\cap I$. Since $I=U\times P_0$, the linear part of $\gamma$ is a nontrivial element of $U$, which preserves $\Gamma_0$ and hence preserves the plane $S=\Span(\Gamma_0)$. Since $\dim(S)=2$, by Sublemma \ref{sublemma:invariant_plane} we have either $S=\Span(T_1,T_2)=P_0$ or $S=\Span((Jv,0),(u,v))$. The first case is excluded since $\Gamma\cap P_0$ is trivial. 

For the second case, we have $q(\Gamma_0)\le q(S)\le \Span(v)\cong \R$. Since $\Gamma_0\cap I=\Gamma_0\cap P_0$ is trivial, we have $\Gamma_0\cong q(\Gamma_0)$, and hence $q(\Gamma_0)$ cannot be discrete; otherwise we would obtain a discrete subgroup of $\R$ with cohomological dimension $2$, which is impossible. Completeness now follows from Corollary~\ref{cor: pure_translation}.
\end{proof}
The next lemma excludes the case $\cd(\Gamma_0)=1$.
\begin{lemma}\label{Gamma_01}
 $\cd(\Gamma_0)$ cannot be equal to $1$, 
\end{lemma}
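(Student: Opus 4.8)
The plan is to assume $\cd(\Gamma_0)=1$ and to derive that $\Omega=\R^{2,2}$, which contradicts $\Omega\neq\R^{2,2}$ and thereby excludes this case. First I would record the structural consequences of the hypothesis. Write $\Gamma_0=\langle v_0\rangle\cong\Z$; since $\Gamma\cap P_0$ is trivial we have $v_0\notin P_0$, so writing $v_0=(v',v'')\in\R^2\oplus\R^2$ we get $v''\neq0$. As $L(\Gamma)$ normalizes $\Gamma_0$, each $L(\gamma)$ sends $v_0$ to $\pm v_0$; after passing to an index-$2$ subgroup we may assume $L(\Gamma)$ fixes $v_0$. By Lemma~\ref{lemma:rank_one_possibility}, $p(\Gamma)\le\SL_2(\R)$, so every $L(\gamma)$ has the block form $\left(\begin{smallmatrix}M&bMJ\\0&M^{-T}\end{smallmatrix}\right)$ with $M\in p(\Gamma)$. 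The relation $L(\gamma)v_0=v_0$ forces $M^{T}v''=v''$; since $v''\neq0$, all such $M$ lie in a single one-parameter unipotent subgroup $V\le\SL_2(\R)$. Hence $p(\Gamma)\le V$ and, after conjugation, $q(\Gamma)\le U_1\ltimes_{\mathsf t}\R^2$, a three-dimensional nilpotent (Heisenberg) group; moreover $\Gamma$ is solvable, being an extension of the abelian group $p(\Gamma)$ by the nilpotent group $\Gamma\cap N$.

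The key step is to pin down $\Gamma\cap N$. A generator $\gamma_1$ of $\Gamma\cap I\cong\Z$ has nontrivial $U$-component $b_1\neq0$, for otherwise $\gamma_1\in\Gamma\cap P_0=\{1\}$. Given any $\gamma_2\in\Gamma\cap N$, with $\R^4$-part $(w',w'')$, the brackets $[u,T_3]=-T_2$ and $[u,T_4]=T_1$ show that the commutator $[\gamma_1,\gamma_2]$ is the pure translation by $b_1Jw''\in P_0$. Since this commutator lies in $\Gamma\cap P_0=\{1\}$ and $b_1\neq0$, we get $w''=0$, i.e. $\gamma_2\in I$. Thus $\Gamma\cap N=\Gamma\cap I\cong\Z$. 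Applying cohomological subadditivity to $1\to\Gamma\cap N\to\Gamma\to p(\Gamma)\to1$ together with $\cd(\Gamma)\ge4$ (Theorem~\ref{thm_vcd_gamma}) yields $\cd(p(\Gamma))\ge3$; as $p(\Gamma)\le V\cong\R$ is free abelian, it has rank $\ge3$ and is therefore non-discrete. Finally, since $\ker(q)=\Gamma\cap I=\Gamma\cap N$, the map $q$ induces an isomorphism from $\Gamma$ onto $p(\Gamma)$ on the level of linear parts; in particular $q(\Gamma)$ is abelian, non-discrete, and meets the translation subgroup $\R^2$ trivially.

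It then remains to analyze the closure $\overline{q(\Gamma)}$, whose identity component $C:=\overline{q(\Gamma)}^{\circ}$ is a nontrivial connected abelian subgroup of $U_1\ltimes_{\mathsf t}\R^2$. If $\dim C\ge2$, then $C$ contains the center of the Heisenberg group, which consists of pure translations, and Corollary~\ref{cor: pure_translation} gives $\Omega=\R^{2,2}$. If $\dim C=1$, then either $C$ is a one-parameter group of pure translations (again Corollary~\ref{cor: pure_translation}), or, by Remark~\ref{remark: transeverse_U_1_transpose}, $C$ has the form $J^{\mathsf t}(a,b,c)$ with $a\neq0$. When $b\neq0$, Corollary~\ref{cor: pure_nilpotent_translation} applies (using that $\Gamma$ is solvable) and gives $\Omega=\R^{2,2}$. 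The degenerate case $b=0$ is ruled out by a centralizer computation: the centralizer in $U_1\ltimes_{\mathsf t}\R^2$ of a generator of such a $C$ consists exactly of the elements with vanishing first translational coordinate; since $q(\Gamma)$ is abelian it lies in this centralizer, so every element of $q(\Gamma)$ preserves a common affine line in $\R^2\cong\R^{2,2}/P_0$. Then $\Gamma$ preserves a proper affine subspace of $\R^{2,2}$, contradicting Theorem~\ref{thm_Goldman_Hirsch_rep}. In every case we reach $\Omega=\R^{2,2}$, the desired contradiction.

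The main obstacle is the third paragraph: extracting from the merely non-discrete abelian group $q(\Gamma)$ a one-parameter subgroup of the precise shape demanded by the corollaries of Section~\ref{sec4}, and eliminating the degenerate one-parameter subgroups (those fixing a line), which are the only ones a priori compatible with a proper domain. The algebraic identity $\Gamma\cap N=\Gamma\cap I$ is what unlocks the estimate $\cd(p(\Gamma))\ge3$, and hence the non-discreteness that drives the whole argument.
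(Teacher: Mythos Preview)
Your argument is essentially correct, but it takes a long detour compared to the paper, and even within your own approach you overlook a major shortcut.

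The paper's proof is three lines and uses no completeness machinery at all: take a nontrivial $\gamma\in\Gamma\cap I$; since $\Gamma\cap P_0=\{1\}$, its linear part is a nontrivial element of $U$; this element must preserve the line $\Span(\Gamma_0)$, but every line in $\R^4$ invariant under a nontrivial element of $U$ lies in $P_0$, forcing $\Gamma_0\subset P_0$, which contradicts $\Gamma\cap P_0=\{1\}$. In fact, your own commutator computation in the second paragraph \emph{is} this argument if you specialize $\gamma_2$ to the generator $v_0\in\Gamma_0$: you would get $b_1Jv''=0$, hence $v''=0$, hence $v_0\in P_0$.

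Within your longer route, note that your first paragraph already finishes the job. Once you have $p(\Gamma)\le V$ with $V$ a unipotent one--parameter subgroup of $\SL_2(\R)$, the whole group $\Gamma$ lies (after conjugation) in $U_1\ltimes N$, which is nilpotent; Theorem~\ref{nilpotent_complete} then gives $\Omega=\R^{2,2}$ immediately. Paragraphs two and three are superfluous. One small gap in those paragraphs: you assert that $q(\Gamma)$ is non-discrete without justification; this follows because an abelian discrete subgroup of the Heisenberg group $U_1\ltimes_{\mathsf t}\R^2$ has $\cd\le 2$, whereas $q(\Gamma)\cong p(\Gamma)$ has $\cd\ge3$.

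Finally, a framing issue: you conclude by ``contradicting $\Omega\neq\R^{2,2}$'', but that is not among the standing hypotheses of the lemma. What you actually prove is ``$\cd(\Gamma_0)=1\Rightarrow\Omega=\R^{2,2}$'', which is weaker than the stated lemma (``$\cd(\Gamma_0)=1$ is impossible'') though equally serviceable for the proof of Proposition~\ref{injective projection to U}. The paper's argument genuinely excludes the case on algebraic grounds, independent of~$\Omega$.
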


\begin{proof}
Consider a nontrivial element $\gamma\in \Gamma\cap I$. Since $I=U\times P_0$, the linear part of $\gamma$ is a nontrivial element of $U$, which preserves $\Gamma_0$ and hence preserves the line $S=\Span(\Gamma_0)$. However, it is not difficult to check that any line invariant under a nontrivial element of $U$ must lie in the plane $P_0$. Hence $\Gamma\cap I\subset P_0$, which is a contradiction.
\end{proof}

We now turn to the case where $\Gamma_0$ is trivial.
\begin{prop}\label{Gamma_0_trivial}
If $\Gamma_0$ is trivial, then $\Omega=\R^{2,2}$.
\end{prop}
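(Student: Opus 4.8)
The plan is to exhibit a one–parameter group of pure translations inside $\overline{q(\Gamma)}$ and then quote Corollary~\ref{cor: pure_translation}. The decisive point is that, in the present case, such translations can be extracted from the abelian group $\Gamma\cap N$ alone, using only the commutator geometry of $N$; no control on the (possibly large) projection $p(\Gamma)$ is then required. First I would fix coordinates as in the running hypotheses of Proposition~\ref{injective projection to U}: the triviality of $\Gamma\cap P_0$ gives $p(\Gamma)\le\SL_2(\mathbb R)$ and $L(\Gamma)\le\SL_2(\mathbb R)\times U$, a direct product since $\SL_2(\mathbb R)$ centralizes $U$. Because $\Gamma_0=\Gamma\cap\mathbb R^4$ is trivial, $L$ is injective on $\Gamma$, so I identify $\Gamma$ with $L(\Gamma)\le\SL_2(\mathbb R)\times U$ and let $\phi\colon\Gamma\to U\cong\mathbb R$ be the (homomorphic) projection onto the $U$–factor. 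Its kernel meets $N$ exactly in $\mathbb R^4$, so $\phi$ is injective on $\Gamma\cap N$; as $\Gamma\cap N$ is a finitely generated discrete subgroup of $N$ that embeds in $\mathbb R$ via $\phi$, it is free abelian, $\Gamma\cap N\cong\mathbb Z^{r}$, with the rank–one group $\Gamma\cap I$ contained in it.

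The engine of the proof is a computation in $N$, which is two–step nilpotent with centre $Z(N)=P_0=\Span(T_1,T_2)$ and only nonzero brackets \eqref{bracket_n}. Since $\Gamma\cap N$ is abelian, the images of its elements in $N/Z(N)\cong\Span(u,T_3,T_4)$ span a subspace that is isotropic for the commutator pairing $\omega(X,Y)=[X,Y]\in P_0$. A direct inspection of \eqref{bracket_n} shows that the unique $2$–dimensional $\omega$–isotropic subspace is $\Span(T_3,T_4)=\{u=0\}$, whereas every line is isotropic. As $\phi$ is nonzero on $\Gamma\cap N$, this span is not contained in $\{u=0\}$, hence is a single line $\mathbb R w$ with nonzero $u$–coordinate; and since $\Gamma\cap N\not\le I$ as soon as $r\ge 2$ (for $\Gamma\cap N\cap I=\Gamma\cap I$ has rank one), the $\Span(T_3,T_4)$–component of $w$ is nonzero. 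Projecting by $q$, which kills $u$ and $P_0$ and records the $\Span(T_3,T_4)$–coordinate, then carries the rank–$r$, hence dense, subgroup lying along $\mathbb R w$ onto a dense subgroup of a line in $\mathbb R^2$. Thus $\overline{q(\Gamma\cap N)}$, and a fortiori $\overline{q(\Gamma)}$, contains a one–parameter group of pure translations, and Corollary~\ref{cor: pure_translation} yields $\Omega=\mathbb R^{2,2}$. This disposes of every case with $r\ge 2$.

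It therefore remains to secure $r\ge 2$, and this is where I expect the main obstacle: a priori $p(\Gamma)$ could be a large group (for instance a surface group), which would put $\Gamma$ outside the reach of the solvable completeness theorems. I would combine the bound $\cd(\Gamma)\ge 4$ of Theorem~\ref{thm_vcd_gamma} with $\cd(\Gamma)\le r+\cd\big(p(\Gamma)\big)$. If $p(\Gamma)$ is discrete then it acts freely and properly on $\mathbb H^2$, so $\cd(p(\Gamma))\le 2$ and $r\ge 2$ follows at once, and the engine applies. If $p(\Gamma)$ is non–discrete, Theorem~\ref{maintheorem2} shows that $\overline{p(\Gamma)}^\circ$ is a one–dimensional nilpotent subgroup of $\SL_2(\mathbb R)$, whence $\Gamma$ is virtually solvable; in the elliptic and $\mathbb R$–split cases $L(\Gamma)$ is abelian, so $\Gamma$ is virtually nilpotent and Theorem~\ref{nilpotent_complete} already gives completeness, while in the unipotent case the accumulation of $q(\Gamma)$ onto a one–parameter subgroup of $U_1\ltimes\mathbb R^2$ produces either a pure translation or a subgroup $J^{\mathsf t}(a,b,c)$ with $b\neq 0$.

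The degenerate alternative $b=0$ would force a $\Gamma$–invariant proper affine subspace of $\mathbb R^{2,2}$, which is excluded by Theorem~\ref{thm_Goldman_Hirsch_rep}; so Corollary~\ref{cor: pure_translation} or Corollary~\ref{cor: pure_nilpotent_translation} (the latter using the solvability of $\Gamma$) finishes the argument. The delicate technical work is thus concentrated in this non–discrete, rank–one situation: one must verify that the unipotent limit genuinely translates transversally to its fixed line, and rule out the fixed–subspace degeneration. The conceptual gain of the isotropy computation above is precisely that it makes this delicate analysis necessary only in the virtually solvable regime, where the completeness theorems and the one–parameter–subgroup results of Section~\ref{sec4} are available.
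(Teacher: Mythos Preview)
Your engine for $r=\operatorname{rank}(\Gamma\cap N)\ge 2$ is correct and more conceptual than the paper's Lemma~\ref{cd:gamma_cap N}, which reaches the same dichotomy by taking the Malcev closure of $\Gamma\cap N$ and running through the classification of $2$- and $3$-dimensional abelian subalgebras of $\mathfrak n$ (Lemmas~\ref{lemma: abelian 2d} and~\ref{lemma: abelian 3d}). Your observation that the image of $\Gamma\cap N$ in $N/Z(N)$ spans an $\omega$-isotropic subspace, together with the fact that the only $2$-dimensional such subspace is $\{u=0\}$, bypasses this case analysis entirely.

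For $r=1$ your outline parallels the paper up to the unipotent endgame, and there you have a genuine gap: you take for granted that $q(\Gamma)$ accumulates onto a one-parameter subgroup of $U_1\ltimes_{\mathsf t}\mathbb R^2$, but $q(\Gamma)$ can perfectly well be discrete even though $p(\Gamma)$ is not, since the linear-part map $q(\Gamma)\to p(\Gamma)$ has non-compact kernel. The discrete case has to be handled first; one way is to note that $B\ltimes_{\mathsf t}\mathbb R^2$ is non-unimodular, so $\cd(q(\Gamma))\le 3$, hence $\cd(\Gamma)\le 4$, and Theorem~\ref{fact: cd=dim+solvable implies complete} closes the argument. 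Your subsequent claim that $b=0$ forces a $\Gamma$-invariant proper affine subspace is also not free: it requires the normalizer computation carried out in the proof of Proposition~\ref{prop: p and q not discrete}, transported from $B_1$ to $B$, and the two subcases (translation part $\alpha=0$ versus $\alpha\ne 0$) treated there.

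The paper's route through this subcase (Proposition~\ref{prop: U_1 in rank_one_case}) is considerably shorter and avoids both issues. Since $\Gamma\cong L(\Gamma)\le B\times U$ is solvable, Theorem~\ref{fact: cd=dim+solvable implies complete} reduces the problem to showing $\cd(\Gamma)=4$. Assuming $\cd(\Gamma)>4$, the subgroup $\Gamma_1=\Gamma\cap(U_1\ltimes N)$ is abelian (because $[\Gamma_1,\Gamma_1]\le\Gamma_0=\{e\}$) with $\cd(\Gamma_1)\ge 4$; by Lemma~\ref{lemma: abelian 4d} its Malcev closure must then be $\mathbb R^4$, forcing $\Gamma_1\le\mathbb R^4$ and contradicting the triviality of $\Gamma_0$. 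This argument stays inside the abelian-subalgebra framework you already exploit for $r\ge 2$, and needs no analysis of $\overline{q(\Gamma)}$ at all.
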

Throughout this paper, we define
\begin{equation}\label{eq:Translation}
    T := q(\Gamma) \cap \bigl(\{\mathrm{Id}\} \times \mathbb{R}^2\bigr)=q(\Gamma\cap N).
\end{equation}
The subgroup of pure translations in $q(\Gamma)$. We begin by establishing the following result. 
\begin{lemma}\label{cd:gamma_cap N}
If $\Gamma_0$ is trivial, then either $\cd(\Gamma\cap N)=1$ or $T$ is not discrete.
\end{lemma}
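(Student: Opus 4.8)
The plan is to read off the structure of $\Gamma\cap N$ from the restriction of the linear-part map $L$ and the projection $q$, and then to pin down $T=q(\Gamma\cap N)$ by a commutator computation. Since $I\subset N$ we have $\Gamma\cap I=\Gamma\cap N\cap\Ker(q)$, so the restriction of $q$ yields a short exact sequence
$$1\longrightarrow \Gamma\cap I\longrightarrow \Gamma\cap N\xrightarrow{\ q\ } T\longrightarrow 1,$$
with $\Gamma\cap I\cong\mathbb{Z}$ by assumption. The hypothesis that $\Gamma_{0}=\Gamma\cap\mathbb{R}^{4}$ is trivial is equivalent to the injectivity of $L$ on $\Gamma\cap N$: the kernel of $L|_{N}\colon N\to U$ is exactly the translation subgroup $\mathbb{R}^{4}$, so $\Ker(L|_{\Gamma\cap N})=\Gamma\cap\mathbb{R}^{4}=\Gamma_{0}$. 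As $U\cong(\mathbb{R},+)$ is abelian, this injectivity forces $\Gamma\cap N$ to be abelian. If $T$ is not discrete, the second alternative holds and there is nothing to prove; so I may assume $T$ is discrete, and I will show that then $T$ is in fact trivial, whence $\Gamma\cap N=\Gamma\cap I\cong\mathbb{Z}$ and $\cd(\Gamma\cap N)=1$.

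The key step is a commutator identity. Fix a generator $\gamma_{0}$ of $\Gamma\cap I\cong\mathbb{Z}$ and write $\gamma_{0}=(b_{0},v_{1,0},0)$ in the coordinates of \eqref{N_as_product}. The standing assumption $\Gamma\cap P_{0}=\{e\}$ of this subsection forces $b_{0}\neq 0$, for otherwise $\gamma_{0}$ would be a nonzero element of $P_{0}=\Span(T_{1},T_{2})$. For an arbitrary $\gamma=(b,v_{1},v_{2})\in\Gamma\cap N$, the brackets $[u,T_{3}]=-T_{2}$ and $[u,T_{4}]=T_{1}$ of \eqref{bracket_n} — equivalently, a direct multiplication in $N=U\ltimes\mathbb{R}^{4}$ using $b\cdot(v_{1},v_{2})=(v_{1}+bJv_{2},v_{2})$ — show that the $U$-components and the $\Span(T_{3},T_{4})$-components of $\gamma_{0}\gamma$ and $\gamma\gamma_{0}$ agree, so that the two products differ only by the central element
$$[\gamma_{0},\gamma]=(0,\ b_{0}\,Jv_{2},\ 0)\in P_{0}.$$
Since $\Gamma\cap N$ is abelian this commutator is trivial, hence $b_{0}Jv_{2}=0$; as $b_{0}\neq 0$ and $J$ is invertible, $v_{2}=0$, i.e. $q(\gamma)=0$. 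As $\gamma$ was arbitrary, $T=q(\Gamma\cap N)$ is trivial, which gives the first alternative.

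The only point that requires genuine care — and the main obstacle — is this commutator identity: one must check that left and right multiplication by $\gamma_{0}$ alters only the $P_{0}$-coordinate, producing the cocycle term $b_{0}Jv_{2}$ coming from the $U$-action in \eqref{N_as_product}. Everything else is routine: the reduction to the discrete case is immediate, and the cohomological-dimension conclusion uses only $\Gamma\cap N=\Gamma\cap I\cong\mathbb{Z}$, together with the elementary fact (recalled in Section~\ref{sec2}) that $\cd(\mathbb{Z})=1$. I note finally that the commutator in fact lands in $\Gamma\cap P_{0}$, which is trivial; thus one may bypass the abelianity of $\Gamma\cap N$ altogether and obtain $v_{2}=0$ directly from $\Gamma\cap P_{0}=\{e\}$, so the argument yields the first alternative without even invoking discreteness of $T$.
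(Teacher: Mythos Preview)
Your proof is correct, and in fact cleaner than the paper's. Both arguments begin by observing that $\Gamma\cap N$ is abelian (since $[\Gamma\cap N,\Gamma\cap N]\leq\Gamma_0=\{e\}$), but diverge from there.

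The paper proceeds via the Malcev closure~$S$ of~$\Gamma\cap N$ in~$N$ and a case analysis on $\dim S=\cd(\Gamma\cap N)\in\{1,2,3\}$, invoking the classification of low-dimensional abelian subalgebras of~$\mathfrak n$ from the appendix (Lemmas~\ref{lemma: abelian 3d} and~\ref{lemma: abelian 2d}) to derive contradictions when $\cd(\Gamma\cap N)\geq 2$. Your argument bypasses all of this: the single commutator identity $[\gamma_0,\gamma]=(0,b_0Jv_2,0)\in\Gamma\cap P_0=\{e\}$, with $b_0\neq 0$ forced by the standing hypothesis $\Gamma\cap P_0=\{e\}$, immediately kills the $\Span(T_3,T_4)$-component of every $\gamma\in\Gamma\cap N$. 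This shows directly that $T=q(\Gamma\cap N)$ is \emph{trivial}, hence $\Gamma\cap N=\Gamma\cap I\cong\mathbb Z$ and $\cd(\Gamma\cap N)=1$ unconditionally. As you note at the end, the discreteness assumption on~$T$ is never used, so you in fact prove the sharper statement that, under the hypotheses of this subsection together with $\Gamma_0=\{e\}$, one always has $\cd(\Gamma\cap N)=1$.

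What your approach buys is brevity and independence from the appendix; what the paper's approach buys is a template (Malcev closure plus classification of abelian pieces) that it reuses elsewhere (e.g.\ in Lemma~\ref{lemma:Gamma_0_rank_2} and in Section~\ref{sec6}). For this particular lemma, your direct computation is preferable.
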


\begin{proof}
Assume that $T$ is a discrete subgroup of $\R^2$. Then $\cd(T)\leq 2$. Using the short exact sequence
$$1\to \Gamma\cap I\to \Gamma\cap N\to T\to 1,$$
we obtain $\cd(\Gamma\cap N)\leq 3$. Since $\Gamma\cap I\leq \Gamma\cap N$, we also have $\cd(\Gamma\cap N)\geq 1$. In what follows, denote by $S$ the Malcev closure of $\Gamma\cap N$ in $N$. Observe that $\dim(S)=\cd(\Gamma\cap N)$. Moreover, $\Gamma\cap N$ is abelian, since
$$[\Gamma\cap N,\Gamma\cap N]\leq \Gamma_0,$$
which is trivial by assumption. Therefore, $\Gamma\cap N$ is a uniform lattice in the simply connected nilpotent Lie group~$S$, which implies that $S$ itself is abelian.

\begin{list}{\textbullet}{\setlength{\leftmargin}{1em}}
\item If $\cd(\Gamma\cap N)=3$, then $\dim(S)=3$. It follows from Lemma~\ref{lemma: abelian 3d} that the Lie algebra of $S$ must be
$\a_2=\Span(u+t,\;T_1,\;T_2),$
with $t\in \Span(T_3,T_4)$. This implies
$$T\leq q(S)\leq \Span(t),$$
in particular $\cd(T)\leq 1$.
This is impossible, as $\cd(\Gamma\cap N)=3$ and $\cd(\Gamma\cap N)\leq 1+\cd(T)\leq  2,$ a contradiction.

\item If $\cd(\Gamma\cap N)=2$, then $\dim(S)=2$. It follows from  Lemma~\ref{lemma: abelian 2d} that the Lie algebra of $S$ must be
equal to $\Span(u+t,\;t'),$
with $t\in \Span(T_3,T_4)$ and $t'\in \Span(T_1,T_2)$.  
We consider the connected subgroup $I\cap S$ of $N$. This subgroup is nontrivial, as it contains $\Gamma\cap I$. Hence its dimension is either $1$ or $2$. If its dimension is $1$, then necessarily $I\cap S=\Span(t')$, and therefore $\Gamma\cap I\leq \Span(t')$. This is a contradiction, since $\Gamma\cap P_0$ is trivial.  

Thus $I\cap S$ has dimension $2$ and hence must coincide with $S$. This implies that $\Gamma\cap I=\Gamma\cap N$. Indeed, we always have $\Gamma\cap I\leq \Gamma\cap N$. For the reverse inclusion, since $S=I\cap S\leq I$, we obtain $\Gamma\cap N\leq \Gamma\cap S\leq \Gamma\cap I$. But this is a contradiction, as $\cd(\Gamma\cap N)=2$ and $\cd(\Gamma\cap I)=1$. This completes the proof.

\end{list}
\end{proof}
The next result shows that $p(\Gamma)$ cannot be discrete if $\cd(\Gamma\cap N)=1$.
\begin{lemma}\label{Gamma_0not_trivial_not_discrete}
If $\cd(\Gamma\cap N)=1$, then $p(\Gamma)$ is not discrete.
\end{lemma}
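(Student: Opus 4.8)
The plan is to derive a contradiction from the assumption that $p(\Gamma)$ is discrete by comparing cohomological dimensions, the decisive input being that here $p(\Gamma)$ lands in $\SL_2(\R)$ rather than merely in $\GL_2^+(\R)$. Throughout we may assume, by Selberg's lemma, that $\Gamma$ is torsion-free; since $\Omega$ is divided by $\Gamma$, Theorem~\ref{thm_vcd_gamma} yields $\cd(\Gamma)\ge 4$.

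First I would record the short exact sequence $1\to \Gamma\cap N\to \Gamma\to p(\Gamma)\to 1$ coming from $\Ker(p)=N$. Replacing $\Gamma$ by a finite-index subgroup if necessary, we may also assume that $p(\Gamma)$, being a finitely generated linear group, is torsion-free; this passage preserves the hypothesis $\cd(\Gamma\cap N)=1$, the bound $\cd(\Gamma)\ge 4$, and the discreteness status of $p(\Gamma)$. Subadditivity of cohomological dimension in short exact sequences of torsion-free groups (as recalled in Section~\ref{sec2}, following \cite[VIII.2]{Brown}) then gives $\cd(\Gamma)\le \cd(\Gamma\cap N)+\cd(p(\Gamma))=1+\cd(p(\Gamma))$, whence $\cd(p(\Gamma))\ge 3$.

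Next I would invoke the reduction available under the standing assumption $\Gamma\cap P_0=\{1\}$: by Lemma~\ref{lemma:rank_one_possibility}, after passing to a subgroup of index at most $2$, we have $p(\Gamma)\le \SL_2(\R)$. If $p(\Gamma)$ were discrete, then being torsion-free it would act freely and properly discontinuously on the contractible $2$-dimensional space $\SL_2(\R)/\SO(2)$, forcing $\cd(p(\Gamma))\le 2$ by \cite[VIII. Proposition 8.1]{Brown}. This contradicts $\cd(p(\Gamma))\ge 3$, so $p(\Gamma)$ cannot be discrete.

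The argument is short, and the only genuinely necessary point is the sharpening from $\GL_2^+(\R)$ to $\SL_2(\R)$: the symmetric space $\GL_2^+(\R)/\SO(2)$ is $3$-dimensional, so a discrete torsion-free subgroup of $\GL_2^+(\R)$ could a priori have cohomological dimension $3$ and the dimension count would be inconclusive. It is precisely the triviality of $\Gamma\cap P_0$, through Lemma~\ref{lemma:rank_one_possibility}, that places $p(\Gamma)$ inside $\SL_2(\R)$ and lowers the available dimension to $2$, making the contradiction go through. The remaining work is bookkeeping: keeping the various finite-index passages compatible, which affects neither discreteness nor cohomological dimension.
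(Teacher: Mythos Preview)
Your proof is correct and follows essentially the same approach as the paper: both use Lemma~\ref{lemma:rank_one_possibility} to place $p(\Gamma)$ in $\SL_2(\R)$, bound $\cd(p(\Gamma))\le 2$ via the action on $\mathbb{H}^2$, and combine this with subadditivity along $1\to\Gamma\cap N\to\Gamma\to p(\Gamma)\to 1$ to contradict $\cd(\Gamma)\ge 4$. Your additional remark explaining why the reduction to $\SL_2(\R)$ (rather than $\GL_2^+(\R)$) is essential is a nice clarification that the paper leaves implicit.
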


\begin{proof}
Assume by contradiction that $p(\Gamma)$ is discrete. By Lemma~\ref{lemma:rank_one_possibility}, $p(\Gamma)$ is a discrete subgroup of $\SL_2(\R)$ and hence up to finite index, $p(\Gamma)$ acts properly and freely on the contractible space $\mathbb{H}^2$, this implies that $\cd(p(\Gamma))\leq 2$. Using the short exact sequence 
$$1\to \Gamma\cap N\to \Gamma\to p(\Gamma)\to 1,$$
we deduce that $\cd(\Gamma)\leq 3$, which contradicts Theorem~\ref{thm_vcd_gamma}.
\end{proof}

\begin{lemma}
If $\cd(\Gamma\cap N)=1$, then $\overline{p(\Gamma)}^\circ$ is a one parameter group of $\SL_2(\R)$. 
\end{lemma}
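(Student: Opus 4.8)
The plan is to identify $\overline{p(\Gamma)}^\circ$ as a nontrivial connected nilpotent subgroup of $\SL_2(\R)$ and then to classify such subgroups. Three ingredients feed into this: the containment $p(\Gamma)\le\SL_2(\R)$, the non-discreteness of $p(\Gamma)$, and the nilpotency of $\overline{p(\Gamma)}^\circ$.

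First I would record what is already in force under our standing hypotheses $\Gamma\cap I\cong\Z$ and $\Gamma\cap P_0$ trivial: by Lemma~\ref{lemma:rank_one_possibility} we have $p(\Gamma)\le\SL_2(\R)$ after passing to an index-$2$ subgroup, and this passage does not affect $\overline{p(\Gamma)}^\circ$ (an index-$2$ subgroup has the same closure up to index $2$, hence the same identity component). Thus $\overline{p(\Gamma)}^\circ$ is a connected Lie subgroup of $\SL_2(\R)$. Next, by Lemma~\ref{Gamma_0not_trivial_not_discrete}, the hypothesis $\cd(\Gamma\cap N)=1$ forces $p(\Gamma)$ to be non-discrete; hence $\overline{p(\Gamma)}$ is non-discrete and $\overline{p(\Gamma)}^\circ$ has positive dimension.

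The decisive step is nilpotency. Here I would invoke Theorem~\ref{maintheorem2}, applied to $\mathcal{P}_2=\GL_2^+(\R)\ltimes N$ with $R=\GL_2^+(\R)$ a linear Lie group and $G=N$ the homothety Lie group of Lemma~\ref{N_homothety}, whose homothety $\Phi_\lambda$ commutes with the $\GL_2^+(\R)$-action. Setting $H=\overline{p(\Gamma)}$ and $\Gamma_{nd}=\Gamma\cap p^{-1}(H^\circ)$, the theorem provides a connected nilpotent syndetic hull $S\le\mathcal{P}_2$ of $\Gamma_{nd}$ with $\overline{p(S)}=H^\circ=\overline{p(\Gamma)}^\circ$. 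Since $S$ is connected and nilpotent, $p(S)$ is a connected nilpotent subgroup of $\GL_2^+(\R)$, and the closure of a connected nilpotent Lie subgroup is again nilpotent (the vanishing of sufficiently long iterated group commutators is a closed condition); therefore $\overline{p(\Gamma)}^\circ=\overline{p(S)}$ is nilpotent.

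Finally I would classify the connected nilpotent subgroups of $\SL_2(\R)$. The only connected subgroups of dimension $\ge 2$ are the $2$-dimensional Borel subgroup, which is solvable but not nilpotent (its one-dimensional derived subalgebra is stable under the adjoint action, so the lower central series never reaches zero), and $\SL_2(\R)$ itself, which is simple. Hence a nontrivial connected nilpotent subgroup of $\SL_2(\R)$ must be $1$-dimensional, i.e.\ a one-parameter group; combined with the positive dimensionality of $\overline{p(\Gamma)}^\circ$ established above, this yields the claim. The main obstacle is the nilpotency step: one must correctly verify the hypotheses of Theorem~\ref{maintheorem2} in this semidirect-product setting and then pass from the existence of a nilpotent syndetic hull of $\Gamma_{nd}$ to nilpotency of the closed connected group $\overline{p(\Gamma)}^\circ$, using that $S$ is connected and that nilpotency is preserved under closure.
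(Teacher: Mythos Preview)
Your proposal is correct and follows essentially the same route as the paper: apply Theorem~\ref{maintheorem2} (via Lemma~\ref{N_homothety}) to obtain nilpotency of $\overline{p(\Gamma)}^\circ$, use Lemma~\ref{Gamma_0not_trivial_not_discrete} for nontriviality, and conclude by noting that $\SL_2(\R)$ has no connected nilpotent subgroups of dimension $\geq 2$. Your write-up is in fact more explicit than the paper's, which simply asserts that Theorem~\ref{maintheorem2} yields the nilpotency of $\overline{p(\Gamma)}^\circ$; you correctly unpack this by passing through the syndetic hull $S$, the image $p(S)$, and the closure $\overline{p(S)}=H^\circ$.
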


\begin{proof}
By Lemma~\ref{N_homothety}, we may apply Theorem~\ref{maintheorem2} to the group~$\GL_2^+(\R)\ltimes N$, to deduce that $\overline{p(\Gamma)}^\circ$ is a nilpotent subgroup of $\SL_2(\R)$. By Lemma \ref{Gamma_0not_trivial_not_discrete}, it is not trivial and hence its dimension is one; otherwise, it would not be nilpotent. This completes the proof.
\end{proof}
We start by considering the simpler case where $\overline{p(\Gamma)}^\circ$ is not a parabolic subgroup of $\SL_2(\R)$.

\begin{lemma}\label{lemma:not parabolic}
    If $\overline{p(\Gamma)}^\circ$ is not a one-parameter parabolic subgroup of $\SL_2(\R)$, then $\Omega = \R^{2,2}$.
\end{lemma}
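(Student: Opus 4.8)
The plan is to study the closed subgroup $\Delta:=\overline{q(\Gamma)}$ inside $\SL_2(\R)\ltimes_{\mathsf t}\R^2$. We may work in $\SL_2(\R)$ rather than $\GL_2^+(\R)$ because $\Gamma\cap P_0$ is trivial, so $p(\Gamma)\le\SL_2(\R)$ by Lemma~\ref{lemma:rank_one_possibility}; hence the linear parts of $q(\Gamma)$ lie in $\SL_2(\R)$ and $\Delta\le\SL_2(\R)\ltimes_{\mathsf t}\R^2$. By the preceding lemma, $H:=\overline{p(\Gamma)}^{\,\circ}$ is a one-parameter subgroup, and by hypothesis it is hyperbolic or elliptic. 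The central object is the group of pure translations $V:=\Delta\cap\R^2$. Conjugating the translation by $w$ by an element $(a,s)\in\Delta$ produces the translation by $a\cdot w$ (inverse-transpose action), so $V$ is a closed subgroup of $\R^2$ invariant under the inverse-transpose action of $\overline{p(\Gamma)}$, in particular of $H$.

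I would first dispose of the case $V\neq\{0\}$. Since $V$ is invariant under the connected group $H$, which acts with no nonzero fixed vector (a hyperbolic generator has eigenvalues $\lambda^{\pm1}\neq1$, an elliptic one $e^{\pm i\theta}\neq1$), $V$ cannot be a nonzero discrete subgroup; its identity component is therefore a nonzero linear subspace, so $V$ contains an affine line, i.e.\ a one-parameter group of pure translations. Concretely, for $H$ elliptic one gets $V=\R^2$, and for $H$ hyperbolic $V$ is one of the two eigenlines or all of $\R^2$. In every such case Corollary~\ref{cor: pure_translation} applies and gives $\Omega=\R^{2,2}$.

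The heart of the argument, and the step I expect to be the main obstacle, is to rule out $V=\{0\}$; this is exactly where the non-parabolic hypothesis enters, as it forces every nontrivial linear part to be semisimple without eigenvalue $1$. The linear parts of $\Delta$ lie in $N_{\SL_2(\R)}(H)$, which is $\SO(2)$ (up to conjugacy) in the elliptic case and the split torus $A$ together with a Weyl element in the hyperbolic case. Let $\Delta_0\le\Delta$ be the index-at-most-two subgroup whose linear parts lie in the abelian torus. For $\delta,\delta'\in\Delta_0$ the commutator $[\delta,\delta']$ has trivial linear part, hence is a pure translation, hence lies in $V=\{0\}$; thus $\Delta_0$ is abelian. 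Since $p(\Gamma)$ is infinite, some $\delta=(a,s)\in\Delta_0$ has $a\neq\mathrm{Id}$; as $a$ acts on $\R^2$ by inverse transpose as a semisimple map none of whose eigenvalues equals $1$, the operator $\mathrm{Id}-a$ is invertible and $\delta$ has a unique fixed point $c=(\mathrm{Id}-a)^{-1}\!\cdot s$. Every element of $\Delta_0$ commutes with $\delta$, hence preserves $\mathrm{Fix}(\delta)=\{c\}$ and fixes $c$; and since $\Delta$ normalizes $\Delta_0$ it permutes $\mathrm{Fix}(\Delta_0)=\{c\}$, so all of $\Delta$ fixes $c$. Consequently $q(\Gamma)$ fixes $c$, which means $\Gamma$ preserves the proper affine subspace $\pi^{-1}(c)\cong c'+P_0$ of $\R^4$. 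This contradicts Theorem~\ref{thm_Goldman_Hirsch_rep}, applied to the closed affine manifold $\Omega/\Gamma$ with its parallel volume form. Therefore $V=\{0\}$ is impossible, and by the second paragraph we conclude $\Omega=\R^{2,2}$.
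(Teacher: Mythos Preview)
Your argument is correct, but the route is quite different from the paper's and considerably longer. The paper proceeds as follows: since $H=\overline{p(\Gamma)}^{\circ}$ is elliptic or hyperbolic, its normalizer in $\SL_2(\R)$ is (virtually) abelian, so $\overline{p(\Gamma)}$, and hence $p(\Gamma)$, is virtually abelian. Because $p(\Gamma)\le\SL_2(\R)$ and $\SL_2(\R)$ acts trivially on $U$ (the action is by $\det$), one has $L(\Gamma)\le p(\Gamma)\times U$, which is therefore abelian. Since $\Gamma_0=\Gamma\cap\R^4$ is trivial, the linear-part map $L$ is injective on $\Gamma$, so $\Gamma\cong L(\Gamma)$ is (virtually) abelian, and completeness follows directly from Theorem~\ref{nilpotent_complete}.

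Your approach instead analyzes $\Delta=\overline{q(\Gamma)}$ dynamically: either $\Delta$ contains a one-parameter translation group, in which case Corollary~\ref{cor: pure_translation} applies, or $\Delta$ has a global fixed point in $\R^2$, contradicting Theorem~\ref{thm_Goldman_Hirsch_rep}. This is a valid and self-contained alternative, and it has the mild advantage of not invoking the Fried--Goldman--Hirsch nilpotent completeness theorem at this particular step. On the other hand, the paper's argument is a two-line reduction once one notices that $\SL_2(\R)$ commutes with $U$ and that $\Gamma\cong L(\Gamma)$; your proof essentially rediscovers the abelianness of (a finite-index subgroup of) $q(\Gamma)$ as the key to the $V=\{0\}$ case, but then exploits it via a fixed-point argument rather than via nilpotent completeness. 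A small cosmetic point: in the hyperbolic case the normalizer of $A$ in $\SL_2(\R)$ has four connected components rather than two, but your $\Delta_0$ (linear parts in the \emph{centralizer}, i.e.\ the full diagonal group) still has index at most two and the argument goes through unchanged.
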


\begin{proof}
    Since $\overline{p(\Gamma)}^\circ$ is a one parameter group of $\SL_2(\R)$, which is either elliptic or hyperbolic, then its normalizers is abelian. Since $\overline{p(\Gamma)}$ normalizes its identity component, it follows that $\overline{p(\Gamma)}$ is abelian. As consequence, $L(\Gamma)\leq p(\Gamma)\times U$ is abelian, and since $\Gamma_0$ is trivial, we conclude that $\Gamma\cong L(\Gamma)$ is abelian. The completeness follows from Theorem \ref{nilpotent_complete}.
\end{proof}
Next, we deal with the parabolic case. Up to conjugacy, we may assume that $\overline{p(\Gamma)}^\circ= U_1$ (see \eqref{U_1_group}).
\begin{prop}\label{prop: U_1 in rank_one_case}
    If $\overline{p(\Gamma)}^\circ=U_1$, then $\Omega=\R^{2,2}$.
\end{prop}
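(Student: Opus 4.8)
The plan is to produce, inside the closure $Q:=\overline{q(\Gamma)}$, a one-parameter subgroup to which one of the completeness criteria of Section~\ref{sec4} applies. First I would record that $\Gamma$ is solvable: since $\Gamma\cap P_0$ is trivial, Lemma~\ref{lemma:rank_one_possibility} gives $p(\Gamma)\le\SL_2(\mathbb{R})$, and as $\overline{p(\Gamma)}^\circ=U_1$ the group $\overline{p(\Gamma)}$ lies in the Borel subgroup normalizing $U_1$, hence is solvable; combined with the fact that $\Gamma\cap N$ is abelian (here $\Gamma_0$ is trivial, so $[\Gamma\cap N,\Gamma\cap N]\le\Gamma_0=\{1\}$), the extension $1\to\Gamma\cap N\to\Gamma\to p(\Gamma)\to1$ shows $\Gamma$ is solvable. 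I would also use that in the present case $\cd(\Gamma\cap N)=1$ forces $\Gamma\cap N\cong\mathbb{Z}$ and $\Gamma\cap N=\Gamma\cap I$, so that $T=q(\Gamma\cap N)$ is trivial (see~\eqref{eq:Translation}); equivalently $q(\Gamma)\cap(\{\Id\}\times\mathbb{R}^2)$ is trivial and $q(\Gamma)\cong p(\Gamma)$ via the projection.

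The crucial step is to show that $Q$ is \emph{non-discrete}. Suppose first $\overline{p(\Gamma)}=U_1$. Then every element of $q(\Gamma)$ has linear part in $U_1$, so $q(\Gamma)$ sits inside the three-dimensional Heisenberg group $\mathcal H=U_1\ltimes_{\mathsf t}\mathbb{R}^2$; since $q(\Gamma)\cap\mathbb{R}^2$ is trivial, $q(\Gamma)$ meets the center of $\mathcal H$ trivially and therefore embeds into $\mathcal H/Z\cong\mathbb{R}^2$, making $q(\Gamma)$ abelian of rank $\le 2$. But $\cd(q(\Gamma))=\cd(\Gamma/(\Gamma\cap N))\ge\cd(\Gamma)-\cd(\Gamma\cap N)\ge 4-1=3$ by Theorem~\ref{thm_vcd_gamma}, a contradiction. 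Hence $\overline{p(\Gamma)}$ contains a hyperbolic element $h$. Conjugation by a lift $\tilde h\in q(\Gamma)$ renormalizes the (dense in $U_1$) unipotent part of $q(\Gamma)$, and I would then argue that a discrete subgroup of $\mathbb{R}^2$ invariant under the diagonal hyperbolic scaling induced by $h$ and with dense projection to the first factor cannot exist, forcing $Q$ to be non-discrete.

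Once $Q$ is non-discrete, set $V:=Q\cap(\{\Id\}\times\mathbb{R}^2)$. If $V$ is non-discrete it contains a one-parameter group of pure translations, and Corollary~\ref{cor: pure_translation} gives $\Omega=\mathbb{R}^{2,2}$. If $V$ is discrete, then $Q^\circ$ is one-dimensional and projects onto $U_1$ (the projection of $Q^\circ$ is a connected subgroup of $U_1$ with discrete kernel $V\cap Q^\circ$, hence all of $U_1$), so by Remark~\ref{remark: transeverse_U_1_transpose} it equals $J^{\mathsf t}(a,b,c)$ for some $a\ne0$. When $b\ne0$, Corollary~\ref{cor: pure_nilpotent_translation} (applicable since $\Gamma$ is solvable) yields $\Omega=\mathbb{R}^{2,2}$. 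The degenerate case $b=0$ I would rule out directly: in linear coordinates $(x,y)$ on $\mathbb{R}^{2,2}/P_0$ the group $J^{\mathsf t}(a,0,c)$ acts by $(x,y)\mapsto(x,\,y+t(c-ax))$ and so fixes the affine line $\{x=c/a\}$; since $Q^\circ$ is normal in $Q$, this line is $Q$-invariant, whence $\Gamma$ preserves the proper affine hyperplane $\pi^{-1}(\{x=c/a\})$ of $\mathbb{R}^{2,2}$, contradicting Theorem~\ref{thm_Goldman_Hirsch_rep}.

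The main obstacle is the middle step: controlling the closure $\overline{q(\Gamma)}$. The difficulty is that $q(\Gamma)$ is only a cocycle-twisted copy of $p(\Gamma)$, so its topology inside $\GL_2^+(\mathbb{R})\ltimes_{\mathsf t}\mathbb{R}^2$ need not match that of $p(\Gamma)$, and one must genuinely exploit the hyperbolic element together with the cohomological-dimension bound to exclude discreteness. The careful exclusion of the $b=0$ configuration via Theorem~\ref{thm_Goldman_Hirsch_rep} is the other point requiring attention.
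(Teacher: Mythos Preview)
Your approach diverges substantially from the paper's and contains a genuine gap. The paper does \emph{not} analyze $\overline{q(\Gamma)}$ here at all. Instead, having observed that $\Gamma$ is solvable (indeed $\Gamma\cong L(\Gamma)\le B\times U$ since $\Gamma_0$ is trivial), it simply splits on $\cd(\Gamma)$: if $\cd(\Gamma)=4$, Theorem~\ref{fact: cd=dim+solvable implies complete} gives completeness directly; if $\cd(\Gamma)>4$, one looks at $\Gamma_1=\Gamma\cap(U_1\ltimes N)$, notes that $[\Gamma_1,\Gamma_1]\subset\Gamma_0=\{1\}$ so $\Gamma_1$ is abelian with $\cd(\Gamma_1)\ge4$, and then its Malcev closure is a $\ge4$--dimensional abelian subgroup of $U_1\ltimes N$, which by Lemma~\ref{lemma: abelian 4d} must be $\mathbb{R}^4$---contradicting $\Gamma_0$ trivial. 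This is a two-line argument once solvability is in hand.

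The error in your plan is the claim that an embedding $q(\Gamma)\hookrightarrow\mathcal H/Z\cong\mathbb{R}^2$ forces $q(\Gamma)$ to have rank $\le2$. Subgroups of $\mathbb{R}^2$ can have arbitrarily large (even infinite) rank: e.g.\ $\mathbb{Z}+\mathbb{Z}\sqrt2+\mathbb{Z}\sqrt3\subset\mathbb{R}$ already has rank~$3$. So no contradiction with $\cd(q(\Gamma))\ge3$ arises this way, and your exclusion of the case $\overline{p(\Gamma)}=U_1$ fails. (Incidentally, this case is immediately handled by Theorem~\ref{nilpotent_complete}, since then $\Gamma\le U_1\ltimes N$ is nilpotent; but if you want a contradiction, the correct route is via the Malcev closure as the paper does.) Your second step---forcing non-discreteness of $Q$ from the hyperbolic element---is only sketched, and the heuristic ``a discrete subgroup of $\mathbb{R}^2$ invariant under hyperbolic scaling with dense first projection cannot exist'' is not obviously true as stated and would need substantial work. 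By contrast, the paper's route avoids both issues entirely. Your treatment of the $b=0$ case via the unique pointwise-fixed line and Theorem~\ref{thm_Goldman_Hirsch_rep} is correct, but by that point the argument has already broken down.
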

First, note that the normalizers of $U_1$ is given by $$
B = \left\{
\begin{pmatrix}
e^t & x\\
0 & e^{-t}
\end{pmatrix}
\ \Big{|}\ t,x \in \mathbb{R}
\right\},$$ and hence $\overline{p(\Gamma)}\leq B$. We write $B$ as the semidirect product $A\ltimes \R$  where the~$A$-factor corresponds to the hyperbolic one-parameter group given by
$$A=\{\mathrm{diag}(e^t,e^{-t})\mid t\in \R \}.$$ We show the following.
\begin{lemma}
If $\overline{p(\Gamma)}^\circ=U_1$, then either $\overline{p(\Gamma)}$ is nilpotent or $\overline{p(\Gamma)}\cong \Z\ltimes U_1$.
\end{lemma}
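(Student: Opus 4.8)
The plan is to study the closed subgroup $H := \overline{p(\Gamma)} \le B$ through its component group. Since $B$ splits as $A \ltimes U_1$, let $\alpha : B \to A \cong \R$ be the projection onto the hyperbolic factor, so that $\alpha\!\left(\begin{pmatrix} e^t & x \\ 0 & e^{-t}\end{pmatrix}\right) = t$ and $\Ker(\alpha) = U_1$. By hypothesis $H^\circ = U_1$, so $U_1 \le H$, and $\alpha$ descends to an injective continuous homomorphism $H/U_1 \hookrightarrow \R$. First I would record that $H/U_1$ is a \emph{closed} subgroup of $B/U_1 \cong \R$: this is the standard fact that, for a closed normal subgroup $U_1$ contained in the closed subgroup $H$, the image $H/U_1$ is closed in $B/U_1$. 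On the other hand $H/U_1 = H/H^\circ$ is totally disconnected, being the component group. A closed totally disconnected subgroup of $\R$ is either trivial or infinite cyclic, so $\alpha(H)$ is either $\{0\}$ or isomorphic to $\Z$.

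In the first case, $\alpha(H) = \{0\}$ gives $H \le \Ker(\alpha) = U_1$, and combined with $H^\circ = U_1$ this forces $H = U_1$. Then $H$ is abelian, hence nilpotent, which is the first alternative of the statement.

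In the second case, $\alpha(H) \cong \Z$, and I would pick $g \in H$ with $\alpha(g) = t_0 \neq 0$ generating the image. Since $\alpha(g^n) = n t_0 \neq 0$ for $n \neq 0$, the element $g$ has infinite order and $\langle g\rangle \cap U_1 = \{I\}$, while $\langle g\rangle\, U_1 = H$; thus the extension $1 \to U_1 \to H \to \Z \to 1$ splits (automatically, as $\Z$ is free) and $H = U_1 \rtimes \langle g\rangle$. A direct block computation shows that conjugation by $g = \begin{pmatrix} e^{t_0} & x_0 \\ 0 & e^{-t_0}\end{pmatrix}$ sends $\begin{pmatrix}1 & s \\ 0 & 1\end{pmatrix}$ to $\begin{pmatrix}1 & e^{2t_0}s \\ 0 & 1\end{pmatrix}$, independently of $x_0$; that is, $\langle g\rangle$ acts on $U_1 \cong \R$ by the scaling $s \mapsto e^{2t_0}s$ with $e^{2t_0}\neq 1$. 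Hence $H \cong \Z \ltimes U_1$ with $\Z$ acting by a nontrivial homothety, which is the second alternative (and indeed makes $H$ non-nilpotent, since its lower central series stabilizes at $U_1$).

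The two cases exhaust the possibilities and give the dichotomy. The only step requiring genuine care is the very first one: establishing that $\alpha(H)$ is discrete in $\R$, so that it can only be trivial or infinite cyclic. This is precisely where both hypotheses — $H$ closed in $B$ and $H^\circ = U_1$ — are used together, via the identification $H/H^\circ = H/U_1$; everything afterwards is a routine splitting argument and a matrix computation, so I would state this closedness/discreteness claim explicitly and isolate it.
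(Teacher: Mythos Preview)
Your proof is correct and follows essentially the same approach as the paper: project $\overline{p(\Gamma)}$ along $B \to B/U_1 \cong A \cong \R$, observe that the image is discrete (being the component group $H/H^\circ$ of the closed subgroup $H$), hence trivial or infinite cyclic, and conclude accordingly. The paper's proof is simply a more compressed version of the same argument, omitting the explicit verification of closedness/discreteness and the splitting computation that you spell out.
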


\begin{proof}
    Consider the natural projection~$B \to A$. Since the kernel of this projection is $U_1$, which equals $\overline{p(\Gamma)}^\circ$, the projection of~$\overline{p(\Gamma)}$ is discrete in~$A\cong \R$. If the projection to $A$ is trivial then~$\overline{p(\Gamma)}$ is nilpotent. If not the projection is isomorphic to~$\mathbb{Z}$, and hence~$\overline{p(\Gamma)} \cong \mathbb{Z} \ltimes U_1.$
\end{proof}

\begin{proof}[Proof of Proposition \ref{prop: U_1 in rank_one_case}]
Since $\Gamma_0$ is trivial, we have $\Gamma \cong L(\Gamma)$. This implies that $\Gamma$ is solvable, as $L(\Gamma) \leq B \times U$ is solvable. If $\cd(\Gamma)=4$ then completeness follows from Theorem~\ref{fact: cd=dim+solvable implies complete}. Assume now that $\cd(\Gamma)>4$, and consider $\Gamma_1=\Gamma\cap (U_1\ltimes N)$. From the short exact sequence 
$$1\to \Gamma_1\to \Gamma\to \Z\to 1,$$
we deduce that $\cd(\Gamma_1)\geq 4$. Moreover, observe that $[\Gamma_1,\Gamma_1]\leq \Gamma_1\cap \R^4\leq \Gamma_0$, which is trivial by assumption; hence $\Gamma_1$ is abelian. Let $S_1$ be the syndetic hull of $\Gamma_1$. Then $S_1$ is abelian; this follows from the fact that $S_1$ is a simply connected nilpotent group containing an abelian uniform lattice. 

Next, since $\dim(S_1)=\cd(\Gamma_1)=4$, it follows from Lemma~\ref{lemma: abelian 4d} that $S_1=\R^4$, and so $\Gamma_1\leq \R^4$. This contradicts the fact that $\Gamma_0$ is trivial.
\end{proof}
\begin{proof}[Proof of Proposition \ref{Gamma_0_trivial}]
By Lemma \ref{cd:gamma_cap N}, we have either $\cd(\Gamma\cap N)=1$ or $T$ is not discrete. If $T$ is not discrete, completeness follows from Corollary \ref{cor: pure_translation}. If $\cd(\Gamma\cap N)=1$, then $\overline{p(\Gamma)}^\circ$ is a one-parameter subgroup of $\SL_2(\R)$. If it is not parabolic, completeness follows from Lemma \ref{lemma:not parabolic}. If it is parabolic, completeness follows from Proposition \ref{prop: U_1 in rank_one_case}. This completes the proof.
\end{proof}
We can finally prove Proposition~\ref{injective projection to U}.

\begin{proof}[Proof of Proposition~\ref{injective projection to U}]
The proof follows by combining Lemmas~\ref{gamma_03}, \ref{Gamma_01}, and \ref{lemma:Gamma_0_rank_2} in the case where the translation group $\Gamma_0$ is nontrivial. The conclusion in the case where $\Gamma_0$ is trivial follows from Proposition~\ref{Gamma_0_trivial}.
\end{proof}

\subsubsection{$\Gamma\cap P_0$ is nontrivial}
In this part we will show this result.
\begin{prop}\label{noninjective projection to U}
If $\Gamma\cap I\cong\Z$ and  $\Gamma\cap P_0$ is nontrivial, then $\Omega=\R^{2,2}$.
\end{prop}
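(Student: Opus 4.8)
The plan is to exploit the crucial structural fact, available here but not in the case $\Gamma\cap P_0$ trivial, that $p(\Gamma)$ lands in a \emph{non-unimodular} group. By Lemma~\ref{lemma:rank_one_possibility}, after passing to a subgroup of index two we may assume $p(\Gamma)\le B_1$ (see~\eqref{B_1}). Since $B_1$ is solvable and $\Gamma\cap N$ is nilpotent, the exact sequence $1\to\Gamma\cap N\to\Gamma\to p(\Gamma)\to 1$ shows that $\Gamma$ is solvable. As $\cd(\Gamma)\ge 4$ always holds (Theorem~\ref{thm_vcd_gamma}), the solvable completeness criterion (Theorem~\ref{fact: cd=dim+solvable implies complete}) reduces the whole statement to proving $\cd(\Gamma)=4$. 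The key observation is that $B_1\cong\Aff^+(\mathbb R)$ is non-unimodular, hence admits no lattice; this is exactly what the hypothesis $\Gamma\cap P_0\neq\{1\}$ provides through Lemma~\ref{lemma:rank_one_possibility}.

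First I would dichotomize according to whether the translation group $T=q(\Gamma\cap N)\le\mathbb R^2$ is discrete. If $T$ is not discrete, its closure contains a one-parameter group of pure translations, which lies inside $\overline{q(\Gamma)}$; Corollary~\ref{cor: pure_translation} then gives $\Omega=\R^{2,2}$ at once. So assume $T$ is discrete. Because $I\subseteq N$, the kernel of $q$ restricted to $\Gamma\cap N$ is $\Gamma\cap I\cong\mathbb Z$, and the sequence $1\to\Gamma\cap I\to\Gamma\cap N\to T\to 1$ yields the bound $\cd(\Gamma\cap N)\le 1+2=3$.

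Next I would take a solvable syndetic hull $S$ of $\Gamma$ (Theorem~\ref{fact:filling_linear_alg}), arranged inside $\mathcal{P}_2$ with $p(S)\le B_1$ (compatibility of the syndetic hull with Zariski closures and with the projection $p$). Since the linear part of $\Gamma$ lies in the upper-triangular group $B_1\ltimes U$ and so has no elliptic part, $S$ has trivial maximal compact subgroup, whence $\dim S=\cd(\Gamma)$. The decisive point is that $\Gamma\cap N$ is a uniform lattice in $S\cap N$: this forces $\dim(S\cap N)=\cd(\Gamma\cap N)\le 3$, and, since $S\cap N=\ker(p|_S)$, it makes $p(\Gamma)$ a discrete cocompact subgroup of $p(S)$. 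If $\dim p(S)=2$ then $p(S)=B_1$ and $p(\Gamma)$ would be a uniform lattice in $B_1\cong\Aff^+(\mathbb R)$, which is impossible because a non-unimodular Lie group carries no lattice; hence $\dim p(S)\le 1$. Writing $\dim S=\dim(S\cap N)+\dim p(S)\le 3+1=4$ and combining with $\cd(\Gamma)\ge 4$, we conclude $\cd(\Gamma)=4$, and Theorem~\ref{fact: cd=dim+solvable implies complete} gives $\Omega=\R^{2,2}$.

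The main obstacle is the dimension bookkeeping for the syndetic hull rather than any geometric difficulty: one must justify that $S$ can be taken in $\mathcal{P}_2$ with $p(S)\le B_1$, that $S$ has no compact part so that $\dim S=\cd(\Gamma)$, and above all that $\Gamma\cap N$ is a uniform lattice in $S\cap N$, which simultaneously delivers the bound $\dim(S\cap N)\le 3$ and the discreteness of $p(\Gamma)$ in $p(S)$. Once these facts are secured, the contradiction with the absence of lattices in $\Aff^+(\mathbb R)$ does the remaining work, and it is precisely the non-unimodular linear part forced by $\Gamma\cap P_0\neq\{1\}$ that makes this case substantially shorter than its $\Gamma\cap P_0$-trivial counterpart.
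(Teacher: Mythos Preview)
Your reduction to showing $\cd(\Gamma)=4$ via solvability is the right endgame, and the non-unimodularity of $B_1$ is indeed the structural feature that distinguishes this case. But the syndetic-hull argument has a genuine gap at the step you yourself flag as the ``main obstacle'': the claim that $\Gamma\cap N$ is a uniform lattice in $S\cap N$ is not a formal consequence of Theorem~\ref{fact:filling_linear_alg}. The equality of Zariski closures $\overline{S}^{Zar}=\overline{\Gamma}^{Zar}$ does not force $\overline{S\cap N}^{Zar}=\overline{\Gamma\cap N}^{Zar}$; in general, for a uniform lattice $\Gamma$ in a connected solvable $S$ and a closed normal subgroup $H\trianglelefteq S$, one has that $\Gamma\cap H$ is a lattice in $H$ \emph{if and only if} $\Gamma H$ is closed in $S$, i.e.\ if and only if $p(\Gamma)$ is discrete in $p(S)$. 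So your deduction ``$\Gamma\cap N$ lattice in $S\cap N$ $\Rightarrow$ $p(\Gamma)$ discrete in $p(S)$'' is correct, but you have not established the hypothesis; and a direct attempt to prove it is circular, since discreteness of $p(\Gamma)$ is exactly what you are after. The toy picture to keep in mind is $S=\mathbb{R}^2$, $\Gamma=\mathbb{Z}^2$, and $H$ a line of irrational slope: then $\Gamma\cap H=\{0\}$ while $\dim(S\cap H)=1$.

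This is not a technicality that can be waved away: the case where $p(\Gamma)$ is genuinely non-discrete in $B_1$ does occur and is precisely where the paper invokes Theorem~\ref{maintheorem2} (applied to $\GL_2^+(\mathbb{R})\ltimes N$, with $N$ a homothety group) to force $\overline{p(\Gamma)}^\circ=U_1$, and then treats $\overline{p(\Gamma)}=U_1$ (nilpotent $\Gamma$, done by Theorem~\ref{nilpotent_complete}) and $\overline{p(\Gamma)}\cong\mathbb{Z}\ltimes U_1$ separately, the latter via an analysis of $\overline{q(\Gamma)}^\circ$ and Corollaries~\ref{cor: pure_translation}--\ref{cor: pure_nilpotent_translation}. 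Your argument goes through cleanly when $p(\Gamma)$ is discrete (and then matches Lemma~\ref{lemma:p discrete}) or when $q(\Gamma)$ is discrete (matching Lemma~\ref{Lemma: q(gamma) discrete}, via the non-unimodularity of $B_1\ltimes_{\mathsf t}\mathbb{R}^2$), but the doubly non-discrete case requires the extra input of Theorem~\ref{maintheorem2} and cannot be absorbed into a single syndetic-hull dimension count.
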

By Lemma \ref{lemma:rank_one_possibility}, we may assume that $p(\Gamma)\leq B_1$. Note that $B_{1}$ is isomorphic to the affine group and, in particular, it is not unimodular. It may be written as the semidirect product~$\mathbb{R}^{+}\ltimes U_{1}$, where~$U_{1}$ is the unipotent subgroup defined in \eqref{U_1_group}. From now on, and until the end of the proof of Proposition \ref{noninjective projection to U}, we work under the assumptions that $\Gamma\cap I\cong \Z$ and $p(\Gamma)\leq B_1$.

The first step is to prove the following.

\begin{prop}\label{prop: q_p_disrete}
 If either~$p(\Gamma)$ or~$q(\Gamma)$ is discrete, then $\Omega = \mathbb{R}^{2,2}$.
\end{prop}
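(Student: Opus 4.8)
The plan is to separate the two hypotheses and reduce everything to the single situation in which $q(\Gamma)$ is discrete, where a cohomological-dimension count finishes the argument. Recall that $\Gamma\cap I\cong\Z$ is exactly $\Ker(q|_\Gamma)$, and that by Lemma~\ref{lemma:rank_one_possibility} we may assume $p(\Gamma)\le B_1$; in particular $\Gamma\le B_1\ltimes N$, which is solvable because $B_1$ is solvable and $N$ is nilpotent. Note moreover that $B_1\cong\mathbb{R}^+\ltimes U_1$ is a two-dimensional, simply connected, \emph{non-unimodular} solvable group, hence admits no lattice; so any discrete subgroup of $B_1$ has cohomological dimension at most $1$ and, being solvable, is trivial or infinite cyclic.

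First I would treat the case where $q(\Gamma)$ is discrete. Since the linear part of $q(\Gamma)$ lies in $B_1$, the group $q(\Gamma)$ is a discrete subgroup of $G_1:=B_1\ltimes_{\mathsf t}\mathbb{R}^2$. Computing the trace of the adjoint action shows $G_1$ is non-unimodular: the diagonal generator of $B_1$ acts on $\mathfrak{b}_1$ with trace $-1$ and on the translation factor $\mathbb{R}^2$ (through $M\mapsto M^{-T}$) with trace $-1$, for a total of $-2\neq 0$. As $G_1$ is simply connected solvable, it is diffeomorphic to $\mathbb{R}^4$, and after passing to a torsion-free finite-index subgroup via Selberg's lemma, $q(\Gamma)$ acts freely and properly discontinuously on it by left translations. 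If this action were cocompact, $q(\Gamma)$ would be a lattice in the non-unimodular group $G_1$, which is impossible; hence $\cd(q(\Gamma))\le 3$. Plugging this into the short exact sequence $1\to\Gamma\cap I\to\Gamma\to q(\Gamma)\to 1$ together with $\cd(\Gamma\cap I)=1$ gives $\cd(\Gamma)\le 4$, while Theorem~\ref{thm_vcd_gamma} gives $\cd(\Gamma)\ge 4$. Thus $\cd(\Gamma)=4=\dim M$, and since $\Gamma$ is solvable, Theorem~\ref{fact: cd=dim+solvable implies complete} forces completeness, i.e.\ $\Omega=\mathbb{R}^{2,2}$.

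It then remains to reduce the hypothesis that $p(\Gamma)$ is discrete to the previous case. If $p(\Gamma)$ is trivial, then $\Gamma\le N$ is nilpotent and Theorem~\ref{nilpotent_complete} gives completeness at once. Otherwise $p(\Gamma)\cong\Z$ by the dichotomy above. I would then look at the translation subgroup $T=q(\Gamma)\cap(\{\Id\}\times\mathbb{R}^2)=q(\Gamma\cap N)$, which is precisely the kernel of the projection $q(\Gamma)\to p(\Gamma)\cong\Z$. If $T$ is not discrete, then $\overline{q(\Gamma)}\supseteq\overline{T}$ contains a one-parameter group of pure translations and Corollary~\ref{cor: pure_translation} yields $\Omega=\mathbb{R}^{2,2}$. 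If $T$ is discrete, then $q(\Gamma)$ is an extension of the discrete group $\Z\cong p(\Gamma)$ by the discrete group $T$ inside $G_1$: any sequence in $q(\Gamma)$ tending to the identity has image in $p(\Gamma)$ eventually trivial, hence lies eventually in $T$, and is eventually trivial; so $q(\Gamma)$ is discrete and we are back in the first case.

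The main obstacle is that, with $\Gamma\cap I\cong\Z$, the fiber of $\Omega\to\widehat\Omega$ is a full plane $P_0\cong\mathbb{R}^2$ on which the kernel $\Z$ acts with \emph{non-compact} quotient; consequently Lemma~\ref{lemma:properness-quotient} does not apply, and one cannot conclude that $q(\Gamma)$ acts cocompactly on $\widehat\Omega$. This makes the clean classification-of-domains argument of the rank-two case (Proposition~\ref{prop:rank_two_kernel}) unavailable. The heart of the proof is therefore to trade cocompactness on the base for the exact cohomological-dimension bookkeeping above, the decisive input being the non-unimodularity of $G_1$, which excludes the extremal value $\cd(q(\Gamma))=4$ and thereby pins down $\cd(\Gamma)=4$.
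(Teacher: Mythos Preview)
Your proof is correct, and the treatment of the case where $q(\Gamma)$ is discrete is essentially identical to the paper's (non-unimodularity of $B_1\ltimes_{\mathsf t}\mathbb{R}^2$, hence $\cd(q(\Gamma))\le 3$, hence $\cd(\Gamma)=4$, then Theorem~\ref{fact: cd=dim+solvable implies complete}).

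Your handling of the case where $p(\Gamma)$ is discrete is, however, slightly different and in fact more economical than the paper's. The paper argues separately: it first shows that if $T$ is discrete with $\cd(T)\le 1$ then $\cd(\Gamma)\le 3$, contradicting Theorem~\ref{thm_vcd_gamma}; this forces either $T$ non-discrete (handled by Corollary~\ref{cor: pure_translation}) or $T\cong\mathbb{Z}^2$, and in the latter case it uses that $p(\Gamma)$ preserves a rank-two lattice in $\mathbb{R}^2$ to conclude $p(\Gamma)\le U_1$, whence $\Gamma$ is nilpotent. You bypass this dichotomy entirely: once $T$ is discrete (of any rank), the short exact sequence $1\to T\to q(\Gamma)\to p(\Gamma)\to 1$ inside $B_1\ltimes_{\mathsf t}\mathbb{R}^2$ has discrete kernel and discrete image under a continuous projection, so $q(\Gamma)$ is discrete and you feed directly into the first case. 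This avoids both the separate $\cd$-contradiction and the $T\cong\mathbb{Z}^2$ nilpotency step. The paper's route has the minor advantage of making visible exactly which configurations are excluded, but your reduction is cleaner.
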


We start with the case where~$p(\Gamma)$ is discrete.

\begin{lemma}\label{lemma:p discrete}
    If $p(\Gamma)$ is discrete, then~$\Omega = \mathbb{R}^{2,2}$.
\end{lemma}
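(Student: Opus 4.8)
The plan is to show that the hypotheses force $\cd(\Gamma)=4$ and then invoke the solvable completeness criterion, Theorem~\ref{fact: cd=dim+solvable implies complete}. First I would record the structural facts that feed this criterion: since $p(\Gamma)\le B_1$ and $B_1$ is solvable while $N=\Ker(p)$ is nilpotent, the group $\Gamma\le p(\Gamma)\ltimes N$ is solvable; and since its linear part lies in $\SO_0(2,2)\le\SL_4(\mathbb{R})$, the affine structure on $M=\Omega/\Gamma$ has parallel volume. By Theorem~\ref{thm_vcd_gamma} we already know $\cd(\Gamma)\ge 4$, so the entire task reduces to producing the matching upper bound $\cd(\Gamma)\le 4$.

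The upper bound comes from two short exact sequences together with subadditivity of cohomological dimension. Set $T=q(\Gamma\cap N)$, the group of pure translations inside $q(\Gamma)$. If $T$ is \emph{not} discrete, then $\overline{q(\Gamma)}$ contains a one-parameter group of pure translations and Corollary~\ref{cor: pure_translation} gives $\Omega=\mathbb{R}^{2,2}$ immediately; so I may assume $T$ is discrete, whence $T\cong\mathbb{Z}^k$ with $k\le 2$ and $\cd(T)\le 2$. Since $I\subset N$, restricting $q$ to $\Gamma\cap N$ yields the sequence $1\to \Gamma\cap I\to \Gamma\cap N\to T\to 1$ whose kernel is $\Gamma\cap I\cong\mathbb{Z}$; subadditivity then gives $\cd(\Gamma\cap N)\le 1+\cd(T)\le 3$. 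Feeding this into $1\to \Gamma\cap N\to \Gamma\to p(\Gamma)\to 1$ produces $\cd(\Gamma)\le \cd(\Gamma\cap N)+\cd(p(\Gamma))\le 3+\cd(p(\Gamma))$.

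The crux is therefore the bound $\cd(p(\Gamma))\le 1$. Here I would use that $p(\Gamma)$ is a discrete, torsion-free subgroup of $B_1\cong\Aff^{+}(\mathbb{R})$, a $2$-dimensional contractible Lie group with trivial maximal compact subgroup. Such a subgroup acts freely and properly discontinuously on $B_1$, so $\cd(p(\Gamma))\le 2$, with equality only if $p(\Gamma)$ were a cocompact lattice; but $\Aff^{+}(\mathbb{R})$ is non-unimodular and hence admits no lattice (equivalently, every abelian subgroup lies in a one-parameter subgroup, so $\mathbb{Z}^2$ cannot embed discretely), forcing $\cd(p(\Gamma))\le 1$. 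Combining the estimates gives $\cd(\Gamma)\le 4$, hence $\cd(\Gamma)=4=\dim M$; as $\Gamma$ is solvable with parallel volume, Theorem~\ref{fact: cd=dim+solvable implies complete} yields completeness, i.e.\ $\Omega=\mathbb{R}^{2,2}$.

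I expect the genuinely delicate points to be the non-discrete-translation alternative (cleanly reducing it to Corollary~\ref{cor: pure_translation}) and the bound $\cd(p(\Gamma))\le 1$, which is what makes the cohomological squeeze close up exactly at $4$. Everything else is bookkeeping, the only technical care being to verify that $\Gamma$, $\Gamma\cap N$, $T$, and $p(\Gamma)$ are all torsion-free before applying subadditivity of $\cd$; torsion-freeness of $p(\Gamma)$ follows from the fact that $\Aff^{+}(\mathbb{R})$ is itself torsion-free.
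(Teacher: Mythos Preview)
Your argument is correct and in fact slightly streamlines the paper's own proof. Both proofs share the dichotomy on $T$ (non-discrete $\Rightarrow$ Corollary~\ref{cor: pure_translation}), the exact sequences $1\to\Gamma\cap I\to\Gamma\cap N\to T\to 1$ and $1\to\Gamma\cap N\to\Gamma\to p(\Gamma)\to 1$, and the non-unimodularity bound $\cd(p(\Gamma))\le 1$. The difference is in how the discrete-$T$ case is finished. The paper pushes the cohomological estimate one step harder: assuming $\cd(T)\le 1$ yields $\cd(\Gamma)\le 3$, contradicting Theorem~\ref{thm_vcd_gamma}, so in fact $T\cong\mathbb{Z}^2$; then the $p(\Gamma)$-invariance of this lattice forces $p(\Gamma)\le U_1$, making $\Gamma$ nilpotent, and Theorem~\ref{nilpotent_complete} concludes. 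You instead accept $\cd(T)\le 2$, obtain $\cd(\Gamma)\le 4$, and close with the solvable criterion Theorem~\ref{fact: cd=dim+solvable implies complete}. Your route avoids the extra case split on $\operatorname{rk}(T)$ and the observation that preserving a rank-two lattice forces $\det=1$; the paper's route has the minor advantage of landing on the stronger nilpotent-completeness theorem and of extracting the sharper structural fact $T\cong\mathbb{Z}^2$ along the way.
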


\begin{proof}
We claim that if~$p(\Gamma)$ is discrete, then either~$T$ is non-discrete or~$T \cong \mathbb{Z}^2$ (recall the definition of $T$ in \eqref{eq:Translation}). First note that we may assume that~$p(\Gamma)$ is torsion-free, up to passing to a finite-index subgroup of~$\Gamma$. Using the short exact sequence~$
1 \to \Gamma\cap N \to \Gamma \to p(\Gamma) \to 1$, we obtain~$
\mathrm{cd}(\Gamma) \le \mathrm{cd}(\Gamma\cap N) + \mathrm{cd}(p(\Gamma))$.

By contradiction, assume that $T$ is discrete with~$\cd(T)\leq 1$, then we have the short exact sequence~$1 \to \Gamma\cap I \to \Gamma\cap N \to T \to 1$. Thus~$\mathrm{cd}(\Gamma\cap N) \le \mathrm{cd}(\Gamma\cap I) + \mathrm{cd}(T) \le 2.$
Since~$p(\Gamma)$ is a discrete subgroup of~$B_1$, we must have~$\mathrm{cd}(p(\Gamma)) = 1$.  
Indeed, if~$\mathrm{cd}(p(\Gamma)) = 2$, then~$p(\Gamma)$ would be a uniform lattice in~$B_1$, which is impossible because~$B_1$ is not unimodular.  
Hence~$\mathrm{cd}(\Gamma) \le 3,$ contradicting Theorem~\ref{thm_vcd_gamma}.  
This proves the claim.

If $T$ is non-discrete, then completeness follows from Corollary \ref{cor: pure_translation}. If $T$ is discrete, then we must have $T \cong \mathbb{Z}^2$. Now, observe that the linear part of $q(\Gamma)\leq B_1\ltimes_\mathsf{t}\mathbb{R}^2$, which is by definition $p(\Gamma)$, preserves the group of pure translations $T\cong \mathbb{Z}^2$. Hence $p(\Gamma)$ is volume-preserving (i.e., has determinant one), which implies that $p(\Gamma)\leq U_1$. Consequently, $\Gamma \leq U_1\ltimes N$ is nilpotent, and completeness follows from Theorem~\ref{nilpotent_complete}.
\end{proof}

We now consider the case where $q(\Gamma)$ is discrete.

\begin{lemma}\label{Lemma: q(gamma) discrete}
 If~$q(\Gamma)$ is discrete, then~$\Omega = \mathbb{R}^{2,2}$.
\end{lemma}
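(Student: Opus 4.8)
The plan is to establish completeness through the solvable criterion of Theorem~\ref{fact: cd=dim+solvable implies complete}, the whole point being to pin down the cohomological dimension of $\Gamma$. First I record the structural facts we may use. Since $p(\Gamma)\le B_1$, the group $\Gamma$ sits inside $B_1\ltimes N$, which is an extension of the solvable group $B_1$ by the nilpotent group $N$ and hence solvable; thus $\Gamma$ is solvable. Moreover $\Gamma\le \SO_0(2,2)\ltimes\mathbb{R}^{2,2}\le\SL_4(\mathbb{R})\ltimes\mathbb{R}^4$, so $\Omega/\Gamma$ is a closed affine manifold with parallel volume form. By Theorem~\ref{thm_vcd_gamma} we already have $\cd(\Gamma)\ge 4$, so it suffices to prove the reverse inequality $\cd(\Gamma)\le 4$: then $\cd(\Gamma)=4=\dim(\Omega/\Gamma)$ and Theorem~\ref{fact: cd=dim+solvable implies complete} gives completeness, i.e. $\Omega=\mathbb{R}^{2,2}$. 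Throughout we may pass to a finite-index torsion-free subgroup by Selberg's lemma, which affects neither solvability nor the conclusion.

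The bound $\cd(\Gamma)\le 4$ will come from the short exact sequence $1\to\Gamma\cap I\to\Gamma\xrightarrow{q}q(\Gamma)\to 1$. Since we are in the rank-one case, $\Gamma\cap I=\Ker(q|_\Gamma)\cong\mathbb{Z}$ has cohomological dimension $1$, so subadditivity of $\cd$ gives $\cd(\Gamma)\le 1+\cd(q(\Gamma))$. It therefore remains to show $\cd(q(\Gamma))\le 3$. This is where the hypothesis enters: $q(\Gamma)$ is discrete and, because $p(\Gamma)\le B_1$, it is contained in the solvable Lie group $G:=B_1\ltimes_{\mathsf{t}}\mathbb{R}^2$, which is simply connected of dimension $4$ and hence diffeomorphic to $\mathbb{R}^4$. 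Being a discrete subgroup of a simply connected solvable group, $q(\Gamma)$ is torsion-free and acts freely and properly discontinuously by left translations on the contractible manifold $G$; thus $\cd(q(\Gamma))\le\dim G=4$, with equality if and only if $q(\Gamma)$ is a uniform lattice in $G$.

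So the crux, and the main obstacle, is to rule out $\cd(q(\Gamma))=4$, which I do by showing that $G$ is \emph{not unimodular} and therefore admits no lattice. A direct computation on the Lie algebra $\mathfrak{g}=\mathfrak{b}_1\oplus\mathbb{R}^2$ settles this: writing $B_1\cong\mathbb{R}^+\ltimes U_1$ and letting $H$ be the diagonal generator, one has $\operatorname{ad}_H=-1$ on the $U_1$-direction, while, since $B_1$ acts on $\mathbb{R}^2$ by the inverse transpose, $\operatorname{ad}_H$ acts on the $\mathbb{R}^2$-factor with trace $-1$; altogether $\operatorname{tr}(\operatorname{ad}_H)=-2\neq 0$, so $G$ is non-unimodular. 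A connected Lie group admitting a lattice must be unimodular, so $q(\Gamma)$ cannot be a uniform lattice in $G$, forcing $\cd(q(\Gamma))\le 3$. Combining with the previous paragraph gives $\cd(\Gamma)\le 1+3=4$, hence $\cd(\Gamma)=4$. As $\Gamma$ is solvable with $\cd(\Gamma)=\dim(\Omega/\Gamma)$, Theorem~\ref{fact: cd=dim+solvable implies complete} yields that $\Omega/\Gamma$ is complete, and therefore $\Omega=\mathbb{R}^{2,2}$.
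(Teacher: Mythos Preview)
Your argument is correct and follows essentially the same route as the paper: bound $\cd(q(\Gamma))\le 3$ via non-unimodularity of $B_1\ltimes_{\mathsf t}\mathbb{R}^2$ (the paper isolates this as Sublemma~\ref{non_unimodular}, computing $\det\Ad_M$ rather than $\tr\ad_H$, but the content is identical), then use the short exact sequence with $\Gamma\cap I\cong\mathbb{Z}$ to get $\cd(\Gamma)\le 4$, and conclude by solvability and Theorem~\ref{fact: cd=dim+solvable implies complete}. Your observation that a discrete subgroup of a simply connected solvable Lie group is automatically torsion-free is a mild sharpening over the paper's ``up to finite index''.
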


The proof uses the following fact about~$B_1\ltimes_{\mathsf{t}} \mathbb{R}^2$.

\begin{sublemma}\label{non_unimodular}
    The group~$B_1 \ltimes_{\mathsf{t}} \mathbb{R}^2$ is not unimodular.
\end{sublemma}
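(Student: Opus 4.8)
The plan is to verify non-unimodularity directly through the adjoint representation: a Lie group $G$ is unimodular if and only if $|\det \Ad_g| = 1$ for every $g \in G$, so it suffices to exhibit a single element whose adjoint operator has determinant of absolute value different from $1$. Write $G = B_1 \ltimes_{\mathsf{t}} \R^2$ and let $\mathfrak{g} = \mathfrak{b}_1 \oplus \R^2$ be its Lie algebra, where $\mathfrak{b}_1 = \operatorname{Lie}(B_1)$ and the $\R^2$ summand is the abelian Lie algebra of the translation subgroup. The key structural point is that $\R^2$ is an ideal of $\mathfrak{g}$, so for any $g \in G$ the operator $\Ad_g$ preserves $\R^2$ and descends to the quotient $\mathfrak{g}/\R^2 \cong \mathfrak{b}_1$; consequently $\Ad_g$ is block triangular and $\det \Ad_g = \det(\Ad_g|_{\R^2}) \cdot \det(\Ad_g|_{\mathfrak{b}_1})$, and I would compute the two factors separately.

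For an element $g = (M, v)$ with $M = \left(\begin{smallmatrix} 1 & x \\ 0 & \lambda \end{smallmatrix}\right) \in B_1$, a short computation with the semidirect-product law (recall that $B_1$ acts on $\R^2$ via $M \cdot w = M^{-T} w$, as fixed in Section~\ref{sec3}) shows that conjugation of a pure translation $(\Id, w)$ by $(M, v)$ returns $(\Id, M^{-T}w)$; hence $\Ad_g|_{\R^2} = M^{-T}$ and its determinant is $(\det M)^{-1} = \lambda^{-1}$. For the second factor, the induced action on $\mathfrak{g}/\R^2 \cong \mathfrak{b}_1$ is exactly the adjoint action of $M$ inside $B_1$; since $B_1 \cong \operatorname{Aff}(\R)$ is the non-unimodular $ax+b$-group, a direct matrix conjugation in a basis of $\mathfrak{b}_1$ gives $\det \Ad^{B_1}_M = \lambda^{-1}$ as well. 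Multiplying the two contributions yields $|\det \Ad_{(M,v)}| = \lambda^{-2}$, which differs from $1$ as soon as $\lambda \neq 1$, proving that $G$ is not unimodular.

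There is no serious obstacle here — the argument is a routine Lie-theoretic computation — but the step requiring the most care is keeping track of the inverse-transpose action $M \mapsto M^{-T}$ together with the semidirect-product multiplication convention, since both the conjugation on the ideal $\R^2$ and the induced action on $\mathfrak{b}_1$ must be computed consistently with the definition of $\ltimes_{\mathsf{t}}$. Alternatively, one may shortcut the second factor by simply invoking the already-noted fact that $B_1$ is isomorphic to the affine group and hence non-unimodular, combined with the general product formula $\Delta_{H \ltimes_\rho V}(h,v) = \Delta_H(h)\,|\det \rho(h)|$ for a semidirect product with an abelian vector factor $V$.
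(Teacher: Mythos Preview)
Your proof is correct and follows essentially the same approach as the paper: both arguments compute $\det\Ad_g$ for an element of $B_1\ltimes_{\mathsf{t}}\R^2$ and obtain $\lambda^{-2}$. The paper picks the specific diagonal element $M=\mathrm{diag}(1,\lambda)$ and writes out the full $4\times 4$ adjoint matrix in a chosen basis, whereas you organize the same computation via the block-triangular splitting $\det\Ad_g=\det(\Ad_g|_{\R^2})\cdot\det(\Ad_g|_{\mathfrak{b}_1})$ for a general $M\in B_1$; the content is the same.
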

\begin{proof} Fix~$\lambda\neq 1$ and consider the diagonal matrix~$M=\mathrm{diag}(1,\lambda)\in B_1$. We will show that the determinant of~$\Ad_M$ is not equal to~$1$. For that, consider the basis $$ u_1=\begin{pmatrix}0&1\\[4pt]0&0\end{pmatrix},\qquad u_2=\begin{pmatrix}0&0\\[4pt]0&1\end{pmatrix} $$ 
of the Lie algebra of~$B_1$. We compute~$\Ad_M$ in the basis~$(u_1,u_2,T_3,T_4)$. A simple computation gives~$M u_1 M^{-1}=(1/\lambda)u_1$ and~$M u_2 M^{-1}=u_2$. Therefore the adjoint action~$\Ad_M$ in the basis~$(u_1,u_2,T_3,T_4)$ is

$$ \Ad_M = \begin{pmatrix} \frac{1}{\lambda} & 0 & 0 & 0 \\ 0 & 1 & 0 & 0 \\ 0 & 0 & 1 & 0 \\ 0 & 0 & 0 & \frac{1}{\lambda} \end{pmatrix}.$$ Thus~$\det(\Ad_M)=\frac{1}{\lambda^{2}}\neq 1$, which completes the proof. 
\end{proof}

\begin{proof}[Proof of Lemma~\ref{Lemma: q(gamma) discrete}]
We first claim that if~$q(\Gamma)$ is discrete, then~$\mathrm{cd}(q(\Gamma)) \le 3$.  
Since up to finite index, $q(\Gamma)$ acts properly on the contractible Lie group~$B_1\ltimes_{\mathsf{t}} \mathbb{R}^2$ of dimension~$4$, we always have~$\mathrm{cd}(q(\Gamma))\le 4$. Suppose for contradiction that~$\mathrm{cd}(q(\Gamma)) = 4$.  
Then~$q(\Gamma)$ would be a uniform lattice in~$B_1\ltimes_{\mathsf{t}} \R^2$, contradicting non-unimodularity (Sublemma~\ref{non_unimodular}), hence~$\mathrm{cd}(q(\Gamma)) = 3$.
The short exact sequence
$$
1 \longrightarrow \Gamma\cap I \longrightarrow \Gamma \longrightarrow q(\Gamma) \longrightarrow 1
$$
then implies~$\mathrm{cd}(\Gamma) \le \mathrm{cd}(q(\Gamma)) + \mathrm{cd}(\Gamma\cap I)
\le 3 + 1 = 4$. In the other hand $\Gamma\leq B_1\ltimes N$ is solvable. Thus~$\Omega = \mathbb{R}^{2,2}$ by Theorem~\ref{fact: cd=dim+solvable implies complete}.
\end{proof}

We can now conclude the proof of Proposition~\ref{prop: q_p_disrete}.

\begin{proof}[Proof of Proposition~\ref{prop: q_p_disrete}]
The proof follows by combining Lemma~\ref{lemma:p discrete} and Lemma~\ref{Lemma: q(gamma) discrete}.
\end{proof}

The rest of this section is devoted to proving the following.

\begin{prop}\label{prop: p and q not discrete} 
If both~$q(\Gamma)$ and~$p(\Gamma)$ are non-discrete, then~$\Omega = \mathbb{R}^{2,2}$.
\end{prop}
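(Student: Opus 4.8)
The plan is to analyse the identity component of the closure $\overline{q(\Gamma)}^\circ$ in $\GL_2^+(\R)\ltimes_{\mathsf t}\R^2$ and feed the outcome into the completeness criteria of Section~\ref{sec4}. Two structural facts drive everything. First, since $p(\Gamma)\le B_1$ (see~\eqref{B_1}), the group $\Gamma$ lies in $B_1\ltimes N$, an extension of the solvable group $B_1$ by the nilpotent group $N$, and is therefore solvable; this makes Corollary~\ref{cor: pure_nilpotent_translation} and Theorem~\ref{fact: cd=dim+solvable implies complete} available. Second, $p(\Gamma)$ is non-discrete, so by Lemma~\ref{N_homothety} and Theorem~\ref{maintheorem2} applied to $\GL_2^+(\R)\ltimes N$, the group $\overline{p(\Gamma)}^\circ$ is a non-trivial connected nilpotent subgroup of $B_1$; as $B_1$ is the affine group of the line, $\overline{p(\Gamma)}^\circ$ is a one-parameter subgroup, either equal to the unipotent subgroup $U_1$ of~\eqref{U_1_group} (the \emph{parabolic} case) or conjugate in $B_1$ to the diagonal torus (the \emph{hyperbolic} case). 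I first observe that if $\overline{q(\Gamma)}^\circ$ contains a one-parameter group of pure translations, then Corollary~\ref{cor: pure_translation} already gives $\Omega=\R^{2,2}$. Otherwise $\overline{q(\Gamma)}^\circ$ contains no pure translations, so the linear-part projection is injective on it and, being non-trivial, identifies $\overline{q(\Gamma)}^\circ$ with a single one-parameter subgroup $F$ mapping isomorphically onto $\overline{p(\Gamma)}^\circ$.

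The analysis of $F$ is organised by how it moves the first coordinate of $\R^2\cong\R^{2,2}/P_0$. Every element of $B_1$ acts on this plane by an inverse transpose $M^{-T}$, which is lower triangular and hence fixes the first coordinate; thus the first-coordinate displacement defines a homomorphism $\phi\colon q(\Gamma)\to\R$, and the behaviour of $F$ is governed by whether its generator has non-zero first-coordinate drift. Suppose $\overline{p(\Gamma)}^\circ$ is parabolic, so that $F$ is of the form $J^{\mathsf t}(a,b,c)$ with $a\neq0$ (Remark~\ref{remark: transeverse_U_1_transpose}). If $b\neq0$ then $F$ drifts the first coordinate and Corollary~\ref{cor: pure_nilpotent_translation} applies, yielding $\Omega=\R^{2,2}$. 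If $b=0$, then $F$ fixes pointwise the vertical line $\{x=c/a\}$; since $\overline{q(\Gamma)}^\circ$ is normal in $\overline{q(\Gamma)}$, all of $q(\Gamma)$ permutes the fixed locus of $F$, which is exactly this line, and therefore preserves it. Pulling back through the projection $\pi$ modulo $P_0$, the group $\Gamma$ preserves an affine hyperplane of $\R^{2,2}$, contradicting Theorem~\ref{thm_Goldman_Hirsch_rep}; so this subcase does not occur.

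Suppose instead $\overline{p(\Gamma)}^\circ$ is hyperbolic; after conjugation its linear part is the diagonal torus, acting on $\R^2$ with eigenvalue $1$ on the first axis and a non-trivial eigenvalue on the second. If $F$ drifts the first coordinate, then along each orbit the first coordinate is an affine bijection of the parameter, so every orbit is a graph over the first axis; the flow-box argument of Proposition~\ref{prop:contractible} then identifies $\widehat\Omega$ with $\R\times I$, whence $\widehat\Omega$, and therefore $\Omega\cong P_0\times\widehat\Omega$, is contractible. Thus $\cd(\Gamma)=4$, and since $\Gamma$ is solvable, Theorem~\ref{fact: cd=dim+solvable implies complete} gives $\Omega=\R^{2,2}$. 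If $F$ has no first-coordinate drift, then its translation part lies in the non-trivial eigendirection and $F$ fixes pointwise a horizontal line; exactly as in the parabolic subcase, $q(\Gamma)$ preserves this line and $\Gamma$ preserves an affine hyperplane, contradicting Theorem~\ref{thm_Goldman_Hirsch_rep}. This exhausts all cases.

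The main obstacle is the organisation of the closure analysis rather than any single computation: one must first strip off the pure-translation possibility to reduce $\overline{q(\Gamma)}^\circ$ to a single flow, and then, in the two \emph{fixing} subcases, recognise that the fixed locus of this normal flow is an affine subspace preserved by all of $q(\Gamma)$, so that the no-invariant-subspace theorem (Theorem~\ref{thm_Goldman_Hirsch_rep}) eliminates them. The delicate point is that the completeness corollaries of Section~\ref{sec4} only detect the \emph{drifting} flows $J^{\mathsf t}(a,b,c)$ with $b\neq0$ and the contractibility they force; the \emph{non-drifting} flows must be excluded by rigidity, and verifying that a non-drifting parabolic or hyperbolic flow genuinely fixes an entire line (and not merely a point) is precisely what makes the hyperplane obstruction applicable.
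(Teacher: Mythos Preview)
Your argument is correct and takes a genuinely different route from the paper's. The paper first invokes Lemma~\ref{lemma: p(gamma)} (which in turn rests on the syndetic-hull computation of Sublemma~\ref{p(gamma)_not_connected}) to force $\overline{p(\Gamma)}^\circ=U_1$, thereby eliminating the hyperbolic possibility before any analysis of $\overline{q(\Gamma)}^\circ$ begins; it then treats the $b=0$ subcase via an explicit normalizer computation inside $U_1\ltimes_{\mathsf t}\R^2$ followed by an adjoint-action argument on a conjugated generator $\widehat{\gamma}$ of the $\Z$-factor. You instead allow $\overline{p(\Gamma)}^\circ$ to be either parabolic or hyperbolic and dispose of both \emph{non-drifting} subcases with the single observation that the fixed-point set of the normal one-parameter group $F=\overline{q(\Gamma)}^\circ$ is an affine line, hence preserved by all of $q(\Gamma)$, yielding an invariant hyperplane and a contradiction with Theorem~\ref{thm_Goldman_Hirsch_rep}. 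This is cleaner and bypasses Sublemma~\ref{p(gamma)_not_connected} entirely. In exchange you must handle the hyperbolic \emph{drift} case, which you do by the same graph-over-a-line mechanism as in Proposition~\ref{prop:contractible}; strictly speaking that proposition is stated only for the parabolic flows $J^{\mathsf t}(a,b,c)$, so you are tacitly reproving it in the hyperbolic setting, but the argument carries over verbatim once one notes that the first coordinate moves affinely along orbits. The paper's approach has the side benefit of isolating $\overline{p(\Gamma)}^\circ=U_1$ as an independent structural fact, while your approach is more self-contained and more elementary for this particular proposition.
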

We start by showing this result.
\begin{lemma}\label{lemma: p(gamma)}
If~$p(\Gamma)$ is not discrete, then either~$\overline{p(\Gamma)}=U_1$ or~$\overline{p(\Gamma)} \cong \mathbb{Z} \ltimes U_1$.
\end{lemma}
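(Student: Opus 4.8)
The plan is to read off the structure of $\overline{p(\Gamma)}$ from Theorem~\ref{maintheorem2}. First I would note that, by Lemma~\ref{N_homothety}, $N$ is a homothety Lie group whose homothety commutes with the $\GL_2^+(\R)$-action, so Theorem~\ref{maintheorem2} applies to $G_\theta=\GL_2^+(\R)\ltimes N$ with $\pi=p$. Setting $H=\overline{p(\Gamma)}$ and $\Gamma_{nd}=\Gamma\cap p^{-1}(H^\circ)$, it provides a nilpotent syndetic hull $S$ of $\Gamma_{nd}$ with $\overline{p(S)}=H^\circ=\overline{p(\Gamma)}^\circ$. Hence $\overline{p(\Gamma)}^\circ$ is a connected nilpotent subgroup of $B_1$. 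Since $B_1$ is solvable but not nilpotent (it is isomorphic to $\R^{+}\ltimes U_1$, the affine group of the line), each of its connected nilpotent subgroups has dimension at most one; as $p(\Gamma)$ is non-discrete, $\overline{p(\Gamma)}^\circ$ is exactly one-dimensional.

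Next I would pin down which one-parameter subgroup this is. Up to conjugacy in $B_1$ there are exactly two: the unipotent radical $U_1=[B_1,B_1]$ (see~\eqref{U_1_group}) and the diagonal torus $A=\{\mathrm{diag}(1,\lambda)\mid\lambda>0\}$. A direct computation gives $N_{B_1}(A)=A$, and the same holds for every conjugate of $A$; therefore, if $\overline{p(\Gamma)}^\circ$ were conjugate to $A$, then $\overline{p(\Gamma)}\subseteq N_{B_1}(\overline{p(\Gamma)}^\circ)=\overline{p(\Gamma)}^\circ$, so $\overline{p(\Gamma)}$ would be connected, $\Gamma_{nd}=\Gamma$, and $\Gamma$ a uniform lattice in the nilpotent group $S$. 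Then $\Gamma$ is nilpotent and Theorem~\ref{nilpotent_complete} yields $\Omega=\R^{2,2}$ outright, so this branch is absorbed and we may assume $\overline{p(\Gamma)}^\circ=U_1$.

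Finally I would determine $\overline{p(\Gamma)}$ from $\overline{p(\Gamma)}^\circ=U_1$ by means of the projection $\sigma\colon B_1\to\R^{+}$ sending $\begin{pmatrix}1&x\\0&\lambda\end{pmatrix}$ to $\lambda$, whose kernel is $U_1$. Since $U_1=\overline{p(\Gamma)}^\circ$, the homomorphism $\sigma$ induces an injection of the discrete group $\overline{p(\Gamma)}/U_1$ into $\R$. I would then show that $\sigma(\overline{p(\Gamma)})$ is discrete: if not, there are $g_n\in\overline{p(\Gamma)}$ with $\sigma(g_n)\to 1$ and $\sigma(g_n)\ne1$; right-multiplying each $g_n$ by a suitable element of $U_1\subseteq\overline{p(\Gamma)}$ clears its off-diagonal entry and produces $\mathrm{diag}(1,\sigma(g_n))\in\overline{p(\Gamma)}\cap A$ accumulating at the identity, so $\overline{p(\Gamma)}\cap A=A$ and $\overline{p(\Gamma)}\supseteq\langle U_1,A\rangle=B_1$, contradicting one-dimensionality of $\overline{p(\Gamma)}^\circ$. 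Hence $\sigma(\overline{p(\Gamma)})$ is trivial or infinite cyclic, giving respectively $\overline{p(\Gamma)}=U_1$ or $\overline{p(\Gamma)}\cong\Z\ltimes U_1$.

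The delicate point is the second step: Theorem~\ref{maintheorem2} only yields nilpotency of $\overline{p(\Gamma)}^\circ$, and both candidates $U_1$ and $A$ are abelian, so nilpotency alone cannot distinguish them. I expect separating the unipotent case from the diagonal case to be the crux; above it is resolved via self-normalization of the torus together with Theorem~\ref{nilpotent_complete}. A more intrinsic alternative worth checking is that $p(s)$ acts on the center $[\mathfrak{n},\mathfrak{n}]=P_0$ of $\mathfrak{n}$ as $\Ad_s|_{P_0}$, which ties the eigenvalues of $p(S)$ to the placement of $\Gamma\cap N$ inside $S$.
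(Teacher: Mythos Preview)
Your overall strategy matches the paper's: apply Theorem~\ref{maintheorem2} to deduce that $\overline{p(\Gamma)}^\circ$ is a connected nilpotent subgroup of $B_1$ (hence one-dimensional), eliminate the torus alternative, and then project onto $B_1/U_1\cong\R^+$ to read off the component group. The final step is essentially the same, though the paper's discreteness argument is more direct: since $U_1=\overline{p(\Gamma)}^\circ$ coincides with the kernel of $B_1\to\R^+$ and $\overline{p(\Gamma)}$ is closed and saturated for this kernel, its image is automatically closed with trivial identity component, hence discrete.

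The genuine difference is how the torus case is disposed of. You absorb it: if $\overline{p(\Gamma)}^\circ$ is (a conjugate of) the diagonal torus, then self-normalization forces $\Gamma=\Gamma_{nd}$ to sit as a lattice in the nilpotent hull $S$, so $\Gamma$ is nilpotent and Theorem~\ref{nilpotent_complete} yields $\Omega=\R^{2,2}$ outright. This is correct reasoning, but it proves ``the stated dichotomy holds, or $\Omega=\R^{2,2}$'' rather than the lemma itself. The paper (Sublemma~\ref{p(gamma)_not_connected}) instead derives an honest contradiction: from $\cd(\Gamma)\ge4$ (Theorem~\ref{thm_vcd_gamma}) it forces $\dim(S\cap N)\ge3$; it then computes the adjoint action of a non-unipotent element of $L(\Gamma)\subset B_1\ltimes U$ on $\R^4$ to see that the subspace on which this action is unipotent is at most two-dimensional, namely $\Span\{T_1,T_3\}$, so $S\cap\R^4=\Span\{T_1,T_3\}$ and $S\cap U$ is nontrivial; finally Sublemma~\ref{sublemma:invariant_plane} shows $\Span\{T_1,T_3\}$ is not $U$-invariant, a contradiction. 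The paper's route is longer but keeps the lemma a purely structural statement about $\overline{p(\Gamma)}$, usable independently of the completeness endgame; your shortcut is slicker but makes the lemma contingent on the surrounding goal.
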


The proof of Lemma~\ref{lemma: p(gamma)} proceeds by showing that  
the identity component~$\overline{p(\Gamma)}^\circ$ coincides with the nilradical of~$B_1$.

\begin{sublemma}\label{p(gamma)_not_connected}
The group $\overline{p(\Gamma)}^{\circ}$ coincides~$U_1$
\end{sublemma}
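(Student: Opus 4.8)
The plan is to pin down $\overline{p(\Gamma)}^{\circ}$ by first using the nilpotent syndetic hull produced by Theorem~\ref{maintheorem2} and then an explicit eigenvalue computation on $\n$. Write $H:=\overline{p(\Gamma)}$, so that $H^{\circ}$ is a connected Lie subgroup of the two-dimensional group $B_1$; since $p(\Gamma)$ is non-discrete we have $\dim H^{\circ}\ge 1$. Because $N$ is a homothety Lie group (Lemma~\ref{N_homothety}) and the homothety $\Phi_\lambda$ commutes with the $\GL_2^+(\R)$-action, hence with the $B_1$-action on $N$, Theorem~\ref{maintheorem2} applies to $\Gamma\le B_1\ltimes N$ with the projection $p$. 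It produces a nilpotent syndetic hull $S$ of $\Gamma_{nd}=\Gamma\cap p^{-1}(H^{\circ})$ with $\overline{p(S)}=H^{\circ}$. Thus $H^{\circ}$ is the closure of the connected nilpotent group $p(S)$, hence itself nilpotent; as $B_1\cong\Aff^{+}(\R)$ is not nilpotent this forces $H^{\circ}\neq B_1$, so $\dim H^{\circ}=1$. Since $B_1$ has no nontrivial compact subgroup, its one-parameter subgroups are either the unipotent $U_1$ or conjugate to the diagonal subgroup $\{\mathrm{diag}(1,\lambda):\lambda>0\}$.

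The heart of the argument is to exclude this second, hyperbolic, possibility. Conjugating by a suitable element of $U_1$, which fixes $T_1$ and preserves $P_0$ together with all of our normalizations, I would reduce to the case $\h:=\mathrm{Lie}(H^{\circ})=\R u_2$. Choosing $X\in\s$ with $p_*(X)=u_2$ and writing $X=u_2+Z$ with $Z\in\n$, the key computation is that of $\ad_X$ on $\n$: using the brackets \eqref{bracket_n} together with the infinitesimal $\gl_2$-action $X\cdot(s,v_1,v_2)=(s\,\tr X,\,Xv_1,\,-X^{T}v_2)$, and crucially the fact that $\ad_Z(\n)\subseteq[\n,\n]=P_0$, one checks that $\ad_X|_{\n}$ has eigenvalues $\{1,1,0,0,-1\}$ independently of $Z$, with a two-dimensional generalized $0$-eigenspace. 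Since $\s$ is nilpotent and $\m:=\s\cap\n$ is an ideal of $\s$, the operator $\ad_X|_{\m}$ is nilpotent, so $\m$ lies in this generalized $0$-eigenspace; hence $\dim\m\le 2$ and $\dim S=\dim\m+1\le 3$.

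To finish, I would feed this bound into a cohomological-dimension dichotomy. As $S$ is a simply connected nilpotent group (a syndetic hull inside $B_1\ltimes N$, which has no compact subgroup) in which $\Gamma_{nd}$ is a uniform lattice, $\cd(\Gamma_{nd})=\dim S\le 3$. The quotient $\Gamma/\Gamma_{nd}$ embeds into the discrete component group $H/H^{\circ}$, hence is cyclic, so $\cd(\Gamma)\le\cd(\Gamma_{nd})+1\le 4$. On the other hand $\cd(\Gamma)\ge 4$ by Theorem~\ref{thm_vcd_gamma}, whence $\cd(\Gamma)=4$; since $\Gamma\le B_1\ltimes N$ is solvable, Theorem~\ref{fact: cd=dim+solvable implies complete} yields $\Omega=\R^{2,2}$. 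As the ambient goal (Proposition~\ref{prop: p and q not discrete}) is precisely to prove $\Omega=\R^{2,2}$, we may assume $\Omega\neq\R^{2,2}$; the hyperbolic case then contradicts this assumption and is therefore impossible, leaving $\overline{p(\Gamma)}^{\circ}=U_1$.

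The step I expect to be the main obstacle is exactly the exclusion of the hyperbolic one-parameter subgroup. The nilpotency coming from Theorem~\ref{maintheorem2} only eliminates the full group $B_1$: the hyperbolic $1$-dimensional subgroup is abelian, hence nilpotent, and so survives the nilpotency test. What rescues the argument is the specific shape of the $\gl_2$-action on $\n$, namely that the undetermined nilpotent correction $\ad_Z$ takes values in the center $P_0$ and therefore cannot enlarge the generalized $0$-eigenspace of $\ad_{u_2}|_{\n}$ beyond dimension two. Getting this eigenvalue bookkeeping correct, and verifying that it is genuinely independent of $Z$, is the delicate point on which the dimension count for $S$, and hence the contradiction with Theorem~\ref{thm_vcd_gamma}, ultimately rests.
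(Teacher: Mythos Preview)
Your argument is correct, and the eigenvalue computation on $\n$ is sound: in any basis adapted to the flag $P_0\subset\n$, the operator $\ad_X=\ad_{u_2}+\ad_Z$ is upper triangular with diagonal $\mathrm{diag}(1,0,1,0,-1)$, since $\ad_Z(\n)\subset P_0$. Hence the generalized $0$--eigenspace is two-dimensional regardless of $Z$, and the nilpotency of $S$ forces $\dim(S\cap\n)\le 2$, i.e. $\dim S\le 3$.

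Your route differs from the paper's. The paper first uses the normalizer argument to conclude that in the hyperbolic case $H=H^\circ$, hence $\Gamma=\Gamma_{nd}$; from $\cd(\Gamma)\ge 4$ it then deduces $\dim(S\cap N)\ge 3$, and analyzes the adjoint action on $\R^4$ (rather than on all of $\n$) to force $S\cap\R^4=\Span\{T_1,T_3\}$, so that $S\cap U\neq\{1\}$; the contradiction then comes from Sublemma~\ref{sublemma:invariant_plane} on $U$--invariant planes. Your approach bypasses this invariant-plane step entirely by working on the full $5$--dimensional $\n$ from the start, which is arguably cleaner.

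One simplification: your final detour through $\cd(\Gamma)\le 4$ and Theorem~\ref{fact: cd=dim+solvable implies complete}, together with the hypothesis $\Omega\neq\R^{2,2}$, is unnecessary. In the hyperbolic case the normalizer of $H^\circ$ in $B_1$ is $H^\circ$ itself (a one-parameter subgroup transverse to $U_1$ is self-normalizing in $B_1$), so $H=H^\circ$ and $\Gamma=\Gamma_{nd}$. Then your bound gives $\cd(\Gamma)=\cd(\Gamma_{nd})\le\dim S\le 3$, directly contradicting Theorem~\ref{thm_vcd_gamma}. This proves the sublemma unconditionally, without appealing to the ambient assumption $\Omega\neq\R^{2,2}$.
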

\begin{proof}
We consider the discrete group~$\Gamma_{nd}=\Gamma\cap \left(\overline{p(\Gamma)}^\circ\ltimes N\right)$. By Lemma~\ref{N_homothety}, we may apply Theorem~\ref{maintheorem2} to the group~$\overline{p(\Gamma)}^\circ\ltimes N$ to get a nilpotent syndetic hull $S$ in $\overline{p(\Gamma)}^\circ\ltimes N$ with the property that $\overline{p(S)}=\overline{p(\Gamma)}^\circ$

\smallskip

Now, by contradiction, assume that~$\overline{p(\Gamma)}^\circ$ is different from~$U_1$. Then it is either equal to~$B_1$ or to a one-parameter subgroup of~$B_1$ not contained in~$U_1$. The case~$\overline{p(\Gamma)}^\circ = B_1$ is impossible, since~$B_1$ is not nilpotent whereas~$\overline{p(S)}=\overline{p(\Gamma)}^\circ$ is nilpotent.

Assume now that~$\overline{p(\Gamma)}^\circ$ is a one-parameter subgroup not contained in the nilradical of~$B_1$. Then the normalizer of~$\overline{p(\Gamma)}^\circ$ is itself. However $\overline{p(\Gamma)}$ normalizes its identity component; hence~$\overline{p(\Gamma)} \leq \overline{p(\Gamma)}^{\circ}$. This implies that $\overline{p(\Gamma)}$ is connected, so that $\overline{p(\Gamma)} = \overline{p(\Gamma)}^\circ$ and therefore $\Gamma_{nd} = \Gamma$. Moreover, $p(S)$ is closed, being a connected one-dimensional Lie group, and hence $p(S) = \overline{p(\Gamma)}$. By Theorem~\ref{thm_vcd_gamma}, $\cd(\Gamma) \ge 4$ and so
$\dim S \ge \cd(\Gamma_{nd}) \ge 4$.
 Since~$p(S)$ is one-dimensional, we deduce that~$\dim(S\cap N)\ge 3$. Now we rewrite~$B_1\ltimes N$ is its original form, namely~$(B_1\ltimes U)\ltimes \R^4$, see Definition \ref{U_group}. 
The nilpotency of~$S$ implies that the~$L(S)$-action on~$S\cap \R^4$, is unipotent, where~$L$ is the projection into the linear part.
Now, let $\gamma=(M,v)\in L(\Gamma)\leq L(S)$ such that $M$ is non-unipotent element of $B_1$ and $v\in U$. Then the adjoint action of $\gamma$ in the basis~$T_1,T_2,T_3,T_4$ of~$\R^4$ (see Table \ref{tab:notation}) is given by
$$
\Ad_\gamma =
\begin{pmatrix}
1& \star &\star &\star \\ 0 & \lambda& \star& 0 \\ 0 &0 &1&0\\ 0&0&\star&\frac{1}{\lambda}
\end{pmatrix}.
$$
We conclude that the maximal dimension of a subgroup of $\R^4$ on which the $\Ad_\gamma$–action is unipotent is two, given by $\Span\{T_1,T_3\}$. Hence, $S \cap \R^4$ has dimension exactly two, and in particular $\dim(S \cap N) = 3$. This implies that $S \cap U$ is nontrivial. For any $g \in S \cap U$ with $g \neq \mathrm{Id}$, $g$ preserves the plane $S \cap \R^4 = \Span\{T_1,T_3\}$. This contradicts Sublemma \ref{sublemma:invariant_plane}.
\end{proof}

We can now prove Lemma~\ref{lemma: p(gamma)}.

\begin{proof}[Proof of Lemma~\ref{lemma: p(gamma)}]
Consider the natural projection~$B_1 \to \mathbb{R}^+$. Since the kernel of this projection is~$U_1$, which equals to $\overline{p(\Gamma)}^\circ$, the projection of~$\overline{p(\Gamma)}$ is discrete in~$\mathbb{R}^+$. If the projection to~$\R^+$ is trivial then~$\overline{p(\Gamma)}=U_1$. If not the projection is isomorphic to~$\mathbb{Z}$, and hence~$\overline{p(\Gamma)} \cong \mathbb{Z} \ltimes U_1.$
\end{proof}
We now have all the tools to prove Proposition~\ref{prop: p and q not discrete}.

\begin{proof}[Proof of Proposition~\ref{prop: p and q not discrete}]\label{proof: prop: p and q not discrete}
By assumption~$p(\Gamma)$ is not discrete, so by Lemma~\ref{lemma: p(gamma)}, either~$\overline{p(\Gamma)}=U_1$ is nilpotent or~$p(\Gamma) \cong \mathbb{Z} \ltimes U_1.$ If~$\overline{p(\Gamma)}=U_1$, then $\Gamma\leq U_1 \ltimes N$ is nilpotent.  
Completeness then follows from Theorem~\ref{nilpotent_complete}. Now consider the case~$p(\Gamma) \cong \mathbb{Z} \ltimes U_1$ and  
let 
$$
\Lambda = q(\Gamma) \cap (U_1\ltimes_{\mathsf{t}}\R^2), \quad \text{and} \quad H = \overline{\Lambda}^\circ
$$
be its identity component.  
In particular, we have~$ \overline{q(\Gamma)}^\circ = H.$
By assumption~$q(\Gamma)$ is not discrete, so~$H$ is a non-trivial connected closed connected subgroup of~$U_1\ltimes_{\mathsf{t}}\R^2$.  
We proceed according to the dimension of~$H$:
\begin{list}{\textbullet}{\setlength{\leftmargin}{1em}}
\item If~$\dim(H) \geq 2$, then~$H \cap \mathbb{R}^2 \neq \{0\}$, in particular~$\overline{q(\Gamma)}$ contains a one-parameter subgroup of pure translations, and completeness follows by Corollary \ref{cor: pure_translation}.
    \item If~$\dim(H) = 1$, then~$H$ is a one-parameter subgroup of~$U_1\ltimes_t\R^2$.  
    We distinguish two cases:
    \begin{enumerate}
        \item If~$H$ is a one-parameter family of pure translations, completeness follows from Corollary \ref{cor: pure_translation}.
        \item Otherwise, one has~$
        H = \overline{q(\Gamma)}^\circ = J^\mathsf{t}(a,b,c)
        $ for some~$a \neq 0$ and~$b,c \in \mathbb{R}$, see Remark \ref{remark: transeverse_U_1_transpose}. We claim that~$b \neq 0$.  

        Suppose by contradiction that~$b = 0$.  
        One checks that the normalizer of~$J^\mathsf{t}(a,0,c)$ in~$U_1\ltimes_{\mathsf{t}}\R^2$ is~$
        U_1 \ltimes_\mathsf{t} \Span(T_4)
        $. Since~$\overline{\Lambda}$ normalizes its identity component, we have~$
        \overline{\Lambda} \leq U_1 \ltimes_\mathsf{t} \Span(T_4)
        $, and therefore~$q(\Gamma) \leq \mathbb{Z} \ltimes (U_1 \ltimes _\mathsf{t}\Span(T_4)).$
Let~$\widehat{\gamma}\in q(\Gamma)$ be an element projecting to a generator of the~$\mathbb{Z}$-factor.
Up to conjugation by an element of~$U_1\ltimes_{\mathsf{t}}\R^2$, we may assume that
$$
\widehat{\gamma}
=
\left(
\begin{pmatrix}
1 & 0\\
0 & \lambda
\end{pmatrix},
\begin{pmatrix}
\alpha\\
0
\end{pmatrix}
\right),
$$
where~$\alpha\in\mathbb{R}$. Assume first that~$\alpha=0$.
Since conjugation by an element of~$U_1\ltimes_{\mathsf{t}}\R^2$ preserves
$U_1\ltimes_{\mathsf{t}}\Span(T_4)$, it follows that all elements of~$q(\Gamma)$ are contained in
$$
\left\{
\begin{pmatrix}
1 & \star\\
0 & \lambda^n
\end{pmatrix}
\;\Big{|}\;
n\in \mathbb{Z}
\right\}
\ltimes_{\mathsf{t}} \Span(T_4).
$$
Consequently~$\Gamma$ is contained in the preimage under~$q$ of
$(\mathbb{Z}\ltimes U_1)\ltimes_{\mathsf{t}}\Span(T_4)$, namely,
$$
\left\{
\begin{pmatrix}
1 & \star & \star & \star\\
0 & \lambda^n & \star & \star\\
0 & 0 & 1 & 0\\
0 & 0 & \star & \frac{1}{\lambda^n}
\end{pmatrix}
\mid n\in \Z\right\}
\ltimes
\begin{pmatrix}
\star\\
\star\\
0\\
\star
\end{pmatrix}.
$$
This group preserves the affine plane~$\{x_3=0\}$, and hence~$\Gamma$ preserves this plane.
This contradicts Theorem~\ref{thm_Goldman_Hirsch_rep}.

We now assume that~$\alpha\neq 0$.
Since~$\Lambda=q(\Gamma)\cap (U_1\ltimes_\mathsf{t}\R^2)$ is normal in~$q(\Gamma)$, it is in particular invariant under
$\Ad_{\widehat{\gamma}}$.
Its closure is therefore also~$\Ad_{\widehat{\gamma}}$-invariant, and hence its identity component~$H$ is invariant as well.
Expressing the adjoint action of~$\widehat{\gamma}$ on the invariant plane spanned by~$U_1$ and~$T_4$,
we obtain
$$
\begin{pmatrix}
\frac{1}{\lambda} & 0\\
\frac{\alpha}{\lambda} & \frac{1}{\lambda}
\end{pmatrix}.
$$
The only invariant direction of this matrix is~$T_4$.
Thus~$H=J^\mathsf{t}(a,b,c)$ must be tangent to~$T_4$, which forces~$a=0$, a contradiction.
\end{enumerate}
\end{list}

\end{proof}
Having established this, Proposition \ref{prop:rank_one_kernel} then follows by combining Propositions \ref{prop: q_p_disrete} and \ref{prop: p and q not discrete}. We can now complete the proof of the main result of the section.

\begin{proof}[Proof of Proposition~\ref{prop: no_pure_translation}]
    Let~$\Gamma$ and~$\Omega$ be as in Proposition~\ref{prop: no_pure_translation}. Then either~$\Gamma \cap I$ is isomorphic to~$\Z$ or to~$\Z^2$. In both cases, completeness follows from Propositions \ref{prop:rank_two_kernel} and \ref{prop:rank_one_kernel}.
\end{proof}

\section{The quotient geometry: Injective projection}\label{sec6}

In this section, we consider the situation opposite to that of Proposition~\ref{prop: no_pure_translation}. Our goal is to prove the following result.

\begin{prop}\label{prop: q_injective}
Let~$\Omega \subset \mathbb{R}^{2,2}$ be a domain foliated by isotropic planes parallel to $P_0$. Suppose that~$\Omega$ is divided by a discrete subgroup~$\Gamma \leq \SO_0(2,2)\ltimes \mathbb{R}^{2,2}$, and that the restriction  
$q:\Gamma \to \GL^+_2(\mathbb{R})\ltimes_{\mathsf{t}} \mathbb{R}^2$ is injective.  
Then~$\Omega = \mathbb{R}^{2,2}$.
\end{prop}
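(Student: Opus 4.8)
The plan is to use the injectivity of $q$ to push the whole problem into the two-dimensional quotient geometry. Since $\Ker(q)=I$ and $q|_\Gamma$ is injective, we have $\Gamma\cap I=\{1\}$, so $\Gamma\cong q(\Gamma)\le \GL_2^+(\mathbb{R})\ltimes_{\mathsf{t}}\mathbb{R}^2$ and $\Gamma\cap N\cong T:=q(\Gamma)\cap(\{\mathrm{Id}\}\times\mathbb{R}^2)$. Because $\Gamma\cap I$ is trivial, $\Gamma\cap N$ injects into $\mathbb{R}^2$ and is therefore free abelian of rank $k\le 2$, while $\cd(\Gamma)\ge 4$ by Theorem~\ref{thm_vcd_gamma}; moreover $\widehat\Omega\subset\mathbb{R}^2$ is $q(\Gamma)$-invariant and $\Omega\cong P_0\times\widehat\Omega$. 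First I would dispose of non-discrete $T$: then $\overline{q(\Gamma)}$ contains a one-parameter group of pure translations and Corollary~\ref{cor: pure_translation} gives $\Omega=\mathbb{R}^{2,2}$. So I may assume $T\cong\mathbb{Z}^k$ is discrete.

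Next I would pin down $k$ and the algebraic type of $\Gamma$ via cohomological dimension. The sequence $1\to\Gamma\cap N\to\Gamma\to p(\Gamma)\to1$ gives $\cd(p(\Gamma))\ge 4-k$. If $k=2$, then $p(\Gamma)$ preserves the full lattice $T$, so up to conjugacy (and the automorphism $M\mapsto M^{-T}$) it lies in $\GL_2(\mathbb{Z})$, which is virtually free; hence $\cd(p(\Gamma))\le1$, contradicting $\cd(p(\Gamma))\ge2$, and this case is excluded. If $k=1$, then $p(\Gamma)$ preserves the line $\mathbb{R}\cdot T$, so it is conjugate into the upper-triangular Borel of $\GL_2(\mathbb{R})$ and is solvable, whence $\Gamma$ is solvable. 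If $k=0$, then $p|_\Gamma$ is injective; a discrete subgroup of $\GL_2^+(\mathbb{R})$ has $\cd\le3$, so $p(\Gamma)$ must be non-discrete, and applying Theorem~\ref{maintheorem2} to $\GL_2^+(\mathbb{R})\ltimes N$ (legitimate by Lemma~\ref{N_homothety}) shows that $\overline{p(\Gamma)}^\circ$ is a non-trivial connected nilpotent subgroup of $\GL_2^+(\mathbb{R})$.

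The core is then a dynamical analysis of $H:=\overline{q(\Gamma)}^\circ$, a connected subgroup of $\GL_2^+(\mathbb{R})\ltimes_{\mathsf{t}}\mathbb{R}^2$ with nilpotent linear part (contained in $\overline{p(\Gamma)}^\circ$) preserving $\widehat\Omega$. I would run the following alternatives. (i) If $H$ contains a pure translation, Corollary~\ref{cor: pure_translation} applies. (ii) If $H$ has a unique fixed point $z_0$ — which happens exactly when its linear part has no eigenvalue equal to $1$, e.g.\ in the scalar/homothety case — then, as $\overline{q(\Gamma)}$ normalizes $H$, it fixes $z_0$, so $\Gamma$ preserves the affine plane $P_0\times\{z_0\}$, contradicting Theorem~\ref{thm_Goldman_Hirsch_rep}; likewise, if $\overline{q(\Gamma)}$ preserves an affine line of $\mathbb{R}^2$ then $\Gamma$ preserves a $3$-dimensional affine subspace, again contradicting Theorem~\ref{thm_Goldman_Hirsch_rep}. (iii) In the unipotent case $H=J^{\mathsf{t}}(a,b,c)$ with $a\neq0$ (Remark~\ref{remark: transeverse_U_1_transpose}); since the normalizer of $U_1$ in $\GL_2^+(\mathbb{R})$ is the Borel subgroup, $\Gamma$ is solvable, and for $b\neq0$ Proposition~\ref{prop:contractible} makes $\widehat\Omega$ and hence $\Omega$ contractible, giving $\cd(\Gamma)=4$ and letting Corollary~\ref{cor: pure_nilpotent_translation} conclude, while $b=0$ reduces to the invariant-line mechanism of (ii). Finally, when $q(\Gamma)$ is discrete and $\Gamma$ solvable, non-unimodularity of $\GL_2^+(\mathbb{R})\ltimes_{\mathsf{t}}\mathbb{R}^2$ forbids lattices, so $\cd(\Gamma)=\cd(q(\Gamma))\le4$, forcing equality and allowing Theorem~\ref{fact: cd=dim+solvable implies complete} (nilpotent sub-cases close via Theorem~\ref{nilpotent_complete}).

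The hard part will be making this case analysis genuinely exhaustive and, above all, establishing solvability of $\Gamma$ uniformly. Solvability comes for free whenever $\overline{p(\Gamma)}^\circ$ is a non-central connected nilpotent group, since its normalizer in $\GL_2^+(\mathbb{R})$ is then solvable; the one dangerous configuration is $\overline{p(\Gamma)}^\circ=\mathbb{R}^+\cdot\mathrm{Id}$, whose normalizer is all of $\GL_2^+(\mathbb{R})$ and could a priori project to a non-elementary subgroup of $\mathrm{PSL}_2(\mathbb{R})$. I expect the fixed-point trick of (ii) to be exactly what saves this case: a one-parameter group with scalar linear part is a genuine homothety with a unique fixed point, and the normalization argument then produces a $\Gamma$-invariant proper affine subspace, contradicting Theorem~\ref{thm_Goldman_Hirsch_rep}. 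The remaining technical burden is to treat the residual diagonal configuration whose linear part has eigenvalue $1$ together with a transverse translation — which is not literally one of the two families of Section~\ref{sec4} but should yield to the same fibration/contractibility argument as Proposition~\ref{prop:contractible} — and to make precise the degenerate passage $b=0$ and the contractibility input needed when $q(\Gamma)$ is non-discrete but $H$ is not of pure-translation type.
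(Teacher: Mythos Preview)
Your overall architecture matches the paper's: split on the rank of $T$, eliminate $T\cong\mathbb{Z}^2$ by a $\cd$ count, and for $T\cong\mathbb{Z}$ or $T$ trivial reduce to solvable holonomy with $\cd(\Gamma)=4$ so that Theorem~\ref{fact: cd=dim+solvable implies complete} applies. Your dynamical analysis of $H=\overline{q(\Gamma)}^\circ$ is also essentially the paper's mechanism for $T\cong\mathbb{Z}$ (Proposition~\ref{prop: rank one translation with injective kernel}); in fact your fixed-point-set argument for the degenerate $b=0$ sub-case is a bit cleaner than the explicit normalizer computation the paper carries out there.

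Where your proposal diverges is in the $T$ trivial case, and this is where there is a genuine gap. Your plan to establish solvability is: $\overline{p(\Gamma)}^\circ$ is nilpotent (Theorem~\ref{maintheorem2}), and unless it equals the center $\mathbb{R}^+\cdot\mathrm{Id}$ its normalizer in $\GL_2^+(\mathbb{R})$ is solvable. For the dangerous central case you invoke the fixed-point trick (ii) on $H=\overline{q(\Gamma)}^\circ$. That works when $q(\Gamma)$ is \emph{non}-discrete: then $H$ is a non-trivial connected group with scalar linear part, and either contains a translation (case (i)) or is a genuine one-parameter homothety with a unique fixed point, forcing a $\Gamma$-invariant affine plane. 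But if $\overline{p(\Gamma)}^\circ=\mathbb{R}^+\cdot\mathrm{Id}$ \emph{and} $q(\Gamma)$ is discrete, then $H$ is trivial and your argument produces nothing. You have neither solvability (the normalizer is all of $\GL_2^+(\mathbb{R})$) nor a fixed point, so neither Theorem~\ref{fact: cd=dim+solvable implies complete} nor Theorem~\ref{thm_Goldman_Hirsch_rep} can be invoked. Non-unimodularity of $\GL_2^+(\mathbb{R})\ltimes_{\mathsf{t}}\mathbb{R}^2$ does give $\cd(\Gamma)=4$ here, but that is useless without solvability.

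The paper closes exactly this hole by a different mechanism (Sublemma~\ref{sublemma:pure homothety}): apply Theorem~\ref{maintheorem2} to $\Gamma'=\Gamma\cap(\mathbb{R}^+\ltimes N)$ to obtain a nilpotent syndetic hull $S$; a $\cd$ count using the discrete image of $p(\Gamma)$ in $\SL_2(\mathbb{R})$ forces $\dim S\ge 2$; but the adjoint action of a non-trivial scalar on $\mathbb{R}^4$ has no eigenvalue $1$, so nilpotency of $S$ forces $S\cap\mathbb{R}^4=0$, whence $S$ embeds in the $2$-dimensional non-unimodular group $\mathbb{R}^+\ltimes U$, contradicting the existence of the lattice $\Gamma'$. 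This rules out the homothety case outright, independently of whether $q(\Gamma)$ is discrete. Once that is done, your streamlined $H$-analysis would in fact let you bypass the long syndetic-hull computations the paper carries out in Proposition~\ref{prop cd>4}; so your route is potentially shorter, but it does not stand on its own without this one syndetic-hull input.
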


We now proceed to the proof of Proposition~\ref{prop: q_injective}, distinguishing cases according to the structure of the group of pure translations~$T$ in~$q(\Gamma)$ defined in~\eqref{eq:Translation}. Recall that the linear part of
$q(\Gamma)\leq \GL_2^+(\mathbb{R})\ltimes_{\mathsf{t}}\mathbb{R}^2$, namely~$p(\Gamma)$, preserves the subgroup~$T$. When $T$ is not discrete, completeness follows from Corollary~\ref{cor: pure_translation}. Thus, it suffices to focus on the case where $T \leq \mathbb{R}^2$ is discrete. In this case, either $T$ is trivial, or $T \cong \mathbb{Z}$, or $T \cong \mathbb{Z}^2$. We start by excluding the $\Z^2$ case.

\begin{lemma}\label{T cannot be Z2}
If $\Gamma\cap I$ is trivial, then we cannot have~$T \cong \mathbb{Z}^2$.
\end{lemma}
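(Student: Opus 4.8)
The plan is to argue by contradiction, exploiting the fact that preserving a full-rank lattice of translations forces the linear part to be arithmetic, and then extracting a cohomological-dimension contradiction. Since we are in the setting of Proposition~\ref{prop: q_injective}, the map $q$ restricts to an isomorphism $\Gamma \cong q(\Gamma) \le \GL_2^+(\mathbb{R})\ltimes_{\mathsf{t}}\mathbb{R}^2$, and by \eqref{eq:Translation} the group $T = q(\Gamma)\cap(\{\mathrm{Id}\}\times\mathbb{R}^2)$ is precisely the kernel of the linear projection $q(\Gamma)\to p(\Gamma)$. This yields a short exact sequence
\[
1 \longrightarrow T \longrightarrow q(\Gamma) \longrightarrow p(\Gamma) \longrightarrow 1 .
\]
Assume for contradiction that $T\cong\mathbb{Z}^2$.

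First I would analyze the linear part $p(\Gamma)$. Recall that $p(\Gamma)$ acts on $\mathbb{R}^2$ through $M\cdot v = M^{-T}v$ and preserves $T$. If $T\cong\mathbb{Z}^2$ is a lattice in $\mathbb{R}^2$, then, using a $\mathbb{Z}$-basis of $T$ as a basis of $\mathbb{R}^2$, every $M^{-T}$ with $M\in p(\Gamma)$ has integer entries together with its inverse, so $p(\Gamma)$ is conjugate into $\GL_2(\mathbb{Z})$; being contained in $\GL_2^+(\mathbb{R})$, its elements have determinant $1$, hence $p(\Gamma)$ is conjugate into $\SL_2(\mathbb{Z})$. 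In particular $p(\Gamma)$ is discrete and virtually free, so after passing to a finite-index subgroup (which I may do throughout, by Selberg's Lemma, keeping $\Gamma$ torsion-free and $\Omega$ divided) I may assume $p(\Gamma)$ is free, whence $\cd(p(\Gamma))\le 1$.

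The conclusion then follows from a cohomological-dimension count. Using subadditivity of $\cd$ along the displayed exact sequence together with $\cd(T)=2$, I obtain
\[
\cd(\Gamma)=\cd(q(\Gamma))\le \cd(T)+\cd(p(\Gamma))\le 2+1=3 .
\]
On the other hand, $\Gamma\le\SO_0(2,2)\ltimes\mathbb{R}^{2,2}\le\SL_4(\mathbb{R})\ltimes\mathbb{R}^4$ divides $\Omega\subset\mathbb{R}^4$, so Theorem~\ref{thm_vcd_gamma} gives $\cd(\Gamma)\ge 4$, a contradiction. Hence $T\not\cong\mathbb{Z}^2$.

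The argument is short, and the only delicate point is the finite-index/torsion-free bookkeeping, which is the main (and essentially only) obstacle. I must verify that passing to a finite-index subgroup of $\Gamma$ (to remove torsion from $p(\Gamma)$) keeps $T$ of rank two: such a subgroup meets $N$ in a finite-index subgroup, so its image is a finite-index subgroup $T'\le T$, still isomorphic to $\mathbb{Z}^2$, giving $\cd(T')=2$ and leaving the count unaffected. Everything else reduces to the stabilizer-of-a-lattice identification and the standard fact that torsion-free subgroups of $\SL_2(\mathbb{Z})$ are free.
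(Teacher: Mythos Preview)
Your proof is correct and follows essentially the same route as the paper: assume $T\cong\mathbb{Z}^2$, deduce that $p(\Gamma)$ is conjugate into $\SL_2(\mathbb{Z})$ (hence virtually free, so $\cd\le 1$ up to finite index), and use the short exact sequence $1\to T\to q(\Gamma)\cong\Gamma\to p(\Gamma)\to 1$ together with Theorem~\ref{thm_vcd_gamma} to reach the contradiction $\cd(\Gamma)\le 3<4$. Your extra remark checking that the finite-index passage preserves $\operatorname{rk}(T)=2$ is a welcome piece of bookkeeping that the paper leaves implicit.
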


\begin{proof}
Assume by contradiction that~$T \cong \mathbb{Z}^2$.  
Since~$T$ is~$p(\Gamma)$-invariant, it follows that, up to conjugacy, we have~$p(\Gamma) \leq \SL_2(\mathbb{Z})$. By the injectivity of~$q$, we have~$q(\Gamma) \cong \Gamma$, and thus there is a well-defined projection~$q(\Gamma) \to p(\Gamma)$ with kernel~$T$.  
This gives the short exact sequence:
\begin{equation*}
    1 \to T \cong \mathbb{Z}^2 \to q(\Gamma) \cong \Gamma \to p(\Gamma) \to 1.
\end{equation*}
Since~$p(\Gamma) \leq \SL_2(\mathbb{Z})$, then up to finite index, $\cd(p(\Gamma))\leq 1$ and so $\mathrm{cd}\, \Gamma \leq 3$. This contradicts Theorem~\ref{thm_vcd_gamma}. 
\end{proof}

\subsubsection{Rank one translation group}

The goal of this section is to prove the following.

\begin{prop}\label{prop: rank one translation with injective kernel}
    If~$T \cong \mathbb{Z}$ and~$\Gamma \cap I$ is trivial, then~$\Omega = \mathbb{R}^{2,2}$.
\end{prop}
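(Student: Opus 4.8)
The plan is to exploit that injectivity of $q$ makes $\Gamma$ isomorphic to a subgroup of $\GL_2^+(\R)\ltimes_{\mathsf{t}}\R^2$, and to combine this with the hypothesis $T\cong\Z$ to trap the linear projection $p(\Gamma)$ inside a two-dimensional solvable group, after which I run the same discrete/non-discrete dichotomy as in Section~\ref{sec5}. First I would set up the reductions. Since $T=q(\Gamma)\cap(\{\Id\}\times\R^2)\cong\Z$ is generated by a translation $v_0$ and is normal in $q(\Gamma)$, the linear part $p(\Gamma)$ permutes $\Z v_0$; passing to an index-$2$ subgroup (which does not affect $\Omega$) I may assume $p(\Gamma)$ fixes $v_0$ under the action $M\cdot v=M^{-T}v$. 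Hence $p(\Gamma)$ lies in the stabilizer $B_{v_0}=\{M\in\GL_2^+(\R):M^{-T}v_0=v_0\}$, a two-dimensional solvable (affine) group, and $T$ becomes central in $q(\Gamma)$. In particular $\Gamma\cong q(\Gamma)\le B_{v_0}\ltimes_{\mathsf{t}}\R^2$ is solvable. The key numerical input is $\cd(\Gamma)\ge 4$ (Theorem~\ref{thm_vcd_gamma}); feeding the extension $1\to T\to q(\Gamma)\to p(\Gamma)\to 1$ into subadditivity of $\cd$ gives $\cd(p(\Gamma))\ge 3$, so $p(\Gamma)$ cannot be discrete, since a discrete torsion-free subgroup of the contractible two-dimensional $B_{v_0}$ has $\cd\le 2$. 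Thus $p(\Gamma)$ is non-discrete throughout.

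Next I would dispose of the case where $q(\Gamma)$ is discrete. Here, unlike in Lemma~\ref{Lemma: q(gamma) discrete}, the group $B_{v_0}\ltimes_{\mathsf{t}}\R^2$ turns out to be unimodular, so the non-unimodularity trick is unavailable; however injectivity of $q$ saves the argument: a discrete torsion-free subgroup of the simply connected four-dimensional solvable group $B_{v_0}\ltimes_{\mathsf{t}}\R^2$ has $\cd\le 4$, whence $\cd(\Gamma)=\cd(q(\Gamma))=4$. As $\Gamma$ is solvable and $\dim M=4$, Theorem~\ref{fact: cd=dim+solvable implies complete} gives completeness. I may therefore assume $q(\Gamma)$ non-discrete.

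Now the structure of $\overline{p(\Gamma)}$ drives the rest. By Lemma~\ref{N_homothety} and Theorem~\ref{maintheorem2}, $\overline{p(\Gamma)}^\circ$ is nilpotent, hence (a nontrivial connected subgroup of the non-nilpotent two-dimensional $B_{v_0}$) one-dimensional: either the unipotent $U_1$ or a hyperbolic one-parameter subgroup $D'$. If it is $D'$, its normalizer in $B_{v_0}$ is $D'$ itself, forcing $\overline{p(\Gamma)}=D'$; if it is $U_1$, then $\overline{p(\Gamma)}$ is $U_1$ or $\Z\ltimes U_1$. In the two abelian cases ($D'$ or $U_1$), $p(\Gamma)$ is abelian, so $[q(\Gamma),q(\Gamma)]\le q(\Gamma)\cap\R^2=T$ is central; thus $\Gamma\cong q(\Gamma)$ is nilpotent of class $\le 2$ and completeness follows from Theorem~\ref{nilpotent_complete}. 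The remaining case is $\overline{p(\Gamma)}=\Z\ltimes U_1$ with $q(\Gamma)$ non-discrete. Then $H:=\overline{q(\Gamma)}^\circ$ projects into $U_1$, so $H\le U_1\ltimes_{\mathsf{t}}\R^2$. If $\dim H\ge 2$ then $H\cap\R^2\ne\{0\}$, and if $\dim H=1$ with $H$ consisting of pure translations, then in either situation $\overline{q(\Gamma)}$ contains a one-parameter translation group and Corollary~\ref{cor: pure_translation} applies. Otherwise $H=J^{\mathsf{t}}(a,b,c)$ with $a\ne 0$ (Remark~\ref{remark: transeverse_U_1_transpose}), and I would show $b\ne 0$: if $b=0$, the normalizer computation forces $\Gamma$ to preserve an affine hyperplane, contradicting Theorem~\ref{thm_Goldman_Hirsch_rep}, exactly as in the proof of Proposition~\ref{prop: p and q not discrete}; with $b\ne 0$, solvability of $\Gamma$ lets Corollary~\ref{cor: pure_nilpotent_translation} conclude. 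I expect this last non-abelian analysis—the $b=0$ invariant-hyperplane contradiction together with the bookkeeping of one-parameter subgroups of $U_1\ltimes_{\mathsf{t}}\R^2$—to be the main obstacle, although it closely parallels Proposition~\ref{prop: p and q not discrete}.
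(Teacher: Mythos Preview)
Your proposal is correct and follows the same overall architecture as the paper: trap $p(\Gamma)$ in the two-dimensional stabilizer $B_{v_0}\cong B_1^*$, show $p(\Gamma)$ is non-discrete, reduce to $\overline{p(\Gamma)}$ being $U_1^*$ or $\Z\ltimes U_1^*$, and in the latter case run the $H=\overline{q(\Gamma)}^\circ$ analysis ending in the $b=0$ invariant-hyperplane contradiction. Two steps are handled differently, and both of your variants are arguably cleaner.

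First, for $q(\Gamma)$ discrete, you correctly observe that $B_1^*\ltimes_{\mathsf t}\R^2$ is \emph{unimodular} (the eigenvalues of $\Ad_M$ for $M=\mathrm{diag}(1,\lambda)$ on $(u_1^*,u_2,T_3,T_4)$ are $\lambda,1,1,\lambda^{-1}$, with product $1$, in contrast to the $B_1$ case of Sublemma~\ref{non_unimodular} where they are $\lambda^{-1},1,1,\lambda^{-1}$). The paper's assertion of non-unimodularity in Lemma~\ref{p and q not discrete in injective case} is therefore in error; your alternative---$\cd(q(\Gamma))\le 4$ from the action on the contractible four-dimensional group, hence $\cd(\Gamma)=4$, hence completeness via solvability and Theorem~\ref{fact: cd=dim+solvable implies complete}---is the right fix. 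Note that you do not exclude the discrete case but rather conclude completeness directly there, which is perfectly fine.

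Second, when $\overline{p(\Gamma)}^\circ$ is a hyperbolic one-parameter subgroup $D'$, the paper invokes the syndetic-hull machinery of Sublemma~\ref{p(gamma)_not_connected} to rule this out, whereas you simply note that the normalizer of $D'$ in $B_{v_0}$ is $D'$ itself, so $p(\Gamma)$ is abelian, whence $[q(\Gamma),q(\Gamma)]\le T$ is central and $\Gamma\cong q(\Gamma)$ is two-step nilpotent; Theorem~\ref{nilpotent_complete} then applies. This is a genuine shortcut. The remaining $\Z\ltimes U_1^*$ case and the $b=0$ normalizer computation are essentially identical to the paper's treatment; just be aware that the matrices are transposed relative to Proposition~\ref{prop: p and q not discrete} (the nilradical is $U_1^*$, not $U_1$), so the preserved hyperplane becomes $\{x_4=0\}$ and no separate $\alpha\ne 0$ sub-case is needed.
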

The same analysis as in Subsection~\ref{subsection:rank_one_kernal} shows that, after possibly
passing to a subgroup of index at most~$2$, we may assume up to conjugacy that $T=\Span(T_3)$, and $p(\Gamma)$ fixes $T_3$. In particular~$p(\Gamma)$ is contained in the group
\begin{equation}\label{B^*_1}
B_1^* :=
\left\{
\begin{pmatrix}
1 & 0\\
x & \lambda
\end{pmatrix}
\;\Bigg|\;
\lambda \in \mathbb{R}_{>0},\ x \in \mathbb{R}
\right\}
\cong \mathbb{R}^{+} \ltimes U_1^*,
\end{equation}
where~$U_1^*$ denotes the nilradical of $B_1^*$. From now on, and until the end of the proof of Proposition \ref{prop: rank one translation with injective kernel}, we work under the assumptions that $\Gamma\cap I$ is trivial and $T\cong \Z$.

\begin{lemma}\label{p and q not discrete in injective case}
The groups $p(\Gamma)$ and~$q(\Gamma)$ are not discrete.
\end{lemma}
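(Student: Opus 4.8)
The plan is to establish both non-discreteness statements separately, in each case assuming discreteness and deriving a cohomological-dimension contradiction with Theorem~\ref{thm_vcd_gamma} (which gives $\cd(\Gamma)\ge 4$). The common preliminary I would record first is the following: since $\Gamma\cap I$ is trivial, $I=\Ker(q)$ is contained in $N$, and $T=q(\Gamma\cap N)\cong\Z$, the restriction of $q$ to $\Gamma\cap N$ is injective with image $T$; hence $\Gamma\cap N\cong\Z$ and $\cd(\Gamma\cap N)=1$. I will also use repeatedly that $q$ is injective, so $q(\Gamma)\cong\Gamma$ and $\cd(q(\Gamma))=\cd(\Gamma)\ge 4$.

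For the case where $p(\Gamma)$ is discrete the argument is routine. Since $p(\Gamma)\le B_1^*$ and $B_1^*\cong\R^{+}\ltimes U_1^{*}$ is the two-dimensional affine group, it is not unimodular, so $p(\Gamma)$ cannot be a uniform lattice in it; being a torsion-free (up to finite index) discrete subgroup of a $2$-dimensional group, this forces $\cd(p(\Gamma))\le 1$. Feeding $\cd(\Gamma\cap N)=1$ and $\cd(p(\Gamma))\le 1$ into the short exact sequence $1\to\Gamma\cap N\to\Gamma\to p(\Gamma)\to 1$ yields $\cd(\Gamma)\le 2$, contradicting Theorem~\ref{thm_vcd_gamma}.

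The harder case is $q(\Gamma)$ discrete, and this is where I expect the main obstacle to lie. Up to finite index $q(\Gamma)$ acts properly and freely on the $4$-dimensional contractible solvable group $G'=B_1^*\ltimes_{\mathsf{t}}\R^2$, so $\cd(q(\Gamma))=4$ forces $q(\Gamma)$ to be a uniform lattice in $G'$. The naive hope of contradicting this by non-unimodularity, as in Sublemma~\ref{non_unimodular} of Section~\ref{sec5}, fails: a direct computation of $\det\Ad_{\mathrm{diag}(1,\lambda)}$ shows that $G'$ is in fact \emph{unimodular}, which is the essential structural difference between the present $B_1^*$-case and the $B_1$-case. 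Instead, the key is the nilradical $N'$ of $G'$: from the infinitesimal action $X\cdot v=-X^{T}v$ one checks that $N'=U_1^{*}\ltimes\R^2$ carries the single nontrivial bracket $[u_1^{*},T_4]=-T_3$, so $N'$ is a three-dimensional Heisenberg group with centre $\Span(T_3)$. By the standard theorem of Mostow on lattices in simply connected solvable Lie groups, a uniform lattice of $G'$ meets $N'$ in a uniform lattice of $N'$, whence $\cd\!\big(q(\Gamma)\cap N'\big)=3$. On the other hand $q(\Gamma)\cap N'=q\big(\Gamma\cap(U_1^{*}\ltimes N)\big)\cong\Gamma_1:=\Gamma\cap(U_1^{*}\ltimes N)$, and the sequence $1\to\Gamma\cap N\to\Gamma_1\to p(\Gamma_1)\to 1$, together with $\cd(\Gamma\cap N)=1$ and $p(\Gamma_1)\le U_1^{*}\cong\R$ (so $\cd(p(\Gamma_1))\le 1$), gives $\cd(\Gamma_1)\le 2$. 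The contradiction $3\le 2$ completes this case.

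Thus the main difficulty is concentrated in the $q(\Gamma)$-case: one must recognise that $G'$ is unimodular, so the cohomological-dimension-versus-unimodularity mechanism used throughout Section~\ref{sec5} is unavailable, and then extract the contradiction from the Heisenberg structure of the nilradical via Mostow's lattice-intersection theorem, using crucially the uniform control $\cd(\Gamma\cap N)=1$ that comes from the triviality of $\Gamma\cap I$.
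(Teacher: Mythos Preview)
Your treatment of the $p(\Gamma)$ case is correct and matches the paper's. Your observation about the $q(\Gamma)$ case is sharp: a direct computation of $\ad_D$ on the basis $(D,u_1^{*},T_3,T_4)$ gives eigenvalues $0,1,0,-1$, so $B_1^{*}\ltimes_{\mathsf t}\R^2$ is indeed unimodular, and the paper's appeal to non-unimodularity (``exactly as in Sublemma~\ref{non_unimodular}'') is mistaken. You were right to look for a different mechanism.

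However, your proposed replacement has a genuine gap. From $p(\Gamma_1)\le U_1^{*}\cong\R$ you conclude $\cd(p(\Gamma_1))\le 1$; this inference is false, since a subgroup of $\R$ need not be cyclic (think of $\Z+\sqrt 2\,\Z$). Worse, in your own setup the bound actually fails. You have $q(\Gamma_1)$ a uniform lattice in the Heisenberg group $N'$ with centre $\Span(T_3)$, and by the standing normalisation $T\subset\Span(T_3)$, so
\[
q(\Gamma_1)\cap\R^2 \;=\; T \;=\; q(\Gamma_1)\cap Z(N').
\]
Hence the kernel of the projection $q(\Gamma_1)\to U_1^{*}$ coincides with $q(\Gamma_1)\cap Z(N')$, and therefore
\[
p(\Gamma_1)\;\cong\; q(\Gamma_1)\big/\bigl(q(\Gamma_1)\cap Z(N')\bigr),
\]
which is a lattice in $N'/Z(N')\cong\R^2$, i.e.\ $p(\Gamma_1)\cong\Z^2$. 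Thus $\cd(p(\Gamma_1))=2$, and your short exact sequence only yields $\cd(\Gamma_1)\le 1+2=3$, which is no contradiction with $\cd(\Gamma_1)=3$. The Mostow step and the Heisenberg identification are fine; the breakdown is precisely the unjustified bound on $\cd(p(\Gamma_1))$. To salvage the $q(\Gamma)$ case you will need a genuinely different obstruction.
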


\begin{proof}
Assume by contradiction that~$p(\Gamma)$ is discrete.  
From the short exact sequence $1 \to\Gamma \cap N \to\Gamma \to p(\Gamma) \to 1,$
we obtain~$\cd(\Gamma) \le \cd(\Gamma \cap N) + \cd(p(\Gamma)).$
Since~$\Gamma \cap I$ is trivial, we have~$\Gamma \cap N \cong T$, and hence~$\cd(\Gamma \cap N)=1$.  
Because~$p(\Gamma)$ is a discrete subgroup of~$B_1^*$, it follows that~$\cd(p(\Gamma)) = 1$, and therefore~$\cd(\Gamma) \le 2,$ contradicting Theorem~\ref{thm_vcd_gamma}. We now show that~$q(\Gamma)$ cannot be discrete.  
Assume again by contradiction that~$q(\Gamma)$ is discrete.  
One can show, exactly as in the proof of Sublemma~\ref{non_unimodular}, that the group
$B_1^* \ltimes_{\mathsf{t}} \mathbb{R}^2$ is not unimodular. Thus, as in the proof of Lemma~\ref{Lemma: q(gamma) discrete}, we deduce that
$\cd(q(\Gamma)) \le 3$. But~$\Gamma\cong q(\Gamma)$, so~$\cd(\Gamma)=\cd(q(\Gamma)) \le 3$, once more contradicting Theorem~\ref{thm_vcd_gamma}.
\end{proof}

Next, we derive the following result.

\begin{lemma}\label{p= Z x U in injective case}
We have either $\overline{p(\Gamma)} = U_1^*$ or $\overline{p(\Gamma)} \cong \mathbb{Z} \ltimes U_1^*$.
\end{lemma}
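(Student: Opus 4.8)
The argument will follow the same pattern as Lemma~\ref{lemma: p(gamma)}, with the pair $(B_1,U_1)$ replaced by $(B_1^*,U_1^*)$, and the whole statement reduces to the single claim that $\overline{p(\Gamma)}^\circ=U_1^*$. Granting this, I would finish with a short projection argument: the map $B_1^*\to\mathbb{R}^+$ has kernel $U_1^*$, and since $U_1^*=\overline{p(\Gamma)}^\circ$ is exactly this kernel, the image of $\overline{p(\Gamma)}$ in $\mathbb{R}^+$ is discrete. Being a discrete subgroup of $\mathbb{R}^+\cong\mathbb{R}$, it is either trivial, giving $\overline{p(\Gamma)}=U_1^*$, or infinite cyclic, giving $\overline{p(\Gamma)}\cong\mathbb{Z}\ltimes U_1^*$.

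To prove $\overline{p(\Gamma)}^\circ=U_1^*$, I would transcribe the proof of Sublemma~\ref{p(gamma)_not_connected}. By Lemma~\ref{p and q not discrete in injective case} the group $p(\Gamma)$ is non-discrete, so $\overline{p(\Gamma)}^\circ$ is nontrivial. Since $N$ is a homothety Lie group (Lemma~\ref{N_homothety}), Theorem~\ref{maintheorem2} applies to $B_1^*\ltimes N$ with the projection $p$, producing a nilpotent syndetic hull $S$ of $\Gamma_{nd}=\Gamma\cap(\overline{p(\Gamma)}^\circ\ltimes N)$ with $\overline{p(S)}=\overline{p(\Gamma)}^\circ$. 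As $\overline{p(S)}$ is nilpotent, $\overline{p(\Gamma)}^\circ$ cannot equal $B_1^*$, which is not nilpotent; hence $\overline{p(\Gamma)}^\circ$ is a one-parameter subgroup of $B_1^*$, and it remains only to exclude the case where this subgroup is transverse to $U_1^*$.

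Assume then that $\overline{p(\Gamma)}^\circ$ is a one-parameter subgroup not contained in $U_1^*$. Its normalizer in $B_1^*$ is itself, so $\overline{p(\Gamma)}$ is connected, $\Gamma_{nd}=\Gamma$, and $p(S)=\overline{p(\Gamma)}$ is a closed one-dimensional subgroup. By Theorem~\ref{thm_vcd_gamma} we have $\cd(\Gamma)\ge 4$, hence $\dim S\ge 4$ and $\dim(S\cap N)\ge 3$. Since $S$ is nilpotent, $L(S)$ acts unipotently on the normal abelian subgroup $S\cap\mathbb{R}^4$, so $S\cap\mathbb{R}^4$ must lie in the generalized $1$-eigenspace $V_1$ of $\Ad_\gamma|_{\mathbb{R}^4}$ for any $\gamma\in L(S)$ whose linear part $M\in B_1^*$ is non-unipotent. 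Here the block $\Ad_\gamma|_{\mathbb{R}^4}=\begin{pmatrix}M & b\,MJ\\ 0 & M^{-T}\end{pmatrix}$ is built from the \emph{lower}-triangular $M$ and its inverse transpose, and a direct computation gives eigenvalues $1,\lambda,1,\lambda^{-1}$ with $\lambda\neq 1$, so $\dim V_1=2$. Combined with $\dim(S\cap N)\ge 3$ and $\dim(N/\mathbb{R}^4)=1$, this forces $\dim(S\cap\mathbb{R}^4)=2$, whence $S\cap\mathbb{R}^4=V_1$ and $\dim(S\cap N)=3$. Consequently the Lie algebra of $S\cap N$ contains an element $u+w$ with $w\in\mathbb{R}^4$ and nonzero $u$-component, and the subalgebra condition forces $\ad_u(V_1)\subseteq V_1$. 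But the bracket relations~\eqref{bracket_n} (in particular $[u,T_3]=-T_2$) show $V_1$ is not $\ad_u$-invariant; equivalently, after conjugating so that $V_1=\Span(T_1,T_3)$, a nontrivial element of $S\cap U$ would preserve a two-plane that is not among those allowed by Sublemma~\ref{sublemma:invariant_plane}. Either way we reach a contradiction, so $\overline{p(\Gamma)}^\circ=U_1^*$.

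The main obstacle is precisely this last exclusion. The genuinely new input relative to Lemma~\ref{lemma: p(gamma)} is the adjoint-action computation for the transposed group $B_1^*$: one must recompute $V_1$ explicitly and verify that it fails to be $\ad_u$-invariant (equivalently, that it is not a $U$-invariant plane in the sense of Sublemma~\ref{sublemma:invariant_plane}), since the bookkeeping of eigenvectors differs from the $B_1$ case. The remaining ingredients—the application of Theorem~\ref{maintheorem2}, the normalizer argument, and the cohomological-dimension estimates—are routine transcriptions of the arguments already established for $B_1$.
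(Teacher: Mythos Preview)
Your proposal is correct and follows precisely the route the paper takes: the paper's own proof simply asserts that ``the same analysis as in Sublemma~\ref{p(gamma)_not_connected}'' yields $\overline{p(\Gamma)}^{\circ}=U_1^*$, and then applies the identical projection argument $B_1^*\to\mathbb{R}^+$ that you describe. Your unpacking of that analysis for the transposed group $B_1^*$---the nilpotent syndetic hull from Theorem~\ref{maintheorem2}, the normalizer argument forcing $\Gamma_{nd}=\Gamma$, the cohomological-dimension bound, and the contradiction via the $U$-invariant plane classification---is exactly what the paper intends by ``the same analysis,'' so there is nothing to add.
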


\begin{proof}
Since~$p(\Gamma) \le B_1^*$ and~$p(\Gamma)$ is not discrete, the same analysis as in Sublemma~\ref{p(gamma)_not_connected}
shows that~$\overline{p(\Gamma)}^{\circ} = U_1^*$. Consider the natural projection~$B_1^* \to \mathbb{R}^+$, where recall that $B_1^*=\R^+\ltimes U_1^*$. Its kernel is~$U_1^*$, which coincides with
$\overline{p(\Gamma)}^{\circ}$. Hence, the projection of~$\overline{p(\Gamma)}$ is discrete in~$\mathbb{R}^+$.
If this projection is trivial, then~$\overline{p(\Gamma)}=U_1^*$; otherwise,
$\overline{p(\Gamma)} \cong \mathbb{Z} \ltimes U_1^*$.
\end{proof}

\begin{proof}[Proof of Proposition~\ref{prop: rank one translation with injective kernel}]
If~$\overline{p(\Gamma)}=U_1^*$, then $\Gamma\leq U_1^*\ltimes N$ is nilpotent and so the completeness follows from Theorem~\ref{nilpotent_complete}.
Otherwise, by Lemmas~\ref{p and q not discrete in injective case} and~\ref{p= Z x U in injective case}, we have~$\overline{p(\Gamma)} \cong \mathbb{Z} \ltimes U_1^*$ and~$q(\Gamma)$ is not discrete.
Let us consider the group
$$
\Lambda := q(\Gamma) \cap (U_1^* \ltimes_{\mathsf{t}} \mathbb{R}^2),
$$
and let~$H := \overline{\Lambda}^{\circ}$ be its identity component.
In particular, we have~$\overline{q(\Gamma)}^{\circ} = H$.
Since~$q(\Gamma)$ is not discrete, then $H$ is a nontrivial connected closed subgroup of
$U_1^* \ltimes_{\mathsf{t}} \mathbb{R}^2$.
We proceed exactly as in the proof of Proposition~\ref{prop: p and q not discrete}.

\begin{list}{\textbullet}{\setlength{\leftmargin}{1em}}
\item If~$\dim(H) \ge 2$, then~$H \cap \mathbb{R}^2 \neq \{0\}$. In particular~$\overline{q(\Gamma)}$ contains a one-parameter group of pure translations,
and completeness follows from Corollary~\ref{cor: pure_translation}.

\item If~$\dim(H) = 1$, then~$H$ is a one-parameter subgroup of
$U_1^* \ltimes_{\mathsf{t}} \mathbb{R}^2$.
We distinguish two cases:
\begin{enumerate}
\item If~$H$ is a one-parameter family of pure translations,
completeness follows from Corollary~\ref{cor: pure_translation}.

\item Otherwise,
$$
H = J_*^{\mathsf{t}}(a,b,c)
=
\left\{
\left(
\begin{pmatrix}
1 & 0\\[2pt]
a t & 1
\end{pmatrix},
\begin{pmatrix}
-\tfrac12 a b\, t^2 + c t\\[4pt]
bt
\end{pmatrix}
\right)
\;\middle|\;
t \in \mathbb{R}
\right\}
$$
for some~$a \neq 0$ and~$b,c \in \mathbb{R}$.
We claim that~$b \neq 0$.

Suppose by contradiction that~$b = 0$.
One checks that the normalizer of~$J_*^\mathsf{t}(a,0,c)$ in
$U_1^* \ltimes_{\mathsf{t}} \Span(T_3)$ is equal to~$
U_1^* \ltimes_{\mathsf{t}} \Span(T_3)
$. Since~$\overline{\Lambda}$ normalizes its identity component, we obtain~$\overline{\Lambda} \le U_1^* \ltimes_{\mathsf{t}} \Span(T_3),$
and therefore~$
q(\Gamma) \le \mathbb{Z} \ltimes (U_1^* \ltimes_{\mathsf{t}} \Span(T_3))$.

Let~$\widehat{\gamma} \in q(\Gamma)$ be an element projecting to a generator of the
$\mathbb{Z}$-factor. Up to conjugacy by an element of~$U_1^* \ltimes_{\mathsf{t}} \Span(T_3)$,
we may assume that
$$
\widehat{\gamma}
=
\left(
\begin{pmatrix}
1 & 0\\
0 & \lambda
\end{pmatrix},
\begin{pmatrix}
\alpha\\
0
\end{pmatrix}
\right),
$$
where~$\alpha \in \mathbb{R}$.
Since conjugation by an element of~$U_1^*\ltimes_{\mathsf{t}}\R^2$ preserves
$U_1^* \ltimes_{\mathsf{t}} \Span(T_3)$, it follows that~$q(\Gamma)$ is contained in
$$
\left\{
\begin{pmatrix}
1 & 0\\
\star & \lambda^n
\end{pmatrix}
\mid n\in \Z\right\}
\ltimes_{\mathsf{t}}
\left\{
\begin{pmatrix}
\star\\
0
\end{pmatrix}
\right\},
$$

This implies that~$\Gamma$ is contained in the preimage under~$q$ of
$(\mathbb{Z} \ltimes U_1^*) \ltimes_{\mathsf{t}} \Span(T_3)$, namely
$$
\left\{
\begin{pmatrix}
1 & 0 & \star & \star\\
\star & \lambda^n & \star & \star\\
0 & 0 & 1 & \star\\
0 & 0 & 0 & \frac{1}{\lambda^n}
\end{pmatrix}\mid n\in \Z
\right\}
\ltimes
\begin{pmatrix}
\star\\
\star\\
\star\\
0
\end{pmatrix}.
$$
This group preserves the affine plane~$x_4 = 0$, so~$\Gamma$ preserves that plane,
contradicting Theorem~\ref{thm_Goldman_Hirsch_rep}.
 Therefore~$b \neq 0$, and completeness follows from
Corollary~\ref{cor: pure_nilpotent_translation}.
\end{enumerate}
\end{list}
\end{proof}

\subsubsection{Trivial translation group}

The next step is to understand what happens if~$q$ is injective and the pure translation group~$T$ is trivial. A first observation is that under these assumptions, the restriction of $p$ to~$\Gamma$ is injective, that is~$\Gamma\cap N$ is trivial, see Table \ref{tab:notation} for the notation.

\begin{prop}\label{prop: every trivial}
    If~$\Gamma\cap I$ and~$T$ are trivial, then~$\Omega=\R^{2,2}$.
\end{prop}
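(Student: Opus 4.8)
The plan is to show that, under these hypotheses, $\Gamma$ is isomorphic to a subgroup of $\GL_2^+(\R)$ whose cohomological dimension is too tightly constrained, and to exploit this together with the completeness criteria already available. \emph{Reduction to the linear factor.} Since $T=q(\Gamma\cap N)$ is trivial, we have $\Gamma\cap N\le\Ker(q)=I$, and as $\Gamma\cap I$ is trivial this forces $\Gamma\cap N=\{1\}$. Hence both $p|_\Gamma$ and $q|_\Gamma$ are injective and $\Gamma\cong p(\Gamma)\le\GL_2^+(\R)$. By Theorem~\ref{thm_vcd_gamma} we have $\cd(\Gamma)\ge 4$. On the other hand, a torsion-free discrete subgroup of $\GL_2^+(\R)$ acts freely and properly on the contractible symmetric space $\GL_2^+(\R)/\SO(2)\cong\R^3$, so it has $\cd\le 3$; therefore $p(\Gamma)$ cannot be discrete.

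\emph{Structure of the closure.} By Lemma~\ref{N_homothety}, $N$ is a homothety Lie group whose homothety commutes with the $\GL_2^+(\R)$-action, so Theorem~\ref{maintheorem2} applies to $\GL_2^+(\R)\ltimes N$ and gives that $\overline{p(\Gamma)}^{\circ}$ is nilpotent. Since $\mathfrak{gl}(2,\R)$ contains no non-abelian nilpotent subalgebra, $\overline{p(\Gamma)}^{\circ}$ is a nontrivial connected \emph{abelian} subgroup $L\le\GL_2^+(\R)$ of dimension $1$ or $2$.

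\emph{Case analysis on $L=\overline{p(\Gamma)}^{\circ}$.} If $L$ is non-central (a hyperbolic, parabolic, or elliptic one-parameter group, or a maximal torus), then $p(\Gamma)\le N_{\GL_2^+(\R)}(L)$, which is solvable — abelian in the elliptic and compact-torus cases, virtually abelian in the split-torus case. In the (virtually) abelian subcases $\Gamma$ is virtually nilpotent, so after passing to a finite-index subgroup still dividing $\Omega$ completeness follows from Theorem~\ref{nilpotent_complete}. In the remaining solvable subcases ($L$ unipotent or hyperbolic) I would analyze $\overline{q(\Gamma)}^{\circ}\le\GL_2^+(\R)\ltimes_{\mathsf{t}}\R^2$ exactly as in the proofs of Propositions~\ref{prop: p and q not discrete} and~\ref{prop: rank one translation with injective kernel}: if $\overline{q(\Gamma)}$ contains a one-parameter group of pure translations, apply Corollary~\ref{cor: pure_translation}; if $\overline{q(\Gamma)}^{\circ}$ is a flow $J^{\mathsf{t}}(a,b,c)$ with $b\ne 0$, then $\widehat{\Omega}$, hence $\Omega\cong P_0\times\widehat{\Omega}$, is contractible by Proposition~\ref{prop:contractible}, so $\cd(\Gamma)=4$ and completeness follows from Corollary~\ref{cor: pure_nilpotent_translation} (via Theorem~\ref{fact: cd=dim+solvable implies complete}); the degenerate flow $b=0$ produces a $\Gamma$-invariant affine plane, excluded by Theorem~\ref{thm_Goldman_Hirsch_rep}.

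\emph{Main obstacle.} The genuinely hard case is $L=\R^{+}\,\mathrm{Id}$ (central scalars), where $N_{\GL_2^+(\R)}(L)=\GL_2^+(\R)$ and $\Gamma$ need not be solvable: a priori $p(\Gamma)$ could be a central extension of a non-elementary discrete subgroup $\Delta\le\SL_2(\R)$ by a dense abelian subgroup of the scalars, a non-solvable group with $\cd=4$, so the algebraic reduction breaks down. Here I would argue geometrically through the induced action of $q(\Gamma)$ on the leaf space $\widehat{\Omega}\subset\R^2=\R^{2,2}/P_0$, whose linear part contains the image of $\Delta$ acting irreducibly on $\R^2$; a non-elementary irreducible subgroup of $\SL_2(\R)$ admits no invariant proper domain of $\R^2$ other than $\R^2\setminus\{0\}$, and the latter would make $\partial\Omega=P_0$ a $\Gamma$-invariant affine plane, again contradicting Theorem~\ref{thm_Goldman_Hirsch_rep}, so $\widehat{\Omega}=\R^2$ and $\Omega=\R^{2,2}$. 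Pinning down this dichotomy rigorously — controlling the invariant open subsets of $\R^2$ and the boundary behaviour, and verifying that the central-scalar configuration cannot evade the cohomological bounds — is where the real work lies.
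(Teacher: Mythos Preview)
Your overall skeleton is right --- reduce to $\Gamma\cong p(\Gamma)$, use non-discreteness, and case-split on $L=\overline{p(\Gamma)}^{\circ}$ --- but two of the branches are not handled.

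\textbf{The central-scalar case.} Your geometric argument conflates the \emph{linear} action of the $\SL_2(\R)$-image $\Delta$ with the \emph{affine} action of $q(\Gamma)$ on $\widehat{\Omega}$: the domain $\widehat{\Omega}$ is only invariant under the latter, and $T=\{1\}$ does not make the translational parts of $q(\Gamma)$ vanish. Even granting the (unproved) claim about linear invariant domains for non-elementary $\Delta$, it does not apply. The paper does not argue geometrically here at all: it shows that $L=\R^{+}\cdot\mathrm{Id}$ is \emph{impossible}. One applies Theorem~\ref{maintheorem2} to $\Gamma'=\Gamma\cap(\R^{+}\ltimes N)$ to obtain a nilpotent syndetic hull $S'\le\R^{+}\ltimes N$ with $\dim S'\ge\cd(\Gamma')\ge 2$; but for a nontrivial scalar the adjoint action on $\R^4$ has no nonzero fixed vectors, so nilpotency forces $S'\cap\R^4=\{0\}$, whence $S'$ embeds in the $2$-dimensional non-unimodular group $\R^{+}\ltimes U$, contradicting the existence of the lattice $\Gamma'$. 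This eliminates the case you flagged as the ``main obstacle'' and yields that $\Gamma$ is solvable.

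\textbf{The unipotent case.} With the central-scalar case excluded, the only non-(virtually-)abelian possibility is $L=U_1$ (or $\R\times U_1$), and here your plan to ``port'' the analysis of $\overline{q(\Gamma)}^{\circ}$ from Propositions~\ref{prop: p and q not discrete} and~\ref{prop: rank one translation with injective kernel} does not go through: those arguments rely on $\Gamma\cap I\cong\Z$ and on $\overline{p(\Gamma)}$ having the specific shape $\Z\ltimes U_1$ inside a fixed Borel, neither of which holds here, and in the present setting $\overline{q(\Gamma)}^{\circ}$ ranges over connected subgroups of $(\R\times B)\ltimes_{\mathsf{t}}\R^2$ that are not exhausted by pure translations and flows $J^{\mathsf{t}}(a,b,c)$. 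The paper instead proves directly that $\cd(\Gamma)=4$ whenever $p(\Gamma)$ is non-abelian (so that Theorem~\ref{fact: cd=dim+solvable implies complete} finishes). Assuming $\cd(\Gamma)>4$, one first pins down $\overline{p(\Gamma)}^{\circ}\in\{U_1,\ \R\times U_1\}$, then eliminates $\R\times U_1$ by a syndetic-hull/nilpotency argument, and finally eliminates $U_1$ by building the syndetic hull $S_\Gamma$ of $\Gamma$ in $(\R\times B)\ltimes N$, identifying the abelian Malcev closure $S_1$ of $\Gamma\cap(U_1\ltimes N)$ via the classification of $3$-dimensional abelian subalgebras (Lemma~\ref{lemma: abelian 3d}), and deriving from unimodularity of $S_\Gamma$ a relation $\det(\Ad_{g\mid\s_0})=\mu^{-2}$ that is incompatible with the rank-$2$ lattice $\Gamma'\subset\R\times A$. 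None of this is visible from the $\overline{q(\Gamma)}^{\circ}$ viewpoint.
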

From now on, and until the end of the proof of Proposition~\ref{prop: every trivial}, we work under the assumption that both $\Gamma \cap I$ and $T$ are trivial. In this case, the restrictions of $p$ and $q$ to $\Gamma$ are injective. The first step is to show that $\Gamma$ is solvable.

\begin{lemma}\label{p(gamma) is solvable}
    The group $\Gamma$ is solvable.
\end{lemma}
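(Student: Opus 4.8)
The plan is to reduce the solvability of $\Gamma$ to that of $p(\Gamma)$ and then to control the closure $\overline{p(\Gamma)}$ inside $\GL_2^+(\R)$. First I would record that the standing hypotheses make both restrictions $p|_\Gamma$ and $q|_\Gamma$ injective: since $\Gamma\cap I$ is trivial the map $q$ is injective, and since $q(\Gamma\cap N)=T$ is trivial while $q|_\Gamma$ is injective, $\Gamma\cap N=\Gamma\cap\Ker(p)$ is trivial as well. Hence $\Gamma\cong p(\Gamma)$, and it suffices to prove that $p(\Gamma)$ is solvable. Next I would show that $p(\Gamma)$ is \emph{not} discrete: were it discrete, then (up to finite index, by Selberg's lemma) it would act properly and freely on the $3$-dimensional contractible symmetric space $\GL_2^+(\R)/\SO(2)$, forcing $\cd(p(\Gamma))\le 3$; but $\cd(\Gamma)\ge 4$ by Theorem~\ref{thm_vcd_gamma} and $\Gamma\cong p(\Gamma)$, a contradiction.

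Write $H:=\overline{p(\Gamma)}^{\circ}$. Because $N$ is a homothety Lie group (Lemma~\ref{N_homothety}), Theorem~\ref{maintheorem2} applies to $\GL_2^+(\R)\ltimes N$ and yields that $H$ is nilpotent; by the previous step it is nontrivial. The heart of the argument, and the step I expect to be the main obstacle, is to rule out $H=Z$, where $Z=\{\lambda I_2:\lambda>0\}$ is the center of $\GL_2^+(\R)$ — this is exactly the configuration in which $\overline{p(\Gamma)}/Z$ could be a non-elementary (hence non-solvable) Fuchsian group. To exclude it, set $\Gamma_{nd}:=\Gamma\cap p^{-1}(H)$ and apply Theorem~\ref{maintheorem2} (with $\pi=p$) to obtain a nilpotent syndetic hull $S$ of $\Gamma_{nd}$ with $\overline{p(S)}=H$. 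Suppose $H=Z$. Then $\overline{p(\Gamma)}/Z$ is a discrete subgroup of $\PSL_2(\R)$, so $\cd(\Gamma/\Gamma_{nd})\le 2$ and hence $\cd(\Gamma_{nd})\ge\cd(\Gamma)-2\ge 2$. Since a syndetic hull of a torsion-free group satisfies $\dim S\ge\cd(\Gamma_{nd})\ge 2$, while $p(S)\subseteq Z$ gives $\dim p(S)\le 1$, the kernel $S\cap N$ is positive-dimensional, i.e. $\s\cap\n\neq 0$. As $\overline{p(S)}=Z$, I may choose $s\in S$ with $p(s)=\lambda I_2$ and $\lambda\neq 1$; then $\Ad_s$ preserves $\s\cap\n$, but by \eqref{GL2_action} the scalar $\lambda I_2$ acts on $\n$ with eigenvalues $\lambda^{2},\lambda,\lambda^{-1}$, and since the $N$-part of $s$ is unipotent $\Ad_s$ has these same eigenvalues, none equal to $1$. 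Thus $\Ad_s$ is not unipotent on $\s\cap\n$, contradicting the nilpotency of $S$. Therefore $H\neq Z$.

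Finally, with $H\neq Z$ I would conclude as follows. Let $\pi:\GL_2^+(\R)\to\PSL_2(\R)$ be the projection with kernel $Z$. Since $H$ is connected and $H\not\le Z$, its image $\pi(H)$ is a nontrivial connected nilpotent subgroup of $\PSL_2(\R)$; as $\PSL_2(\R)$ has no $2$-dimensional nilpotent connected subgroup, $\pi(H)$ is one-dimensional and abelian, and its normalizer $N_{\PSL_2(\R)}(\pi(H))$ is solvable (a Borel in the parabolic case, the normalizer of a one-parameter torus otherwise). Because $\overline{p(\Gamma)}$ normalizes its identity component $H$, it normalizes $\pi(H)$, so $\overline{p(\Gamma)}\le\pi^{-1}\bigl(N_{\PSL_2(\R)}(\pi(H))\bigr)$; the latter is an extension of a solvable group by the abelian group $Z$, hence solvable. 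Consequently $p(\Gamma)\le\overline{p(\Gamma)}$ is solvable, and so is $\Gamma\cong p(\Gamma)$, which completes the proof.
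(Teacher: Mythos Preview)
Your proof is correct and follows essentially the same architecture as the paper: establish injectivity of $p$ and $q$, show $p(\Gamma)$ is non-discrete via the cohomological bound, invoke Theorem~\ref{maintheorem2} to get $H=\overline{p(\Gamma)}^{\circ}$ nilpotent, exclude $H=Z$ (the homothety subgroup), and conclude by the normalizer argument. The one noteworthy difference is in the exclusion of $H=Z$: the paper restricts attention to $S\cap\R^4$, shows it is trivial via the $\Ad$-eigenvalue computation, and then derives a contradiction from the fact that $S$ would embed in the non-unimodular $2$-dimensional group $\R\ltimes U$ while carrying a uniform lattice; you instead work directly with $\s\cap\n$ and obtain the contradiction in one stroke from the eigenvalues $\lambda^{2},\lambda,\lambda^{-1}$ on $\n$, bypassing the unimodularity step. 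Both are valid, and your version is mildly more economical.
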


We begin with the following observation.

\begin{sublemma}\label{Lemma: non-discrete projection}
The group $p(\Gamma)$ cannot be discrete.
\end{sublemma}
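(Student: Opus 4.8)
The plan is to argue by contradiction using the cohomological-dimension obstruction of Theorem~\ref{thm_vcd_gamma}, exactly in the spirit of Lemma~\ref{p and q not discrete in injective case}. Suppose that $p(\Gamma)$ were discrete. Under the standing hypotheses that both $\Gamma\cap I$ and $T$ are trivial, the subgroup $\Gamma\cap N$ is trivial as well: indeed $T=q(\Gamma\cap N)$ by \eqref{eq:Translation}, so $T=\{e\}$ forces $\Gamma\cap N\le\Ker(q)=I$, whence $\Gamma\cap N\le\Gamma\cap I=\{e\}$. Consequently $p|_\Gamma$ is injective (as already recorded above), and $\Gamma\cong p(\Gamma)$ is a \emph{discrete} subgroup of $\GL_2^+(\mathbb{R})$.

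The next step is to bound the cohomological dimension of such a group. By Selberg's Lemma I would pass to a finite-index torsion-free subgroup $\Gamma'\le\Gamma$; this subgroup still divides $\Omega$ (properness, freeness, and cocompactness are inherited, the last because $\Omega/\Gamma'$ is a finite cover of $\Omega/\Gamma$), so Theorem~\ref{thm_vcd_gamma} applies and gives $\cd(\Gamma')\ge 4$. On the other hand, $p(\Gamma')\cong\Gamma'$ is a torsion-free discrete subgroup of $\GL_2^+(\mathbb{R})$, hence acts freely and properly discontinuously on the homogeneous space $\GL_2^+(\mathbb{R})/\SO(2)$. This space is diffeomorphic to $\mathbb{R}_{>0}\times\mathbb{H}^2\cong\mathbb{R}^3$, so it is contractible of dimension $3$. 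By \cite[VIII.~Proposition~8.1]{Brown} we obtain $\cd(\Gamma')=\cd(p(\Gamma'))\le 3$. Combining the two bounds yields $4\le\cd(\Gamma')\le 3$, a contradiction. Therefore $p(\Gamma)$ cannot be discrete.

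There is essentially no serious obstacle here: the two points requiring care are the reduction $\Gamma\cong p(\Gamma)$, which is precisely the injectivity of $p|_\Gamma$ deduced from the triviality of $\Gamma\cap I$ and $T$, and the identification of a contractible $3$-dimensional model for $\GL_2^+(\mathbb{R})/\SO(2)$ (so that a discrete torsion-free subgroup has $\cd\le 3$). Once these are in place, the dimension count against Theorem~\ref{thm_vcd_gamma} closes the argument immediately, in complete analogy with the discreteness exclusions carried out earlier in Lemmas~\ref{lemma:p discrete} and~\ref{p and q not discrete in injective case}.
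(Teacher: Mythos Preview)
Your argument is correct and follows essentially the same route as the paper: under the standing assumptions you deduce that $p|_\Gamma$ is injective, identify $\GL_2^+(\mathbb{R})/\SO(2)\cong\mathbb{R}_{>0}\times\mathbb{H}^2$ as a contractible $3$-manifold on which a torsion-free discrete subgroup acts freely and properly, conclude $\cd(\Gamma)\le 3$, and contradict Theorem~\ref{thm_vcd_gamma}. The paper's proof is simply a terser version of the same dimension count.
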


\begin{proof}
By contradiction, if~$p(\Gamma)$ is discrete, then~$\cd\Gamma=\cd p(\Gamma)\leq 3$. The last inequality is due to the fact that, up to finite index, the group~$p(\Gamma)$ acts properly and freely on the contractible space $\GL_2^+(\R)/\SO(2)\cong \R^+\times \mathbb{H}^2$.
\end{proof}
Having established this, we now consider the group~$\overline{p(\Gamma)}^\circ$, which is non-trivial since~$p(\Gamma)$ is not discrete.

\begin{sublemma}\label{sublemma:pure homothety}
    The identity component~$\overline{p(\Gamma)}^\circ$ cannot be equal to the group of pure homotheties in~$\GL_2^+(\mathbb{R})$.
\end{sublemma}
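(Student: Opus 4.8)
The plan is to argue by contradiction: suppose $\overline{p(\Gamma)}^\circ$ equals the group $Z=\{\lambda I_2 : \lambda>0\}$ of pure homotheties, and derive $\cd(\Gamma)\le 3$, contradicting Theorem~\ref{thm_vcd_gamma}, which forces $\cd(\Gamma)\ge 4$ since $\Gamma\le\SO_0(2,2)\ltimes\mathbb{R}^{2,2}\le\SL_4(\mathbb{R})\ltimes\mathbb{R}^4$. Recall that under the running hypotheses ($\Gamma\cap I$ and $T$ trivial) the projection $p$ is injective on $\Gamma$, so $\Gamma\cong p(\Gamma)$, and $\overline{p(\Gamma)}$ is a closed subgroup with identity component $Z$, in particular $\overline{p(\Gamma)}\supseteq Z$.

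First I would apply Theorem~\ref{maintheorem2} to $G_\theta=\GL_2^+(\mathbb{R})\ltimes N$ with $\pi=p$; this is legitimate because $N$ is a homothety Lie group whose homothety commutes with the $\GL_2^+(\mathbb{R})$-action (Lemma~\ref{N_homothety}). Setting $\Gamma_{nd}=\Gamma\cap p^{-1}(Z)$, the theorem produces a nilpotent syndetic hull $S$ of $\Gamma_{nd}$ in $G_\theta$ with $\overline{p(S)}=\overline{p(\Gamma)}^\circ=Z$. Since $S$ is connected, $p(S)$ is a nontrivial connected subgroup of the one-dimensional group $Z$, hence $p(S)=Z$.

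The crux, and the step I expect to be the main obstacle, is to show $\dim S=1$. Pick $g\in S$ with $p(g)=\lambda I_2$ and $\lambda\neq 1$; such $g$ exists because $p(S)=Z$. As $S$ is nilpotent, $\Ad_g$ is unipotent on the Lie algebra $\mathfrak{s}$, hence on the $\Ad_g$-invariant subalgebra $\mathfrak{s}\cap\mathfrak{n}$. On the other hand, the eigenvalues of $\Ad_g$ on $\mathfrak{n}$ coincide with those of the semisimple action of $\lambda I_2$ given by \eqref{GL2_action}: the inner contribution of the $N$-part is unipotent and acts trivially on the center $[\mathfrak{n},\mathfrak{n}]=\Span(T_1,T_2)$ and on the abelianization, so it does not alter the eigenvalues, which are $\lambda$ (on $T_1,T_2$), $\lambda^2$ (on $u$), and $\lambda^{-1}$ (on $T_3,T_4$), none of which equals $1$. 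A unipotent restriction can only have eigenvalue $1$, so $\mathfrak{s}\cap\mathfrak{n}=0$, and therefore $\dim S=\dim p(S)=1$. Since $\Gamma_{nd}$ is a uniform lattice in $S$, this gives $\cd(\Gamma_{nd})\le\dim S=1$.

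It remains to control $\cd(\Gamma/\Gamma_{nd})$. Consider $\bar p\colon\GL_2^+(\mathbb{R})\to\GL_2^+(\mathbb{R})/Z\cong\SL_2(\mathbb{R})$, realized by $M\mapsto M/\sqrt{\det M}$. Since $\overline{p(\Gamma)}\supseteq Z=\Ker\bar p$, the image $\bar p(\overline{p(\Gamma)})$ is closed with identity component $\bar p(Z)=\{\Id\}$, hence discrete; its subgroup $\bar p(p(\Gamma))\cong\Gamma/\Gamma_{nd}$ is therefore a discrete subgroup of $\SL_2(\mathbb{R})$, and up to finite index it acts freely and properly on $\mathbb{H}^2$, giving $\cd(\Gamma/\Gamma_{nd})\le 2$. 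Feeding this into the short exact sequence $1\to\Gamma_{nd}\to\Gamma\to\Gamma/\Gamma_{nd}\to 1$ yields $\cd(\Gamma)\le\cd(\Gamma_{nd})+\cd(\Gamma/\Gamma_{nd})\le 1+2=3$, the desired contradiction with Theorem~\ref{thm_vcd_gamma}.
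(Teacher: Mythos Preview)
Your proof is correct and follows essentially the same strategy as the paper: apply Theorem~\ref{maintheorem2} to obtain a nilpotent syndetic hull of $\Gamma_{nd}=\Gamma\cap p^{-1}(Z)$, use an eigenvalue argument to bound its dimension, and then combine with the fact that $p(\Gamma)$ has discrete image in $\SL_2(\mathbb{R})$ to contradict $\cd(\Gamma)\ge 4$.

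The only noteworthy difference is in the eigenvalue step. The paper restricts $\Ad_{L(\gamma)}$ to $\mathbb{R}^4$ (eigenvalues $\lambda,\lambda,\lambda^{-1},\lambda^{-1}$) to conclude $S\cap\mathbb{R}^4=0$, and then argues that the remaining two-dimensional quotient $\mathbb{R}\ltimes U$ is not unimodular, so cannot carry a lattice. You instead compute the eigenvalues of $\Ad_g$ on all of $\mathfrak{n}$, observing that the $u$-direction contributes the additional eigenvalue $\lambda^2\neq 1$; this kills $\mathfrak{s}\cap\mathfrak{n}$ in one stroke and yields $\dim S=1$ directly, bypassing the unimodularity detour. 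Your route is slightly more economical, but the two arguments are minor variants of the same idea.
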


\begin{proof}
We denote by~$\R$ the subgroup of pure homotheties in~$\GL_2^+(\mathbb{R})$.  Assume for contradiction that~$\overline{p(\Gamma)}^\circ = \R$. Let~$\pi : \GL_2(\mathbb{R}) \to \SL_2(\mathbb{R})$ be the natural projection.  
Since~$\Ker(\pi)=\R=\overline{p(\Gamma)}^\circ$, it follows that  
$\pi(\overline{p(\Gamma)})$ is a discrete subgroup of~$\SL_2(\mathbb{R})$. In particular $\pi(p(\Gamma))$ is also discrete and therefore, up to finite index, one has~$\cd\big(\pi(p(\Gamma))\big)\leq 2.$ Consider~$\Gamma'=\Gamma\cap (\R\ltimes N).$ We claim that~$\cd(\Gamma')\geq 2.$
Indeed, from the short exact sequence
$$
1 \longrightarrow \Gamma' \longrightarrow \Gamma \longrightarrow \pi(p(\Gamma)) \longrightarrow 1,
$$
we obtain
$\cd(\Gamma)\leq \cd(\Gamma')+\cd\big(\pi(p(\Gamma))\big).$
On the other hand, by Theorem~\ref{thm_vcd_gamma}, we know that~$\cd(\Gamma)\geq 4.$ Thus~$\cd(\Gamma')\geq 2.$

Next, since $\Gamma' = \Gamma \cap \bigl(\overline{p(\Gamma)}^\circ \ltimes N\bigr)$, Lemma~\ref{N_homothety} allows us to apply Theorem~\ref{maintheorem2} to the group~$\mathbb{R} \ltimes N$, and we obtain a nilpotent syndetic hull~$S_{\Gamma'}$ inside~$\mathbb{R} \ltimes N$.

Choose $\gamma\in \Gamma'$ such that the linear part of $\gamma$ is
$L(\gamma)=(\widehat{\gamma},x)\in \mathbb{R}\ltimes U$, where
$\widehat{\gamma}=\mathrm{diag}(\lambda,\lambda)$ with $\lambda\neq 1$.
The adjoint action of $L(\gamma)$ on $\mathbb{R}^4$ with respect to the basis
$T_1,T_2,T_3,T_4$ (see Table~\ref{tab:notation}) is given by
\begin{equation}\label{eq:ad_action_unipotent}
\Ad_{L(\gamma)} =
\begin{pmatrix}
\lambda& \star &\star &\star \\ 
0 & \lambda& \star& 0 \\ 
0 & 0 & \frac{1}{\lambda}& 0\\ 
0 & 0 & \star & \frac{1}{\lambda}
\end{pmatrix}.
\end{equation}
The subspace of fixed vectors of $\Ad_{L(\gamma)}$ is trivial. Since
$S_{\Gamma'}$ is nilpotent, the linear action of $L(S_{\Gamma'})$ on
$S_{\Gamma'}\cap \mathbb{R}^4$ is unipotent. Applying this to
$L(\gamma)\in L(S_{\Gamma'})$ and using \eqref{eq:ad_action_unipotent}, we
deduce that $S_{\Gamma'}\cap \mathbb{R}^4$ is trivial. In particular, $S_{\Gamma'}$
injects into $\mathbb{R}\ltimes U_1^*$. However, since $\dim S_{\Gamma'}\geq 2$,
it follows that $S_{\Gamma'}\cong \mathbb{R}\ltimes U_1^*=B_1^*$. This is a
contradiction, because $S_{\Gamma'}$ is a unimodular Lie group, whereas
$B_1^*$ is not.
\end{proof}

\begin{proof}[Proof of Lemma~\ref{p(gamma) is solvable}]
It is not difficult to check that the normalizer of a solvable subgroup of~$\GL_2^+(\mathbb{R})$ is solvable, unless it is the group of pure homotheties.  
By Theorem \ref{maintheorem2}, the group~$\overline{p(\Gamma)}^\circ$ is nilpotent and hence solvable. Since~$\overline{p(\Gamma)}$ normalizes its identity component, the result then follows from Sublemma~\ref{sublemma:pure homothety}.
\end{proof}
By Theorem~\ref{thm_vcd_gamma}, we have~$\cd(\Gamma) \geq 4$. Since $p$ is injective, it follows that $\Gamma \cong p(\Gamma)$, and hence $\Gamma$ is solvable. Completeness follows from Theorem~\ref{fact: cd=dim+solvable implies complete} in the case~$\cd(\Gamma)=4$. Completeness also holds when~$p(\Gamma)$ is abelian, without any assumption on the cohomological dimension. Indeed, in this case~$\Gamma \cong p(\Gamma)$ is abelian, and the conclusion follows from Theorem~\ref{nilpotent_complete}. The remaining and more delicate part is to establish the following.

\begin{prop}\label{prop cd>4}
If~$p(\Gamma)$ is not abelian, then~$\cd(\Gamma)=4$. 
\end{prop}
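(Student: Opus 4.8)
The plan is to reduce everything to the cohomological dimension of $p(\Gamma)$ and to control it through the structure of $\overline{p(\Gamma)}$. Since we are in the case where $\Gamma\cap I$ and $T$ are trivial, the restriction of $p$ to $\Gamma$ is injective, so $\cd(\Gamma)=\cd(p(\Gamma))$. Recall that $\overline{p(\Gamma)}^\circ$ is nilpotent (Theorem~\ref{maintheorem2}), hence abelian of dimension at most $2$ by the structure of $\GL_2^+(\R)\cong\R^{+}\times\SL_2(\R)$, and that it is not the group of pure homotheties (Sublemma~\ref{sublemma:pure homothety}). First I would run through the connected abelian subgroups of $\GL_2^+(\R)$ of dimension $1$ or $2$ and compute their normalizers: each has a virtually abelian normalizer except the unipotent $U_1$ and the group $\R^{+}\cdot U_1$ of scalar--times--unipotent matrices, whose normalizer is a Borel subgroup $B$. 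In all the other cases $\overline{p(\Gamma)}$, normalizing its identity component, would be virtually abelian, so that $\Gamma\cong p(\Gamma)$ would be virtually abelian and, after passing to a finite-index subgroup, abelian; this is the already-treated abelian case, which yields $\Omega=\R^{2,2}$ and hence $\cd(\Gamma)=4$. Thus we may assume $\overline{p(\Gamma)}\le B$ with $\overline{p(\Gamma)}^\circ\in\{U_1,\ \R^{+}\cdot U_1\}$.

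Next I would set $\Gamma_{nd}:=\Gamma\cap p^{-1}(\overline{p(\Gamma)}^\circ)$. The group $U_1\ltimes N$ is nilpotent, since $\ad_{u_1}$ acts nilpotently on $\n$ (see \eqref{bracket-u-1}), and $\Gamma/\Gamma_{nd}\cong p(\Gamma)/(p(\Gamma)\cap\overline{p(\Gamma)}^\circ)$ embeds discretely into $B/\overline{p(\Gamma)}^\circ$, hence is isomorphic to $\Z^k$ with $k\le 2$. Because $p$ is injective and $p(\Gamma_{nd})\subset\overline{p(\Gamma)}^\circ$ is abelian, $\Gamma_{nd}$ is abelian; by Lemma~\ref{N_homothety} and Theorem~\ref{maintheorem2} it admits an abelian syndetic hull $S$ with $\overline{p(S)}=\overline{p(\Gamma)}^\circ$, so that $\cd(\Gamma_{nd})=\dim S$.

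The heart of the proof is a bound on $\dim(S\cap N)$. Writing $\s=\tilde{\r}\oplus\m$ with $\m=\mathrm{Lie}(S\cap N)\subset\n$ and $\tilde{\r}$ mapping isomorphically onto $\mathrm{Lie}(\overline{p(\Gamma)}^\circ)$, abelianness of $S$ forces $\m$ to be an abelian subalgebra of $\n$ annihilated by $\ad_\xi|_{\n}$ for every $\xi\in\tilde{\r}$. Taking $\xi=u_1+w$ and computing $D:=\ad_\xi|_{\n}$ (the sum of the $u_1$--action and $\ad_w$) in the basis \eqref{basis_n}, one finds that $\ker D$ has dimension at most $3$, and that whenever it is $3$--dimensional it is a Heisenberg algebra with center $\Span(T_1)$; since $\m$ is abelian, $\dim\m\le 2$ in every case. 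The same computation applied to a scalar generator in the case $\overline{p(\Gamma)}^\circ=\R^{+}\cdot U_1$ gives trivial kernel (the scalar acts on $\n$ with no zero weight), so $\m=0$, $\dim S=2$ and $\cd(\Gamma)\le 3$, contradicting Theorem~\ref{thm_vcd_gamma}; this discards that case and leaves $\overline{p(\Gamma)}^\circ=U_1$, whence $\cd(\Gamma_{nd})=\dim S=1+\dim\m\le 3$ and $\cd(\Gamma)\le\cd(\Gamma_{nd})+k\le 3+k$.

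The main obstacle is the configuration $k=2$ together with $\dim\m=2$, which would only give $\cd(\Gamma)\le 5$. Here I would use that a rank--two diagonal torus acts on $\m$ with the pairwise distinct weights carried by $u,T_1,\dots,T_4$, forcing $\m$ to be a sum of weight lines; combined with $\m\subset\ker D$ and abelianness this leaves only $\m=\Span(T_1,T_4)$ or $\m=\Span(T_1,u)$. In the first case $q(\m)=\Span(T_4)\neq 0$ is central in the nilradical, and since the cocompact lattice $\Gamma\cap\mathrm{Nil}$ meets $\ker(q)=I$ trivially, its $q$--image is non-discrete, so $\overline{q(\Gamma)}$ contains a one-parameter group of pure translations; Corollary~\ref{cor: pure_translation} then gives $\Omega=\R^{2,2}$, which is contractible and hence forces $\cd(\Gamma)=4$ directly. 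In the second case one checks, using Sublemma~\ref{sublemma:invariant_plane}, that $\Gamma$ preserves a proper affine subspace of $\R^{2,2}$, contradicting Theorem~\ref{thm_Goldman_Hirsch_rep}. Either way $\cd(\Gamma)\le 4$, and since Theorem~\ref{thm_vcd_gamma} gives $\cd(\Gamma)\ge 4$, we conclude $\cd(\Gamma)=4$. I expect the weight-space analysis and this final translation--versus--invariant--subspace dichotomy to be the delicate point.
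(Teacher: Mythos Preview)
Your overall architecture coincides with the paper's: reduce to $\overline{p(\Gamma)}^\circ\in\{U_1,\ \R\times U_1\}$ via normalizers, rule out the second, and in the $U_1$ case control $\cd(\Gamma)$ through the abelian syndetic hull $S$ of $\Gamma_{nd}=\Gamma\cap(U_1\ltimes N)$ and the rank $k$ of the discrete quotient $\Gamma'\subset\R\times A$. Your elimination of the $\R\times U_1$ case (the scalar direction acts on $\n$ with no zero weight, so $\m=0$ and $\cd(\Gamma)\le 3$) is correct and is in fact cleaner than the paper's Lemma~\ref{lemma= No U_1 x R}.

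The genuine gap is in the endgame for $k=2$, $\dim\m=2$. Your claim that ``a rank--two diagonal torus acts on $\m$\ldots forcing $\m$ to be a sum of weight lines'' does not go through as stated. The elements of $\Gamma$ normalizing $\m$ are of the form $(\delta u,n)$ with $u\in U_1$, $n\in N$, and $\Ad_{(\delta u,n)}|_{\n}$ is only upper triangular in the ordered basis $T_1,T_2,u,T_4,T_3$, not diagonal; the unipotent parts shift the eigenspaces. Passing to the Zariski closure of $\Gamma$ does produce a $2$--torus normalizing $\m$, but that torus is a conjugate of $\R\times A$ inside $(\R\times B)\ltimes N$, and its weight lines are \emph{not} the standard coordinate lines $\Span(u),\Span(T_i)$ in general. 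Concretely, by the classification of $3$--dimensional abelian subalgebras of $U_1\ltimes N$ (Lemma~\ref{lemma: abelian 3d}) the possible $\s$ with $p(\s)=\Span(u_1)$ and $\dim\m=2$ are $\a_1$, $\a_3^{\pm}$ and $\a_4$, giving $\m=\Span(T_1,T_4)$, $\m=\Span(T_1,T_2\pm T_4)$, or $\m=\Span(u+\beta T_2+\gamma T_4,\,T_1)$. Your list omits the $\a_3^{\pm}$ cases and the generic $\a_4$. Even in your two listed cases the final steps are incomplete: for $\m=\Span(T_1,T_4)$ you only obtain that $q(\Gamma_{nd})\cong\Z^3$ is non-discrete inside the $2$--dimensional group $q(S)$, but the resulting one--parameter subgroup of $\overline{q(\Gamma)}^\circ$ need not consist of pure translations (it may be of the form $J^{\mathsf t}(1,c,d)$ with $c=0$); and for $\m=\Span(T_1,u)$ the invariant affine subspace is not exhibited and I do not see how Sublemma~\ref{sublemma:invariant_plane} yields one.

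The paper closes this gap differently: it takes the \emph{solvable} syndetic hull $S_\Gamma$ of all of $\Gamma$ (Theorem~\ref{fact:filling_linear_alg}), uses that $S_\Gamma$ is unimodular (it contains the lattice $\Gamma$) to derive the determinant identity $\det(\Ad_{g}|_{\s_0})=\mu^{-2}$ for $g=(\delta,n)$ with $\delta=\mathrm{diag}(\lambda\mu,\lambda\mu^{-1})$, and then runs through $\a_1,\a_3^{\pm},\a_4$ checking this identity (and the $\Ad_{g_i}$--invariance of $\s_0$) to force the two generators $\delta_1,\delta_2$ of $\Gamma'$ to be linearly dependent. This unimodularity constraint is the missing idea in your argument; it replaces the weight--line step entirely and handles all four subalgebra types at once.
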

The remainder of this section is devoted to proving the proposition and so from now on we assume that $p(\Gamma)$ is not abelian. We argue by contradiction and assume that~$\cd(\Gamma) > 4$. Let~$\mathcal{A}$ denote the algebraic closure of~$p(\Gamma)$. Then~$\mathcal{A}$ is a solvable, nonabelian subgroup of~$\GL_2^+(\mathbb{R})$ with finitely many connected components. In particular, up to conjugacy, the identity component of~$\mathcal{A}$ is contained in the subgroup of upper triangular matrices in~$\GL_2^+(\mathbb{R})$, which can be written as~$\mathbb{R} \times B$, where
$$
B = \left\{
\begin{pmatrix}
e^t & x\\
0 & e^{-t}
\end{pmatrix}
\ \Big{|}\ t,x \in \mathbb{R}
\right\},$$ and~$\mathbb{R}$ is identified with the subgroup of homotheties. It follows that, up to finite index, the group~$p(\Gamma)$ is contained in~$\mathbb{R} \times B$. The group~$\mathbb{R}\times B$ can be written as~$A \ltimes (\mathbb{R}\times U_1),$ where $U_1$ is the nilradical of $B$ (see \eqref{U_1_group}) and the~$A$-factor corresponds to the hyperbolic one-parameter group given by
$$A=\{\mathrm{diag}(e^t,e^{-t})\mid t\in \R \}.$$ Next, we study the possible structure of~$p(\Gamma)$.

\begin{lemma}\label{lemma:U_1 and R x U_1}
If~$p(\Gamma)$ is not abelian, then the identity component
$\overline{p(\Gamma)}^\circ$ coincides with one of the following:
\begin{enumerate}
    \item the nilradical of~$B$, which is~$U_1$;
    \item the nilradical of~$\mathbb{R}\times B$, which is~$\mathbb{R}\times U_1$.
\end{enumerate}
\end{lemma}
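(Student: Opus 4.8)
The plan is to pin down $\overline{p(\Gamma)}^\circ$ by first listing \emph{all} connected nilpotent subgroups of the ambient solvable group $\mathbb{R}\times B$, and then eliminating every candidate except $U_1$ and $\mathbb{R}\times U_1$. First I would record the standing structural facts that will be used repeatedly. By Theorem~\ref{maintheorem2} (applicable since $N$ is a homothety Lie group, Lemma~\ref{N_homothety}) the identity component $\overline{p(\Gamma)}^\circ$ is nilpotent; by Sublemma~\ref{Lemma: non-discrete projection} it is nontrivial; and since it is connected, contains the identity, and lies inside the algebraic closure $\mathcal A$ whose identity component is contained in $\mathbb{R}\times B$, we get $\overline{p(\Gamma)}^\circ\le\mathbb{R}\times B$ without any passage to finite index. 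Writing the Lie algebra of $\mathbb{R}\times B$ as $\langle z,a,u_1\rangle$ with $z$ central and $[a,u_1]=2u_1$, the group $\mathbb{R}\times B$ is solvable but not nilpotent, so $\dim\overline{p(\Gamma)}^\circ\in\{1,2\}$.

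Next I would carry out the (routine) classification of connected nilpotent subgroups of $\mathbb{R}\times B$. In dimension two the only nilpotent subalgebras are $\mathbb{R}\times U_1=\langle z,u_1\rangle$ and, for $\sigma\in\mathbb{R}$, the subalgebras $\langle z,a+\sigma u_1\rangle$, which are conjugate (via $U_1$) to the diagonal torus $D$; in dimension one they are the lines $\langle\alpha z+\beta a+\gamma u_1\rangle$. The two allowed cases $U_1=\langle u_1\rangle$ and $\mathbb{R}\times U_1$ are the nilradicals of $B$ and of $\mathbb{R}\times B$, hence normal, which is consistent with their being admissible.

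The elimination step uses that $\overline{p(\Gamma)}$ normalizes $S_0:=\overline{p(\Gamma)}^\circ$. The key point is that, with the single exception of the pure homotheties $\langle z\rangle$, every excluded candidate has abelian normalizer identity component $N_{\GL_2^+(\mathbb{R})}(S_0)^\circ$: one computes $N(\langle\alpha z+\gamma u_1\rangle)^\circ=\mathbb{R}_{>0}\times U_1$ when $\alpha\neq0$ and $\gamma\neq0$, while $N(S_0)^\circ=D$ whenever $S_0$ has nontrivial $a$-component (that is, for $\langle\alpha z+\beta a\rangle$ with $\beta\neq0$ and for $\langle z,a+\sigma u_1\rangle\sim D$); in the hyperbolic cases the extra Weyl element only contributes a second component. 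Consequently $\Gamma_{ab}:=\Gamma\cap p^{-1}\!\big(N(S_0)^\circ\big)$ has finite index in $\Gamma$ and $p(\Gamma_{ab})$ is abelian, so by injectivity of $p|_\Gamma$ the group $\Gamma_{ab}$ is abelian with $\cd(\Gamma_{ab})=\cd(\Gamma)>4$. By Theorem~\ref{fact:filling_linear_alg}, $\Gamma_{ab}$ then admits a syndetic hull in $\GL_2^+(\mathbb{R})\ltimes N$ which is abelian (its Zariski closure equals that of the abelian $\Gamma_{ab}$) and of dimension $\ge\cd(\Gamma_{ab})>4$; this contradicts the bound that connected abelian subgroups of $\GL_2^+(\mathbb{R})\ltimes N$ have dimension at most $4$ (Lemma~\ref{lemma: abelian 4d} and the appendix). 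The sole remaining candidate $\langle z\rangle$ is ruled out directly by Sublemma~\ref{sublemma:pure homothety}. Hence $S_0\in\{U_1,\mathbb{R}\times U_1\}$, as claimed.

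The main obstacle is precisely the hyperbolic cases $S_0=A$ and $S_0=D$: there the normalizer in $\GL_2^+(\mathbb{R})$ genuinely contains a Weyl element and is nonabelian, so a pure normalizer argument cannot force $\overline{p(\Gamma)}$ to be abelian and thereby contradict the nonabelianness of $p(\Gamma)$. It is exactly here that one must invoke the standing hypothesis $\cd(\Gamma)>4$ together with the appendix bound on abelian subgroups. I would also take care to check the two harmless reductions that make the argument go through: passing from $\Gamma$ to the finite-index $\Gamma_{ab}$ leaves $\overline{p(\Gamma)}^\circ$ unchanged (finite-index subgroups share the identity component of the closure), and the syndetic hull produced for $\Gamma_{ab}$ is indeed abelian, so that its dimension can be compared against the dimension~$4$ ceiling.
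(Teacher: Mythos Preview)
Your overall strategy --- enumerate connected nilpotent subgroups of $\mathbb{R}\times B$ and eliminate the unwanted ones via normalizer computations --- is the same as the paper's. The difference lies in how you handle the hyperbolic candidates (those $S_0$ with nontrivial $a$-component), and here your detour through $\cd(\Gamma)>4$ is both unnecessary and contains a gap.

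The detour is unnecessary because you are computing normalizers in all of $\GL_2^+(\mathbb{R})$. Recall that, by the finite-index reduction made just before the lemma, $p(\Gamma)$ already lies in the connected group $\mathbb{R}\times B$; hence so does its closure $\overline{p(\Gamma)}$. The relevant normalizer is therefore $N_{\mathbb{R}\times B}(S_0)$, and a direct matrix computation shows that for any one-parameter subgroup $S_0\le\mathbb{R}\times A$ distinct from the homothety line, as well as for $S_0=\mathbb{R}\times A$ itself, this normalizer is exactly the diagonal torus $\mathbb{R}\times A$, which is abelian. The Weyl element you worry about lies outside $\mathbb{R}\times B$ and is therefore irrelevant. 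So in every hyperbolic case one concludes $\overline{p(\Gamma)}\le\mathbb{R}\times A$ abelian, contradicting the hypothesis directly. This is exactly what the paper does, and its proof of this lemma never invokes $\cd(\Gamma)>4$.

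As for the gap: your argument ends by invoking a bound that connected abelian subgroups of $\GL_2^+(\mathbb{R})\ltimes N$ have dimension at most $4$, citing Lemma~\ref{lemma: abelian 4d}. But that lemma, and the entire appendix, treat only subgroups of $U_1\ltimes N$. Your abelian syndetic hull projects to (a subgroup of) the diagonal torus $D$, not to $U_1$, so the cited result does not apply. The bound you want is in fact true --- using Sublemma~\ref{sublemma:ad_action_diagonal} one sees that a diagonal element with nontrivial $A$-part has at most a one-dimensional fixed subspace in $\mathfrak{n}$, which forces $\dim(S\cap N)\le 1$ and hence $\dim S\le 3$ --- but you would need to supply this computation. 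Given that the direct normalizer argument inside $\mathbb{R}\times B$ avoids the issue entirely, the cleaner fix is simply to compute normalizers there rather than in $\GL_2^+(\mathbb{R})$.
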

\begin{proof}
We write $\R\times B$ as $A \ltimes (\mathbb{R} \times U_1)$, and let $\pi \colon A \ltimes (\mathbb{R} \times U_1) \to A$ be the natural projection, and set $H := \overline{p(\Gamma)}^\circ$. We claim that~$\pi(H)$ is necessarily trivial. Suppose, by contradiction, that~$\pi(H)$ is nontrivial. Since~$\pi(H)$ is a connected subgroup of~$A$, it follows that~$\pi(H)=A$. We distinguish cases according to the dimension of~$H$.

\begin{itemize}
    \item If~$\dim(H)=1$, then up to conjugacy we may assume that~$H$ is a one-parameter subgroup of~$A \ltimes \mathbb{R}=A\times \R$. Since~$\pi(H)=A$, the group~$H$ is not contained in the~$\mathbb{R}$-factor, which consists of pure homotheties. It follows that the normalizer of~$H$ is abelian. In particular~$\overline{p(\Gamma)}$ is abelian, which is a contradiction.

    \item If~$\dim(H)=2$, then~$\Ker(\pi)\cap H \subset \mathbb{R} \times U_1$ is a one-dimensional subgroup invariant under~$\Ad_h$ for all~$h \in \pi(H)=A$. For~$h=\operatorname{diag}(e^{s},e^{-s}) \in A$, the adjoint action of~$h$ on~$\mathbb{R} \times U_1$ is given, in the canonical basis of the Lie algebra of~$\mathbb{R} \times U_1$, by
    \begin{equation}\label{eq:adjoint_hyperbolic}
        \begin{pmatrix}
            1 & 0 \\
            0 & e^{2s}
        \end{pmatrix}.
    \end{equation}
  Observe that $\pi(H)$ preserves $\Ker(\pi)\cap H$, and since $H$ is nilpotent, the action of~$\pi(H)$ on~$\Ker(\pi)\cap H$ is unipotent, equation~\eqref{eq:adjoint_hyperbolic} implies that necessarily
   ~$\Ker(\pi)\cap H = \mathbb{R}$. Thus we obtain a short exact sequence
   ~$1 \to \mathbb{R} \to H \to A \to 1$, and hence~$H \cong A \ltimes \mathbb{R} = A \times \mathbb{R}$. The normalizer of such a group is~$A \times \mathbb{R}$, and therefore~$\overline{p(\Gamma)}$ is abelian, again a contradiction.
\end{itemize}

We conclude that~$\pi(H)$ must be trivial and so $H\leq \R\times U_1$. We now argue according to the dimension of~$H$.

\begin{itemize}
    \item If~$\dim(H)=1$, then~$H$ is a one-parameter subgroup of~$\mathbb{R} \times U_1$. Since the projection of~$H$ onto~$A$ is trivial, it follows that either~$H=U_1$, or the normalizer of~$H$ is abelian. Hence~$H=U_1$.

\item If~$\dim(H)=2$, then necessarily~$H=\mathbb{R} \times U_1$, since~$H \leq \mathbb{R} \times U_1$.
\end{itemize}
\end{proof}

Next, we show the following.

\begin{lemma}\label{lemma= No U_1 x R}
If~$\cd(\Gamma)>4$, then~$\overline{p(\Gamma)}^\circ$ cannot be equal to $\R\times U_1$.
\end{lemma}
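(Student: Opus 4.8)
The plan is to argue by contradiction: assume $\overline{p(\Gamma)}^\circ=\R\times U_1$ together with the standing hypothesis $\cd(\Gamma)>4$ (and $p(\Gamma)$ nonabelian), and derive a contradiction by comparing two estimates for the dimension of $S\cap N$, where $S$ is the nilpotent syndetic hull produced by Theorem~\ref{maintheorem2}. Since $N$ is a homothety Lie group (Lemma~\ref{N_homothety}) and the homotheties $\Phi_\lambda$ commute with the whole $\GL_2^+(\R)$-action, I would apply Theorem~\ref{maintheorem2} to $\Gamma\le \GL_2^+(\R)\ltimes N$ with $\pi=p$. Writing $H=\overline{p(\Gamma)}$, so that $H^\circ=\R\times U_1$, this yields $\Gamma_{nd}=\Gamma\cap p^{-1}(\R\times U_1)$ and a nilpotent syndetic hull $S$ of $\Gamma_{nd}$ in $\GL_2^+(\R)\ltimes N$ with $\overline{p(S)}=\overline{p(\Gamma_{nd})}=\R\times U_1$.

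First I would run a cohomological dimension count to bound $\dim(S\cap N)$ from below. Recalling that, up to finite index, $p(\Gamma)\le \R\times B=A\ltimes(\R\times U_1)$, the projection of $\overline{p(\Gamma)}$ onto $A$ has kernel exactly the identity component $\R\times U_1$, so its image is discrete in $A\cong\R$; as $p(\Gamma)$ is nonabelian this image is nontrivial, hence infinite cyclic. Therefore $\Gamma/\Gamma_{nd}\cong p(\Gamma)/\bigl(p(\Gamma)\cap(\R\times U_1)\bigr)$ is isomorphic to $\Z$, and the short exact sequence $1\to\Gamma_{nd}\to\Gamma\to\Z\to1$ gives $\cd(\Gamma_{nd})\ge\cd(\Gamma)-1\ge 4$. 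Consequently $\dim S\ge\cd(\Gamma_{nd})\ge 4$. On the other hand $p(S)$ is a connected subgroup of the abelian group $\R\times U_1\cong\R^2$ whose closure is all of $\R\times U_1$; since connected subgroups of $\R^2$ are closed, $p(S)=\R\times U_1$ and $\dim p(S)=2$. Hence $\dim(S\cap N)=\dim S-\dim p(S)\ge 2$.

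Next I would contradict this using nilpotency of $S$, exactly as in Sublemma~\ref{sublemma:pure homothety}. Because $p(S)=\R\times U_1$ contains the full homothety subgroup, there is $g\in S$ with $p(g)=\mu\,\mathrm{Id}$ and $\mu\ne 1$; its adjoint action on $\R^4=\Span(T_1,T_2,T_3,T_4)$ has the form~\eqref{eq:ad_action_unipotent}, with eigenvalues $\mu,\mu,\mu^{-1},\mu^{-1}$ and, in particular, no eigenvalue equal to $1$. Since $S$ is nilpotent, the action of $\Ad_g$ on the normal subgroup $S\cap\R^4$ is unipotent; having all eigenvalues equal to $1$ on a subspace on which $g$ acts with eigenvalues $\mu^{\pm1}\ne 1$ forces $S\cap\R^4=\{0\}$. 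Then $S\cap N$ injects into $N/\R^4\cong U$, so $\dim(S\cap N)\le\dim U=1$, contradicting $\dim(S\cap N)\ge 2$. This rules out $\overline{p(\Gamma)}^\circ=\R\times U_1$.

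The main obstacle is the collision of the two dimension bounds for $S\cap N$: the lower bound $\dim(S\cap N)\ge 2$ is precisely where the hypothesis $\cd(\Gamma)>4$ enters (through $\dim S\ge 4$ and $\dim p(S)=2$), while the upper bound $\dim(S\cap N)\le 1$ rests on the nilpotency of the syndetic hull together with the absence of a fixed direction for the homothety. The two delicate points requiring care are verifying that $p(S)$ is genuinely $2$-dimensional—i.e. that the connected subgroup $p(S)$ of $\R\times U_1\cong\R^2$ is closed and therefore equals its closure—and that $\Gamma/\Gamma_{nd}\cong\Z$, which follows from the discreteness of the image of $\overline{p(\Gamma)}$ in $A$ obtained from $\Ker=\overline{p(\Gamma)}^\circ$.
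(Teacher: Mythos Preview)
Your proof is correct and follows essentially the same approach as the paper's own argument: both pass to $\Gamma_{nd}=\Gamma\cap p^{-1}(\R\times U_1)$, apply Theorem~\ref{maintheorem2} to obtain a nilpotent syndetic hull $S$ with $\overline{p(S)}=\R\times U_1$, use the short exact sequence $1\to\Gamma_{nd}\to\Gamma\to\Z\to1$ to get $\dim S\ge 4$, and then derive a contradiction from the fact that a nontrivial homothety in $p(S)$ acts on $\R^4$ with no eigenvalue~$1$. The only cosmetic difference is in the final bookkeeping: the paper observes directly that $\dim S>3$ forces $S\cap\R^4\neq\{0\}$ (since the quotient by $\R^4$ is $3$-dimensional), while you first compute $\dim(S\cap N)\ge 2$ and then bound it above by $\dim U=1$; these are equivalent packagings of the same contradiction.
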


\begin{proof}
Assume by contradiction that $\overline{p(\Gamma)}^\circ = \mathbb{R} \times U_1$. Consider the natural projection $\mathbb{R} \times B \to A$. Its kernel is $\mathbb{R} \times U_1$, which coincides with $\overline{p(\Gamma)}^\circ$. Hence, the projection of $\overline{p(\Gamma)}$ is discrete in $A \cong \mathbb{R}$. If this projection is trivial, then $\overline{p(\Gamma)}$ is abelian, as it coincides with $\R\times U_1$. This contradicts the hypothesis of Proposition~\ref{prop cd>4}. Therefore, $\overline{p(\Gamma)} \cong \mathbb{Z} \ltimes (\mathbb{R} \times U_1)$.

Let~$\Gamma_2=\Gamma\cap\big((\R\times U_1)\ltimes N\big)$, by Theorem~\ref{maintheorem2}, the group~$\Gamma_2$ admits a nilpotent syndetic hull~$S_2$ inside~$(\R\times U_1)\ltimes N$. The projection to the $A$ factor provides the short exact sequence
$$1\to \Gamma_2\to \Gamma\to \Z\to 1.$$ Using the assumption~$\cd(\Gamma)>4$, we deduce that~$\cd(\Gamma_2)>3$, and hence~$\dim(S_2)\geq\cd(\Gamma_2)>3.$ In particular~$S_2\cap \R^4$ is nontrivial and invariant under the projection of~$S_2$ to the linear part in~$\R\ltimes (U_1\times U)$. However, the~$\R$-action on~$S_2\cap \R^4$ is not unipotent (see \eqref{eq:ad_action_unipotent}), which contradicts the nilpotency of~$S_2$. This completes the proof. 
\end{proof}
We now investigate the case where~$\overline{p(\Gamma)}^{\circ}$ is the nilradical of~$B$.

\begin{prop}\label{prop: Not U_1}
    If~$\cd(\Gamma)>4$, then~$\overline{p(\Gamma)}^\circ$ cannot be equal to $U_1$.
\end{prop}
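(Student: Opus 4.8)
The plan is to argue by contradiction: assume $\overline{p(\Gamma)}^\circ = U_1$ while $\cd(\Gamma) > 4$, under the standing hypotheses that $\Gamma\cap I$ and $T$ are trivial (so $p$ and $q$ are injective on $\Gamma$), that $p(\Gamma)$ is non-abelian, and that, up to finite index, $p(\Gamma)\le \R\times B = A\ltimes(\R\times U_1)$. Since $\overline{p(\Gamma)}$ normalizes its identity component $U_1$, it lies in the normalizer $\R\times B$ of $U_1$. Let $\rho\colon \R\times B\to(\R\times B)/U_1\cong \R\times A$ be the quotient by the $U_1$-direction; the target is abelian, $\cong\R^2$. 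Because $\ker\rho=U_1=\overline{p(\Gamma)}^\circ$, the image $L:=\rho(p(\Gamma))$ is a discrete subgroup of $\R^2$, and it is nontrivial (otherwise $p(\Gamma)\le U_1$ would be abelian); thus $L\cong\Z$ or $\Z^2$.

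First I would produce a syndetic hull. Set $\Gamma_{nd}=\Gamma\cap(U_1\ltimes N)$. As $\Gamma\cap N$ is trivial, $p$ is injective on $\Gamma_{nd}$ with image in $U_1\cong\R$, so $\Gamma_{nd}$ is abelian. By Lemma~\ref{N_homothety} I may apply Theorem~\ref{maintheorem2} to $U_1\ltimes N$ and obtain a nilpotent syndetic hull $S$ of $\Gamma_{nd}$ inside $U_1\ltimes N$ with $\overline{p(S)}=U_1$; being a simply connected nilpotent group containing the abelian lattice $\Gamma_{nd}$, $S$ is abelian, $p(S)=U_1$, and $\dim S=\cd(\Gamma_{nd})$. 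Writing $\mathfrak{s}=\mathrm{Lie}(S)$, this is an abelian subalgebra of the Lie algebra of $U_1\ltimes N$ of the form $\mathfrak{s}=\R X_0\oplus(\mathfrak{s}\cap\mathfrak{n})$ with $X_0=u_1+n_0$, $n_0\in\mathfrak{n}$. The next step is the bound $\dim S\le 3$: a direct bracket computation (using \eqref{bracket_n}--\eqref{bracket-u-1}) shows that the centralizer of $X_0$ in $\mathfrak{n}$ is either $2$-dimensional or a $3$-dimensional Heisenberg subalgebra; since $\mathfrak{s}\cap\mathfrak{n}$ is an abelian subalgebra of this centralizer, $\dim(\mathfrak{s}\cap\mathfrak{n})\le 2$, whence $\dim S\le 3$.

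I would then combine this with cohomological dimension. Since $p(\Gamma)$ normalizes $U_1$, the subgroup $\Gamma_{nd}$ is normal in $\Gamma$ with $\Gamma/\Gamma_{nd}\cong L$, giving $\cd(\Gamma)\le\dim S+\mathrm{rank}(L)\le 3+\mathrm{rank}(L)$. As $\cd(\Gamma)>4$, this forces $\mathrm{rank}(L)=2$ and $\dim S=3$, so $\mathfrak{s}\cap\mathfrak{n}$ is exactly $2$-dimensional. (If $\mathrm{rank}(L)\le 1$ we already get $\cd(\Gamma)\le4$, a contradiction.) It remains to rule out the rank-two case. Here $\Gamma$ normalizes $S$, so $\mathfrak{s}$ and $\mathfrak{s}\cap\mathfrak{n}$ are $\Ad_\gamma$-invariant, and since $\Gamma_{nd}\cong\Z^3$ is a lattice in $S\cong\R^3$ we have $\det(\Ad_\gamma|_{\mathfrak{s}})=\pm1$ for all $\gamma\in\Gamma$. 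Writing $M^{ss}=\mathrm{diag}(\mu e^{s},\mu e^{-s})$ for the semisimple part of $M=p(\gamma)$, the operator $\Ad_{M^{ss}}$ is diagonal in the basis $(u_1,u,T_1,T_2,T_3,T_4)$ with eigenvalues $(e^{2s},\mu^{2},\mu e^{s},\mu e^{-s},\mu^{-1}e^{-s},\mu^{-1}e^{s})$.

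The main obstacle, and the crux of the argument, is precisely this rank-two analysis, because $U_1\ltimes N$ is itself nilpotent and so the nilpotency of $S$ gives no unipotency obstruction, unlike in Sublemma~\ref{sublemma:pure homothety} and Lemma~\ref{lemma= No U_1 x R}; I would instead exploit the determinant condition together with the weights. Since $L$ has rank two, I can choose $\gamma$ with the six eigenvalues above pairwise distinct on $\mathfrak{n}$; then $\mathfrak{s}\cap\mathfrak{n}$, being $\Ad_{M^{ss}}$-invariant, must be a coordinate $2$-plane. The relations $[X_0,T_2]=T_1\neq0$ and $[X_0,T_3]=-T_4-\beta T_2\neq0$ (with $\beta$ the $u$-component of $n_0$) exclude $T_2$ and $T_3$, and abelianness excludes $\Span(u,T_4)$ since $[u,T_4]=T_1$; hence $\mathfrak{s}\cap\mathfrak{n}$ is $\Span(T_1,T_4)$ or $\Span(u,T_1)$. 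Finally $\det(\Ad_{M^{ss}}|_{\mathfrak{s}})=1$ reads $e^{4s}=1$ in the first case and $\mu^{3}e^{3s}=1$ in the second; each is a nontrivial linear relation on $\rho(M)=(\log\mu,s)$ valid for every $M\in p(\Gamma)$, forcing $L$ into a line and contradicting $\mathrm{rank}(L)=2$. This contradiction completes the proof.
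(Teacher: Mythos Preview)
Your overall architecture matches the paper's: reduce to the discrete abelian quotient $L=\rho(p(\Gamma))\subset\R\times A$, build the abelian syndetic hull $S$ of $\Gamma_{nd}=\Gamma\cap(U_1\ltimes N)$, bound $\dim S\le 3$, dispose of $\mathrm{rank}(L)\le 1$ by cohomological dimension, and in the rank-$2$ case derive a linear relation on $L$ from a determinant condition. Your centralizer computation giving $\dim(\mathfrak{s}\cap\n)\le 2$ is correct and is in fact a slightly slicker route to the paper's Lemma~\ref{lemma:rank1case}.

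The gap is in the rank-two step, at the sentence ``$\mathfrak{s}\cap\mathfrak{n}$, being $\Ad_{M^{ss}}$-invariant, must be a coordinate $2$-plane''. What you actually know is that $\mathfrak{s}\cap\n$ is invariant under $\Ad_\gamma$ for $\gamma\in\Gamma$, hence under the semisimple part $(\Ad_\gamma)^{ss}$ of that operator. But $(\Ad_\gamma)^{ss}$ is \emph{not} the action of the element $(M^{ss},0)\in\GL_2^+\ltimes N$: the latter is not in $\Gamma$ nor, a priori, in the normalizer of $S$, and the genuine Jordan decomposition of $\gamma=(M,n)$ in the semidirect product mixes the $U_1$- and $N$-parts into the semisimple factor. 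Concretely, the eigenspaces of $(\Ad_\gamma)^{ss}$ on $\n$ have the correct \emph{eigenvalues} $\mu^2,\mu e^{\pm s},\mu^{-1}e^{\pm s}$, but the eigenlines are perturbations of the coordinate axes depending on the $U_1\ltimes N$-part of $\gamma$ (only the $T_1$-line is always exactly $\Span(T_1)$). So there is no reason for $\mathfrak{s}\cap\n$ to be a coordinate plane.

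This is not a cosmetic issue: the classification in the Appendix (Lemma~\ref{lemma: abelian 3d}) shows that $\mathfrak{s}$ can be of type $\a_3^{\pm}$, with $\mathfrak{s}\cap\n=\Span(T_1,\,T_2\pm T_4)$, or of type $\a_4(\alpha,\beta,\gamma)$, with $\mathfrak{s}\cap\n=\Span(T_1,\,u+\beta T_2+\gamma T_4)$ --- neither a coordinate plane in general. Your list $\{\Span(T_1,T_4),\ \Span(u,T_1)\}$ therefore omits cases that do occur. The paper's cure is exactly to invoke Lemma~\ref{lemma: abelian 3d} and, for each family, read off the invariance/determinant constraint directly (Proposition~\ref{lemma:Rank_2case} together with Lemma~\ref{adjoint_s_0}): type $\a_3^{\pm}$ forces $\mu=\pm e^{s}$, type $\a_1$ forces $e^{4s}=1$, and type $\a_4$ forces $\mu^3e^{3s}=1$ (in your notation), each a nontrivial linear relation on $L$. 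To repair your argument you must either supply a genuine reason why the diagonal torus $\R\times A$ normalizes $\mathfrak{s}\cap\n$ (this is not immediate), or replace the ``coordinate plane'' shortcut by the explicit case analysis of abelian $3$-dimensional subalgebras as the paper does.
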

To prove this, we consider the projection
$j \colon \mathbb{R}\times B \longrightarrow \mathbb{R}\times A$
modulo~$U_{1}$. Since~$\overline{p(\Gamma)}^{\circ}=\Ker(j)$,
it follows that~$j(\overline{p(\Gamma)})$ is discrete. We define
\begin{equation}\label{eq:subgroup_gamma_prime}
\Gamma' := j\bigl(p(\Gamma)\bigr),
\end{equation}
which is a discrete subgroup of~$\mathbb{R}^{2}$. Since~$U_{1}\ltimes N$ is an algebraic
nilpotent group, the subgroup~$\Gamma_{1}:= \Gamma \cap (U_{1}\ltimes N)$ admits a nilpotent syndetic hull, which we
denote by~$S_{1}$; this is the Malcev closure of~$\Gamma_{1}$.

\begin{lemma}\label{abelianS_1}
The Malcev closure $S_{1}\leq U_{1}\ltimes N$ of~$\Gamma_{1}:= \Gamma \cap (U_{1}\ltimes N)$ is abelian. Moreover, its
projection onto~$U_{1}$ is nontrivial.
\end{lemma}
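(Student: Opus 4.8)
The plan is to prove the two assertions separately, using the standing hypotheses of this subsection: since $\Gamma\cap I$ and $T$ are trivial, the restrictions of $p$ and $q$ to $\Gamma$ are injective; in particular $\Gamma\cap N=\{\mathrm{Id}\}$. Recall also that we are in the case $\overline{p(\Gamma)}^\circ=U_1$, so that $\Ker(j)=U_1$ and $\Gamma'=j(p(\Gamma))$ is discrete in $\mathbb{R}^2$.

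First I would show that $\Gamma_1=\Gamma\cap(U_1\ltimes N)$ is abelian. For $\gamma,\gamma'\in\Gamma_1$ the images $p(\gamma),p(\gamma')$ lie in $U_1\cong\mathbb{R}$, which is abelian, so $p([\gamma,\gamma'])=[p(\gamma),p(\gamma')]=\mathrm{Id}$; hence $[\gamma,\gamma']\in\Gamma\cap\Ker(p)=\Gamma\cap N=\{\mathrm{Id}\}$. Thus $\Gamma_1$ is abelian. Since $S_1$ is the Malcev closure of $\Gamma_1$, it is a simply connected nilpotent Lie group in which $\Gamma_1$ is a uniform lattice; because an abelian uniform lattice forces the ambient simply connected nilpotent group to be abelian (the same fact used in the proof of Lemma~\ref{cd:gamma_cap N}), we conclude that $S_1$ is abelian.

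For the nontriviality of the projection onto $U_1$, I would combine the non-discreteness of $p(\Gamma)$ (Sublemma~\ref{Lemma: non-discrete projection}) with the discreteness of $\Gamma'$. Choose $g_n\in p(\Gamma)\setminus\{\mathrm{Id}\}$ with $g_n\to\mathrm{Id}$. Then $j(g_n)\to\mathrm{Id}$ inside the discrete group $\Gamma'$, so $j(g_n)=\mathrm{Id}$ for $n$ large, i.e.\ $g_n\in\Ker(j)=U_1$. Hence $p(\Gamma_1)=p(\Gamma)\cap U_1$ contains the nontrivial elements $g_n$, and since $\Gamma_1\le S_1$ its projection onto $U_1$ contains $p(\Gamma_1)$ and is nontrivial. (In fact $p(\Gamma_1)$ accumulates at the identity of $U_1\cong\mathbb{R}$, hence is dense, so the connected group $p(S_1)$ equals all of $U_1$.) Alternatively, the exact sequence $1\to\Gamma_1\to\Gamma\to\Gamma'\to1$ with $\cd(\Gamma')\le2$ and $\cd(\Gamma)\ge5$ gives $\cd(\Gamma_1)\ge3$, so that a trivial $U_1$-projection would force $\Gamma_1\le\Gamma\cap N=\{\mathrm{Id}\}$, a contradiction.

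Both arguments are short, and I do not expect a genuine obstacle. The only points needing care are the clean use of the injectivity $\Gamma\cap N=\{\mathrm{Id}\}$, which is exactly what makes the commutators of $\Gamma_1$ vanish, and the standard structural fact that the Malcev closure of an abelian lattice is abelian.
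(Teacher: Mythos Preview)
Your proof is correct. The argument for the abelianness of $S_1$ is identical to the paper's: $[\Gamma_1,\Gamma_1]\subset\Gamma\cap N=\{\mathrm{Id}\}$, and an abelian uniform lattice in a simply connected nilpotent group forces the group to be abelian.

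For the nontriviality of the projection onto $U_1$, the paper argues slightly differently: assuming the projection is trivial gives $\Gamma_1\subset\Gamma\cap N=\{\mathrm{Id}\}$, whence $j\circ p\colon\Gamma\to\mathbb{R}\times A$ is injective; since $\mathbb{R}\times A$ is abelian this forces $p(\Gamma)$ abelian, contradicting the standing hypothesis of Proposition~\ref{prop cd>4}. Your first argument instead exploits the hypothesis $\overline{p(\Gamma)}^\circ=U_1$ together with the discreteness of $\Gamma'$ to exhibit nontrivial elements of $p(\Gamma)\cap U_1=p(\Gamma_1)$ directly; your alternative via $\cd(\Gamma_1)\geq 3$ is also valid under the contradiction hypothesis $\cd(\Gamma)>4$. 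All three routes are short and use only hypotheses already in force; the paper's choice has the minor advantage of being purely algebraic, while your first argument has the bonus of showing $p(S_1)=U_1$ rather than merely nontriviality.
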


\begin{proof}
Since~$[\Gamma_{1},\Gamma_{1}] \leq \Gamma \cap N = \{0\},$
it follows that~$\Gamma_{1}$ is abelian. Moreover~$\Gamma_{1}$ is a uniform lattice in
the simply connected nilpotent Lie group~$S_{1}$, which implies that~$S_{1}$ itself is
abelian.

To prove that the projection of~$S_{1}$ onto~$U_{1}$ is nontrivial, assume by
contradiction that~$S_{1}\leq N$. Then~$\Gamma_{1}\leq N$, and hence~$\Gamma_{1} \leq \Gamma \cap N = \{0\},$
so~$\Gamma_{1}$ is trivial. Consequently~$\Gamma$ is a subgroup of~$\mathbb{R}\times A$,
which is abelian, and therefore~$p(\Gamma)$ is abelian. This contradicts our standing
assumption that~$p(\Gamma)$ is not abelian.
\end{proof}

We now begin the investigation of $\Gamma'$ depending on its rank.

\begin{lemma}\label{lemma:rank1case}
The subgroup~$\Gamma'$ in~\eqref{eq:subgroup_gamma_prime} cannot have rank~$1$.
\end{lemma}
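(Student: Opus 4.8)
The plan is to argue by contradiction: suppose $\Gamma'$ has rank $1$, so that $\Gamma' \cong \mathbb{Z}$ and $\cd(\Gamma') = 1$. Recall that, since $\Gamma\cap N$ is trivial, $p$ restricts to an isomorphism $\Gamma \cong p(\Gamma)$, under which $\Gamma_1 = \Gamma \cap (U_1 \ltimes N)$ is exactly the kernel of the map $\gamma \mapsto j(p(\gamma))$ (because $\Ker j = U_1$ and $p^{-1}(U_1) = U_1\ltimes N$). This gives a short exact sequence $1 \to \Gamma_1 \to \Gamma \to \Gamma' \to 1$, whence $\cd(\Gamma) \le \cd(\Gamma_1) + \cd(\Gamma') = \cd(\Gamma_1) + 1$. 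Since we are assuming $\cd(\Gamma) > 4$, i.e.\ $\cd(\Gamma) \ge 5$, we get $\cd(\Gamma_1) \ge 4$, and therefore $\dim S_1 = \cd(\Gamma_1) \ge 4$. By Lemma~\ref{abelianS_1}, $S_1$ is abelian with nontrivial projection onto $U_1$; hence its Lie algebra $\mathfrak{s}_1$ is an abelian subalgebra of $\mathfrak{g} := \Span(u_1,u,T_1,T_2,T_3,T_4)$, the Lie algebra of $U_1\ltimes N$, with $\dim \mathfrak{s}_1 \ge 4$ and containing an element of nonzero $u_1$-component.

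The heart of the matter is then the purely Lie-algebraic claim that no abelian subalgebra of $\mathfrak{g}$ with nonzero $u_1$-component can have dimension $\ge 4$. To prove it I would fix $X_0 \in \mathfrak{s}_1$ normalized so that its $u_1$-coefficient is $1$, say $X_0 = u_1 + \beta u + \sum_{i=1}^4 c_i T_i$, and compute its centralizer using the only nonzero brackets \eqref{bracket_n} and \eqref{bracket-u-1}, together with the fact that $u_1$ fixes $u$, i.e.\ $[u_1,u]=0$. A direct computation gives, for $Y = \alpha' u_1 + \beta' u + \sum c'_i T_i$,
$$[X_0,Y] = \bigl(c'_2 - c_2\alpha' + \beta c'_4 - c_4\beta'\bigr)T_1 - \bigl(\beta c'_3 - c_3\beta'\bigr)T_2 - \bigl(c'_3 - c_3\alpha'\bigr)T_4.$$
In particular $\operatorname{Image}(\ad_{X_0}) \subseteq \Span(T_1,T_2,T_4)$; inspection of the coefficients shows that the $T_1$-direction is always attained, while the pair of $(T_2,T_4)$-coefficients spans a plane when $c_3 \ne 0$ and degenerates to a single direction exactly when $c_3 = 0$. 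Thus $\ad_{X_0}$ has rank $3$ if $c_3 \ne 0$ and rank $2$ if $c_3 = 0$, so that $\dim \mathfrak{z}(X_0) = 6 - \operatorname{rank}(\ad_{X_0}) \le 4$, with equality only when $c_3 = 0$.

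To conclude, note that $\mathfrak{s}_1 \subseteq \mathfrak{z}(X_0)$ is abelian of dimension $\ge 4$ while $\dim\mathfrak{z}(X_0)\le 4$; this forces $c_3 = 0$ and $\mathfrak{s}_1 = \mathfrak{z}(X_0)$, so that $\mathfrak{z}(X_0)$ itself is abelian. But in the case $c_3 = 0$ one computes directly that $\mathfrak{z}(X_0) = \Span\bigl(T_1,\, u_1 + c_2 T_2,\, u + c_4 T_2,\, T_4 - \beta T_2\bigr)$, and the bracket $[\,u + c_4 T_2,\, T_4 - \beta T_2\,] = [u,T_4] = T_1 \ne 0$ shows that $\mathfrak{z}(X_0)$ is \emph{not} abelian, a contradiction. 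Hence $\Gamma'$ cannot have rank $1$.

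The main obstacle is precisely the last step. From the rank computation one only learns that $\dim\mathfrak{z}(X_0)\le 4$, which is consistent with the existence of a $4$-dimensional abelian $\mathfrak{s}_1$; a dimension count alone therefore does not close the argument. The decisive point is to exhibit an explicit nonvanishing bracket inside $\mathfrak{z}(X_0)$ in the critical case $c_3 = 0$, turning the equality $\mathfrak{s}_1 = \mathfrak{z}(X_0)$ into a genuine contradiction with the abelianness of $\mathfrak{s}_1$ furnished by Lemma~\ref{abelianS_1}.
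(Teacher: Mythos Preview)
Your argument is correct. Both your proof and the paper's reach the same intermediate conclusion: from the short exact sequence $1\to\Gamma_1\to\Gamma\to\Gamma'\to 1$ and $\cd(\Gamma)>4$ one gets $\cd(\Gamma_1)\ge 4$, hence $\dim S_1\ge 4$, with $S_1$ abelian by Lemma~\ref{abelianS_1}. At that point the paper simply invokes Lemma~\ref{lemma: abelian 4d} from the appendix, which classifies abelian subgroups of $U_1\ltimes N$ of dimension $\ge 4$ as the pure translation group $\mathbb{R}^4$; this immediately contradicts either the nontrivial $U_1$-projection of $S_1$ or, equivalently, the triviality of $\Gamma\cap N$.

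You instead prove the needed Lie-algebraic fact directly, via the centralizer computation for an element $X_0$ with nonzero $u_1$-coefficient: the rank dichotomy (rank $3$ when $c_3\neq 0$, rank $2$ when $c_3=0$) forces $\mathfrak{s}_1=\mathfrak{z}(X_0)$, and then the explicit bracket $[u+c_4T_2,\,T_4-\beta T_2]=T_1$ shows $\mathfrak{z}(X_0)$ is not abelian. This is a valid, self-contained alternative to invoking Lemma~\ref{lemma: abelian 4d}; it is a bit longer but avoids the appendix. One small phrasing issue: it is not only the $T_1$-direction that is always attained---the $T_4$-coefficient $-(c_3'-c_3\alpha')$ is also a nonzero functional regardless of $c_3$---but your conclusion on the rank is correct in both cases.
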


\begin{proof}
Assume by contradiction that~$\Gamma'$ has rank~$1$, and set
$\Gamma_1=\Gamma\cap (U_1\ltimes N)$ with syndetic hull~$S_1$. Using the short exact sequence
$1\to \Gamma_1\to \Gamma\to \Gamma'\to 1$
and the assumption that~$\cd(\Gamma)>4$, we deduce that~$\cd(\Gamma_1)>3$ and, in
particular~$\dim(S_1)\geq 4$. It follows from
Lemma~\ref{lemma: abelian 4d} that~$S_1$ is the group of pure translations. This implies
that~$\Gamma_1\cap N$ is nontrivial, which contradicts the fact that $\Gamma\cap N$ is trivial.
\end{proof}

We now deal with the case where~$\Gamma'$ has rank~$2$. We state the
following result.

\begin{prop}\label{lemma:Rank_2case}
The subgroup~$\Gamma'$ in~\eqref{eq:subgroup_gamma_prime} cannot have rank~$2$.
\end{prop}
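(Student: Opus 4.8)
The plan is to argue by contradiction, assuming $\Gamma'$ has rank $2$, so that $\Gamma'\cong\Z^2$ and $\cd(\Gamma')=2$. Recall that $\Gamma_1=\Gamma\cap(U_1\ltimes N)$ is the kernel of the composite homomorphism $j\circ p\colon\Gamma\to\R\times A$, that $\Gamma/\Gamma_1\cong\Gamma'$, and that $\Gamma_1$ admits an \emph{abelian} syndetic hull $S_1\le U_1\ltimes N$ whose projection to $U_1$ is nontrivial (Lemma~\ref{abelianS_1}). From the short exact sequence $1\to\Gamma_1\to\Gamma\to\Gamma'\to1$, the subadditivity of cohomological dimension, and the standing assumption $\cd(\Gamma)>4$, I would first deduce $\cd(\Gamma_1)\ge\cd(\Gamma)-\cd(\Gamma')\ge 3$, whence $\dim S_1=\cd(\Gamma_1)\ge 3$, since $\Gamma_1$ is a uniform lattice in the contractible abelian group $S_1$.

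The core of the proof is to pin down the Lie algebra $\mathfrak{s}_1$ of $S_1$ inside the Lie algebra of $U_1\ltimes N$, spanned by $u_1,u,T_1,T_2,T_3,T_4$. As $\Gamma$ normalizes $\Gamma_1$, it normalizes its Malcev hull $S_1$ and so acts on $\mathfrak{s}_1$ by $\Ad$. The diagonal parts of the elements of $\Gamma$ project exactly onto $\Gamma'$, a rank-$2$ discrete subgroup of the split torus $\{\mathrm{diag}(\mu,\nu):\mu,\nu>0\}$; being discrete of rank $2$ in a $2$-dimensional group, $\Gamma'$ spans it over $\R$ and is therefore Zariski dense in it. Passing to the semisimple parts of the Jordan decomposition (which lie in the Zariski closure of $\Ad(\Gamma)$), the whole torus preserves $\mathfrak{s}_1$. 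A direct computation gives the torus weights of $u_1,u,T_1,T_2,T_3,T_4$ as $(1,-1),(1,1),(1,0),(0,1),(-1,0),(0,-1)$, which are pairwise distinct, so every torus-invariant subspace is a coordinate subspace; hence $\mathfrak{s}_1$ is the span of a subset of the basis. Since its projection to $U_1$ is nonzero, $u_1\in\mathfrak{s}_1$; and since $\mathfrak{s}_1$ is abelian while $[u_1,T_2]=T_1$, $[u_1,T_3]=-T_4$ and $[u,T_4]=T_1$ (by \eqref{bracket_n}--\eqref{bracket-u-1}), the subspace can contain neither $T_2$ nor $T_3$, nor both $u$ and $T_4$. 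Together with $\dim\mathfrak{s}_1\ge 3$ this forces $\mathfrak{s}_1=\Span(u_1,u,T_1)$ or $\mathfrak{s}_1=\Span(u_1,T_1,T_4)$.

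Finally, set $W:=\Span(T_1,T_2,T_4)$. One checks that $L(\Gamma)\le(\R\times B)\ltimes U$ preserves the hyperplane $W$, and that in either of the two cases above the translation part of every element of $S_1$ lies in $\Span(T_1,T_4)\subset W$ while its linear part fixes $T_3$ modulo $W$. Consequently $\Gamma_1$ acts trivially on the quotient line $\R^{4}/W\cong\R$, so the induced affine action of $\Gamma$ on $\R^{4}/W$ factors through $\Gamma'\cong\Z^2$, yielding an abelian group of affine transformations of the line. Because $\Gamma'$ has rank $2$ it contains an element whose diagonal part has $\mu\neq 1$, so its image on $\R^{4}/W$ has linear part $\mu^{-1}\neq 1$; an abelian subgroup of $\mathrm{Aff}(\R)$ containing such a non-unipotent element has a global fixed point. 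The preimage of that point is a $\Gamma$-invariant affine hyperplane parallel to $W$, contradicting Theorem~\ref{thm_Goldman_Hirsch_rep}. I expect the main obstacle to be the reduction in the second paragraph: rigorously upgrading invariance under the discrete group $\Gamma'$ to invariance under the full diagonal torus (via Zariski density and the Jordan decomposition) and checking that the resulting weights are distinct; once $\mathfrak{s}_1$ is identified, the invariant hyperplane comes out cleanly.
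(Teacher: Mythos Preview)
Your argument is correct and takes a genuinely different route from the paper's proof.

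The paper proceeds as follows. It first pins down $\dim S_1=3$ exactly and builds a $5$-dimensional solvable syndetic hull $S_\Gamma$ of $\Gamma$ with $S_\Gamma\cap(U_1\ltimes N)=S_1$ (Lemma~\ref{S_1_dimension3}). Unimodularity of $S_\Gamma$ then yields the identity $\det(\Ad_{g\mid\mathfrak{s}_0})=\mu^{-2}$ for every $g\in S_\Gamma$ (Lemma~\ref{adjoint_s_0}). Finally, running through the classification of three-dimensional abelian subalgebras of $U_1\ltimes N$ from the appendix ($\a_1,\a_3^{\pm},\a_4$), the paper computes $\det(\Ad_{g\mid\mathfrak{s}_0})$ directly in each case and equates it with $\mu^{-2}$; this forces a linear relation between the two generators of $\Gamma'$, contradicting rank $2$.

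Your approach bypasses both the unimodularity trick and the case-by-case analysis. By passing to the Zariski closure of $\Gamma$ and using that $\Gamma'$ is Zariski dense in the full diagonal torus, you reduce (after conjugation) to $\mathfrak{s}_1$ being a coordinate subspace for the distinct torus weights; the abelian and $U_1$-projection constraints then leave only $\Span(u_1,u,T_1)$ or $\Span(u_1,T_1,T_4)$. Your endgame is also different: rather than forcing linear dependence in $\Gamma'$, you produce a $\Gamma$-invariant affine hyperplane and invoke Theorem~\ref{thm_Goldman_Hirsch_rep}. This is more structural and arguably cleaner; the price is the algebraic-group input. One point to make explicit: what you actually obtain is that a \emph{maximal torus of the Zariski closure of $\Gamma$} preserves $\mathfrak{s}_1$, and this torus is only conjugate to $\R\times A$ by an element of $U_1\ltimes N$. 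You should state that you replace $\Gamma$ (and $\Omega$, $S_1$) by this conjugate before concluding that $\mathfrak{s}_1$ is a coordinate subspace; all the hypotheses survive this conjugation since $U_1\ltimes N$ normalizes itself, preserves $W=\Span(T_1,T_2,T_4)$, and leaves $\Gamma'$ unchanged.
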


We begin with the following result.

\begin{lemma}\label{S_1_dimension3}
If $\Gamma'$ has rank $2$, then $\dim(S_1) = 3$, and the group $\Gamma$ admits a syndetic hull $S_\Gamma\leq (\R\times B)\ltimes N$ of dimension $5$ such that
$S_\Gamma \cap (U_1 \ltimes N) = S_1$, where $S_1\leq U_1\ltimes N$ is the Malcev closure of $\Gamma_1 = \Gamma \cap (U_1 \ltimes N)$.
\end{lemma}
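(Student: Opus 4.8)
The plan is to prove the two assertions in turn: first the equality $\dim(S_1)=3$, and then the existence of the five-dimensional syndetic hull $S_\Gamma$ with $S_\Gamma\cap(U_1\ltimes N)=S_1$. Throughout I use that $S_1$ is the Malcev closure of the torsion-free nilpotent group $\Gamma_1$, so that $\dim(S_1)=\cd(\Gamma_1)$, and I bound $\cd(\Gamma_1)$ from both sides.

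For the lower bound I would use the short exact sequence $1\to\Gamma_1\to\Gamma\to\Gamma'\to1$ induced by $j\circ p$. Since $\Gamma'$ has rank~$2$ we have $\cd(\Gamma')=2$, while Theorem~\ref{thm_vcd_gamma} together with the standing assumption $\cd(\Gamma)>4$ gives $\cd(\Gamma)\ge5$; hence $\cd(\Gamma_1)\ge3$. For the upper bound I would exploit that $S_1$ is abelian and projects nontrivially onto $U_1$ (Lemma~\ref{abelianS_1}), so that $p(S_1)=U_1$ and $\dim(S_1)=1+\dim(S_1\cap N)$. Choosing $X\in\mathfrak{s}_1$ with nonzero $u_1$-component, the abelian subalgebra $\mathfrak{s}_1\cap\mathfrak{n}$ is contained in $\Ker\bigl(\ad(X)|_{\mathfrak{n}}\bigr)$, since $\ad(X)$ maps $\mathfrak{n}$ into $\mathfrak{n}$ (as $U_1$ normalizes $N$) and $X$ is central in $\mathfrak{s}_1$. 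A direct computation with the brackets \eqref{bracket_n} and \eqref{bracket-u-1} shows that this kernel has dimension at most $3$, and that whenever it is exactly $3$-dimensional it is a non-abelian nilpotent algebra isomorphic to the Heisenberg algebra (the relation $[u,T_4]=T_1$ survives in it), whose maximal abelian subalgebras are two-dimensional. Therefore $\dim(\mathfrak{s}_1\cap\mathfrak{n})\le2$, whence $\dim(S_1)\le3$; combined with the lower bound this yields $\dim(S_1)=3$ and $\dim(S_1\cap N)=2$.

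For the syndetic hull I would use that $\Gamma$ is solvable (Lemma~\ref{p(gamma) is solvable}) and discrete in the linear algebraic group $(\R\times B)\ltimes N$, so Theorem~\ref{fact:filling_linear_alg} supplies, after passing to a finite-index subgroup, a solvable syndetic hull $S_\Gamma$. The essential tool is the restriction of $j\circ p$ to $S_\Gamma$: its image is a connected subgroup of $\R\times A\cong\R^2$ containing $\Gamma'\cong\Z^2$ cocompactly (the continuous image of the compact quotient $\Gamma\backslash S_\Gamma$ is compact), hence equals all of $\R\times A$ and has dimension~$2$. Because the target $\R\times A$ is simply connected, the long exact homotopy sequence of the resulting fibration forces the kernel $S_\Gamma\cap(U_1\ltimes N)$ to be connected; it is nilpotent, and the fibration $\Gamma\backslash S_\Gamma\to\Gamma'\backslash(\R\times A)$ over the torus $\Gamma'\backslash(\R\times A)$ exhibits $\Gamma_1$ as a uniform lattice in it. By uniqueness of the Malcev closure it must therefore coincide with $S_1$, and counting dimensions gives $\dim(S_\Gamma)=\dim(S_1)+2=5$.

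The main obstacle is the upper bound $\dim(S_1)\le3$: one cannot conclude by a dimension count alone, because $\Ker\bigl(\ad(X)|_{\mathfrak{n}}\bigr)$ genuinely can be $3$-dimensional, and the argument really requires the \emph{non-abelianity} of that kernel in the borderline case to cut the abelian piece $\mathfrak{s}_1\cap\mathfrak{n}$ down to dimension~$2$. A secondary point needing care is the bookkeeping in the construction of $S_\Gamma$—namely that the kernel $S_\Gamma\cap(U_1\ltimes N)$ is connected and that $\Gamma_1$ is genuinely cocompact in it—but both follow cleanly from the simple connectivity of $\R\times A$ and the compactness of the base torus.
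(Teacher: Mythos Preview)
Your proof is correct and follows essentially the same strategy as the paper, with two minor variations worth noting. For the upper bound $\dim(S_1)\le3$, the paper argues by contradiction via Lemma~\ref{lemma: abelian 4d} (an abelian subgroup of $U_1\ltimes N$ of dimension $\ge4$ must be the pure translations) together with the standing hypothesis $\Gamma\cap N=\{1\}$; your centralizer/Heisenberg computation is an inlined, slightly more self-contained version of the same Lie-algebra argument that does not need to invoke $\Gamma\cap N=\{1\}$ separately. For the syndetic hull, the paper first uses contractibility of $S_\Gamma$ to get $\dim(S_\Gamma)=\cd(\Gamma)=5$ and then reads off $\dim\bigl(S_\Gamma\cap(U_1\ltimes N)\bigr)=3$, whereas you first identify the kernel with $S_1$ (making explicit its connectedness, which the paper uses tacitly) and deduce $\dim(S_\Gamma)=5$ at the end; both orderings are valid and amount to the same content.
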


\begin{proof}
 Using the short exact sequence
$1\to \Gamma_1\to \Gamma\to \Gamma'\to 1$, we deduce that
$5\leq \cd(\Gamma)\leq \cd(\Gamma_1)+2$, and hence~$\cd(\Gamma_1)\geq 3$. If~$\cd(\Gamma_1)>3$, then~$\dim(S_1)\geq 4$, and, as in the
proof of Lemma~\ref{lemma:rank1case}, we deduce that~$S_1$ is the group of pure
translations. This implies that~$\Gamma\cap N$ is nontrivial, which is a contradiction.
 Therefore~$\cd(\Gamma_1)=3$, and consequently~$\cd(\Gamma)=5$.

Since~$\Gamma$ is solvable and~$\mathbb{R}\times B\ltimes N$ is an algebraic group,
$\Gamma$ admits a syndetic hull~$S_\Gamma$ by
Theorem~\ref{fact:filling_linear_alg}. Moreover, since~$\mathbb{R}\times B\ltimes N$ is
contractible, the group~$S_\Gamma$ is contractible, and hence
$\cd(\Gamma)=\dim(S_\Gamma)$. Let
$j_1:(\mathbb{R}\times A)\ltimes (U_1\ltimes N)\to \mathbb{R}\times A$
be the natural projection. Note that the restriction
$j_1|_{S_\Gamma}:S_\Gamma\to \mathbb{R}\times A$
is surjective. Indeed,~$j_1(S_\Gamma)$ is a connected Lie subgroup of
$\mathbb{R}\times A\cong \mathbb{R}^2$ containing the discrete group~$\Gamma'$ of rank~$2$,
and so~$j_1(S_\Gamma)=\mathbb{R}\times A$. Since $\Ker(j_1|_{S_\Gamma})=S_{\Gamma}\cap (U_1\ltimes N)$, we deduce that 
$\dim(S_\Gamma\cap (U_1\ltimes N))=3$.

The group~$S_\Gamma\cap (U_1\ltimes N)$ contains~$\Gamma_1$, and since
$\cd(\Gamma_1)=\dim(S_\Gamma\cap (U_1\ltimes N))$, we conclude that~$\Gamma_1$ is
cocompact in~$S_\Gamma\cap (U_1\ltimes N)$. As the Malcev closure of~$\Gamma_1$ in
$U_1\ltimes N$ is unique, the result follows.
\end{proof}

We arrive at the following result.

\begin{lemma}\label{adjoint_s_0}
Let~$S_{\Gamma}$ and~$S_1$ as in Lemma \ref{S_1_dimension3}, and let ~$S_0 = S_1 \cap N$ with Lie algebra~$\mathfrak{s}_0$. Take
$g = (\delta,n) \in S_{\Gamma}$, where
$\delta = \mathrm{diag}(\lambda \mu, \lambda \mu^{-1}) \in \mathbb{R} \times A$. Then~$\det\bigl( \Ad_{g\mid \mathfrak{s}_0} \bigr) = 1/\mu^2$.
\end{lemma}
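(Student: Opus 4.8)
The plan is to turn the determinant into a product of torus weights. Since $N$ is the nilradical of $\mathcal{P}_2$, it is normal in $(\R\times B)\ltimes N$, so $S_0=S_1\cap N=S_\Gamma\cap N$ is a normal subgroup of $S_\Gamma$; consequently $\mathfrak{s}_0$ is $\Ad_{S_\Gamma}$-invariant and $\Ad_{g\mid\mathfrak{s}_0}$ is a genuine automorphism of the plane $\mathfrak{s}_0$ (its dimension is $\dim S_1-1=2$, because by Lemma~\ref{abelianS_1} the projection of $S_1$ onto $U_1$ is surjective). First I would note that $g\mapsto\det\big(\Ad_{g\mid\mathfrak{s}_0}\big)$ is a continuous homomorphism $S_\Gamma\to\R_{>0}$.

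The decisive point is that this homomorphism is trivial on $S_1=S_\Gamma\cap(U_1\ltimes N)$. Indeed, the brackets in \eqref{bracket_n} and \eqref{bracket-u-1} show that both the $U_1$-action and the inner $N$-action on $\mathfrak{n}$ are nilpotent, so $U_1\ltimes N$ acts unipotently on $\mathfrak{n}\supseteq\mathfrak{s}_0$; hence $\det\big(\Ad_{h\mid\mathfrak{s}_0}\big)=1$ for every $h\in S_1$. Therefore the homomorphism factors through $j_1\colon S_\Gamma\to\R\times A$ and depends only on $\delta$, which is exactly why the $N$-part $n$ of $g=(\delta,n)$ is immaterial and one may compute with the diagonal part alone.

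It then remains to pin down $\mathfrak{s}_0$ and read off the weights. As $\mathfrak{s}_1$ is abelian and $\Ad_{S_\Gamma}$-invariant while $j_1(S_\Gamma)=\R\times A$ by Lemma~\ref{S_1_dimension3}, the subalgebra $\mathfrak{s}_1$ is stable under the diagonal torus and splits into weight spaces; the weights of $\R\times A$ on $\mathfrak{u}_1\oplus\mathfrak{n}$ are pairwise distinct, so $\mathfrak{s}_1$ is spanned by vectors from $\{u_1,u,T_1,T_2,T_3,T_4\}$. Nontriviality of the $U_1$-projection forces $u_1\in\mathfrak{s}_1$; abelianness together with \eqref{bracket-u-1} excludes $T_2$ and $T_3$, and $[u,T_4]=T_1$ from \eqref{bracket_n} forbids keeping both $u$ and $T_4$, leaving only $\mathfrak{s}_0=\Span(u,T_1)$ or $\mathfrak{s}_0=\Span(T_1,T_4)$. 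The first would place $S_0$ inside $I=\Ker(q)$ (see \eqref{kerq}), so $\dim q(S_\Gamma)\le 3$; but $q$ is injective on $\Gamma$, hence $q(\Gamma)\cong\Gamma$ is a uniform lattice in the contractible solvable group $q(S_\Gamma)$ and $\cd\Gamma=\cd q(\Gamma)\le 3$, contradicting Theorem~\ref{thm_vcd_gamma}. Thus $\mathfrak{s}_0=\Span(T_1,T_4)$, and reading off the scalars by which $\delta=\mathrm{diag}(\lambda\mu,\lambda\mu^{-1})$ acts on $T_1$ and $T_4$ via \eqref{GL2_action} and multiplying them gives $\det\big(\Ad_{g\mid\mathfrak{s}_0}\big)=1/\mu^2$.

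The step I expect to be the main obstacle is the exact identification of $\mathfrak{s}_0$: one must show that among the torus-invariant abelian planes only $\Span(T_1,T_4)$ is compatible with the standing hypotheses, which requires combining the bracket relations (Table~\ref{tab:notation}) with both the nontriviality of the $U_1$-projection and the triviality of $\Gamma\cap I$ (through the cohomological dimension bound above). One also has to verify carefully that the unipotent $U_1$- and $N$-contributions to $\Ad_g$ really drop out, so that the determinant genuinely depends only on the $\mu$-weight of the diagonal torus.
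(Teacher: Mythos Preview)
Your argument has a genuine gap at the torus–invariance step, and this gap propagates to both the identification of $\mathfrak{s}_0$ and the final numerical answer.

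You assert that since $j_1(S_\Gamma)=\R\times A$ and $\mathfrak{s}_1$ is $\Ad_{S_\Gamma}$-invariant, the subalgebra $\mathfrak{s}_1$ is stable under the diagonal torus $\R\times A$. This does not follow. Elements of $S_\Gamma$ projecting to $\delta\in\R\times A$ have the form $g=(\delta,n)$ with a nontrivial $n\in U_1\ltimes N$, and only the composite $\Ad_g=\Ad_\delta\circ\Ad_n$ preserves $\mathfrak{s}_1$; neither $\Ad_\delta$ nor $\Ad_n$ does so separately. Concretely, by Lemma~\ref{lemma: abelian 3d} the admissible $\mathfrak{s}_1$'s include $\mathfrak{a}_1=\Span(u_1+t,T_1,T_4)$ with $t\in\Span(T_2,T_3)$, and for $t\neq 0$ one has $\Ad_\delta(u_1+t)=\mu^2u_1+\Ad_\delta t\notin\mathfrak{s}_1$ in general. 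So $\mathfrak{s}_1$ need not be spanned by weight vectors, and your subsequent elimination leading to the unique choice $\mathfrak{s}_0=\Span(T_1,T_4)$ is unjustified. Indeed, in the paper's Proposition~\ref{lemma:Rank_2case} three genuinely different possibilities for $\mathfrak{s}_0$ are treated (namely $\Span(T_1,T_4)$, $\Span(T_1,T_2\pm T_4)$, and $\Span(u+\beta T_2+\gamma T_4,T_1)$).

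There is also a sign/value error at the end. Even granting $\mathfrak{s}_0=\Span(T_1,T_4)$, the torus weights from Sublemma~\ref{sublemma:ad_action_diagonal} are $\lambda\mu$ on $T_1$ and $\lambda^{-1}\mu$ on $T_4$, whose product is $\mu^2$, not $1/\mu^2$. The paper in fact uses precisely this discrepancy: in Case~$\mathfrak{a}_1$ of Proposition~\ref{lemma:Rank_2case} the direct weight computation gives $\mu^2$, and combining it with the present lemma forces $\mu^4=1$. The lemma cannot, therefore, be proved by the same direct weight computation.

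The paper's proof takes a completely different route and never identifies $\mathfrak{s}_0$. It uses that $S_\Gamma$ is unimodular (it carries the uniform lattice $\Gamma$), so $\det(\Ad_{g\mid\mathfrak{s}_\Gamma})=1$. Factoring along the normal series $\mathfrak{s}_0\subset\mathfrak{s}_1\subset\mathfrak{s}_\Gamma$, the quotient $\mathfrak{s}_\Gamma/\mathfrak{s}_1\cong\operatorname{Lie}(\R\times A)$ contributes $1$, while $\mathfrak{s}_1/\mathfrak{s}_0\cong\mathfrak{u}_1$ contributes $\mu^2$ by Sublemma~\ref{sublemma:ad_action_diagonal}; hence $\det(\Ad_{g\mid\mathfrak{s}_0})=1/\mu^2$. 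Your observation that the homomorphism factors through $\delta$ is correct and useful, but the missing ingredient is unimodularity, not an explicit description of $\mathfrak{s}_0$.
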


First we record the following basic computation.

\begin{sublemma}\label{sublemma:ad_action_diagonal}
Let~$\lambda \in \mathbb{R}$ and~$\mu \in \mathbb{R}^*$, and consider the diagonal matrix
$\delta = \mathrm{diag}(\lambda \mu, \lambda \mu^{-1})$.
Then the adjoint action of~$\delta$ on~$U_1 \ltimes N$ is given, in the basis
$u_1, u, T_1, T_2, T_3, T_4$ (see Table~\ref{tab:notation}), by
$$
\Ad_{\delta}
= \mathrm{diag}\bigl(
\mu^2,\,
\lambda^2,\,
\lambda \mu,\,
\lambda \mu^{-1},\,
\lambda^{-1}\mu^{-1},\,
\lambda^{-1}\mu
\bigr).
$$
\end{sublemma}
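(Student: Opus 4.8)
The plan is to exploit that $U_1 \ltimes N$ is a semidirect product and that $\delta = \mathrm{diag}(\lambda\mu,\lambda\mu^{-1})$ normalizes each factor, so that its adjoint action splits along $\mathfrak{u}_1 \oplus \mathfrak{n}$ and, in fact, is already diagonal in the prescribed basis. Concretely, $\delta$ normalizes $U_1$ (since $\delta\,\bigl(\begin{smallmatrix}1&t\\0&1\end{smallmatrix}\bigr)\,\delta^{-1}=\bigl(\begin{smallmatrix}1&\mu^2 t\\0&1\end{smallmatrix}\bigr)\in U_1$) and normalizes the nilradical $N$, hence normalizes $U_1\ltimes N$, and $\Ad_\delta$ preserves the decomposition $\mathfrak{u}_1\oplus\mathfrak{n}$. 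It therefore suffices to compute $\Ad_\delta$ separately on the one-dimensional piece $\Span(u_1)$ and on $\mathfrak{n}=\Span(u,T_1,T_2,T_3,T_4)$.

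First I would treat the $\mathfrak{gl}_2$-direction: a direct conjugation $\delta u_1 \delta^{-1}$ gives $\mu^2\,u_1$, which is the first diagonal entry. For the $\mathfrak n$-direction I would invoke the explicit $\GL_2^+(\mathbb{R})$-action \eqref{GL2_action}, namely $M\cdot(b,v_1,v_2)=(b\,\det M,\ M v_1,\ M^{-T}v_2)$. Since this is \emph{linear} in the coordinates $(b,v_1,v_2)$ and these are linear coordinates on $N$, the differential $d\theta_\delta$ at the identity is the same linear map on $\mathfrak n$ (one checks compatibility with the infinitesimal formula $X\cdot(s,v_1,v_2)=(s\,\operatorname{tr}X,\ Xv_1,\ -X^T v_2)$ by differentiating at $\exp(tX)$). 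Applying this to the basis \eqref{basis_n} and using $\det\delta=\lambda^2$ together with $\delta^{-T}=\delta^{-1}=\mathrm{diag}(\lambda^{-1}\mu^{-1},\lambda^{-1}\mu)$ yields
$$
\delta\cdot u=\lambda^2 u,\quad \delta\cdot T_1=\lambda\mu\,T_1,\quad \delta\cdot T_2=\lambda\mu^{-1}T_2,\quad \delta\cdot T_3=\lambda^{-1}\mu^{-1}T_3,\quad \delta\cdot T_4=\lambda^{-1}\mu\,T_4 .
$$
Assembling the two computations in the order $u_1,u,T_1,T_2,T_3,T_4$ gives exactly the claimed diagonal matrix.

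There is no genuinely hard step here; the only point requiring a line of justification is the reduction to block-diagonal form, i.e.\ that $\delta$ normalizes $U_1$ so that $\Ad_\delta$ does not mix $\mathfrak u_1$ with $\mathfrak n$, and that the coordinate description of the $\GL_2^+$-action may be used directly as the differential on $\mathfrak n$. Once these two observations are in place, the eigenvalues follow from elementary diagonal arithmetic, and this computation is precisely what will feed into the determinant $\det(\Ad_{g\mid \mathfrak{s}_0})=1/\mu^2$ of Lemma~\ref{adjoint_s_0}, since $\mathfrak s_0 = \mathfrak s_1 \cap \mathfrak n$ will be spanned by a suitable subcollection of the $T_i$ whose eigenvalues multiply to $\mu^{-2}$.
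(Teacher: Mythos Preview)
Your proposal is correct and is exactly the direct computation the paper has in mind; indeed, the paper gives no proof of this sublemma beyond calling it a ``basic computation,'' and your verification via $\delta u_1\delta^{-1}=\mu^2 u_1$ together with the linear action \eqref{GL2_action} on $\mathfrak n$ is the intended argument. One small remark on your closing comment: in the application to Lemma~\ref{adjoint_s_0}, the subspace $\mathfrak s_0$ is not always spanned by a subcollection of the $T_i$ (in the $\mathfrak a_4$ case it contains $u+\beta T_2+\gamma T_4$), so the determinant computation there is slightly more involved than pure eigenvalue multiplication---but this does not affect the sublemma itself.
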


\begin{proof}[Proof of Lemma~\ref{adjoint_s_0}]
Let~$g = (\delta,n) \in S_{\Gamma}$, where~$\delta \in \mathbb{R} \times A$ and
$n \in N_1$. Let~$\mathfrak{s}_1$ be the Lie algebra of~$S_1$.
We claim that
$$
\det\bigl( \Ad_{g\mid \mathfrak{s}_1} \bigr) = 1.
$$
Indeed, by Lemma~\ref{S_1_dimension3}, we have~$S_1 = S_{\Gamma}\cap (U_1\ltimes N)$, and
hence~$S_1$ is normal in~$S_{\Gamma}$. This follows from the fact that
$U_1\ltimes N$ is normal in~$(\mathbb{R}\times A)\ltimes (U_1\ltimes N)$. In particular,
we have the short exact sequence
$$
1 \longrightarrow S_1 \longrightarrow S_{\Gamma}
\longrightarrow \mathbb{R} \times A \longrightarrow 1.
$$
Since~$\Gamma$ is a uniform lattice in~$S_{\Gamma}$, then~$S_{\Gamma}$ is unimodular, and so we obtain
$$
\det\bigl( \Ad_{g\mid \mathfrak{s}_{\Gamma}} \bigr)
=
\det\bigl( \Ad_{g\mid \mathfrak{s}_1} \bigr)
\det\bigl( \Ad_{g\mid (\mathfrak{s}_{\Gamma}/\mathfrak{s}_1)} \bigr)
= 1.
$$
As~$S_{\Gamma}/S_1\cong \R\times A$, a direct computation shows that
$\det\bigl( \Ad_{g\mid (\mathfrak{s}_{\Gamma}/\mathfrak{s}_1)} \bigr) = 1$, and hence
$\det\bigl( \Ad_{g\mid \mathfrak{s}_1} \bigr) = 1$.

Next, since~$N$ is normal in~$(\mathbb{R} \times B) \ltimes N$, we have the short exact
sequence
$$
1 \longrightarrow S_0 \longrightarrow S_1 \longrightarrow U_1 \longrightarrow 1.
$$
Therefore,
$$
\det\bigl( \Ad_{g\mid \mathfrak{s}_1} \bigr)
=
\det\bigl( \Ad_{g\mid \mathfrak{s}_0} \bigr)
\det\bigl( \Ad_{g\mid (\mathfrak{s}_1/\mathfrak{s}_0)} \bigr)
= 1.
$$
By Lemma~\ref{sublemma:ad_action_diagonal},
$\det\bigl( \Ad_{g\mid (\mathfrak{s}_1/\mathfrak{s}_0)} \bigr) = \mu^2$, and the result
follows.
\end{proof}

Finally, we prove Proposition~\ref{lemma:Rank_2case}.

\begin{proof}[Proof of Proposition~\ref{lemma:Rank_2case}]
Let
$\delta_1=\mathrm{diag}(\lambda_1\mu_1,\lambda_1\mu_1^{-1})$ and
$\delta_2=\mathrm{diag}(\lambda_2\mu_2,\lambda_2\mu_2^{-1})$
be the two generators of the group~$\Gamma'$ defined in
\eqref{eq:subgroup_gamma_prime}. Note that
$$
(\log |\lambda_1 \mu_1|, \log |\lambda_1 \mu_1^{-1}|)
\quad \text{and} \quad
(\log |\lambda_2 \mu_2|, \log |\lambda_2 \mu_2^{-1}|)
$$
are linearly independent in~$\mathbb{R}^2$. In the rest of the proof, we will simply say
that~$\delta_1$ and~$\delta_2$ are linearly independent.

By Lemma~\ref{abelianS_1}, the group~$S_1\leq U_1\ltimes N$ is an abelian
group of dimension~$3$ with a nontrivial projection onto~$U_1$. It then follows from Lemma~\ref{lemma: abelian 3d} that the Lie algebra $\mathfrak{s}_1$ is one of $\mathfrak{a}_1$, $\mathfrak{a}_3^{\pm}$, or $\mathfrak{a}_4$. Denoting by $\mathfrak{s}_0$ the Lie algebra of $S_1 \cap N$, the proof proceeds according to the different possibilities for $\mathfrak{s}_1$.

\medskip
\noindent\textbf{Case~$\mathfrak{s}_1=\mathfrak{a}_1$.}
In this case~$\mathfrak{s}_0=\Span\{T_1,T_4\}$. Since~$T_1$ is central in~$N$, for
$g_i=(\delta_i,n_i)$ with~$\delta_i\in \Gamma'$ and~$n_i\in N_1$,~$i=1,2$, we have~$\Ad_{g_i}(T_1)=\Ad_{\delta_i}(T_1)=\lambda_i\mu_i T_1$.

Therefore, by Sublemma~\ref{sublemma:ad_action_diagonal}, the restriction of~$\Ad_{g_i}$
to~$\mathfrak{s}_0$ has the form, in the basis~$T_1,T_4$,
$$
\begin{pmatrix}
\lambda_i\mu_i & \star \\
0 & \lambda_i^{-1}\mu_i
\end{pmatrix}.
$$
Thus~$\det(\Ad_{g_i\mid \mathfrak{s}_0})=\mu_i^2$, and using
Lemma~\ref{adjoint_s_0} we obtain~$\mu_i^4=1$, hence~$\mu_i=\pm 1$. In particular,
$\delta_1$ and~$\delta_2$ are not linearly independent, a contradiction.

\medskip
\noindent\textbf{Case~$\mathfrak{s}_1=\mathfrak{a}_3^{\pm}$.}
Here~$\mathfrak{s}_0=\Span\{T_1,T_2\pm T_4\}$. One checks that
\begin{equation}\label{eq:T_2_T_4}
\Ad_{g_i}(T_2)=\lambda_i\mu_i^{-1}T_2+x_i T_1,
\qquad
\Ad_{g_i}(T_4)=\lambda_i^{-1}\mu_i T_4+y_i T_1,
\end{equation}
for some~$x_i,y_i\in \mathbb{R}$. For~$\mathfrak{s}_0$ to be invariant under~$\Ad_{g_i}$,
we must have~$\lambda_i\mu_i^{-1}=\lambda_i^{-1}\mu_i$, which implies
$\lambda_i=\pm \mu_i$. Hence~$\delta_1$ and~$\delta_2$ are not linearly independent, a
contradiction.

\medskip
\noindent\textbf{Case~$\mathfrak{s}_1=\mathfrak{a}_4$.}
In this case,
$\mathfrak{s}_0=\Span\{u+\beta T_2+\gamma T_4,\;T_1\}$. The restriction of~$\Ad_{g_i}$ to
$\mathfrak{s}_0$ has the following form in the basis
$T_1,\,u+\beta T_2+\gamma T_4$:
$$
\begin{pmatrix}
\lambda_i\mu_i & \star \\
0 & \lambda_i^2
\end{pmatrix}.
$$ Thus~$\det(\Ad_{g_i\mid \mathfrak{s}_0})=\mu_i\lambda_i^3$, and by Lemma~\ref{adjoint_s_0} we obtain
$\lambda_i^3\mu_i^3=1$. In particular~$\lambda_i=\mu_i^{-1}$, and again~$\delta_1$ and
$\delta_2$ are not linearly independent, a contradiction.
\end{proof}
\begin{proof}[Proof of Proposition~\ref{prop: Not U_1}]
By contradiction, assume that~$\overline{p(\Gamma)}^{\circ}=U_1$. Consider the group~$\Gamma'$ defined in \eqref{eq:subgroup_gamma_prime}, so that~$\overline{p(\Gamma)} \cong \Gamma' \ltimes U_1.$ Lemma~\ref{lemma:rank1case} and Proposition~\ref{lemma:Rank_2case} imply that~$\Gamma'$ is trivial. Hence~$\overline{p(\Gamma)}=U_1$ is abelian, which yields a contradiction.
\end{proof}

The proof of Proposition~\ref{prop cd>4} follows by combining Lemma~\ref{lemma:U_1 and R x U_1}, Lemma~\ref{lemma= No U_1 x R}, and Proposition~\ref{prop: Not U_1}. This also completes the proof of Proposition~\ref{prop: every trivial}. 
\begin{proof}[Proof of Proposition~\ref{prop: q_injective}]
Let~$\Gamma$ and~$\Omega$ as in Proposition~\ref{prop: q_injective}. Let~$T = q(\Gamma) \cap \left( \{\mathrm{Id}\} \times \R^2 \right)$ be the group of pure translations in~$q(\Gamma)$ as in \eqref{eq:Translation}. If~$T$ is not discrete, then completeness follows from Corollary \ref{cor: pure_translation}. If~$T$ is discrete, then by Lemma~\ref{T cannot be Z2}, the group~$T$ is either trivial or isomorphic to~$\Z$. In both cases, completeness follows from Propositions \ref{prop: rank one translation with injective kernel} and \ref{prop: every trivial}. This completes the proof.
\end{proof}
We now have all the tools to prove our main theorem.

\begin{proof}[Proof of Theorem~\ref{main_theorem}]
   Let~$\Gamma$ and~$\Omega$ be as in Theorem~\ref{main_theorem}. Then, up to passing to a subgroup of index at most two, we may assume that~$\Gamma \leq \SO_0(2,2)\ltimes\R^{2,2}$. We then argue according to whether 
  ~$q:\Gamma\to \GL_2^+(\R)\ltimes_{\mathsf{t}} \R^{2}$ is injective or not. In both cases, we have completeness from Propositions \ref{prop: no_pure_translation} and \ref{prop: q_injective}.
\end{proof}

\section{Proof of Theorem \ref{maintheorem2}}\label{sec7}
The goal of this section is to prove Theorem~\ref{maintheorem2}. Let~$G$ be a homothety lie group and~$G_{\theta}=R\ltimes _{\theta}G$ as in Theorem~\ref{maintheorem2}. The first step is to show that~$G_{\theta}$ is linear. 

\begin{lemma}\label{fact1}
The group~$G_\theta=R\ltimes_{\theta}G$ is linear, i.e., there exists an injective homomorphism~$G_\theta \hookrightarrow \GL_n(\C)$ for some~$n \in \mathbb{N}$. 
\end{lemma}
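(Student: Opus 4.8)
The plan is to reduce everything to the linearity of the \emph{holomorph} $\Hol(G)=\Aut(G)\ltimes G$, which in turn rests on first identifying $G$ as a simply connected nilpotent Lie group. First I would use the defining homothety $\Psi$ of $G$ (Definition~\ref{homot_group}); after replacing $\Psi$ by $\Psi^{-1}$ if necessary, we may assume $d\Psi$ is diagonalizable over $\R$ with all eigenvalues of modulus $>1$, i.e.\ $\Psi$ is expanding. Decomposing $\g=\bigoplus_\lambda \g_\lambda$ into eigenspaces of $d\Psi$ and using $[\g_\lambda,\g_\mu]\subseteq\g_{\lambda\mu}$, the moduli of the eigenvalues appearing in a $k$-fold bracket are bounded below by $\beta^k$, where $\beta=\min_\lambda|\lambda|>1$; once $\beta^k$ exceeds the largest eigenvalue modulus, all $k$-fold brackets must vanish, so $\g$ is nilpotent. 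To see that $G$ is simply connected, I would lift $\Psi$ to the universal cover $\widetilde G$, note that $\pi_1(G)$ is a discrete subgroup of the vector group $Z(\widetilde G)=\exp(\z)$ preserved by the lifted automorphism, and observe that the contraction $\Psi^{-1}$ would push any nonzero element of $\pi_1(G)$ to an accumulation at the identity, contradicting discreteness; hence $\pi_1(G)$ is trivial.

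Once $G$ is simply connected and nilpotent, $\Aut(G)$ is canonically identified with the linear algebraic group $\Aut(\g)\le\GL(\g)$, and $G$ is realized as a unipotent algebraic group via $\exp$ and the (polynomial) Baker--Campbell--Hausdorff product. The next step is to produce a faithful finite-dimensional representation of $\Hol(G)=\Aut(\g)\ltimes G$. For this I would work in the finite-dimensional algebra $A=U(\g)/\mathfrak{m}^{c+1}$, where $U(\g)$ is the universal enveloping algebra, $\mathfrak{m}$ its augmentation ideal, and $c$ the nilpotency class: $G$ acts on $A$ faithfully by left multiplication $\lambda(g)$ through the grouplike elements $\exp X\mapsto\sum_n X^n/n!$, while each $\alpha\in\Aut(\g)$ induces an algebra automorphism $\sigma(\alpha)$ of $A$, and these satisfy $\sigma(\alpha)\,\lambda(g)\,\sigma(\alpha)^{-1}=\lambda(\alpha(g))$. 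Assembling $\lambda$ and $\sigma$ gives a faithful homomorphism $\Hol(G)\hookrightarrow\GL(A)$, so $\Hol(G)$ is linear.

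Finally I would deduce the lemma by an elementary embedding that separates the two factors. Define
\[
\Phi\colon G_\theta=R\ltimes_\theta G\longrightarrow R\times\Hol(G),\qquad \Phi(r,g)=\bigl(r,(\theta(r),g)\bigr).
\]
A direct check of the two semidirect-product multiplication laws shows that $\Phi$ is a homomorphism, and it is injective since both $r$ and $g$ are recoverable from $\Phi(r,g)$. As $R$ is linear by hypothesis and $\Hol(G)$ is linear by the previous step, their direct product embeds into some $\GL_n(\C)$, and hence so does $G_\theta$. I expect the only delicate points to be the two structural facts about $G$---nilpotency and simple connectedness extracted from the expanding automorphism---together with the construction of the faithful representation of the holomorph. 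Note that these use only that $G$ is a homothety Lie group, so the hypothesis that $\theta$ commutes with a homothety is not needed here (it will be used later in Section~\ref{sec7}), and $R$ enters solely through its assumed linearity.
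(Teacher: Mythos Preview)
Your argument is correct, and the overall architecture matches the paper's exactly: define the same map $\Phi(r,g)=\bigl(r,(\theta(r),g)\bigr)$ into $R\times\Hol(G)$, check it is an injective homomorphism, and conclude from the linearity of $R$ and of $\Hol(G)=\Aut(G)\ltimes G$.

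The only genuine difference is how you obtain the linearity of $\Hol(G)$. You first establish that a homothety Lie group is simply connected and nilpotent, and then build a faithful representation of $\Hol(G)$ on the truncated universal enveloping algebra $A=U(\g)/\mathfrak{m}^{c+1}$. The paper shortcuts this step: it observes directly that the existence of a homothety forces the center of $\Aut(G)\ltimes G$ to be trivial (a central element $(\alpha,g)$ must satisfy $\Psi(g)=g$, hence $g=e$, and then $\alpha=\id$), so the adjoint representation $\Ad:\Hol(G)\to\GL(\mathfrak{l})$ is already faithful, where $\mathfrak{l}=\mathrm{Lie}(\Hol(G))$. This is quicker and avoids the enveloping-algebra machinery, though your structural facts (nilpotency, simple connectedness) are exactly what makes the fixed-point claim $\Psi(g)=g\Rightarrow g=e$ transparent, so your extra work is not wasted. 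Your remark that the commutation of $\theta$ with a homothety is irrelevant here is also correct and consistent with the paper.
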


\begin{proof}
The idea of the proof is inspired  from \cite[Appendix B]{hanounah_topology}. Consider the natural morphism ~$f: G_{\theta} \to \Aut(G) \ltimes G, \quad f(r,g)= (\theta(r),g).$ Note that this morphism is not necessarily injective. We claim that the group~$\Aut(G)\ltimes G$ is linear. Indeed, since~$G$ admits homotheties, the center of~$\Aut(G)\ltimes G$ is trivial. Hence the adjoint representation~$\Ad: \Aut(G)\ltimes G \to \GL(\mathfrak{l})$ is faithful, where~$\l$ is the Lie algebra of~$\Aut(G)\ltimes G$. Now define~$\Phi: G_{\theta} \to R\times \GL(\l), \ \Phi(r,g) = (r, \Ad(f(r,g))).$
Then~$\Phi$ is clearly a faithful morphism into~$R\times \GL(\l)$, which is a linear group. The claim follows.
\end{proof}
We will now recall the following well known result.
 
\begin{lemma}[Strong Zassenhaus Lemma, Theorem 4.1.7 \cite{thurston2014three},   
Proposition 8.16 \cite{raghunathan1972discrete}]\label{strongZass}
    Let~$G$ be a Lie group. There exists a neighborhood~$V$ of~$1$ in~$G$ such that any discrete subgroup~$\Gamma$ of~$G$ generated by~$V \cap \Gamma$ admits a nilpotent syndetic hull~$S$ in~$G$.
\end{lemma}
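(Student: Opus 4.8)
The plan is to prove the statement in two stages: first establish the classical Zassenhaus dichotomy yielding nilpotency of $\Gamma':=\langle \Gamma\cap V\rangle$, and then upgrade this discrete nilpotent group to a syndetic hull by passing to the Lie algebra. Throughout I would work in an exponential chart: fix a norm $\|\cdot\|$ on $\mathfrak{g}$, let $\exp$ restrict to a diffeomorphism from a ball $B_r=\{X:\|X\|<r\}$ onto a neighborhood of $1$ with local inverse $\log$, and record from the Baker--Campbell--Hausdorff formula the bilinear contraction estimate $\|\log[\exp X,\exp Y]\|\le C\,\|X\|\,\|Y\|$ for $X,Y\in B_r$, where $C$ depends only on $\mathfrak{g}$ and the norm.

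Stage one (nilpotency). I would set $V:=\exp(B_r)$ with $r$ chosen so small that $Cr<\tfrac12$, so that the commutator of two elements of $V$ has strictly smaller log-norm than each factor. Given a discrete $\Gamma$ with $\Gamma=\langle \Gamma\cap V\rangle$, iterated commutators of elements of $\Gamma\cap V$ then have log-norms shrinking geometrically. Since $\Gamma$ is discrete, the identity is isolated, so there is a positive lower bound on $\|\log\gamma\|$ over nontrivial $\gamma\in\Gamma$ lying in $B_r$; hence sufficiently deep commutators must equal the identity. The usual bookkeeping — controlling that the relevant commutators stay inside the chart, by following the descending chain generated by $\Gamma\cap V$ — then shows the lower central series of $\Gamma$ terminates, so $\Gamma$ is nilpotent. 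After intersecting $V$ with a neighborhood containing no nontrivial subgroup and invoking Selberg's Lemma \cite{Selberg}, I may further assume $\Gamma$ is torsion-free.

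Stage two (syndetic hull). Now $\Gamma$ is a torsion-free nilpotent discrete group, and every element of the generating set $\Gamma\cap V$ is of the form $\exp X$ with $X\in B_r$. I would let $\mathfrak{s}\le\mathfrak{g}$ be the Lie subalgebra generated by $\{\log\gamma:\gamma\in\Gamma\cap V\}$ and set $S:=\exp(\mathfrak{s})$. The claims to verify are: (a) $\mathfrak{s}$ is nilpotent; (b) $S$ is a closed connected (nilpotent) subgroup; and (c) $\Gamma$ is a uniform lattice in $S$. For (a) and (c) I would induct on the nilpotency class, transferring the central series of $\Gamma$ to a filtration of $\mathfrak{s}$: commuting elements near $1$ have commuting logarithms, and more generally the contraction estimate forces the logarithms of all of $\Gamma$ (not only the generators) to organize into a nilpotent Lie algebra whose dimension equals the Hirsch length $h(\Gamma)$. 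Discreteness together with nilpotency makes $\Gamma$ finitely generated, so by Malcev's theorem \cite{Malcev,raghunathan1972discrete} it has a unique Malcev completion $\hat S$, a simply connected nilpotent Lie group of dimension $h(\Gamma)$ in which $\Gamma$ is cocompact; identifying $\hat S$ with $S=\exp(\mathfrak{s})$ through the exponential chart then yields (b) and (c).

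The hard part will be Stage two, specifically matching the abstract Malcev completion with the concrete subgroup $\exp(\mathfrak{s})\subset G$: one must show $\dim\mathfrak{s}=h(\Gamma)$ and that $\exp(\mathfrak{s})$ is closed with $\Gamma$ cocompact, rather than, say, a dense image of the wrong dimension. This is exactly where the Zassenhaus smallness is essential — it guarantees that logarithms of group elements behave like their abstract Malcev logarithms — and where the induction on nilpotency class, combined with the contraction estimate, must be executed carefully. Stage one, by contrast, is a routine contraction-plus-discreteness argument.
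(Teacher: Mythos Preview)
The paper does not prove this lemma; it is merely quoted from the literature (Thurston, Theorem~4.1.7, and Raghunathan, Proposition~8.16) and used as a black box in the proof of Theorem~\ref{maintheorem2}. There is therefore no ``paper's own proof'' to compare against.

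Your outline is essentially the standard argument one finds in those references: contraction of commutators via BCH gives nilpotency of a discrete group generated near the identity, and then one builds the hull from the logarithms. Two small caveats. First, your appeal to Selberg's Lemma is unjustified here, since $G$ is an arbitrary Lie group with no linearity assumption; but this is harmless, because in a finitely generated nilpotent group the torsion elements form a finite normal subgroup, which one can handle directly (and ``no small subgroups'' already prevents torsion elements from lying in $V$, though not in $\Gamma$). Second, and as you yourself flag, the genuine work is in Stage two: showing that $\exp(\mathfrak{s})$ is closed, that $\dim\mathfrak{s}=h(\Gamma)$, and that $\Gamma$ is cocompact there. Your sketch of an induction on the nilpotency class, peeling off central quotients that become lattices in vector groups, is exactly the route taken in Raghunathan; the Zassenhaus smallness is what makes $\log$ well-defined on enough of $\Gamma$ for this to go through.
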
 
We recall also the following lemma.

 \begin{lemma}\cite[Lemma 4.3]{hanounah_topology}\label{Lemma1:to filling}
     Let~$\Lambda$ be a subgroup of a Lie group~$G$. Define~$\Lambda_0:=\Lambda\cap {\overline{\Lambda}}^{\mathsf{o}}$, where~${\overline{\Lambda}}^{\mathsf{o}}$ denotes the identity component of the topological closure of~$\Lambda$. Then the subgroup~$\Lambda_{0}$ can be generated by~$\Lambda_{0}\cap V'$ for any neighborhood~$V'$ of identity in~${\overline{\Lambda}}^{\mathsf{o}}$. 
 \end{lemma}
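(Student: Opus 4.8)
The plan is to reduce the statement to a standard fact about dense subgroups of connected Lie groups and then to prove that fact by subdividing a path. First I would note that $\overline{\Lambda}$, being a closed subgroup of the Lie group $G$, is itself a Lie group by Cartan's closed subgroup theorem; hence its identity component $H:=\overline{\Lambda}^{\circ}$ is open and closed in $\overline{\Lambda}$ and is a connected Lie group. The structural observation I would isolate is that $\Lambda_{0}=\Lambda\cap H$ is \emph{dense} in $H$: indeed $\Lambda$ is dense in $\overline{\Lambda}$ by definition of the closure, and the intersection of a dense subset with the open subset $H$ is dense in $H$. With this in hand, the lemma becomes the assertion that a dense subgroup $D$ of a connected Lie group $H$ is generated by $D\cap V'$ for every neighborhood $V'$ of the identity; replacing $V'$ by its interior, I may assume $V'$ is open.

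For the core argument I would fix $d\in D$ and exploit that a connected Lie group is path-connected, choosing a continuous path $\gamma\colon[0,1]\to H$ with $\gamma(0)=1$ and $\gamma(1)=d$. Fixing a left-invariant metric and using uniform continuity on the compact interval, I would take a partition $0=t_{0}<\dots<t_{k}=1$ fine enough that every increment $\gamma(t_{i-1})^{-1}\gamma(t_{i})$ lies in a prescribed small symmetric neighborhood $W$ of the identity. Density of $D$ then lets me choose $d_{i}\in D$ with $d_{0}=1$, $d_{k}=d$, and each $d_{i}$ so close to $\gamma(t_{i})$ that the perturbed increment $d_{i-1}^{-1}d_{i}$ still lies in $V'$. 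Each such increment lies in $D\cap V'$, and the telescoping identity $d=(d_{0}^{-1}d_{1})(d_{1}^{-1}d_{2})\cdots(d_{k-1}^{-1}d_{k})$ exhibits $d$ as a product of elements of $D\cap V'$. As $d$ was arbitrary, $D=\langle D\cap V'\rangle$, which applied to $D=\Lambda_{0}$ and $H=\overline{\Lambda}^{\circ}$ is exactly the claim.

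The main obstacle is the bookkeeping in this last step: one must guarantee that the independently chosen approximants $d_{i}$ produce consecutive increments lying in $V'$ while keeping the endpoints pinned to $1$ and $d$, without error accumulating along the chain. I would handle this by fixing in advance symmetric neighborhoods $W,W_{0}$ of the identity with $W_{0}^{-1}WW_{0}\subseteq V'$ (possible by continuity of multiplication at the identity), requiring the partition to make each $\gamma(t_{i-1})^{-1}\gamma(t_{i})\in W$, and imposing $d_{i}\in\gamma(t_{i})W_{0}$; then every increment automatically lies in $W_{0}^{-1}WW_{0}\subseteq V'$ regardless of the other choices. The remaining ingredients—that $\overline{\Lambda}$ is a Lie group, that $H$ is open in it and path-connected, and that $\Lambda_{0}$ is dense in $H$—are standard and require no further work.
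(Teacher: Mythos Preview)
Your argument is correct. The reduction to showing that a dense subgroup $D$ of a connected Lie group $H$ is generated by $D\cap V'$ is the right move, and the path-subdivision with the buffer neighborhoods $W_0^{-1}WW_0\subseteq V'$ handles the bookkeeping cleanly; the endpoints $d_0=1$ and $d_k=d$ already lie in $D$, so no approximation is needed there.

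Note, however, that the paper does not supply its own proof of this lemma: it is quoted verbatim from \cite[Lemma~4.3]{hanounah_topology} and used as a black box in the proof of Theorem~\ref{maintheorem2}. So there is no ``paper's proof'' to compare against. Your argument is the standard one and is presumably close to what appears in the cited reference.
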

The last result needed to prove Theorem~\ref{maintheorem2}, is the following.
\begin{lemma}\cite[Lemma 1.3.2]{carriere1989generalisations}\label{fact2}
    A discrete subgroup of~$\GL_n(\C)$ which is locally nilpotent (i.e. any finitely generated subgroup is nilpotent) is nilpotent.
 \end{lemma}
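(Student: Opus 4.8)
The plan is to reduce the statement to a \emph{uniform} bound on nilpotency class and then promote that bound to the whole group. The essential input is the classical fact that there is a constant $c = c(n)$, depending only on $n$, such that every nilpotent subgroup of $\GL_n(\C)$ has nilpotency class at most $c$. Granting this bound, the conclusion becomes purely group-theoretic: since $\Gamma$ is locally nilpotent, every finitely generated subgroup has class $\le c$, and I claim this forces $\gamma_{c+1}(\Gamma) = 1$, i.e.\ $\Gamma$ is nilpotent of class $\le c$.

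To justify this claim, I would first show that $\gamma_{c+1}(\Gamma)$ is generated, as a subgroup, by the left-normed commutators $[g_1, g_2, \ldots, g_{c+1}]$ with $g_i \in \Gamma$. This follows by induction on $k$: using the identity $[ab,g] = [a,g]^b[b,g]$ together with the fact that conjugation distributes through commutators (so that a $\Gamma$-conjugate, or an inverse, of a weight-$(c+1)$ commutator is again such a commutator), one checks that $\gamma_k(\Gamma)$ is generated by the weight-$k$ left-normed commutators of elements of $\Gamma$. Now fix any generator $[g_1, \ldots, g_{c+1}]$: the finitely generated subgroup $F = \langle g_1, \ldots, g_{c+1}\rangle$ is nilpotent of class $\le c$, so $\gamma_{c+1}(F) = 1$ and the commutator vanishes. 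As every generator of $\gamma_{c+1}(\Gamma)$ is trivial, $\gamma_{c+1}(\Gamma) = 1$, proving nilpotency of $\Gamma$. Note that discreteness is not needed for this argument; it is the linearity over $\C$, through the uniform bound, that does the work.

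It remains to establish the uniform bound. First I would pass to Zariski closures: if $H \le \GL_n(\C)$ is nilpotent of class $c_0$, then the commutator morphism $(x_1, \ldots, x_{c_0+1}) \mapsto [x_1, \ldots, x_{c_0+1}]$ is a morphism of varieties vanishing on the Zariski-dense subset $H^{c_0+1} \subseteq \overline{H}^{\,c_0+1}$, hence vanishes identically, so $\overline{H}$ is nilpotent of the same class. This reduces the problem to bounding the class of nilpotent \emph{algebraic} subgroups $G \le \GL_n(\C)$. For the identity component, the structure theory of connected nilpotent algebraic groups gives $G^\circ = T \times U$ with $T$ a central torus and $U$ unipotent; after conjugation $U$ lies in the upper unitriangular group, whose class is $n-1$, so $G^\circ$ has class $\le n-1$.

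The main obstacle is the passage from $G^\circ$ to the full (finite-by-connected) group $G$: the finite component group $G/G^\circ$ can a priori inflate the nilpotency class, and in general the class of an extension is not bounded by the classes of kernel and quotient. The mechanism I would use is the upper-central-series estimate: if $N \trianglelefteq G$ with $G/N$ of class $c_1$ and $N \subseteq Z_{c_2}(G)$, then $\gamma_{c_1+1}(G) \subseteq N \subseteq Z_{c_2}(G)$, and iterating $[Z_k(G), G] \subseteq Z_{k-1}(G)$ yields $\mathrm{class}(G) \le c_1 + c_2$; the delicate point is to control, by a function of $n$ alone, both the class of the finite quotient and the upper-central depth of $G^\circ$ in $G$. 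Rather than belabor this combinatorial bookkeeping, one may instead invoke the classical theorem (Mal'cev; see also Suprunenko and Wehrfritz) that the nilpotency class of a nilpotent subgroup of $\GL_n(K)$ over any field $K$ is bounded by a function of $n$. This packages exactly the required bound and, together with the law argument above, completes the proof.
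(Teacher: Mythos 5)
The paper does not actually prove this lemma --- it is imported as a citation from Carri\`ere--Dal'bo --- so your proposal has to stand on its own, and unfortunately it does not: the ``classical fact'' on which the entire reduction rests is false. There is no constant $c(n)$ bounding the nilpotency class of nilpotent subgroups of $\GL_n(\C)$. Already in $\GL_2(\C)$ the dihedral group of order $2^m$, realized inside $\O(2)$ as the symmetry group of a regular $2^{m-1}$-gon, is a $2$-group of maximal class $m-1$, so finite nilpotent subgroups of $\GL_2(\C)$ have unbounded class. What Mal'cev, Zassenhaus, Suprunenko and Wehrfritz actually provide is a bound, depending only on $n$, on the \emph{derived length} of solvable subgroups of $\GL_n(K)$; you appear to have conflated derived length with nilpotency class. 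Your own analysis even points at exactly where the bound fails: the finite component group. For $\Gamma = D_{2^m}$, which is finite and hence Zariski closed, the identity component of the closure is trivial and \emph{all} of the class sits in the part your argument leaves uncontrolled.

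The failure is not repairable within your framework, because your closing remark that ``discreteness is not needed'' is also wrong --- without discreteness the statement itself is false. Take
$$
\Gamma \;=\; \bigcup_{m\ge 2} D_{2^m} \;\cong\; \bigl(\Z[1/2]/\Z\bigr)\rtimes \Z/2 \;\le\; \O(2)\;\le\;\GL_2(\C),
$$
the group generated by all rotations of $2$-power order together with one reflection. Every finite subset lies in a finite dihedral $2$-group, so $\Gamma$ is locally nilpotent; but since the Pr\"ufer group of rotations is $2$-divisible and the reflection acts by inversion, $[\Gamma,\Gamma]$ is the full rotation subgroup and the lower central series stabilizes there, so $\Gamma$ is not nilpotent. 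Since your argument never invokes discreteness, it would ``prove'' nilpotency of this group; hence some step must be false, and it is the uniform class bound. What does survive is your purely group-theoretic mechanism: if every finitely generated subgroup of a group satisfies a fixed law, the whole group satisfies it (your commutator-generation argument for $\gamma_{c+1}$ is correct). The standard correct proof applies this mechanism to derived length rather than class: by Zassenhaus's genuine theorem, every finitely generated subgroup of $\Gamma$, being nilpotent, is solvable of derived length at most $\rho(n)$, so $\Gamma$ is solvable of derived length at most $\rho(n)$. Discreteness then enters in an essential way: a discrete solvable subgroup of a connected Lie group such as $\GL_n(\C)$ is polycyclic (compare \cite[Proposition 3.8]{raghunathan1972discrete}, which this paper invokes elsewhere for the nilpotent case), in particular finitely generated --- and a finitely generated locally nilpotent group is nilpotent by definition.
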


\begin{proof}[Proof of Theorem~\ref{maintheorem2}]
 As in the statement let~$\Gamma$ be a discrete subgroup of~$G_\theta$. Let~$H$ be the closure  of~$\pi(\Gamma)$. We consider the non-discrete part of~$\Gamma$, namely~$\Gamma_{nd}=\Gamma\cap \pi^{-1}(H^\circ)$. By Lemma~\ref{fact1}, the group~$\Gamma_{nd}$ is linear. The goal is to show that every finitely generated subgroup~$\Gamma_0$ of~$\Gamma_{nd}$ is nilpotent and then conclude using Lemma~\ref{fact2} that $\Gamma_{nd}$ is nilpotent. We choose~$V_1\subset R$ and~$V_2\subset G$ such that~$V_1\times V_2$ is a strong Zassenhaus neighborhood in~$G_\theta=R\ltimes _{\theta}G$. Lemma~\ref{Lemma1:to filling} applied to~$\Lambda=\pi(\Gamma_0)$ yields that~$\Lambda_0=\Lambda\cap \overline{\Lambda}^\circ$ is generated by~$\Lambda_0\cap V_1$. Let~$rh$ be one of finitely many generators of~$\Gamma_0$, where~$r\in R$, and~$h\in G$. Then~$r=\lambda_1\cdot\ldots\cdot \lambda_k$ for~$\lambda_j\in\Lambda_0\cap V_1$,~$j=1,\dots,k$. Choose elements~$\gamma_j\in\Gamma_0$ such that~$r(\gamma_j)=\lambda_j$,~$j=1,\dots,k$. Then~$(\gamma_1\cdots \gamma_k)^{-1}(rh)\in \Ker(\pi)\cap \Gamma_0=G\cap \Gamma_0~$ and so
\[ rh=\lambda_1\cdot\ldots\cdot\lambda_k h=\gamma_1\cdot\ldots\cdot\gamma_k h'\] 
for some~$h'\in \Gamma_0\cap G$. Thus we may replace~$rh$ by~$\gamma_1,\dots,\gamma_k$ and~$h'$. Repeating this procedure for every generator of~$\Gamma_0$, we obtain a set of generators~$\{r_ih_i\}_{i=1}^m$, where~$r_i\in V_1$ and~$h_i\in G$. Let~$\Psi$ be a homothety of~$G$ that commutes with~$\theta$. Then~$\Psi$ extends to an automorphism of~$G_{\theta}$ by~$\Psi_{\theta} = (\mathrm{Id}, \Psi),$ whose restriction to~$R$ is the identity. Consider~$n$ big enough so that~$\Psi(h_i)\in V_2$ for all~$i=1,\dots,m$. Then~$\Psi_\theta^n(\Gamma_0)=\langle r_i\Psi^n(h_i)\mid \ i=1,\dots,m\rangle$ is generated by elements of~$V_1\times V_2$, thus it is nilpotent by Lemma~\ref{strongZass}. Consequently, also~$\Gamma_0$ is nilpotent. As~$\Gamma_{nd}$ is linear, we conclude that~$\Gamma_{nd}$ itself is nilpotent.

Since~$\Gamma_{nd}$ is discrete and nilpotent it is in particular, polycyclic \cite[Proposition 3.8]{raghunathan1972discrete} so it is finitely generated. Hence, the argument above applies to~$\Gamma_{nd}$ itself. Then~$\Psi_\theta^n(\Gamma_{nd})$ for some big~$n$ is generated by elements that belongs to  the strong Zassenhaus neighborhood of~$V_1\times V_2$. Thus applying Lemma \ref{strongZass} gives rise a syndetic hull for $\Gamma_{nd}$.

The last part of the Theorem is to show that $\overline{\pi(\Gamma_{nd})}=H^\circ=\pi(S)$, where $S$ is a syndetic hull of $\Gamma_{nd}$ in $H^\circ \ltimes G$. By definition \(H^\circ=\overline{\pi(\Gamma_{nd})}\), on the other hand, $\pi(\Gamma_{nd})\subset \pi(S)$. We have that $\pi(S)\subset H^\circ$. Therefore, $H^\circ=\overline{\pi(\Gamma_{nd})}\subset\overline{\pi(S)}\subset H^\circ$. The claim follows.
\end{proof}

\section{Appendix: Abelian Subgroups}

In this appendix, we record some results concerning the structure of abelian subgroups of $U_1\ltimes N$. These results are used throughout the paper in Sections~\ref{sec5} and~\ref{sec6}. We rewrite $U_1\ltimes N$ as $(U_1\times U)\ltimes \R^4$. Recall from the description of $U_1\times U$ in terms of $(4,4)$ matrices that the Lie algebra $\mathfrak{u}_1\times \mathfrak{u}$ is given by
$$
\left\{
\alpha_{x,y}:=\begin{pmatrix}
0 & y & 0 & x\\
0 & 0 & -x & 0\\
0 & 0 & 0 & 0\\
0 & 0 & -y & 0
\end{pmatrix}
\ \Bigg|\ x,y\in \R
\right\},
$$
so that $\alpha_{1,0}=u$ and $\alpha_{0,1}=u_1$.

Let $G$ be an abelian subgroup of $U_1\ltimes N$. Let $\g$ and $\n_1$ denote the Lie algebras of $G$ and $U_1\ltimes N$, respectively, and let $\ell:\n_1\to \mathfrak{u}_1\times \mathfrak{u}$ be the projection at the level of Lie algebras of $U_1\ltimes N$. In the case where $\ell(\g)$ is nonzero, the adjoint action $\ad_{\ell(\g)}$ acts trivially on $\g_0:=\g\cap \R^4$, that is,
\[
\g_0\subset \bigcap_{\alpha\in \ell(\g)} \Ker(\ad_\alpha|_{\mathbb{R}^4}),
\]
which follows from the fact that $\g$ is abelian.
Next, observe that the restriction of $\ad_{\alpha_{x,y}}$ to $\R^4$ coincides with $\alpha_{x,y}$, and hence
\begin{equation}\label{eq:kernal_condition}
\g_0\subset \bigcap_{\alpha_{x,y}\in \ell(\g)} \Ker(\alpha_{x,y}).
\end{equation}
We start with the following result.

\begin{lemma}\label{lemma: abelian 4d}
Let $G$ be an abelian subgroup of $U_1\ltimes N$ of dimension greater than $4$. Then $G$ is the subgroup of pure translations.
\end{lemma}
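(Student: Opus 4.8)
The plan is to run the whole argument at the level of Lie algebras. First I would let $\g$ denote the Lie algebra of $G$; replacing $G$ by its identity component if needed (which affects neither the dimension nor the conclusion), I may assume $G$ is connected, so that $\g$ is abelian and $\dim G=\dim\g$. Keeping the notation of the appendix, $\ell:\n_1\to\mathfrak{u}_1\times\mathfrak{u}$ is the projection with kernel the translation ideal $\R^4=\Span(T_1,T_2,T_3,T_4)$, and I set $\g_0:=\g\cap\R^4$. Rank--nullity for $\ell|_\g$ gives $\dim\g=\dim\g_0+\dim\ell(\g)$, and since $\dim(\mathfrak{u}_1\times\mathfrak{u})=2$ we have $\dim\ell(\g)\in\{0,1,2\}$. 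The entire lemma then reduces to the single claim that $\ell(\g)\neq 0$ forces $\dim\g\le 3$: once this is known, a subgroup with $\dim G\ge 4$ must satisfy $\ell(\g)=0$, i.e.\ $\g=\g_0\subseteq\R^4$, and then $\dim\g\ge 4=\dim\R^4$ gives $\g=\R^4$, so $G$ is the group of pure translations.

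To establish the claim I would use the inclusion \eqref{eq:kernal_condition}, namely $\g_0\subseteq\bigcap_{\alpha_{x,y}\in\ell(\g)}\Ker(\alpha_{x,y})$, which holds exactly because $\g$ is abelian and $\ad_{\alpha_{x,y}}$ restricts to $\alpha_{x,y}$ on $\R^4$. From the matrix form of $\alpha_{x,y}$ recorded in the appendix one computes, for $v=aT_1+bT_2+cT_3+dT_4$, that $\alpha_{x,y}(v)=(by+dx)T_1-cx\,T_2-cy\,T_4$; hence for every $(x,y)\neq(0,0)$ the kernel $\Ker(\alpha_{x,y})=\{\,c=0,\ by+dx=0\,\}$ is exactly two-dimensional. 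If $\dim\ell(\g)=1$, I pick a nonzero $\alpha_{x_0,y_0}\in\ell(\g)$ and obtain $\g_0\subseteq\Ker(\alpha_{x_0,y_0})$, so $\dim\g_0\le 2$ and $\dim\g\le 3$. If $\dim\ell(\g)=2$ then $\ell(\g)=\mathfrak{u}_1\times\mathfrak{u}$ contains $u=\alpha_{1,0}$ and $u_1=\alpha_{0,1}$, whence $\g_0\subseteq\Ker(\alpha_{1,0})\cap\Ker(\alpha_{0,1})=\Span(T_1,T_2)\cap\Span(T_1,T_4)=\Span(T_1)$, so $\dim\g_0\le 1$ and again $\dim\g\le 3$.

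Combining the two cases proves the claim and hence the lemma; the computation even shows that every abelian subalgebra of $\n_1$ has dimension at most $4$, with equality only for $\R^4$, so that an abelian subgroup of dimension at least $4$ is necessarily the pure translation group. I do not expect a genuine obstacle here, since the proof is a short finite case check resting on the structural inclusion \eqref{eq:kernal_condition} from the appendix together with the one-line computation of $\Ker(\alpha_{x,y})$. The only step that needs a word of care is the reduction to the connected case, which turns the statement into the purely Lie-algebraic assertion treated above.
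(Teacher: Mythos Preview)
Your proof is correct and follows essentially the same route as the paper: both arguments use the inclusion \eqref{eq:kernal_condition} together with rank--nullity for $\ell|_\g$, then split into the cases $\dim\ell(\g)=1$ and $\dim\ell(\g)=2$ to bound $\dim\g_0$ by the kernel dimensions. Your write-up is in fact slightly more careful than the paper's in two places: you compute $\Ker(\alpha_{x,y})$ explicitly and correctly observe that it is two-dimensional for every $(x,y)\neq(0,0)$ (the paper's phrasing ``since $x,y\neq 0$'' is a small slip), and you make the reduction to the connected case explicit, which also lets you sharpen the statement to $\dim G\ge 4$ rather than $>4$, matching how the lemma is actually used later in the paper.
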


\begin{proof}
Let $G$ be an abelian subgroup of $U_1\ltimes N$, and assume by contradiction that it is not contained in the subgroup of pure translations. By assumption, $\ell(\g)$ is nonzero. Since $\ell(\g)\cong \g/\g_0$, if we assume that $\dim \ell(\g)=1$, then $\dim(\g_0)\geq 3$. However, if we denote by $\alpha_{x,y}$ a generator of $\ell(\g)$, then by~\eqref{eq:kernal_condition} we have $\g_0\subset \Ker(\alpha_{x,y})$. Since $x,y\neq 0$, we have $\dim(\Ker(\alpha_{x,y}))\leq 2$, and hence
$\dim(\g_0)\leq \dim(\Ker(\alpha_{x,y}))\leq 2$,
which is a contradiction.

Therefore, we must have~$\dim \ell(\g)=2$, and hence~$\dim(\g_0)\geq2$. Let~$\alpha_{0,1}$ and~$\alpha_{1,0}$ be generators of~$\ell(\g)$. Then, by \eqref{eq:kernal_condition}, we obtain~$
\g_0\subset \Ker (\alpha_{0,1})\cap \Ker (\alpha_{1,0})=\Span\{T_1\}.$ This contradicts the fact that~$\dim \g_0\geq2$, and the proof is complete.
\end{proof}
The next result deals with three--dimensional subgroups.
\begin{lemma}\label{lemma: abelian 3d}
Let~$G$ be an abelian Lie subgroup of~$U_1\ltimes N$ of dimension~$3$ which is not contained in the group of pure translations. Let~$u,u_1,T_1,T_2,T_3,T_4$ be the basis of the Lie algebra as in \eqref{tab:notation}. Then the Lie algebra of~$G$ is given by 
$$
\begin{aligned}
(1)\quad &\a_1=\Span(u_1+t,\;T_1,\;T_4),\qquad &&t\in\Span(T_2,T_3),\\
(2)\quad &\a_2=\Span(u+t,\;T_1,\;T_2),\qquad &&t\in\Span(T_3,T_4),\\
(3)\quad &\a_3^\pm=\Span(u\mp u_1+t,\;T_1,\;T_2\pm T_4),\qquad &&t\in\Span(T_3,T_4),\\
(4)\quad &\a_4(\alpha,\beta,\gamma)=\Span(u_1+\alpha T_2+\beta T_4,\;u+ \beta T_2+\gamma T_4,\;T_1),\qquad &&\alpha, \beta,\gamma\in\R,
\end{aligned}$$
\end{lemma}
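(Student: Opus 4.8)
The plan is to classify $\g$ according to the dimension of its image $\ell(\g)$ under the projection $\ell\colon\n_1\to\mathfrak{u}_1\times\mathfrak{u}$. Since $G$ is not contained in the pure translations, $\ell(\g)\neq 0$, and since $\mathfrak{u}_1\times\mathfrak{u}$ is two-dimensional we have $\dim\ell(\g)\in\{1,2\}$, with $\dim\g_0=3-\dim\ell(\g)$ where $\g_0=\g\cap\R^4$. Everything rests on one bracket identity: writing elements of $\n_1$ as $\alpha+v$ with $\alpha\in\mathfrak{u}_1\times\mathfrak{u}$ and $v\in\R^4$, and using that $\mathfrak{u}_1\times\mathfrak{u}$ and $\R^4$ are both abelian (recall $[u,u_1]=0$), the bracket collapses to $[\alpha+v,\beta+w]=\ad_\alpha(w)-\ad_\beta(v)$, where $\ad_{\alpha_{x,y}}$ restricted to $\R^4$ is the matrix $\alpha_{x,y}$ itself. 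Applying abelianness to a generator of $\ell(\g)$ against $\g_0$ recovers exactly the inclusion \eqref{eq:kernal_condition}, namely $\g_0\subseteq\bigcap_{\alpha\in\ell(\g)}\Ker(\alpha)$, which will be the engine of the argument.

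First I would treat $\dim\ell(\g)=1$, so $\dim\g_0=2$. For a generator $\alpha_{x,y}$ of $\ell(\g)$ a direct read-off of the matrix gives $\ad_{\alpha_{x,y}}(a,b,c,d)=(yb+xd,\,-xc,\,0,\,-yc)$, which has rank $2$ and hence two-dimensional kernel; combined with \eqref{eq:kernal_condition} this forces $\g_0=\Ker(\alpha_{x,y})$. If $y=0$ then $\Ker(\alpha_{x,y})=\Span(T_1,T_2)$, and writing $\g=\g_0\oplus\Span(\alpha_{x,y}+t)$ with $t$ defined modulo $\g_0$ I may take $t\in\Span(T_3,T_4)$, producing $\a_2$; symmetrically $x=0$ gives $\Ker=\Span(T_1,T_4)$ and hence $\a_1$. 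When $x,y\neq0$ one computes $\Ker(\alpha_{x,y})=\Span(T_1,\,xT_2-yT_4)$, and conjugating by the diagonal torus—which by Sublemma~\ref{sublemma:ad_action_diagonal} rescales the ratio $y/x$ by an arbitrary positive factor while fixing its sign—I normalize $\alpha_{x,y}$ to $u\mp u_1$ and reduce $t$ into $\Span(T_3,T_4)$, obtaining $\a_3^\pm$. In each sub-case abelianness imposes no further condition on $t$, precisely because $\g_0=\Ker(\alpha_{x,y})$ already annihilates the only potentially nonzero brackets.

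Next I would handle $\dim\ell(\g)=2$, where $\ell(\g)=\mathfrak{u}_1\times\mathfrak{u}$ and $\dim\g_0=1$. Here \eqref{eq:kernal_condition} yields $\g_0\subseteq\Ker(u)\cap\Ker(u_1)=\Span(T_1,T_2)\cap\Span(T_1,T_4)=\Span(T_1)$, forcing $\g_0=\Span(T_1)$. Choosing lifts $X=u_1+v_1$ and $Y=u+v_2$ of $u_1$ and $u$ with $v_1,v_2\in\Span(T_2,T_3,T_4)$, the last remaining content of abelianness is the single equation $[X,Y]=\ad_{u_1}(v_2)-\ad_u(v_1)=0$. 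Feeding in the brackets $[u_1,T_2]=T_1,\ [u_1,T_3]=-T_4$ from \eqref{bracket-u-1} and $[u,T_3]=-T_2,\ [u,T_4]=T_1$ from \eqref{bracket_n}, the vanishing of the $T_1$, $T_2$ and $T_4$ components forces the $T_3$-coefficients of $v_1,v_2$ to vanish and the $T_4$-coefficient of $v_1$ to equal the $T_2$-coefficient of $v_2$; relabelling the three surviving free parameters gives exactly $\a_4(\alpha,\beta,\gamma)$.

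The one genuinely delicate point is the sub-case $x,y\neq0$ of $\dim\ell(\g)=1$. Left untouched, the abelian subalgebras $\Span(\alpha_{x,y}+t,\,T_1,\,xT_2-yT_4)$ form a family carrying the extra ratio parameter $y/x$, so the bare list $\a_1,\a_2,\a_3^\pm,\a_4$ would miss a one-parameter family; it is exactly the diagonal conjugation above that collapses this ratio to its sign and yields the two normal forms $\a_3^\pm$. I would therefore state this normalization explicitly. The remaining steps are routine linear algebra, and the only care needed is to check that the kernel computations and the single abelianness equation in the rank-two case are carried out consistently with the brackets in Table~\ref{tab:notation}.
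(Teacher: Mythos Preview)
Your approach is essentially the paper's: split on $\dim\ell(\g)\in\{1,2\}$, use \eqref{eq:kernal_condition} to pin down $\g_0$, and in the rank-two case solve the single commutator equation $[u_1+v_1,u+v_2]=0$; the computations in both cases agree.

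Where you differ, you are in fact more careful. In the sub-case $\dim\ell(\g)=1$ with $x,y$ both nonzero, the paper asserts that $\Ker(\alpha_{x,y})=\Span(T_1)$ whenever $x\neq\pm y$ and discards that case on those grounds. As your own computation shows, this is not correct: the kernel is $\Span(T_1,\,xT_2-yT_4)$ and is two-dimensional for every nonzero $(x,y)$, so without further argument one obtains a one-parameter family indexed by $y/x$. You resolve this by conjugating by the diagonal torus (via Sublemma~\ref{sublemma:ad_action_diagonal}) to normalize $y/x$ to $\pm1$, which is the right fix and which you should state explicitly, since the lemma's list can only hold up to such a conjugacy. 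The applications of the lemma elsewhere in the paper (notably Proposition~\ref{lemma:Rank_2case}) are insensitive to this normalization, so nothing downstream is affected.
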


\begin{proof}
We proceed with the proof according to the dimension of~$\ell(\g)$, which is nonzero by hypothesis.

\noindent\textbf{Case 1:}~$\dim(\ell(\g))=1$.  
Then~$\ell(\g)=\Span(\alpha_{x,y})$ for some~$x,y\in \R$. In particular~$\dim(\g_0)=2$, hence~$\g_0=\Ker(\alpha_{x,y})=\Span(T_1,\, xT_2-yT_4)$. Thus
$$
\g=\Span(X+t,\,T_1,\,xT_2-yT_4), \quad X\in \ell(\g),\ t\in \R^4.
$$
We observe that~$x\neq \pm y$ then~$\Ker \alpha_{x,y}$ is exactly~$T_1$, so we exclude this case, since~$\dim \g_0=2$. In the case~$x=y$, then~$\Ker \alpha_{x,x}=\Span\{T_1,T_2-T_4\}$ So, in this case we have~$\g:=\Span\{u_1+u+t, T_1, T_2-T_4\}$, moreover, we may assume that~$t\in \Span\{T_3,T_4\}$. Indeed~$t=a T_2+bT_3+cT_4$, then~$u_1-u+t-a(T_2+T_4)=u_1-u+bT_3+(c-a)T_4=u_1-u+t'$. In case~$x=-y$, then~$\Ker \alpha_{x,-x}=\Span(T_1,T_2+T_4)$ and so in this case~$\g=\Span\{u_1-u+t, T_1, T_2+T_4\}$ for~$t\in \Span\{T_3,T_4\}$.

\noindent\textbf{Case 2:}~$\dim(\ell(\g))=2$.  
Then~$\ell(\g)=\mathfrak{u}_1\times \mathfrak{u}$ and~$\dim(\g_0)=1$. By the condition \eqref{eq:kernal_condition},
$$
\g_0 \subset \Ker(\alpha_{1,0}) \cap \Ker(\alpha_{0,1}) = \Span(T_1),
$$
so~$\g_0=\Span(T_1)$. Hence~$
\g=\Span(u+v,\,u_1+w,\,T_1)$ for~$v,w\in \Span(T_2,T_3,T_4)
$. Write $$v=pT_2+qT_3+rT_4,\qquad w=aT_2+bT_3+cT_4.$$ Since~$\g$ is abelian, we have~$[u+v,\;u_1+w]=0$, which implies~$b=q=0~$ and~$ c-p=0.$
Therefore
$$
v=cT_2 + rT_4,\qquad w=aT_2 + cT_4,
$$
for some~$a,c,r\in\R$, and~$
\g=\Span\big(u_1 + aT_2 + cT_4, u+cT_2 + rT_4,\;T_1\big)$. Hence, for~$a=\alpha$,~$c=\beta$ and~$r=\gamma$, the claim follows.
\end{proof}
The last result needed concerns abelian subgroups of~$N$ of dimension~$2$.

\begin{lemma}\label{lemma: abelian 2d}
Let~$G$ be an abelian subgroup of dimension two of~$N=U\ltimes \R^4$ which is not contained in the group of pure translations. Then the Lie algebra of~$G$ is given by
$$
\g=\Span(u+t,\;t'),
$$ 
for $t\in \Span(T_3,T_4)$ and~$t'\in \Span(T_1,T_2)$.
\end{lemma}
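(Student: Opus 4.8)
The plan is to run the same argument as in Lemma~\ref{lemma: abelian 3d}, but adapted to the extreme case in which $\ell(\g)$ and $\g_0:=\g\cap\R^4$ are both one-dimensional. I would begin with the dimension bookkeeping. Since $G$ is not contained in the pure translations, $\ell(\g)\neq 0$, and because $\mathfrak{u}=\Span(u)$ is one-dimensional we get $\dim\ell(\g)=1$; from $\ell(\g)\cong\g/\g_0$ it then follows that $\dim\g_0=1$. Hence, after scaling the non-translation generator so that its $u$-coefficient equals $1$, we may write $\g=\Span(u+v,\;t')$ for some $v\in\R^4$ and some nonzero $t'\in\R^4$ spanning $\g_0$.

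Next I would pin down $t'$ using the abelian hypothesis exactly as in \eqref{eq:kernal_condition}. For the generator $u+v$ one has $[u+v,\,t']=[u,t']$, since $\R^4$ is abelian, so $[u+v,t']=0$ forces $t'\in\Ker(\ad_u|_{\R^4})$. By the brackets \eqref{bracket_n}, $\ad_u$ sends $T_3\mapsto -T_2$ and $T_4\mapsto T_1$ and annihilates $T_1,T_2$, whence $\Ker(\ad_u|_{\R^4})=\Span(T_1,T_2)=P_0$. Therefore $t'\in P_0$, which already produces the asserted second generator.

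It remains to normalise the first generator so that $v\in\Span(T_3,T_4)$, and this is the genuinely new point. Writing $v=v_{P_0}+v_{34}$ with $v_{P_0}\in P_0$ and $v_{34}\in\Span(T_3,T_4)$, the difficulty is that $v_{P_0}$ cannot in general be cleared by subtracting a multiple of $t'$, since $\g_0$ is now only one-dimensional (this is exactly the step that succeeded in Lemma~\ref{lemma: abelian 3d}, where $\g_0$ had dimension two). Instead I would conjugate by $\exp(w)$ with $w=w_3T_3+w_4T_4\in\Span(T_3,T_4)\subset N$: a direct computation from \eqref{bracket_n} gives $\Ad_{\exp(w)}(u)=u+w_3T_2-w_4T_1$ (the higher terms of $\exp(\ad_w)$ vanish because $[w,u]\in P_0$ is central), while $\Ad_{\exp(w)}$ fixes every element of $\R^4$. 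As $(w_3,w_4)\mapsto w_3T_2-w_4T_1$ is a linear isomorphism onto $P_0$, one selects $w$ so that $\Ad_{\exp(w)}(u+v)=u+v_{34}$, leaving $t'\in P_0$ unchanged; setting $t:=v_{34}$ yields $\g=\Span(u+t,t')$ with $t\in\Span(T_3,T_4)$ and $t'\in\Span(T_1,T_2)$. The main obstacle is precisely this normalisation: the one-dimensional kernel $\g_0$ does not supply enough translations to absorb $v_{P_0}$ by a mere change of basis, so one must pass through conjugation by $\exp(\Span(T_3,T_4))$, which is legitimate since $G$ arises as the Malcev closure of a group $\Gamma$ that we are free to conjugate inside $\mathcal{P}_2$; I would state explicitly that the normal form is understood up to this inner automorphism of $N$, consistently with the up-to-conjugacy conventions used throughout the paper.
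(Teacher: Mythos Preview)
Your argument follows the same outline as the paper's: identify $\ell(\g)=\Span(u)$, use \eqref{eq:kernal_condition} to force $t'\in\Span(T_1,T_2)$, and then normalise $t$. The paper dispatches the last step with a bare ``and so we may assume that $t\in\Span(T_3,T_4)$'', whereas you correctly observe that this cannot be done by a change of basis of $\g$ alone (since $\g_0=\Span(t')$ is only one-dimensional in $P_0$), and you supply the missing justification via conjugation by $\exp(w)$ with $w\in\Span(T_3,T_4)$. Your computation $\Ad_{\exp(w)}(u)=u+w_3T_2-w_4T_1$ is correct and does the job.

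So your proof is correct and strictly more careful than the paper's. Two remarks. First, your fix yields the normal form only up to an inner automorphism of $N$; you note this, and it is indeed harmless for the unique application (Lemma~\ref{cd:gamma_cap N}), since conjugation by a pure translation in $\Span(T_3,T_4)$ preserves $I=U\times P_0$ and hence the hypotheses on $\Gamma\cap I$ and $\Gamma\cap P_0$. Second, that application in fact only uses that $t'\in P_0$ and that $\g\cap\i$ equals either $\Span(t')$ or all of $\g$; both facts follow already from $\g=\Span(u+v,t')$ with $v\in\R^4$ and $t'\in P_0$, without normalising $v$. So the paper's terse proof happens to suffice for its purposes, but the lemma as literally stated does require the extra step you provide.
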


\begin{proof}
By assumption, $\ell(\g)$ is nonzero and hence $\ell(\g)=\Span(\alpha_{1,0})$. In particular,
$\Ker(\alpha_{1,0})=\Span(T_1,T_2)$, and therefore $\g_0\subset \Span(T_1,T_2)$. It follows that
$$
\g=\Span(u+t,\;t')\qquad t\in \R^4,\; t'\in \Span(T_1,T_2),
$$ and so we may assume that $t\in \Span(T_3,T_4)$. This completes the proof.
\end{proof}

\bibliographystyle{alpha}
\bibliography{bib}

\end{document}